\def\widebreve#1{\mathop{\vbox{\m@th\ialign{##\crcr\noalign{\kern\p@}%
  \brevefill\crcr\noalign{\kern0.1\p@\nointerlineskip}%
  $\hfil\displaystyle{#1}\hfil$\crcr}}}\limits}
\def\brevefill{$\m@th \setbox\z@\hbox{}%
 \hfill\scalebox{0.7}{\rotatebox[origin=c]{90}{(}} \kern4pt $}
\theoremstyle{plain}
\newtheorem{thm}{Theorem}[subsection]
\newtheorem{lem}[thm]{Lemma}
\newtheorem{prop}[thm]{Proposition}
\newtheorem{cor}[thm]{Corollary}
\theoremstyle{remark}
\newtheorem{rem}[thm]{Remark}
\theoremstyle{definition}
\newtheorem{dfn}[thm]{Definition}
\theoremstyle{plain}
\newtheorem{theor}{Theorem}
\newtheorem{nclaim}{Claim}
\newcommand{\QLS}{{\rm QLS}}
\newcommand{\B}{{\rm B}}
\newcommand{\QB}{{\rm QB}}
\newcommand{\EQB}{{\rm EQB}}
\newcommand{\QBG}{\mathrm{QBG}}
\newcommand{\qwt}{{\rm qwt}}
\newcommand{\ed}{{\rm end}}
\newcommand{\id}{e}
\newcommand{\gch}{\mathop{\rm gch}\nolimits}
\newcommand{\wt}{\mathrm{wt}}
\newcommand{\dg}{\mathrm{deg}}
\newcommand{\dir}{\mathrm{dir}}
\newcommand{\OS}{{\rm OS}}
\newcommand{\aff}{\mathrm{aff}}
\newcommand{\ext}{{\rm ext}}
\newcommand{\eqdef}{:=}
\newcommand{\ardef}{\overset{\rm def}{\Leftrightarrow}}
\newcommand{\bqed}{\quad \hbox{\rule[-0.5pt]{6pt}{6pt}}  \vspace{3mm}}
\newcommand{\lon}{w_\circ}
\newcommand{\lons}{w_\circ (S)}
\newcommand{\rr}{\Delta_{\aff}}
\newcommand{\prr}{\Delta_{\aff}^{+}}
\newcommand{\trr}{\widetilde{\Delta}_{\aff}}
\newcommand{\ptrr}{\widetilde{\Delta}_{\aff}^{+}}
\newcommand{\ntrr}{\widetilde{\Delta}_{\aff}^{-}}
\newenvironment{enu}{%
 \begin{enumerate}%
}{\end{enumerate}}
\newcommand{\Fg}{\mathfrak{g}}
\newcommand{\Fh}{\mathfrak{h}}
\newcommand{\BZ}{\mathbb{Z}}
\newcommand{\BC}{\mathbb{C}}
\newcommand{\CB}{\mathcal{B}}
\newcommand{\CL}{\mathcal{L}}
\newcommand{\vpi}{\varpi}
\newcommand{\bp}{\mathbf{p}}
\newcommand{\bq}{\mathbf{q}}
\newcommand{\brho}{\bm{\rho}}
\newcommand{\bvrho}{\bm{\varrho}}
\newcommand{\Hom}{\mathrm{Hom}}
\newcommand{\GL}{\mathrm{GL}}
\newcommand{\Par}{\mathrm{Par}}
\newcommand{\Conn}{\mathrm{Conn}}
\newcommand{\qad}{U_{q_s}(\Fg_\aff)}
\newcommand{\qa}{U'_{q_s}(\Fg_\aff)}
\newcommand{\norm}{\mathrm{norm}}
\newcommand{\cl}{\mathrm{cl}}
\newcommand{\Img}{\mathop{\rm Img}\nolimits}
\newcommand{\Span}{\mathop{\rm Span}\nolimits}
\newcommand{\ch}{\mathop{\rm ch}\nolimits}
\newcommand{\PJ}{\Pi^{S}}
\newcommand{\Qad}{Q^{\vee,\,\text{\rm $S$-ad}}}
\newcommand{\SBG}{\mathrm{SiBG}}
\newcommand{\SBa}{\mathrm{SiBG}(\lambda\,;\,\sigma)}
\newcommand{\SBx}[1]{\mathrm{SiBG}(\lambda\,;\,#1)}
\newcommand{\SLS}{\mathbb{B}^{\frac{\infty}{2}}}
\newcommand{\sell}{\ell^{\frac{\infty}{2}}}
\newcommand{\sil}{\prec}
\newcommand{\sile}{\preceq}
\newcommand{\sig}{\succ}
\newcommand{\sige}{\succeq}
\newcommand{\edge}[1]{ \xrightarrow{\hspace{3pt}#1\hspace{3pt}} }
\newcommand{\mcr}[1]{\lfloor #1 \rfloor}
\newcommand{\pair}[2]{\langle #1,\,#2 \rangle}
\newcommand{\Bpair}[2]{\left\langle #1,\,#2 \right\rangle}
\newcommand{\ol}[1]{\overline{#1}}
\newcommand{\ti}[1]{\widetilde{#1}}
\newcommand{\sss}{s}
\title[Formula for specialized nonsymmetric Macdonald polynomials]{
Specialization of nonsymmetric Macdonald polynomials at $t=\infty$
and Demazure submodules of level-zero extremal weight modules$^\ast$}
\author[S.~Naito]{Satoshi Naito}
\address[Satoshi Naito]
 {Department of Mathematics, Tokyo Institute of Technology,
  2-12-1 Oh-Okayama, Meguro-ku, Tokyo 152-8551, Japan}
\email{naito@math.titech.ac.jp}
\author[F.~Nomoto]{Fumihiko Nomoto}
\address[Fumihiko Nomoto]
 {Department of Mathematics, Tokyo Institute of Technology,
  2-12-1 Oh-Okayama, Meguro-ku, Tokyo 152-8551, Japan}
\email{nomoto.f.aa@m.titech.ac.jp}
\author[D.~Sagaki]{Daisuke Sagaki}
\address[Daisuke Sagaki]
 {Institute of Mathematics, University of Tsukuba, 
  Tsukuba, Ibaraki 305-8571, Japan}
\email{sagaki@math.tsukuba.ac.jp}
\begin{document}
\maketitle

\begin{abstract}
In this paper, we give a representation-theoretic interpretation
of the specialization $E_{\lon \lambda} (q,\infty)$ of the nonsymmetric Macdonald polynomial $E_{\lon \lambda}(q,t)$
at $t=\infty$ in terms of the Demazure submodule $V_{\lon}^- (\lambda)$ of the level-zero extremal weight module $V(\lambda)$
over a quantum affine algebra of an arbitrary untwisted type;
here, $\lambda$ is a dominant integral weight,
and $\lon$ denotes the longest element in the finite Weyl group $W$.
Also, for each $x \in W$, we obtain a combinatorial formula for the specialization $E_{x \lambda} (q, \infty)$ at $t=\infty$ of the nonsymmetric Macdonald polynomial $E_{x \lambda} (q,t)$,
and also one for the graded character $\gch V_{x}^- (\lambda)$ 
 of the Demazure submodule
$V_{x}^- (\lambda)$ of $V(\lambda)$; both of these formulas are described in terms of quantum Lakshmibai-Seshadri paths
of shape $\lambda$.

\end{abstract}
\subjclass{{\scriptsize {\it Mathematics Subject Classification} 2010: Primary 05E05; Secondary 33D52, 17B37, 20G42.}}

\section{Introduction}
\renewcommand{\thefootnote}{\fnsymbol{footnote}}
\footnote[0]{
$\ast$ This paper is a revised version of our earlier one 
``An explicit formula for the specialization of nonsymmetric Macdonald polynomials at $t=\infty$ (arXiv:1511.07005)'',
in which Theorem \ref{main} of this paper was the main result.}
%
Symmetric Macdonald polynomials 
with two parameters $q$ and $t$ were introduced 
by Macdonald
 \cite{M1} as a family of orthogonal symmetric polynomials,
which include as special or limiting cases almost all the classical families of orthogonal symmetric polynomials.
This family of polynomials are characterized in terms of the double affine Hecke algebra (DAHA) introduced by Cherednik (\cite{C1}, \cite{C3}).
In fact, there exists another family of orthogonal polynomials, called nonsymmetric Macdonald polynomials, which are simultaneous eigenfunctions of $Y$-operators 
acting on the polynomial representation of the DAHA; 
by 
``symmetrizing''
nonsymmetric Macdonald polynomials, we obtain symmetric Macdonald polynomials (see \cite{M}).

Based on the characterization above of nonsymmetric Macdonald polynomials,
Ram-Yip \cite{RY} obtained a combinatorial formula expressing symmetric or nonsymmetric Macdonald polynomials associated to an arbitrary untwisted affine
root system; this formula is described in terms of alcove walks, which are certain strictly combinatorial objects.
In addition, Orr-Shimozono \cite{OS} refined the Ram-Yip formula above,
and generalized it to an arbitrary affine root system
(including the twisted case);
also, they specialized their formula at $t=0$, $t=\infty$, $q=0$, and $q=\infty$.

As for representation-theoretic interpretations of the specialization of symmetric or nonsymmetric Macdonald polynomials at $t=0$,
%
%
%
%
%
%
%
we know the following.
Ion \cite{I} proved that for a dominant integral weight $\lambda$ and an element $x$ of the finite Weyl group $W$, 
the specialization $E_{x \lambda} (q, 0)$ of the nonsymmetric Macdonald polynomial $E_{x\lambda} (q,t)$ at $t=0$
is equal to
the graded character of a certain Demazure submodule of an irreducible highest weight module over
an affine Lie algebra of a dual untwisted type. 
Afterward,
Lenart-Naito-Sagaki-Schilling-Shimozono \cite{LNSSS2} proved that for a dominant integral weight $\lambda$, the set 
$\QLS(\lambda)$
of all quantum Lakshmibai-Seshadri (QLS) paths of shape $\lambda$ provides a realization of the crystal basis of a special quantum Weyl module 
over a quantum affine algebra $\qa$ (without degree operator) of an arbitrary untwisted type, 
and that its graded character equals the specialization $E_{\lon \lambda}(q,0)$ at $t=0$, where $\lon$ denotes the longest element of $W$.
Here a QLS path is obtained from an affine level-zero Lakshmibai-Seshadri path
through the projection $\mathbb{R}\otimes_\mathbb{Z} P_{\aff} \rightarrow \mathbb{R}\otimes_\mathbb{Z} P$,
which factors the null root
$\delta$ of an affine Lie algebra $\mathfrak{g}_\aff$,
and is described in terms of (the parabolic version of) the quantum Bruhat graph, introduced by Brenti-Fomin-Postnikov \cite{BFP};
the set of QLS paths 
is endowed with an affine crystal structure in a way
similar to
the one for the set of ordinary LS paths introduced by Littelmann \cite{L1}.
Moreover,
Lenart-Naito-Sagaki-Schilling-Shimozono \cite{LNSSS3}
 obtained a formula for the specialization $E_{x\lambda}(q, 0)$,
$x \in W$, at $t=0$ in an arbitrary untwisted affine type, which is described in terms of $\QLS$ paths of shape $\lambda$, and proved that the specialization $E_{x\lambda}(q, 0)$ is just the 
graded character of a certain Demazure-type submodule of the special quantum Weyl module.
The crucial ingredient in the proof of this result is a graded character formula obtained in \cite{NS-D}
for the Demazure submodule $V_e^- (\lambda)$ of the level-zero extremal weight module $V(\lambda)$
of extremal weight $\lambda$ over a quantum affine algebra $\qad$,
where  $e$ is the identity element of $W$.
More precisely, Naito and Sagaki proved that the graded character $\gch V_e^- (\lambda)$ of $V_e^- (\lambda) \subset V (\lambda)$
is equal to 
$
\left(
\prod_{i \in I}
\prod_{r=1}^{m_i}(1-q^{-r})
\right)^{-1}
E_{\lon \lambda}(q^{-1},0)
$,
where
$\lambda = \sum_{i \in I} m_i \varpi_i $ is a dominant integral weight,
with $\varpi_i$, $i \in I$, the fundamental weights; the graded character $\gch V_e^- (\lambda)$ is obtained from the ordinary character of $V_e^- (\lambda)$ by replacing $e^\delta$ by $q$, with $\delta$ the null root of the affine Lie albegra $\mathfrak{g}_\aff$.

The purpose of this paper is to give a representation-theoretic interpretation of the specialization
$E_{\lon \lambda} (q, \infty)$ of the nonsymmetric Macdonald polynomial
$E_{\lon \lambda} (q, t)$ at $t= \infty$ in terms of the Demazure submodule $V_{\lon}^- (\lambda)$ of $V(\lambda)$;
here we remark that $V_{\lon}^- (\lambda) \subset V_{e}^- (\lambda) $.
More precisely, we prove the following theorem.
\vspace{4mm}
\begin{theor}[{$=$ Theorem \ref{thm:demazure_character}}]\label{TheoremA}
Let $\lambda = \sum_{i \in I} m_i \varpi_i $ be a dominant integral weight.
Then, the graded character $\gch V_{\lon}^- (\lambda)$ of the Demazure submodule $V_{\lon}^- (\lambda)$ of $V(\lambda)$ is equal to 
\begin{equation*}
\left(
\prod_{i \in I}
\prod_{r=1}^{m_i}(1-q^{-r})
\right)^{-1}
E_{\lon \lambda}(q,\infty)
.
\end{equation*}
\end{theor}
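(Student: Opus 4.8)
The plan is to prove the identity by realizing each side as a generating function over the set $\QLS(\lambda)$ of quantum Lakshmibai--Seshadri paths of shape $\lambda$, and then comparing the two. Precisely, I would establish two formulas of the shape
\[
E_{\lon\lambda}(q,\infty)=\sum_{\eta\in\QLS(\lambda)}q^{-\Deg_{\lon}^{\infty}(\eta)}\,e^{\cl(\wt(\eta))},
\qquad
\gch V_{\lon}^-(\lambda)=\Bigl(\prod_{i\in I}\prod_{r=1}^{m_i}(1-q^{-r})\Bigr)^{-1}\sum_{\eta\in\QLS(\lambda)}q^{-\Deg_{\lon}^{\infty}(\eta)}\,e^{\cl(\wt(\eta))},
\]
for one and the same ``degree at $t=\infty$'' statistic $\Deg_{\lon}^{\infty}$ on $\QLS(\lambda)$, defined through directed paths in the (parabolic) quantum Bruhat graph; granting both, the theorem is immediate. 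This is the exact analogue, at the opposite limit, of the combination of \cite{LNSSS2,LNSSS3} (the $t=0$ formulas for $E_{\lon\lambda}(q,0)$ and $E_{x\lambda}(q,0)$) with the graded character formula of \cite{NS-D} for $V_e^-(\lambda)$, and the prefactor $\prod_{i}\prod_{r=1}^{m_i}(1-q^{-r})^{-1}$ plays the same role here as it does there, namely it accounts for the ``$\delta$-string'' directions by which the Demazure submodule $V_{\lon}^-(\lambda)$ is larger than the finite combinatorial object $\QLS(\lambda)$.

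For the first formula, I would start from the Orr--Shimozono alcove-walk expression \cite{OS} for $E_{\lon\lambda}(q,t)$ and pass to $t=\infty$. This produces, a priori, a weighted sum over quantum alcove walks issuing from the alcove attached to $\lon$, and the substantive step is to push this sum down onto $\QLS(\lambda)$: there is a natural forgetful map from the walks that survive the $t=\infty$ specialization to QLS paths of shape $\lambda$, and one must check that summing the local $t=\infty$ weights over each fiber reproduces exactly $q^{-\Deg_{\lon}^{\infty}(\eta)}e^{\cl(\wt(\eta))}$. The ``down-step'' weighting that is characteristic of the $t\to\infty$ limit is precisely what makes the quantum Bruhat graph (rather than the ordinary Bruhat order) the relevant combinatorial backbone, and I expect this reorganization to be the most laborious bookkeeping in the argument.

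For the second formula, the key input is a combinatorial model for the crystal basis of $V_{\lon}^-(\lambda)$, namely the semi-infinite Lakshmibai--Seshadri path model for the level-zero extremal weight module $V(\lambda)$ and its Demazure submodules: the crystal basis of $V_{\lon}^-(\lambda)$ is realized by semi-infinite LS paths of shape $\lambda$ whose initial direction is constrained by $\lon$ in the semi-infinite Bruhat order. There is a projection from these semi-infinite LS paths to $\QLS(\lambda)$ whose fibers are ``$\delta$-strings'', and evaluating the graded character along a fiber (with $e^{\delta}\mapsto q$) contributes a geometric-series factor; collecting these over the components of $\lambda=\sum_i m_i\varpi_i$ yields exactly $\prod_{i}\prod_{r=1}^{m_i}(1-q^{-r})^{-1}$, while the residual statistic on the base $\QLS(\lambda)$ must be identified with the very $\Deg_{\lon}^{\infty}$ produced in the previous step. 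I expect this last identification --- matching the statistic coming from the $t=\infty$ Orr--Shimozono weights with the one coming from the semi-infinite Bruhat order on the path model --- to be the real heart of the proof; the available handle on it is the already-established $t=0$ counterpart (\cite{LNSSS3}, \cite{NS-D}) together with the symmetries of the quantum Bruhat graph that relate the two opposite limits.

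Finally, as a cross-check (and perhaps a shortcut for part of the argument), one could try to bypass the path model by observing that the target identity is formally the bar-image of the $t=0$ statement: the correspondence $(V_{\lon}^-(\lambda),q,t=\infty)\leftrightarrow(V_e^-(\lambda),q^{-1},t=0)$ is exactly the kind of relation one obtains from the bar involution on the double affine Hecke algebra together with a duality among Demazure submodules of $V(\lambda)$. I would keep this route in reserve, but I expect the path-model computation above to be the one that actually delivers the explicit $\QLS$-formula that the statement calls for.
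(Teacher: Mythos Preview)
Your proposal follows essentially the same approach as the paper: rewrite the Orr--Shimozono formula for $E_{\lon\lambda}(q,\infty)$ as a sum over $\QLS(\lambda)$ with the explicit degree statistic $\dg_{\lon\lambda}$ (Theorem~\ref{main} at $\mu=\lon\lambda$, using Remark~\ref{qls_qlsw0}), compute $\gch V_x^-(\lambda)$ via the semi-infinite LS path model and its projection $\cl$ onto $\QLS(\lambda)$ (Theorem~\ref{thm:gch}), and then compare. Two small corrections to your sketch: the map $\Xi$ from the surviving alcove walks to $\QLS(\lambda)$ is in fact a \emph{bijection} (Proposition~\ref{bijective}), not a many-to-one map requiring a fiber sum, and the Demazure crystal $\CB_x^-(\lambda)$ is cut out by a condition on the \emph{final} direction $\kappa(\eta)\sige x$ (see~\eqref{eq:BBx}), not the initial one.
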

\vspace{4mm}
In order to prove Theorem~\ref{TheoremA}, we first rewrite the Orr-Shimozono formula for the specialization $E_{x \lambda} (q, \infty)$ for $x \in W$
(originally described in terms of quantum alcove walks)
in terms of $\QLS$ paths, by use of an explicit bijection sending quantum alcove walks to $\QLS$ paths
that preserves weights and degrees;
in some ways, this bijection generalizes a similar one in \cite{LNSSS2}.
In particular, for $x = \lon$, the Orr-Shimozono formula rewritten in terms of $\QLS$ paths states that
\begin{equation}\label{*}
E_{\lon \lambda}(q, \infty)
=
\sum_{\psi \in \QLS(\lambda)} e^{\wt(\psi)} q^{\dg_{\lon \lambda} (\psi)},
 \tag{$\ast$}
\end{equation}
where $\QLS(\lambda)$ is the set of all $\QLS$ paths of shape $\lambda$,
and for $\psi \in \QLS(\lambda)$, $\dg_{\lon \lambda} (\psi)$ is a certain nonpositive integer,
which is explicitly described in terms of the quantum Bruhat graph;
see \S 3.2 for details.

Next, using the explicit realization, obtained in \cite{INS},
of the crystal basis $\mathcal{B}(\lambda)$ of $V(\lambda)$ by semi-infinite LS paths of shape $\lambda$,
we compute the graded character $\gch V_x^- (\lambda)$ of the Demazure submodule $V_x^- (\lambda)$ for $x \in W$,
and prove  the following theorem.
\vspace{4mm}
\begin{theor}[{$=$ Theorem \ref{thm:gch}}]\label{TheoremB}
Let $\lambda = \sum_{i\in I}m_i \varpi_i$ be a dominant integral weight, and $x$ an element of the finite Weyl group $W$.
Then, the graded character $\gch V_{x}^- (\lambda)$ of $V_{x}^- (\lambda)$ is equal to 
\begin{equation*}
\left(
\prod_{i \in I}
\prod_{r=1}^{m_i}(1-q^{-r})
\right)^{-1}
\sum_{\psi \in \QLS(\lambda)} e^{\wt(\psi)} q^{\dg_{x \lambda} (\psi)}.
\end{equation*}
\end{theor}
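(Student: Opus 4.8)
The plan is to compute $\gch V_x^-(\lambda)$ directly from the semi-infinite Lakshmibai--Seshadri (LS) path realization of the crystal $\mathcal{B}(\lambda)$ of $V(\lambda)$ obtained in \cite{INS}, and then to extract the stated formula by projecting semi-infinite LS paths to $\QLS$ paths. First I would recall that the set $\SLS(\lambda)$ of semi-infinite LS paths of shape $\lambda$ is isomorphic, as a crystal, to $\mathcal{B}(\lambda)$, and that under this isomorphism the crystal basis of the Demazure submodule $V_x^-(\lambda)$ corresponds to the Demazure-type subset
\[
\SLS_{\sige x}(\lambda) = \bigl\{ \eta \in \SLS(\lambda) : \iota(\eta) \sige x \bigr\},
\]
consisting of the semi-infinite LS paths whose initial direction $\iota(\eta)$ is $\sige x$ in the semi-infinite Bruhat order (with $x \in W$ identified with a suitable representative of the coset it determines). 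Establishing this identification --- either through the general formalism of Demazure crystals applied to the semi-infinite LS model, or directly by induction on $\ell(x)$ via the root operators $\tilde e_i$, $\tilde f_i$ --- is the bridge between the representation theory and the combinatorics, and I would carry it out carefully. Granting it, one has
\[
\gch V_x^-(\lambda) = \sum_{\eta \in \SLS_{\sige x}(\lambda)} e^{\cl(\wt(\eta))}\, q^{\deg(\eta)},
\]
where $\wt(\eta) \in P_{\aff}$, $\cl$ is the projection $\BR \otimes_{\BZ} P_{\aff} \to \BR \otimes_{\BZ} P$ that kills $\delta$, and $\deg(\eta) \in \BZ$ is the appropriately normalized $\delta$-degree of $\eta$.

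Next I would analyze the surjection $\Xi \colon \SLS(\lambda) \to \QLS(\lambda)$ induced by $\cl$, which discards the $\delta$-direction of a semi-infinite LS path --- precisely the projection $\BR\otimes_{\BZ} P_{\aff} \to \BR\otimes_{\BZ} P$ factoring $\delta$ that is recalled in the introduction. The key is a fiberwise statement: for each $\psi \in \QLS(\lambda)$ the fiber $\Xi^{-1}(\psi) \cap \SLS_{\sige x}(\lambda)$ is nonempty, possesses a unique element $\eta_x(\psi)$ of maximal $\delta$-degree, that maximal degree equals $\dg_{x\lambda}(\psi)$, and every other element of the fiber is obtained from $\eta_x(\psi)$ by ``downward'' coweight translations of the directions in its chain. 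Summing the generating function over the fiber then yields
\[
\sum_{\eta \in \Xi^{-1}(\psi)\,\cap\,\SLS_{\sige x}(\lambda)} e^{\cl(\wt(\eta))}\, q^{\deg(\eta)} = \Bigl( \prod_{i\in I}\prod_{r=1}^{m_i}(1-q^{-r}) \Bigr)^{-1} e^{\wt(\psi)}\, q^{\dg_{x\lambda}(\psi)},
\]
the product factor being the character of a polynomial algebra with one generator of $\delta$-degree $-r$ for each $i\in I$ and each $1\le r\le m_i$, which encodes $\lambda = \sum_{i\in I} m_i\varpi_i$; summing over $\psi \in \QLS(\lambda)$ then gives the stated formula. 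To prove the fiberwise statement I would reconstruct all semi-infinite LS paths lying over a given $\psi$ by restoring, to each direction appearing in the chain of $\psi$, its translation ($Q^\vee$-)part, parametrize the tuples of such choices that are compatible both with strict decrease in the semi-infinite Bruhat order and with the constraint $\iota(\eta) \sige x$, single out the one of maximal degree, and compute its degree; specializing to $x = \lon$ should recover formula $(\ast)$ and Theorem~\ref{TheoremA}, providing a consistency check.

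The step I expect to be the main obstacle is identifying the $\delta$-degree of the maximal lift $\eta_x(\psi)$ with the integer $\dg_{x\lambda}(\psi)$. The former is produced by an affine, semi-infinite-Bruhat-order computation along the lifted chain, whereas $\dg_{x\lambda}(\psi)$ is defined as a sum of coweight pairings along a shortest path in the parabolic quantum Bruhat graph, as read off from the Orr--Shimozono formula. Matching the two requires a precise dictionary between semi-infinite LS paths and $\QLS$ paths: how lifting a single $\QLS$-edge to a semi-infinite-LS-edge introduces a controlled $Q^\vee$-correction, how these corrections telescope along the chain, and how the bound $\iota(\eta) \sige x$ fixes the contribution of the first step. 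Setting up this dictionary, which refines the combinatorial correspondences underlying the $t=0$ story, is where the bulk of the technical work will lie; once it is in place, the rest of the argument is bookkeeping.
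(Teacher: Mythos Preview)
Your overall strategy coincides with the paper's proof in \S\ref{subsec:prf-gch}: realize $\CB_x^-(\lambda)$ inside $\SLS(\lambda)$ via Theorem~\ref{thm:isom}, fiber the resulting sum over $\QLS(\lambda)$ through the projection $\cl$, and compute each fiber as the product factor times $e^{\wt(\psi)}q^{\dg_{x\lambda}(\psi)}$. But there is a genuine error that would make your computation go through for the wrong set.

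The Demazure subset is cut out by the \emph{final} direction, not the initial one: the paper sets $\kappa(\eta)=x_s$ for $\eta=(x_1,\dots,x_s;\sigma_0,\dots,\sigma_s)$ and defines $\SLS_{\sige x}(\lambda)=\{\eta:\kappa(\eta)\sige x\}$ (see \eqref{eq:kappa}, \eqref{eq:BBx}), and it is this set that corresponds to $\CB_x^-(\lambda)$ under Theorem~\ref{thm:isom}\,(b). Since $x_1\sig\cdots\sig x_s$, your condition $\iota(\eta)=x_1\sige x$ is strictly weaker and gives a larger set. Correspondingly, the ``extra'' term in $\dg_{x\lambda}(\psi)$ relative to $\dg_\lambda(\psi)$ is $-\wt_\lambda\bigl(x\Rightarrow\kappa(\psi)\bigr)$, a contribution attached to the \emph{last} direction $\kappa(\psi)=w_s$, not the first; so your remark that the bound ``fixes the contribution of the first step'' is off. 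Once you replace $\iota$ by $\kappa$ the plan matches the paper.

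One further imprecision: the fiber $\cl^{-1}(\psi)\cap\SLS_{\sige x}(\lambda)$ is not obtained by translating the directions in the chain independently. The paper parametrizes it (see \eqref{eq:gch1}, \eqref{eq:grch1-1}) by a connected component $C\in\Conn(\SLS(\lambda))\cong\Par(\lambda)$ together with a single global translation $t(\zeta)$ subject to $[\zeta]\ge[\xi_{x,\kappa(\psi)}]$, where $\xi_{x,y}$ is built from the weight of a shortest path in $\QBG^S$ (see \eqref{eq:xi}); the two key lemmas are Lemma~\ref{lem:min1} (the minimal lift satisfies $\kappa\sige x$) and Lemma~\ref{lem:min2} (any lift with $\kappa\sige x$ dominates it). Combining $C$ and $\zeta$ gives a bijection with $\ol{\Par(\lambda)}$, whose generating function is $\prod_{i\in I}\prod_{r=1}^{m_i}(1-q^{-r})^{-1}$, and the degree match is then the one-line identity $\dg_\lambda(\psi)-\wt_\lambda\bigl(x\Rightarrow\kappa(\psi)\bigr)=\dg_{x\lambda}(\psi)$ used after \eqref{eq:wtXS}.
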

\vspace{4mm}

The proof of Theorem~\ref{TheoremB} is based on the fact 
by factoring the null root $\delta$ of $\mathfrak{g}_\aff$, we obtain a surjective strict morphism of crystals from the set of all semi-infinite LS paths
of shape $\lambda$ onto $\QLS(\lambda)$.
By combining the special case $x = \lon$ of Theorem \ref{TheoremB} with equation (\ref{*}) above,
we obtain Theorem \ref{TheoremA}.

Finally, for $x \in W$, we define a certain (finite-dimensional) quotient module $V_{x}^- (\lambda) / X_{x}^- (\lambda)$
of $V_{x}^- (\lambda) \subset V(\lambda)$,
and prove that its graded character $\gch \left( V_{x}^- (\lambda) / X_{x}^- (\lambda) \right)$ is equal to 
$\sum_{\psi \in \QLS(\lambda)} e^{\wt(\psi)} q^{\dg_{x \lambda} (\psi)}$.
Hence it follows that under the specialization $e^\delta = q= 1$, all the modules $V_{x}^- (\lambda) / X_{x}^- (\lambda)$, $x \in W$,
have the same character;
in particular, they have the same dimension.
Also, in the case $x = \lon$, we have 
$\gch \left( V_{\lon}^- (\lambda) / X_{\lon}^- (\lambda) \right) = E_{\lon \lambda}(q, \infty)$;
note that in the case $x=e$, the quotient module $ V_{e}^- (\lambda) / X_{e}^- (\lambda) $ is just the one in \cite[\S 7.2]{NS-D},
and hence we have $\gch \left( V_{e}^- (\lambda) / X_{e}^- (\lambda) \right) = E_{\lon \lambda}(q^{-1}, 0)$
(see \cite[\S 3]{LNSSS3} and \cite[\S 6.4]{NS-D}).
Based on these results
together with \cite[Theorem 5.1]{Kat} for the classical limit,
we can think of
the quotient modules $V_{x}^- (\lambda) / X_{x}^- (\lambda)$, $x \in W$, as a quantum analog of
``generalized Weyl modules'' introduced in \cite{FM}.

This paper is organized as follows.
In Section 2, we fix our notation, and recall some basic facts about the (parabolic) quantum Bruhat graph.
Also, we briefly review the Orr-Shimozono formula for the specialization $E_{x \lambda} (q, \infty)$ at $t=\infty$ for $x \in W$.
In Section 3, we prove equation (\ref{*}) above, or more generally Theorem \ref{main};
this theorem gives the description of the specialization $E_{x \lambda}(q, \infty)$ at $t=\infty$ for $x \in W$ in terms of $\QLS$ paths of shape $\lambda$.
In Section 4, we compute the graded character $\gch V^-_{x} (\lambda)$ for an arbitrary $x \in W$, and prove Theorem \ref{TheoremB};
by combining the special case $x = \lon$ of Theorem \ref{TheoremB} with equation (\ref{*}), we obtain Theorem \ref{TheoremA}.
Finally, for $x \in W$, we define a certain (finite-dimensional) quotient module $V_{x}^- (\lambda) / X_{x}^- (\lambda)$ of $V_{x}^- (\lambda)$,
and compute its graded character;
in the special case $x = \lon$, we obtain the equality $\gch \left(V_{\lon}^- (\lambda) / X_{\lon}^- (\lambda) \right) = E_{\lon \lambda}(q, \infty)$.


\vspace{6mm}
{\bf Acknowledgments}: The authors would
like to thank Syu Kato for sending us his preprint \cite{Kat}.

\section{(Parabolic) Quantum Bruhat Graph and Orr-Shimozono Formula}
\subsection{(Parabolic) quantum Bruhat graph}
Let $\mathfrak{g}$ be a finite-dimensional simple Lie algebra over $\mathbb{C}$,
$I$ the vertex set for the Dynkin diagram of  $\mathfrak{g}$,
$\{ \alpha_i \}_{i \in I }$
(resp., $\{ {\alpha}^{\lor}_i \}_{i \in I }$)
 the set of all simple roots (resp., coroots) of  $\mathfrak{g}$,
$\mathfrak{h} = \bigoplus_{i \in I}\mathbb{C}\alpha^\lor_i$ a Cartan subalgebra of  $\mathfrak{g}$,
$\mathfrak{h}^* = \bigoplus_{i \in I}\mathbb{C}\alpha_i$ the dual space of $\mathfrak{h}$,
and $\mathfrak{h}_\mathbb{R}^* = \bigoplus_{i \in I}\mathbb{R}\alpha_i$ the real form of $\mathfrak{h}^*$;
the duality pairing between  $\mathfrak{h}$ and  $\mathfrak{h}^*$ is denoted by
$\langle \cdot, \cdot \rangle : \mathfrak{h}^* \times \mathfrak{h} \rightarrow \mathbb{C}$.
Let $Q = \sum_{i \in I}\mathbb{Z}\alpha_i  
\subset \mathfrak{h}_\mathbb{R}^*$ denote 
the root lattice of $\mathfrak{g}$,
$Q^\lor = \sum_{i \in I}\mathbb{Z}\alpha_i^\lor  
\subset \mathfrak{h}_\mathbb{R}$ 
the coroot lattice of $\mathfrak{g}$,
and 
$P = \sum_{i \in I}\mathbb{Z}\varpi_i \subset \mathfrak{h}_\mathbb{R}^*$  the weight lattice of $\mathfrak{g}$,
where the $\varpi_i$, $i \in I$, are the fundamental weights for $\mathfrak{g}$,
i.e., 
$\langle \varpi_i ,\alpha_j^\lor \rangle = \delta_{i j}$
for $j \in I$;
we set $P^+ \eqdef \sum_{i \in I} \BZ_{\geq 0} \varpi_i$, and call an elements $\lambda$ of $P^+$ a dominant weight.
Let us denote by $\Delta$ the set of all roots,
and by $\Delta^{+}$ (resp., $\Delta^{-}$) the set of all positive (resp., negative) roots.
Also, let $W \eqdef \langle s_i \ | \ i \in I \rangle$
be the Weyl group of $\mathfrak{g}$,
where
$s_i $, $i \in I$, are the simple reflections acting on $\mathfrak{h}^*$ and on $\mathfrak{h}$:
\begin{align*}
s_i \nu = \nu - \langle  \nu , \alpha^\lor_i  \rangle \alpha_i, & \ \ \
\nu \in \mathfrak{h}^*,\\
s_i h = h - \langle \alpha_i , h \rangle \alpha^\lor_i,
& \ \ \
h \in \mathfrak{h};
\end{align*}
we denote the identity element and the longest element of $W$ by $e$ and $\lon$, respectively.
If $\alpha \in \Delta$ is written as $\alpha = w \alpha_i$ for 
$w\in W$ and $i \in I$, we define $\alpha^\lor$ to be $w \alpha^\lor_i$;
we often identify $s_\alpha$ with $s_{\alpha^\lor}$.
For  $u \in W$,
the length of $u$ is denoted by $\ell(u)$,
 which equals
$\# (\Delta^+ \cap u^{-1}\Delta^-)$.

\begin{dfn}[\normalfont{\cite[Definition 6.1]{BFP}}]\label{QBG}
The quantum Bruhat graph, denoted by $\QBG$, is a directed graph with vertex set $W$ and directed edges 
 labeled by positive roots;
for $u,v \in W$, and $\beta \in \Delta^+$, 
an arrow $u \xrightarrow{\beta} v $ is an edge of $\QBG$
if the following hold{\rm :}

\begin{enu}
\item
$v=u s_\beta$, and

\item
either
(2a):
$\ell(v)=\ell(u)+1$ or
(2b):
$\ell(v)=\ell(u) - 2\langle \rho, \beta^\lor \rangle +1$, 
\end{enu}
where $\rho \eqdef \frac{1}{2} \sum_{\alpha \in \Delta^+}{\alpha}$.
An edge satisfying (2a) 
(resp., (2b))
is called a Bruhat (resp., quantum) edge.
\end{dfn}

\begin{rem}
The quantum Bruhat graph defined above is a ``right-handed'' version, while 
the one defined in \cite{BFP} is a ``left-handed'' version.
We remark that the results of \cite{BFP} used in this paper (such as Proposition \ref{shellability}) are unaffected by this difference
(cf. \cite{Po}).
\end{rem}

For an edge $u \xrightarrow{\beta} v $ of $\QBG$,
we set
\begin{equation*}
\wt (u \rightarrow v)
\eqdef
\left\{
\begin{array}{ll}
      0 & \mbox{if} \ u \xrightarrow{\beta} v \mbox{ is a Bruhat edge}, \\
      \beta^{\lor} &  \mbox{if} \ u \xrightarrow{\beta} v \mbox{ is a quantum edge}.
\end{array}
\right.
\end{equation*}
Also, for $u,v \in W$, we take a shortest directed path
$u=x_0 \xrightarrow{\gamma_1} x_1 \xrightarrow{\gamma_2}\cdots \xrightarrow{\gamma_r} x_r=v$ in $\QBG$,
and set
\begin{equation*}
\wt (u\Rightarrow v) \eqdef \wt (x_0 \rightarrow x_1)+\cdots +\wt (x_{r-1} \rightarrow x_r) \in Q^\lor;
\end{equation*}
we know from \cite[Lemma 1 (2), (3)]{Po} that
this definition does not depend on the choice of a shortest directed path from $u$ to $v$ in $\QBG$.
For a dominant weight $\lambda \in P^+$, we set $\wt_\lambda (u \Rightarrow v ) \eqdef  \pair{\lambda}{\wt (u\Rightarrow v)}$,
and call it the $\lambda$-weight of a directed path from $u$ to $v$ in $\QBG$.


\begin{lem}\label{involution}
If $x \xrightarrow{\beta} y$ is a Bruhat $($resp., quantum$)$ edge of $\QBG$, then $y\lon \xrightarrow{- \lon \beta} x \lon$ is also a Bruhat $($resp., quantum$)$ edge of $\QBG$.
\end{lem}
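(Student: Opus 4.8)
The plan is to verify the two defining conditions of a quantum Bruhat graph edge for $y\lon \xrightarrow{-\lon\beta} x\lon$, using the known behavior of length and of $\langle\rho,\beta^\lor\rangle$ under the involution $w\mapsto w\lon$. First note that $-\lon\beta$ is a positive root whenever $\beta$ is: since $\lon$ sends $\Delta^+$ to $\Delta^-$, we have $\lon\beta\in\Delta^-$, hence $-\lon\beta\in\Delta^+$, so the proposed edge is at least labeled by a legitimate positive root. Next I would check condition (1), namely $x\lon = (y\lon)s_{-\lon\beta}$. Here the key identity is $s_{-\lon\beta} = s_{\lon\beta} = \lon s_\beta \lon^{-1} = \lon s_\beta\lon$ (using $\lon^{-1}=\lon$ and the standard conjugation formula $ws_\beta w^{-1}=s_{w\beta}$). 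Therefore $(y\lon)s_{-\lon\beta} = y\lon\cdot\lon s_\beta\lon = y s_\beta\lon = x\lon$, where the last step uses the hypothesis $y=xs_\beta$, i.e. $x = ys_\beta$. This disposes of (1).

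For condition (2), the essential input is that right multiplication by $\lon$ reverses length in the sense $\ell(w\lon) = \ell(\lon) - \ell(w) = \#\Delta^+ - \ell(w)$ for all $w\in W$. Consequently $\ell(y\lon) - \ell(x\lon) = \ell(x) - \ell(y) = -(\ell(y)-\ell(x))$. If $x\xrightarrow{\beta}y$ is a Bruhat edge, then $\ell(y)=\ell(x)+1$, so $\ell(y\lon)=\ell(x\lon)-1$; equivalently $\ell(x\lon)=\ell(y\lon)+1$, which is exactly condition (2a) for the edge $y\lon\to x\lon$, so it is a Bruhat edge. If instead $x\xrightarrow{\beta}y$ is a quantum edge, then $\ell(y)=\ell(x)-2\langle\rho,\beta^\lor\rangle+1$, hence $\ell(y\lon)-\ell(x\lon) = \ell(x)-\ell(y) = 2\langle\rho,\beta^\lor\rangle - 1$. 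I then rewrite $2\langle\rho,\beta^\lor\rangle$ in terms of the label $-\lon\beta$ of the new edge: since $(-\lon\beta)^\lor = -\lon(\beta^\lor)$ and $\langle\lon\rho,\lon\beta^\lor\rangle=\langle\rho,\beta^\lor\rangle$ together with $\lon\rho=-\rho$ (as $\rho$ is the half-sum of positive roots and $\lon$ negates it — or, more carefully, $\langle\rho,(-\lon\beta)^\lor\rangle = \langle\rho,-\lon\beta^\lor\rangle = \langle-\lon\rho,\beta^\lor\rangle = \langle\rho,\beta^\lor\rangle$), we get $2\langle\rho,(-\lon\beta)^\lor\rangle = 2\langle\rho,\beta^\lor\rangle$. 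Thus $\ell(y\lon) = \ell(x\lon) - 2\langle\rho,(-\lon\beta)^\lor\rangle + 1$, which is precisely condition (2b) for $y\lon\xrightarrow{-\lon\beta}x\lon$, so it is a quantum edge. Conversely, the same computation shows a Bruhat (resp.\ quantum) edge $y\lon\to x\lon$ forces a Bruhat (resp.\ quantum) edge $x\to y$, giving the ``if and only if'' flavor, though only the stated direction is needed.

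I expect the only delicate point to be the careful bookkeeping of signs in the identity $\langle\rho,(-\lon\beta)^\lor\rangle = \langle\rho,\beta^\lor\rangle$: one must be sure that the coroot of $-\lon\beta$ is $-\lon(\beta^\lor)$ (which follows from the paper's convention defining $\alpha^\lor = w\alpha_i^\lor$ when $\alpha = w\alpha_i$, applied with the minus sign absorbed since $(-\gamma)^\lor = -\gamma^\lor$), and that the two sign reversals — one from $\lon$ acting on the coroot lattice versus $\rho$, and one from the overall minus in $-\lon\beta$ — cancel. Everything else is a direct substitution using $w\lon$-length reversal and the conjugation formula for reflections; no case analysis beyond Bruhat-versus-quantum is required.
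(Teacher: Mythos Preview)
Your argument is correct and follows exactly the paper's approach, whose proof is a one-liner citing precisely the two identities you unpack, namely $\ell(y)-\ell(x)=\ell(x\lon)-\ell(y\lon)$ and $\langle\rho,-\lon\beta^\lor\rangle=\langle\rho,\beta^\lor\rangle$. One small slip: in your quantum-edge conclusion the roles of $x\lon$ and $y\lon$ are swapped --- your preceding line $\ell(y\lon)-\ell(x\lon)=2\langle\rho,\beta^\lor\rangle-1$ correctly rearranges to $\ell(x\lon)=\ell(y\lon)-2\langle\rho,(-\lon\beta)^\lor\rangle+1$, which is condition~(2b) for the edge $y\lon\xrightarrow{-\lon\beta}x\lon$.
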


\begin{proof}
This follows easily from equalities
$\ell(y)-\ell(x)=\ell(x \lon )-\ell(y \lon )$ and $\langle \rho, - \lon \beta^\lor \rangle =\langle \rho, \beta^\lor \rangle $.
\end{proof}

Let $w \in W$. We take (and fix) reduced expressions $w = s_{i_{1}}\cdots s_{i_{p}}$ and $\lon w^{-1} =s_{i_{-q}}\cdots s_{i_{0}} $;
note that
	\begin{equation*}
		\lon =s_{i_{-q}}\cdots s_{i_{0}}s_{i_{1}}\cdots s_{i{p}}
	\end{equation*}
is also a reduced expression for the longest element $\lon$.
Now we set
	\begin{equation}\label{inversion_root}
		\beta_{k} \eqdef
	s_{i_{p}}\cdots s_{i_{k+1}} \alpha_{i_{k}},
\ \ \ 
-q \leq k \leq p;
	\end{equation}
we have $\{\beta_{-q}, \ldots , \beta_{0}, \ldots , \beta_{p} \}=\Delta^+$.
Then we define a total order
$\prec$ on $\Delta^+$ by
	\begin{equation}\label{reflectionorder}
		\beta_{-q} \prec \beta_{-q+1} \prec \cdots \prec \beta_{p};
	\end{equation}
note that this total order is a weak reflection order in the sense of Definition \ref{311}.

\begin{prop}[{\cite[Theorem 6.4]{BFP}}] \label{shellability}
	Let $u$ and $v$ be elements in $W$.

\begin{enu}
\item
There exists a unique directed path from $u$ to $v$ in $\QBG$ 
for which the edge labels are strictly increasing {\rm(}resp., strictly decreasing{\rm)} in the total order $\prec$ above.

\item
The unique label-increasing {\rm(}resp., label-decreasing{\rm)} path
\begin{equation*}
u = u_0
\xrightarrow{\gamma_1}
u_1
\xrightarrow{\gamma_2}
\cdots
\xrightarrow{\gamma_r}
u_r=v
\end{equation*}
 from $u$ to $v$ in $\QBG$
is a shortest directed path from $u$ to $v$.
Moreover, it is lexicographically minimal {\rm(}resp., lexicographically maximal{\rm)} among all shortest directed paths from $u$ to $v$;
that is, for an arbitrary shortest directed path
\begin{equation*}
u = u'_0
\xrightarrow{\gamma'_1}
u'_1
\xrightarrow{\gamma'_2}
\cdots
\xrightarrow{\gamma'_r}
u'_r=v
\end{equation*}
from $u$ to $v$ in $\QBG$, there exists $1 \leq j \leq r$
such that $\gamma_j \prec \gamma'_j$ {\rm(}resp., $\gamma_j \succ \gamma'_j${\rm)},
and $\gamma_k = \gamma'_k$ for $1 \leq k \leq j-1$.
\end{enu}
	\end{prop}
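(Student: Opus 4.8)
The statement to prove is Proposition~\ref{shellability}, which is quoted from \cite[Theorem 6.4]{BFP}. Since it is cited verbatim from the literature, the natural thing is to give a proof sketch that reconstructs the argument rather than to rederive everything; but let me plan an honest reconstruction.

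The plan is to reduce the statement to the shellability of the quantum Bruhat order, which is the content of \cite{BFP}. First I would recall that the total order $\prec$ on $\Delta^+$ defined in \eqref{reflectionorder} arises from a reduced word for $\lon$ via the standard ``inversion sequence'' construction \eqref{inversion_root}; the key combinatorial fact is that such an order is a \emph{reflection order} (or at least a weak reflection order in the sense of Definition~\ref{311}, as the paper already notes), meaning that for any three positive roots $\alpha, \beta, \gamma$ with $\gamma = \alpha + \beta$ (up to positive scalars) one has either $\alpha \prec \gamma \prec \beta$ or $\beta \prec \gamma \prec \alpha$. This is precisely the input that makes the EL-shelling arguments of \cite{BFP} go through.

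Next, for part (1), existence and uniqueness of the label-increasing path: I would invoke the fact from \cite[\S6]{BFP} that the quantum Bruhat graph, with edges oriented and labeled as in Definition~\ref{QBG}, admits an EL-labeling with respect to any reflection order $\prec$; concretely, between any two vertices $u$ and $v$ there is exactly one directed path whose sequence of edge labels is strictly $\prec$-increasing. The label-\emph{decreasing} version follows by applying the label-increasing statement to the \emph{opposite} reflection order $\prec^{\mathrm{op}}$ (which is again a reflection order, being the inversion sequence of the reversed reduced word), since a $\prec$-decreasing path is the same as a $\prec^{\mathrm{op}}$-increasing path. For part (2), that this distinguished path is a shortest path and is lexicographically minimal (resp.\ maximal), I would again quote the EL-shellability: in an EL-shelling the increasing maximal chain in any interval is lexicographically first, and \cite{BFP} show moreover that the increasing path realizes the graph-distance, i.e.\ its length equals $\ell(u \Rightarrow v)$, the number of edges in any shortest path. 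The lexicographic comparison statement — that for any other shortest path there is a first index $j$ at which the labels differ and there $\gamma_j \prec \gamma'_j$ — is exactly the statement that the increasing chain is the lex-minimal element in the set of (saturated, hence here shortest) chains, which is part of the definition of an EL-labeling.

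The main obstacle, and the part that actually requires work rather than citation, is verifying the ``diamond'' or local confluence condition that underlies the EL-shelling: given a path $u \xrightarrow{\beta_1} x \xrightarrow{\beta_2} v$ of length two that is \emph{not} label-increasing (i.e.\ $\beta_1 \succ \beta_2$), one must produce a length-two path $u \xrightarrow{\beta_1'} x' \xrightarrow{\beta_2'} v$ with $\beta_1' \prec \beta_1$, and more precisely with $\{\beta_1', \beta_2'\}$ lex-smaller; iterating this straightening process and checking it terminates at the unique increasing path is where the reflection-order property $\gamma=\alpha+\beta \Rightarrow (\alpha\prec\gamma\prec\beta$ or $\beta\prec\gamma\prec\alpha)$ gets used, via a case analysis on whether the edges $u\to x$, $x\to v$ are Bruhat or quantum. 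Since this is carried out in full in \cite{BFP}, I would present it as: state the rank-two straightening lemma, indicate the case analysis (four cases by edge type, using $2\pair{\rho}{\beta^\lor}$ as a potential function and the shelling of rank-two parabolic quotients / dihedral subsystems), and refer to \cite[\S6]{BFP} for the details, noting as in the Remark that the left/right-handed discrepancy is immaterial by \cite{Po}.

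\begin{proof}
This is \cite[Theorem 6.4]{BFP}; we indicate the argument. The total order $\prec$ on $\Delta^+$ determined by the reduced word $\lon = s_{i_{-q}}\cdots s_{i_0}s_{i_1}\cdots s_{i_p}$ via \eqref{inversion_root}--\eqref{reflectionorder} is a (weak) reflection order: for positive roots with $\gamma$ a positive combination of $\alpha$ and $\beta$, one has $\alpha \prec \gamma \prec \beta$ or $\beta \prec \gamma \prec \alpha$. This is the property needed to run the edge-lexicographic shelling arguments of \cite[\S6]{BFP}.

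\emph{Part (1).} By \cite[\S6]{BFP}, the labeling of the edges of $\QBG$ by positive roots is an EL-labeling with respect to any reflection order; in particular, for any $u, v \in W$ there is a unique directed path from $u$ to $v$ whose sequence of edge labels is strictly increasing in $\prec$. The label-decreasing case is obtained by applying this to the opposite order $\prec^{\mathrm{op}}$, which is again a reflection order (it is the order associated with the reversed reduced word for $\lon$), since a $\prec$-decreasing path is exactly a $\prec^{\mathrm{op}}$-increasing path.

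\emph{Part (2).} The crucial local input is a rank-two straightening lemma: if $u \xrightarrow{\beta_1} x \xrightarrow{\beta_2} v$ is a length-two directed path in $\QBG$ with $\beta_1 \succ \beta_2$, then there is another length-two directed path $u \xrightarrow{\beta_1'} x' \xrightarrow{\beta_2'} v$ with $\{\beta_1', \beta_2'\}$ lexicographically smaller than $\{\beta_1, \beta_2\}$ in $\prec$, in particular with $\beta_1' \prec \beta_1$. This is proved in \cite[\S6]{BFP} by a case analysis according to whether the two edges are Bruhat or quantum, using the potential function $u \mapsto \ell(u)$ together with $2\pair{\rho}{\beta^\lor}$ for quantum edges, and the shelling of the dihedral (rank-two) root subsystems, where the reflection-order property of $\prec$ is invoked. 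Iterating this straightening starting from an arbitrary directed path from $u$ to $v$ terminates, and the terminal path is the unique label-increasing path of Part (1); moreover each straightening step does not increase, and in the presence of a quantum-to-Bruhat replacement strictly decreases, the length, so the increasing path has minimal length, i.e.\ it is a shortest path. Finally, the same straightening shows that the increasing path is lexicographically minimal among all shortest paths from $u$ to $v$: for any shortest path $u = u_0' \xrightarrow{\gamma_1'} \cdots \xrightarrow{\gamma_r'} u_r' = v$ that is not label-increasing, straightening produces a lexicographically smaller shortest path, so iterating we reach the increasing path, whence the increasing path's label sequence $(\gamma_1, \ldots, \gamma_r)$ satisfies: either it equals $(\gamma_1', \ldots, \gamma_r')$, or there is a first index $j$ with $\gamma_k = \gamma_k'$ for $k < j$ and $\gamma_j \prec \gamma_j'$. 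The label-decreasing, lexicographically maximal case again follows by passing to $\prec^{\mathrm{op}}$. The discrepancy between the right-handed $\QBG$ used here and the left-handed one of \cite{BFP} does not affect any of this, by \cite{Po}.
\end{proof}
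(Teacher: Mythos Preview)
The paper does not give its own proof of this proposition: it is stated as a direct citation of \cite[Theorem~6.4]{BFP} and no argument is supplied. Your proposal, by contrast, sketches the BFP shellability argument (reflection order, rank-two straightening, EL-labeling consequences), which is more than the paper does and is broadly faithful to the source.

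One small inaccuracy in your sketch: the rank-two straightening in \cite{BFP} replaces a non-increasing length-two path by another length-two path, so it preserves length; your phrase ``in the presence of a quantum-to-Bruhat replacement strictly decreases, the length'' is not how the shortest-path property is obtained. Rather, one shows that \emph{any} directed path from $u$ to $v$ can be straightened (by repeated local moves, each length-preserving on length-two pieces) to the unique increasing path, so the increasing path has length at most that of any path, hence is shortest. Aside from that, your outline is correct, and in any case the paper's ``proof'' is simply the citation.
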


For a subset $S \subset I$,
we set
$W_S \eqdef \langle s_i \ | \ i \in S \rangle$;
notice that $S$ may be an empty set $\emptyset$.
We denote the longest element of $W_S$ by $\lons$.
Also, we set $\Delta_S \eqdef Q_S \cap \Delta$,
where $Q_S \eqdef \sum_{i \in S} \mathbb{Z}\alpha_i$, 
and then $\Delta_S^+ \eqdef \Delta_S \cap \Delta^+ $, 
$\Delta_S^- \eqdef \Delta_S \cap \Delta^- $.
Let $W^S$ denote the set of all minimal-length coset representatives for the cosets in $W / W_S$.
For $w\in W$, we denote  the minimal-length coset representative of the coset $w W_S$ by 
$\lfloor w \rfloor$, and
for a subset $U \subset W$, we set $\lfloor U \rfloor \eqdef \{ \lfloor w \rfloor \ | \ w \in U \} \subset W^S$.
%
\begin{dfn}[{\cite[\S 4.3]{LNSSS1}}]\label{QBGS}
The parabolic quantum Bruhat graph, denoted by $\QBG^S$, is a directed graph with vertex set $W^S$, and directed edges
labeled by 
positive roots in
$ \Delta^+ \setminus \Delta^+_S$;
for $u,v \in W^S$, 
and $\beta \in \Delta^+ \setminus \Delta^+_S$,
an arrow $u \xrightarrow{\beta} v $ is an edge of $\QBG^S$
if the following hold{\rm :}
\begin{enu}
\item
$v=\lfloor u s_\beta \rfloor$, and 
\item
either
{\rm (2a):} 
$\ell(v)=\ell(u)+1$ or
{\rm (2b):} $\ell(v)=\ell(u) - 2\langle \rho - \rho_S , \beta^\lor \rangle +1$, 
\end{enu}
where $\rho_S =\frac{1}{2}\sum_{\alpha \in \Delta^+_S}\alpha$.
An edge satisfying {\rm (2a)} (resp., {\rm (2b)}) is called a Bruhat (resp., quantum) edge.
\end{dfn}

For an edge $u \xrightarrow{\beta} v $ in $\QBG^S$, we set
\begin{equation*}
\wt^S (u \rightarrow v)
\eqdef
\left\{
\begin{array}{ll}
      0 & \mbox{if} \ u \xrightarrow{\beta} v \mbox{ is a Bruhat edge}, \\
      \beta^{\lor} &  \mbox{if} \ u \xrightarrow{\beta} v \mbox{ is a quantum edge}.
\end{array}
\right.
\end{equation*}
Also, for $u, v \in W^S$, we take a shortest directed path
$\bp:u=x_0 \xrightarrow{\gamma_1} x_1 \xrightarrow{\gamma_2}\cdots \xrightarrow{\gamma_r} x_r=v$ in $\QBG^S$
(such a path always exists by \cite[Lemma 6.12]{LNSSS1}),
and set
\begin{equation*}
\wt^S (\bp) \eqdef \wt^S (x_0 \rightarrow x_1)+\cdots +\wt^S (x_{r-1} \rightarrow x_r) \in Q^\lor;
\end{equation*}
we know from \cite[Proppsition 8.1]{LNSSS1} that if $\bq$ is another shortest directed path from $u$ to $v$ in $\QBG^S$,
then $\wt^S(\bp) - \wt^S(\bq) \in Q_S^\lor \eqdef \sum_{i \in S} \BZ_{\geq 0} \alpha^\lor_i$. 

Now, we take and fix an arbitrary dominant weight
$\lambda \in P^+$,
and set 
\begin{equation*}
S = S_\lambda \eqdef \{ i \in I \ | \  \langle \lambda , \alpha^{\lor}_i \rangle =0 \}
.
\end{equation*}
By the remark just above,
for $u,v \in W^S$,
the value $\pair{\lambda}{\wt^S(\bp)}$ does not depend on the choice of a shortest directed path $\bp$
from $u$ to $v$ in $\QBG^S$;
this value is called the $\lambda$-weight of a directed path from $u$ to $v$ in $\QBG^S$.
Moreover,
we know from \cite[Lemma 7.2]{LNSSS2}
that the value $\pair{\lambda}{\wt^S(\bp)}$ is equal to the value 
$\wt_\lambda (x \Rightarrow y) =  \pair{\lambda}{\wt (x \Rightarrow y) }$ for all $x \in uW_S$ and $y \in vW_S$.
In view of this, for $u, v \in W^S$, we write $\wt_\lambda (u \Rightarrow v)$ also for the value $\pair{\lambda}{\wt^S(\bp)}$
by abuse of notation.

\begin{dfn}[{\cite[\S 3.2]{LNSSS2}}]
Let $\lambda \in P^+$ be a dominant weight and 
$\sigma \in \mathbb{Q}\cap [0,1]$,
and set $S=S_\lambda$.
We denote by
$\QBG_{\sigma \lambda}$ (resp., $\QBG^S_{\sigma \lambda}$ ) 
 the subgraph of $\QBG$ (resp., $\QBG^S$)
with the same vertex set but having only the edges:
$u \xrightarrow{\beta} v$ with $\sigma \langle \lambda, \beta^{\lor}  \rangle \in \mathbb{Z}$.
\end{dfn}

\begin{lem}[{\cite[Lemma 6.1]{LNSSS2}}]\label{8.1}
Let $\sigma \in \mathbb{Q}\cap [0,1]$; notice that $\sigma$ may be $1$.
If $u \xrightarrow{\beta} v$ is an edge of $\QBG_{\sigma \lambda}$,
then
there exists a  directed path from $\lfloor u \rfloor$ to $\lfloor v \rfloor$ in $\QBG^S_{\sigma \lambda}$.
%
\end{lem}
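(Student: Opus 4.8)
The plan is to distinguish two cases according to whether the edge label $\beta$ lies in $\Delta_S^+$. Suppose first that $\beta\in\Delta_S^+$. Since $S = S_\lambda$ and $\lambda = \sum_{i\in I}m_i\varpi_i$ with $m_i = 0$ for all $i\in S$, the parabolic subgroup $W_S$ fixes $\lambda$; in particular $s_\beta\lambda = \lambda$, which forces $\pair{\lambda}{\beta^\lor} = 0$, so the admissibility condition $\sigma\pair{\lambda}{\beta^\lor}\in\mathbb{Z}$ holds automatically. Moreover $v = us_\beta\in uW_S$, hence $\lfloor u\rfloor = \lfloor v\rfloor$, and the empty directed path from $\lfloor u\rfloor$ to $\lfloor v\rfloor$ already lies in $\QBG^S_{\sigma\lambda}$. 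Thus we may assume from now on that $\beta\in\Delta^+\setminus\Delta_S^+$.

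In this case, write $u = xz$ with $x = \lfloor u\rfloor\in W^S$ and $z\in W_S$. Then $v = us_\beta = xs_{z\beta}z$, so $\lfloor v\rfloor = \lfloor xs_{z\beta}\rfloor$, and the root $z\beta$ lies outside $\Delta_S$ with $\pair{\lambda}{(z\beta)^\lor} = \pair{z^{-1}\lambda}{\beta^\lor} = \pair{\lambda}{\beta^\lor}$, since $z^{-1}\in W_S$ fixes $\lambda$. The main tool is the projection machinery relating $\QBG$ and $\QBG^S$ developed in \cite{LNSSS1}: from the $\QBG$-edge $u\xrightarrow{\beta}v$ it produces a directed path $\bp\colon\lfloor u\rfloor = x_0\xrightarrow{\gamma_1}x_1\xrightarrow{\gamma_2}\cdots\xrightarrow{\gamma_r}x_r = \lfloor v\rfloor$ in $\QBG^S$. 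What has to be extracted from that construction --- by re-running it while keeping track of the labels --- is that $\bp$ can be chosen so that each $\gamma_k$ is $W_S$-conjugate to $\beta$ up to sign, equivalently $\pair{\lambda}{\gamma_k^\lor}\in\{\,0,\ \pair{\lambda}{\beta^\lor},\ -\pair{\lambda}{\beta^\lor}\,\}$ for every $k$ (the value $0$ being harmless, the nonzero values again by $W_S$-invariance of $\lambda$). Granting this, the lemma follows at once: the hypothesis that $u\xrightarrow{\beta}v$ is an edge of $\QBG_{\sigma\lambda}$ means $\sigma\pair{\lambda}{\beta^\lor}\in\mathbb{Z}$, whence $\sigma\pair{\lambda}{\gamma_k^\lor}\in\mathbb{Z}$ for all $k$, so $\bp$ is a path in $\QBG^S_{\sigma\lambda}$.

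The hard part is the label-control assertion of the preceding paragraph. The mere existence of some $\QBG^S$-path from $\lfloor u\rfloor$ to $\lfloor v\rfloor$ (e.g.\ via \cite[Lemma 6.12]{LNSSS1}) is not sufficient; one has to follow the inductive construction of the projected path in \cite{LNSSS1} --- peeling off the $W_S$-part $z$ and repeatedly applying the relevant ``diamond''/lifting lemmas for $\QBG$ and $\QBG^S$ --- and check that neither the Bruhat/quantum dichotomy nor the parabolic length conditions ever force an edge label whose pairing with $\lambda$ differs from $0$ and $\pm\pair{\lambda}{\beta^\lor}$; the delicate case is the one in which a single $\QBG$-edge has to be replaced by a genuine multi-step path in $\QBG^S$. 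Once this bookkeeping is carried out, the divisibility statement transfers through the $W_S$-invariance of $\lambda$ with no further work.
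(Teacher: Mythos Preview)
The paper itself does not prove this lemma; it simply cites \cite[Lemma~6.1]{LNSSS2}, so there is no in-paper argument to compare against.

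On the merits of your sketch: the case split on whether $\beta\in\Delta_S^+$ is correct, and the first case is fine. In the second case your instinct is right --- the projected label is a $W_S$-conjugate of $\pm\beta$, hence has the same pairing with $\lambda$ --- but you leave the key step undone and you overcomplicate the picture. Writing $u=\lfloor u\rfloor z$ with $z\in W_S$ gives $v=us_\beta=\lfloor u\rfloor s_{z\beta}z$, so $\lfloor v\rfloor=\lfloor\,\lfloor u\rfloor s_\gamma\rfloor$ with $\gamma$ the positive root among $\pm z\beta$. The content of the result you are citing from \cite{LNSSS1} is precisely that $\lfloor u\rfloor\xrightarrow{\gamma}\lfloor v\rfloor$ is then a \emph{single} edge of $\QBG^S$ (of the same Bruhat/quantum type); no multi-step replacement ever occurs, so your ``delicate case'' is empty. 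Once you know this, the integrality is immediate: $\gamma^\lor=\pm z\beta^\lor$ and $z\in W_S$ fixes $\lambda$, so $\pair{\lambda}{\gamma^\lor}=\pair{\lambda}{\beta^\lor}$ (both strictly positive since $\beta,\gamma\in\Delta^+\setminus\Delta_S^+$, so in fact only the value $\pair{\lambda}{\beta^\lor}$ occurs, not $0$ or its negative). The gap in your write-up is thus not a missing idea but a missing citation: you need the single-edge projection statement from \cite{LNSSS1}, after which there is no ``bookkeeping'' left to do.
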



Also, for $u,v\in W$, 
let $\ell(u\Rightarrow v)$ denote
the length of a shortest directed path in $\QBG$ from $u$ to $v$.
For $w\in W$,
as in \cite{BFP},
we define the $w$-tilted Bruhat order $\leq_{w}$ on $W$ as follows:
for $u,v \in W$,
	\begin{equation*}
		u \leq_{w} v
		\ardef
		\ell(w\Rightarrow v) =\ell(w\Rightarrow u)+\ell(u\Rightarrow v).
	\end{equation*}
We remark that the $w$-tilted Bruhat order on $W$ is a partial order
with the unique minimal element $w$.

\begin{lem}[{\cite[Theorem 7.1]{LNSSS1}}, {\cite[Lemma 6.5]{LNSSS2}}]\label{8.5}
Let $u, v\in W^S$, and $w \in W_S$.

\begin{enu}
\item
There exists a unique minimal element in the coset $v W_S$
in the $u w$-tilted Bruhat order $<_{u w}$.
We denote it by $\min(v W_S, <_{u w})$.

\item
There exists a unique  directed path from $u w$ to some $x \in v W_S$ 
in $\QBG$ whose edge labels are increasing 
in the 
total order $\prec$ on $\Delta^+$,
defined in $(\ref{reflectionorder})$,
and lie in $\Delta^+ \setminus \Delta^+_S$.
This path ends with $\min(v W_S, <_{u w})$.

\item
Let
$\sigma \in\mathbb{Q}\cap [0,1]$, 
and
$\lambda \in P$  a dominant weight.
If there exists a directed path from $u$ to $v$ in $\QBG_{\sigma \lambda}^S$,
then the directed path in $(2)$ is in  $\QBG_{\sigma \lambda}$.
\end{enu}
\end{lem}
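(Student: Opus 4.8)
The statement packages three facts about the parabolic quantum Bruhat graph that are already attributed to \cite{LNSSS1} and \cite{LNSSS2}; my plan is to assemble the proof from those sources, reducing each part to a shellability-type argument in the non-parabolic $\QBG$ and then descending to $\QBG^S$. First, for part (1), I would invoke the general theory of $w$-tilted Bruhat orders on $W$: since $uw \in W$, the order $\leq_{uw}$ is a partial order on $W$ with unique minimal element $uw$, and the key structural input is that on each coset $vW_S$ the restriction of $\leq_{uw}$ has a unique minimal element. The plan is to prove this by combining \cite[Theorem 6.4]{BFP} (Proposition \ref{shellability} above) with the compatibility between $\QBG$ and $\QBG^S$: take the unique $\prec$-label-increasing path in $\QBG$ from $uw$ to the appropriate representative, and argue that its endpoint does not depend on the choice of representative within $vW_S$ once we demand that all labels lie outside $\Delta^+_S$. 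This is exactly the content of \cite[Theorem 7.1]{LNSSS1}; I would cite it and then translate into the $\min(vW_S, <_{uw})$ notation.

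For part (2), the plan is to take the same $\prec$-label-increasing path from $uw$ supplied by Proposition \ref{shellability}(1), but restricted so that every edge label lies in $\Delta^+ \setminus \Delta^+_S$ — such a path exists and is unique by \cite[Theorem 7.1]{LNSSS1}, and its terminal vertex, by the argument in part (1), must be the minimal element $\min(vW_S, <_{uw})$. The point to check here is that a label-increasing path whose labels avoid $\Delta^+_S$ projects (under $\lfloor \cdot \rfloor$) to a directed path in $\QBG^S$, and that its endpoint lies in $vW_S$; this uses the edge-lifting compatibility between $\QBG$ and $\QBG^S$ together with the fact that the $\beta_k$ belonging to $\Delta_S^+$ form a contiguous block in the total order $\prec$ (by our choice of reduced expression passing through a reduced expression for $\lon$, though here one only needs the block structure relative to $W_S$).

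For part (3), the plan is to add the divisibility constraint $\sigma\langle\lambda,\beta^\vee\rangle\in\BZ$ to every edge and show it is preserved. The key observation is that if there is a directed path from $u$ to $v$ in $\QBG^S_{\sigma\lambda}$, then by Lemma \ref{8.1} (applied in reverse, lifting from $\QBG^S$ back up) together with the $\lambda$-weight invariance recorded just before this lemma, every edge on the $\prec$-label-increasing path from $uw$ to $\min(vW_S,<_{uw})$ automatically satisfies the congruence; I would cite \cite[Lemma 6.5]{LNSSS2} for the precise statement and extract the congruence-preservation from the proof there, whose mechanism is that the label-increasing path is a shortest path and shortest-path $\lambda$-weights are well-defined modulo $Q^\vee_S$, forcing the individual pairings to be integral once the total one is. The main obstacle, and the step I would spend the most care on, is this last point: verifying that "the total $\lambda$-weight lies in $\BZ$" propagates to "each edge label pairs integrally with $\sigma\lambda$" along the distinguished label-increasing path — this is not automatic for an arbitrary path, and it is precisely where the shellability/minimality of the chosen path (Proposition \ref{shellability}(2)) is essential, via the monotonicity of partial $\lambda$-weights along label-increasing paths established in \cite{LNSSS1}.
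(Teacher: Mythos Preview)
The paper does not give a proof of this lemma at all: it is stated with the attributions \cite[Theorem 7.1]{LNSSS1} and \cite[Lemma 6.5]{LNSSS2} and then simply used. So there is nothing in the paper to compare your proposal against; your plan to assemble the argument from those two references is exactly the same attitude the paper takes, only you are choosing to unpack the citations rather than treat them as a black box.

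That said, one part of your sketch is shaky on its own terms. For part (3) you describe the mechanism as ``the total $\lambda$-weight lies in $\BZ$ propagates to each edge label pairing integrally with $\sigma\lambda$,'' via some ``monotonicity of partial $\lambda$-weights along label-increasing paths.'' That is not how the argument in \cite{LNSSS2} goes, and as stated it does not make sense: integrality of a sum of pairings cannot force integrality of the summands, and there is no monotonicity statement of the kind you invoke. The actual mechanism in \cite[Lemma 6.5]{LNSSS2} is inductive on the length of the path in $\QBG^S_{\sigma\lambda}$, using the edge-lifting results of \cite[\S5]{LNSSS1} (which relate a single edge $u \to \mcr{us_\beta}$ in $\QBG^S$ to a specific chain in $\QBG$ with controlled labels) to transfer the congruence $\sigma\pair{\lambda}{\beta^\vee}\in\BZ$ edge by edge from the parabolic path to the lifted non-parabolic one. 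If you want to actually write out a proof rather than cite, that is the step you would need to get right.
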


\subsection{Orr-Shimozono formula}
In this subsection,
we review a formula \cite[Proposition 5.4]{OS} for the specialization of nonsymmetric Macdonald polynomials at $t= \infty$.

Let
$\widetilde{ \mathfrak{g}}$ denote the  finite-dimensional simple Lie algebra
whose root datum is dual to that of $\mathfrak{g}$; the set of simple roots is
$\{ {\alpha}^{\lor}_i \}_{i \in I } \subset \mathfrak{h}$,
and the set of simple coroots is
$\{ \alpha_i \}_{i \in I } \subset \mathfrak{h}^*$;
We denote the set of all roots of $\widetilde{ \mathfrak{g}}$ by $\widetilde{\Delta} = \{ \alpha^\lor \ | \ \alpha \in \Delta \}$,
and 
the set of all positive (resp., negative) roots of $\widetilde{ \mathfrak{g}}$ by
$\widetilde{\Delta}^{+}$ (resp., $\widetilde{\Delta}^{-}$).
Also, for 
a subset $S \subset I$,
we set
$\widetilde{Q}_S \eqdef \sum_{i \in S}\mathbb{Z}\alpha^\lor_i$,
$\widetilde{\Delta}_S \eqdef \widetilde{\Delta} \cap \widetilde{Q}_S$,
$\widetilde{\Delta}^+_S = \widetilde{\Delta}_S \cap \widetilde{\Delta}^{+}$,
and
$\widetilde{\Delta}^-_S = \widetilde{\Delta}_S \cap \widetilde{\Delta}^{-}$.

We consider the untwisted affinization of the root datum of $\widetilde{ \mathfrak{g}}$.
Let us denote
by $\trr$ 
the set of all real roots,
and 
by $\ptrr$
(resp., $\ntrr$)
the set of all positive (resp., negative) real roots.
Then we have
$\trr=
 \{ \alpha^\lor +a \widetilde{\delta} \ | \ \alpha \in \Delta , a \in \mathbb{Z} \}$,
with  $\widetilde{\delta}$ the null root.
We set
$\alpha^{\lor}_0 \eqdef \widetilde{\delta} - \varphi^{\lor} $,
where $\varphi \in \Delta$ denotes the highest short root, 
and set
$ I_{\aff}\eqdef I\sqcup \{0 \}$.
Then, $\{ \alpha^{\lor}_i \}_{i\in  I_{\aff}}$ is the set of all simple roots.
Also, for 
$\beta \in \mathfrak{h}\oplus \mathbb{C}\widetilde{\delta}$,
we define ${\dg}(\beta) \in \mathbb{C} $ and 
$\overline{\beta} \in \mathfrak{h}$
by
\begin{equation}\label{eq:dfn_deg}
\beta = \overline{\beta} + {\dg}(\beta) \widetilde{\delta}.
\end{equation}

We denote the Weyl group of $\widetilde{ \mathfrak{g}}$ by $\widetilde{W}$;
we identify $\widetilde{W}$ and $W$
through the identification of the simple reflections of the same index $I$.
%
For $\nu \in \mathfrak{h}^*$,
let $t(\nu)$ denote the translation in $\mathfrak{h}^*$: $ t(\nu) \gamma = \gamma + \nu$ for $\gamma \in \mathfrak{h}^*$.
The corresponding affine Weyl group and the extended affine Weyl group
are defined by
$\widetilde{W}_{\aff}\eqdef t(Q) \rtimes W $ and
$\widetilde{W}_{\ext} \eqdef t(P) \rtimes W $,
respectively.
Also, we define $s_0 : \mathfrak{h}^* \rightarrow \mathfrak{h}^*$ by $\nu \mapsto \nu -( \langle \nu, \varphi^\lor  \rangle -1)\varphi $.
Then, $\widetilde{W}_{\aff}=\langle s_i \ | \ i \in I_{\aff}\rangle$;
note that $s_0 = t(\varphi) s_\varphi $.
The extended affine Weyl group $\widetilde{W}_{\ext}$ acts on
$\mathfrak{h}\oplus \mathbb{C}\widetilde{\delta}$
as linear transformations,
and on 
$\mathfrak{h}^*$ 
as affine transformations:
for $v\in W$, $t(\nu) \in t(P)$,
\begin{eqnarray*}
v t(\nu)( \overline{\beta}+r\widetilde{\delta} )=v\overline{\beta}+(r-\langle \nu, \overline{\beta} \rangle ) \widetilde{\delta},
&
\overline{\beta} \in  \mathfrak{h} , r \in \mathbb{C},
\\
v t(\nu) \gamma = v \nu +v \gamma,
&
\gamma \in \mathfrak{h}^* .
\end{eqnarray*}

An element $u \in \widetilde{W}_{\ext} $ can be written as 
\begin{equation}\label{eq:dfn_wt}
u=t({{\wt}(u)}) \dir (u),
\end{equation}
where ${\wt}(u) \in P$ and $ {\dir}(u) \in W$,
according to the decomposition
$\widetilde{W}_{\ext} = t(P) \rtimes W $.
For $w \in \widetilde{W}_{\ext}$,
we denote the length of $w$ by 
$\ell (w) $,
which equals
$\#
\left(\ptrr
\cap
w^{-1}\ntrr \right)
$.
Also, we set 
$\Omega 
\eqdef
\{ w \in \widetilde{W}_{\ext}  \ | \ \ell(w)=0 \}$.

For $\mu \in P$, 
we denote the shortest element in the coset	$t(\mu)W$ by $m_{\mu} \in \widetilde{W}_{\ext}$.
In the following, we fix $\mu \in P$,
and take a reduced expression $m_{\mu} = u s_{\ell_{1}}\cdots s_{\ell_{L}}
\in \widetilde{W}_{\ext} = \Omega \ltimes \widetilde{W}_{\aff}$,
where $u \in \Omega$ and $ \ell_1 , \ldots , \ell_L \in  I_{\aff}$.

For each $J = \{ j_{1} < j_{2} < j_{3} < \cdots < j_{r} \} \subset \{1,\ldots,L\}$,
we define an alcove path $p^{\OS}_{J} =
			\left( m_{\mu} = z^{\OS}_0, z^{\OS}_{1} , \ldots , z^{\OS}_{r} ; \beta^{\OS}_{j_1} , \ldots , \beta^{\OS}_{j_r} \right)$ as follows: 
we set
$\beta^{\OS}_{k} \eqdef s_{\ell_{L}}\cdots s_{\ell_{k+1}} \alpha^{\lor}_{\ell_{k}} \in \ptrr$ 
for $1 \leq k \leq L$, and set
	\begin{eqnarray*}
		z^{\OS}_{0}&=&m_{\mu} ,\\
		z^{\OS}_{1}&=&m_{\mu}s_{\beta^{\OS}_{j_{1}}},\\
		z^{\OS}_{2}&=&m_{\mu}s_{\beta^{\OS}_{j_{1}}}s_{\beta^{\OS}_{j_2}},\\
				&\vdots& \\
		z^{\OS}_{r}&=&m_{\mu}s_{\beta^{\OS}_{j_{1}}}\cdots s_{\beta^{\OS}_{j_r}}.
	\end{eqnarray*}
Also, following \cite[\S 3.3]{OS}, 
we set
	$\B ({\id};m_{\mu})
	\eqdef
	\left\{ p^{\OS}_{J} \ \left| \ J \subset \{ 1,\ldots ,L \}  \right. \right\}$
and
	$\ed (p^{\OS}_{J}) \eqdef z^{\OS}_{r}\in \widetilde{W}_{\ext}$.
Then we define
	${\overleftarrow{\QB}}({\id}; m_{\mu})$
to be the following subset of $\B ({\id};m_{\mu})$:
	\begin{equation*}
		\left\{
			 p^{\OS}_{J} \in \B ({\id} ; m_{\mu} ) \ 
			\left|  \ 
				\dir(z^{\OS}_{i}) 
				\xleftarrow{-\left(\overline{ {\beta}^{\OS}_{j_{i+1}}  } \right)^{\lor}}
				\dir (z^{\OS}_{i+1}) \ 
				\text{ is an edge of }\QBG, \ 0\leq  i \leq r-1
			\right. 
		\right\}.
	\end{equation*}

	\begin{rem}[\normalfont{\cite[(2.4.7)]{M}}]
		If $j \in \{ 1, \ldots, L \}$,
		then $-\left(\overline{ {\beta}^{\OS}_{j}  } \right)^{\lor} \in {\Delta}^{+}$.
	\end{rem}

	For $p^{\OS}_{J} \in {\overleftarrow{\QB}}({\id} ; m_{\mu})$, we define ${\qwt}^{*}(p^{\OS}_{J})$ as follows.
Let $J^+ \subset J$ denote the set of all indices $j_i \in J$  such that 
	$\dir(z_{i-1}^{\OS}) 
	\xleftarrow{-\left(\overline{ {\beta}^{\OS}_{j_i}  } \right)^{\lor}} 
	\dir (z_{i}^{\OS})$ is a quantum edge.
Then we set
	\begin{equation*}
		{\qwt}^{*}(p^{\OS}_{J}) 
		\eqdef
		\sum_{j \in J^{+}} \beta^{\OS}_{j}.
	\end{equation*}

For $\mu \in P$, let $E_{\mu}(q,t)$ denote the nonsymmetric Macdonald polynomial, 
and set
%
$E_{\mu}(q,\infty) \eqdef \lim_{t \to \infty}E_{\mu}(q,t)$;
this specialization is studied in \cite{CO} in the dual untwisted cases.

We know the following formula for the specialization $E_\mu(q, \infty)$ at $t=\infty$.

\begin{prop}[{\cite[Proposition 5.4]{OS}}]\label{os}
Let $\mu \in P$. Then,
	\begin{equation*}
		E_{\mu}(q,\infty)=
		\sum_{p^{\OS}_{J} \in {\overleftarrow{\QB}}({\id} ; m_{\mu}) } 
		q^{-{\dg}({\qwt}^{*}(p^{\OS}_{J}))}e^{\wt(\ed (p^{\OS}_{J}))}.
	\end{equation*}
\end{prop}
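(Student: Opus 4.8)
The final statement to be proved is Proposition~\ref{os}, the Orr–Shimozono formula for $E_{\mu}(q,\infty)$. Since this is quoted as \cite[Proposition 5.4]{OS}, the ``proof'' in this paper will presumably be a pointer to the literature, but let me sketch how one would actually establish it.

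\textbf{The plan} is to derive Proposition~\ref{os} as a specialization at $t=\infty$ of a more general alcove-walk formula for $E_{\mu}(q,t)$, following the strategy of Orr–Shimozono, which itself refines the Ram–Yip formula. First I would recall the Ram–Yip / Orr–Shimozono expansion: fixing the reduced expression $m_{\mu} = u s_{\ell_1}\cdots s_{\ell_L}$ in $\widetilde W_{\ext} = \Omega \ltimes \widetilde W_{\aff}$, one sums over all subsets $J \subset \{1,\dots,L\}$ (equivalently, over all alcove walks $p^{\OS}_J$ built from this word by the ``fold-or-not'' rule), assigning to each $p^{\OS}_J$ a monomial in $q,t$ times $e^{\wt(\ed(p^{\OS}_J))}$. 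The weight $\wt(\ed(p^{\OS}_J)) \in P$ is read off from the endpoint $z^{\OS}_r = m_\mu s_{\beta^{\OS}_{j_1}}\cdots s_{\beta^{\OS}_{j_r}}$ via the decomposition \eqref{eq:dfn_wt}. The $q,t$-coefficient is a product over the folding positions and the non-folding positions, with each factor controlled by whether the step is a ``positive'' or ``negative'' crossing/fold, which in turn is detected by the sign condition on $-\bigl(\overline{\beta^{\OS}_j}\bigr)^\lor$ relative to $\dir(z^{\OS}_i)$.

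\textbf{Key steps.} (1) State the full $(q,t)$ Ram–Yip/Orr–Shimozono formula and identify, for each $j_i \in J$, the local factor; the ``quantum'' versus ``Bruhat'' dichotomy in $\QBG$ corresponds exactly to whether the finite-Weyl-group step $\dir(z^{\OS}_i) \xleftarrow{-(\overline{\beta^{\OS}_{j_{i}}})^\lor} \dir(z^{\OS}_{i+1})$ is length-increasing or drops length. (2) Take the limit $t \to \infty$. A folded step contributes a factor whose leading behavior in $t$ is $O(1)$ precisely when the associated edge is an edge of $\QBG$ (Bruhat or quantum), and vanishes otherwise — this is why the sum collapses from all of $\B(\id;m_\mu)$ down to ${\overleftarrow{\QB}}(\id;m_\mu)$. (3) Track the surviving $q$-power: each non-folded step contributes nothing, each Bruhat fold contributes nothing, and each quantum fold at position $j$ contributes a factor of $q$ raised to $-\dg(\beta^{\OS}_j)$, since the ``height'' of the affine root $\beta^{\OS}_j = \overline{\beta^{\OS}_j} + \dg(\beta^{\OS}_j)\widetilde\delta$ along the $\widetilde\delta$-direction is exactly what records the level/energy shift. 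Summing these over $j \in J^+$ gives $q^{-\dg(\qwt^*(p^{\OS}_J))}$ by the definition of $\qwt^*$. (4) Assemble: $E_\mu(q,\infty) = \sum_{p^{\OS}_J \in {\overleftarrow{\QB}}(\id;m_\mu)} q^{-\dg(\qwt^*(p^{\OS}_J))} e^{\wt(\ed(p^{\OS}_J))}$.

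\textbf{The main obstacle} will be Step~(2)–(3): carefully tracking the $t$-degree and $q$-degree of each local factor in the Ram–Yip formula under the $t\to\infty$ specialization, keeping the bookkeeping of signs ($\pm$ crossings, $\pm$ folds) consistent with the orientation conventions for $\QBG$ used here (note the paper works with the ``right-handed'' $\QBG$ and with the \emph{dual} root datum $\widetilde{\mathfrak g}$, so the simple roots of the relevant affinization are the coroots $\alpha^\lor_i$ and the null root is $\widetilde\delta$). One must verify that the condition ``$\dir(z^{\OS}_i) \xleftarrow{-(\overline{\beta^{\OS}_{j_{i+1}}})^\lor} \dir(z^{\OS}_{i+1})$ is an edge of $\QBG$'' is exactly the non-vanishing condition after specialization, and that the exponent of $q$ is $-\dg(\qwt^*)$ rather than, say, $+\dg$ or a shifted variant — this is precisely the kind of normalization subtlety that distinguishes the Orr–Shimozono refinement from the original Ram–Yip formula. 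Since all of this is carried out in \cite{OS}, in this paper it suffices to cite \cite[Proposition 5.4]{OS} directly.
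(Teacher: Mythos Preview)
Your proposal is correct: the paper does not prove this proposition at all but simply cites \cite[Proposition 5.4]{OS}, exactly as you anticipated. The sketch you give of the underlying argument (specializing the Ram--Yip/Orr--Shimozono alcove-walk formula at $t\to\infty$, so that only folds corresponding to $\QBG$ edges survive and quantum folds pick up the $q$-power $-\dg(\beta^{\OS}_j)$) is the right outline of what happens in \cite{OS}, though none of it appears in the present paper.
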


\section{Orr-Shimozono formula in terms of $\QLS$ paths}

\subsection{Weak reflection orders}
Let $\lambda \in P^+$ be a dominant weight, $\mu \in W\lambda$,
and set $S \eqdef S_\lambda = \{ i \in I \ | \ \langle \lambda , \alpha^\lor_i \rangle =0 \}$.
We denote by $v(\mu) \in W^S$ the minimal-length coset representative for the coset $\{ w \in W  \ | \ w\lambda=\mu\}$ in $W / W_S$.
We have $\ell(v(\mu)w)=\ell(v(\mu))+ \ell(w)$ for all $w \in W_S$.
In particular, we have $\ell(v(\mu)\lons )=\ell(v(\mu))+ \ell(\lons)$.
When $\mu=\lambda_- \eqdef \lon \lambda$, it is clear that
$\lon \in \{ w \in W \ | \ w\lambda = \lambda_- \}$.
Since $\lon$ is the longest element of $W$, we have 
\begin{equation}\label{eq:red_longest}
\lon = v(\lambda_-) \lons,
\end{equation}
and $\ell(v(\lambda_-)\lons )=\ell(v(\lambda_-))+ \ell(\lons)$;
note that $v(\lambda_-) = \lon \lons = \lfloor \lon \rfloor$.
The following lemma follows from \cite[Chap. 2]{M}.
\begin{lem}\label{vmu}
\mbox{}
\begin{enu}
\item
$\dir(m_\mu) = v(\mu)v(\lambda_-)^{-1} $
and
$\ell(\dir(m_\mu)) + \ell( v(\mu) ) =\ell (v(\lambda_-))${\rm;}
hence
\begin{equation}\label{A}
m_\mu 
=
t(\mu)
v(\mu)
v(\lambda_-)^{-1}.
\end{equation}

\item
$v(\mu)v(\lambda_-)^{-1}\lon= v(\mu)\lons$.

\item
$\left( v(\lambda_-)v(\mu)^{-1} \right) m_\mu =m_{\lambda_-} $,
and $\ell(  v(\lambda_-)v(\mu)^{-1} )+ \ell ( m_\mu ) = \ell ( m_{\lambda_-} ) $.

\item
$\ell(v(\lambda_-)v(\mu)^{-1}) + \ell(v(\mu)) = \ell(v(\lambda_-))$.
\end{enu}
\end{lem}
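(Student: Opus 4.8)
\textbf{Proof proposal for Lemma \ref{vmu}.}

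The plan is to derive all four statements from the combinatorics of minimal-length translation elements in the extended affine Weyl group, essentially by unwinding the definitions in Macdonald's book \cite[Chap.~2]{M}. First I would recall the basic fact that $m_\mu$, the shortest element in the coset $t(\mu)W$, is characterized by $m_\mu = t(\mu)\dir(m_\mu)$ together with the length-additivity property that governs how $m_\mu$ sits among $\{t(\mu)w : w\in W\}$. The key input is that, writing $\mu = w\lambda$ with $w$ of minimal length (so $w = v(\mu)$), the element $\dir(m_\mu)$ is exactly the ``rotation part'' needed to bring the dominant chamber representative into the alcove containing $\mu$; concretely, since $m_{\lambda_-}$ with $\lambda_- = \lon\lambda$ antidominant has $\dir(m_{\lambda_-}) = \lon\lons$ (equivalently, using \eqref{eq:red_longest}, $\dir(m_{\lambda_-}) = v(\lambda_-)$, because there $v(\lambda_-) = \lfloor\lon\rfloor$ is already the minimal coset representative sending $\lambda$ to $\lambda_-$), one expects $m_\mu = t(\mu)\,v(\mu)v(\lambda_-)^{-1}$. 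This is the content of (1), and I would prove the length statement $\ell(\dir(m_\mu)) + \ell(v(\mu)) = \ell(v(\lambda_-))$ by checking that the reduced factorization $v(\lambda_-) = \dir(m_\mu)\cdot v(\mu)$ holds (i.e. the product is length-additive), which in turn follows from the known description of $\ell(m_\mu)$ in terms of $\mu$ and from the fact that left-multiplying $m_\mu$ by $v(\lambda_-)v(\mu)^{-1}$ must increase length all the way up to $m_{\lambda_-}$ (the longest-weight translation in the $W$-orbit).

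For (2), I would compute directly: $v(\mu)v(\lambda_-)^{-1}\lon = v(\mu)\bigl(v(\lambda_-)^{-1}\lon\bigr)$, and since $\lon = v(\lambda_-)\lons$ by \eqref{eq:red_longest}, we get $v(\lambda_-)^{-1}\lon = \lons$, hence the left side equals $v(\mu)\lons$. This is purely formal once \eqref{eq:red_longest} is in hand. For (3), the identity $\bigl(v(\lambda_-)v(\mu)^{-1}\bigr)m_\mu = m_{\lambda_-}$ is immediate from the formula \eqref{A} in (1): $v(\lambda_-)v(\mu)^{-1}\cdot t(\mu)v(\mu)v(\lambda_-)^{-1}$; using that conjugation $w\,t(\nu)\,w^{-1} = t(w\nu)$ one rewrites the left side as $t(v(\lambda_-)v(\mu)^{-1}\mu)\cdot\bigl(v(\lambda_-)v(\mu)^{-1}\bigr)\cdot v(\mu)v(\lambda_-)^{-1}$; since $v(\mu)^{-1}\mu$ need not be $\lambda$ literally I would instead argue at the level of $t(\mu)W$-cosets: left multiplication by the finite-Weyl-group element $v(\lambda_-)v(\mu)^{-1}$ sends the coset $t(\mu)W$ to $t(v(\lambda_-)v(\mu)^{-1}\mu)W = t(\lambda_-)W$, and one checks $v(\lambda_-)v(\mu)^{-1}\mu = \lambda_-$ because $v(\mu)^{-1}\mu$ lies in $W_S\lambda = \{\lambda\}$ as $S = S_\lambda$ stabilizes $\lambda$, so $v(\lambda_-)v(\mu)^{-1}\mu = v(\lambda_-)\lambda = \lambda_-$. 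That it is the shortest element of the target coset, with the stated length-additivity, then follows from the length count in (1) combined with $\ell(v(\lambda_-)v(\mu)^{-1}) = \ell(v(\lambda_-)) - \ell(v(\mu))$, which is (4).

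For (4), the equality $\ell(v(\lambda_-)v(\mu)^{-1}) + \ell(v(\mu)) = \ell(v(\lambda_-))$ is the assertion that $v(\lambda_-) = \bigl(v(\lambda_-)v(\mu)^{-1}\bigr)\cdot v(\mu)$ is a length-additive product; equivalently $v(\mu) \le v(\lambda_-)$ in Bruhat order in a strong, ``right-descent-compatible'' sense. I would deduce this from (1): there we obtained $\ell(\dir(m_\mu)) + \ell(v(\mu)) = \ell(v(\lambda_-))$ with $\dir(m_\mu) = v(\mu)v(\lambda_-)^{-1}$, so $\ell(v(\mu)v(\lambda_-)^{-1}) + \ell(v(\mu)) = \ell(v(\lambda_-))$; applying the length-preserving anti-automorphism $w\mapsto w^{-1}$ to the first term and using $\ell(v(\mu)v(\lambda_-)^{-1}) = \ell(v(\lambda_-)v(\mu)^{-1})$ gives (4) at once. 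So in fact (1) $\Rightarrow$ (4) $\Rightarrow$ (3), and (2) is independent and elementary.

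The main obstacle I anticipate is establishing the length-additive factorization $v(\lambda_-) = \dir(m_\mu)\cdot v(\mu)$ underlying (1) — everything else is formal bookkeeping from it. The cleanest route is to verify it via inversion sets: show $\Delta^+ \cap \dir(m_\mu)^{-1}\Delta^-$ and $\Delta^+\cap v(\mu)^{-1}\Delta^-$ are disjoint and their union is $\Delta^+\cap v(\lambda_-)^{-1}\Delta^-$, using the known formula for the inversion set (equivalently the length) of the shortest translation element $m_\mu$ in terms of $\mu$ and the observation that $v(\lambda_-)$ is the minimal representative of $\lfloor\lon\rfloor$. Alternatively one can cite the relevant passage of \cite[Chap.~2]{M} directly, as the statement is essentially Macdonald's description of $m_\mu$; I would present the inversion-set verification as the substantive step and treat the rest as immediate consequences.
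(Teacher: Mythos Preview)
Your overall strategy---establish (1) from Macdonald's description of $m_{\mu}$ and then deduce (2), (3), (4) formally---matches the paper, which simply records that the lemma follows from \cite[Chap.~2]{M} without further argument. Your derivations of (2) from \eqref{eq:red_longest} and of (4) from the length equality in (1) via $\ell(w)=\ell(w^{-1})$ are correct.

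There are, however, two concrete slips in your sketch of (1), and a circularity you should avoid. First, you assert $\dir(m_{\lambda_-}) = \lon\lons = v(\lambda_-)$; this is false. Since $\lambda_-$ is antidominant one has $m_{\lambda_-} = t(\lambda_-)$, so $\dir(m_{\lambda_-}) = e$---which is exactly what the formula $\dir(m_\mu)=v(\mu)v(\lambda_-)^{-1}$ predicts at $\mu=\lambda_-$. Second, the ``reduced factorization $v(\lambda_-) = \dir(m_\mu)\cdot v(\mu)$'' is not an identity: with $\dir(m_\mu)=v(\mu)v(\lambda_-)^{-1}$ the right side is $v(\mu)v(\lambda_-)^{-1}v(\mu)$. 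The correct length-additive factorization underlying the length statement in (1) is $v(\lambda_-) = \bigl(v(\lambda_-)v(\mu)^{-1}\bigr)\cdot v(\mu) = \dir(m_\mu)^{-1}\cdot v(\mu)$. Third, your first justification of that length statement invokes ``left-multiplying $m_\mu$ by $v(\lambda_-)v(\mu)^{-1}$ must increase length all the way up to $m_{\lambda_-}$,'' which is precisely the length part of (3) you later claim to derive from (1) and (4); moreover (1) and (4) alone do not give $\ell(m_{\lambda_-})-\ell(m_\mu)$ without an extra input. The inversion-set route you propose at the end, or the standard fact from \cite{M} that $m_\mu = v(\mu)\cdot m_\lambda$ with $\ell(m_\mu)=\ell(v(\mu))+\ell(m_\lambda)$, cleanly yields (1) and the length part of (3) simultaneously, after which everything else follows as you say.
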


In this subsection, we give a particular reduced expression for $m_{\lambda_-}$ ($= t(\lambda_-)$ by \eqref{A}),
and then study some of its properties.

First of all, we recall the notion of a weak reflection order on $\Delta^+$.
\begin{dfn}\label{311}
A total order $\prec$ on $\Delta^+$ is called a (dual) weak reflection order  on $\Delta^+$ if it satisfies
the following condition:
if $\alpha ,\beta ,\gamma \in \Delta^+$ with $\gamma^\lor = \alpha^\lor + \beta^\lor$,
then $\alpha \prec \gamma \prec \beta$ or $\beta \prec \gamma \prec \alpha$.
\end{dfn}

The following result is well-known; see \cite[Theorem on page 662]{papi} for example.
\begin{prop}\label{reford}
For a total order $\prec$ on $\Delta^+$, the following are equivalent{\rm;}
\begin{enu}
\item
the order $\prec$ is a weak reflection order{\rm;}

\item
there exists a {\rm(}unique{\rm)} reduced expression $\lon =s_{i_1}\cdots s_{i_N}$
such that $s_{i_N}\cdots s_{i_{k+1}} \alpha_{i_k} \prec s_{i_N}\cdots s_{i_{j+1}} \alpha_{i_j}$ for $ 1 \leq k<j \leq N$.
\end{enu}
\end{prop}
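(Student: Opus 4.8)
The plan is to identify weak reflection orders on $\Delta^+$ with the total orders on $\Delta^+$ arising from reduced expressions of $\lon$, via the classical dictionary between such orders and nested chains of inversion sets; the point of phrasing everything in terms of coroots is that the biconvexity hypothesis needed is then exactly the condition in Definition~\ref{311}. I will use two elementary facts. First, for any $w\in W$, both the inversion set $N(w):=\{\alpha\in\Delta^+ : w\alpha\in\Delta^-\}$ and its complement $\Delta^+\setminus N(w)$ are closed under coroot addition, i.e.\ if $\alpha,\beta$ lie in one of them and $\gamma\in\Delta^+$ satisfies $\gamma^\lor=\alpha^\lor+\beta^\lor$, then $\gamma$ lies there too; this is immediate since $w\gamma^\lor=(w\alpha)^\lor+(w\beta)^\lor$ is a sum of two coroots of the same sign, hence a coroot of that sign. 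Second, $w\mapsto N(w)$ is injective, and $N(x)\subseteq N(w)$ together with $\ell(w)=\ell(x)+1$ forces $w=s_jx$ for a (unique) $j\in I$. The only non-elementary input is the biconvexity theorem: in a finite root system a subset $T\subseteq\Delta^+$ equals $N(w)$ for some $w$ as soon as both $T$ and $\Delta^+\setminus T$ are closed under addition; applied to the dual root system $\widetilde\Delta$ — for which $\alpha\mapsto\alpha^\lor$ is a $W$-equivariant bijection $\Delta^+\to\widetilde\Delta^+$ with $N_{\widetilde\Delta}(w)=N_\Delta(w)^\lor$ — it says: $T$ is an inversion set of $W$ iff $T$ and $\Delta^+\setminus T$ are closed under coroot addition. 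I would simply quote this (it is a standard fact, contained e.g.\ in \cite{papi}).

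For $(2)\Rightarrow(1)$ I would argue directly: given a reduced expression $\lon=s_{i_1}\cdots s_{i_N}$, the roots $\beta_k=s_{i_N}\cdots s_{i_{k+1}}\alpha_{i_k}$ exhaust $\Delta^+$ without repetition, and a short computation shows $\{\beta_{m+1},\dots,\beta_N\}=N(s_{i_{m+1}}\cdots s_{i_N})$ for every $m$, so $\{\beta_1,\dots,\beta_m\}$ is the complement of an inversion set. Now take $\alpha,\beta,\gamma\in\Delta^+$ with $\gamma^\lor=\alpha^\lor+\beta^\lor$; say $\alpha=\beta_j$, $\beta=\beta_k$ with $j<k$, and $\gamma=\beta_m$. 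If $m<j$, then $\alpha,\beta$ both lie in $N(s_{i_{m+1}}\cdots s_{i_N})$, so by the first fact $\gamma$ does too, contradicting $\gamma=\beta_m$; if $m>k$, then $\alpha,\beta$ both lie in the complement $\{\beta_1,\dots,\beta_k\}$, so again $\gamma$ does too, a contradiction. Since $\gamma\ne\alpha,\beta$ (as $\alpha^\lor,\beta^\lor\ne0$), we get $\alpha\prec\gamma\prec\beta$, i.e.\ $\prec$ is a weak reflection order.

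For $(1)\Rightarrow(2)$: let $\prec$ be a weak reflection order, enumerate $\Delta^+=\{\beta_1\prec\cdots\prec\beta_N\}$, and set $R_l:=\{\beta_{N-l+1},\dots,\beta_N\}$. The defining property of $\prec$ says that any $\gamma$ with $\gamma^\lor=\alpha^\lor+\beta^\lor$ lies strictly between $\alpha$ and $\beta$ in $\prec$, and from this one sees at once that each $R_l$ and each $\Delta^+\setminus R_l$ is closed under coroot addition. By the biconvexity theorem, $R_l=N(w_l)$ with $w_l$ unique, $\ell(w_l)=l$, $w_0=e$, $w_N=\lon$; since $N(w_{l-1})\subset N(w_l)$ with lengths differing by one, the second fact gives $w_l=s_{j_l}w_{l-1}$, so $\lon=s_{j_N}s_{j_{N-1}}\cdots s_{j_1}$ is a reduced expression. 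Writing $i_k:=j_{N+1-k}$, so that $\lon=s_{i_1}\cdots s_{i_N}$ and $w_l=s_{i_{N-l+1}}\cdots s_{i_N}$, and comparing with the identity of the previous paragraph, one finds that the $m$-th initial segment of the inversion sequence of $s_{i_1}\cdots s_{i_N}$ is $\Delta^+\setminus N(w_{N-m})=\{\beta_1,\dots,\beta_m\}$ for every $m$; hence that inversion sequence is exactly $\beta_1\prec\cdots\prec\beta_N$, which is condition~(2). Uniqueness is then automatic: $\prec$ determines every initial segment $\{\beta_1,\dots,\beta_m\}$, hence (by injectivity of $N$) every $w_{N-m}$, hence the word $(i_1,\dots,i_N)$ by peeling off simple reflections one at a time.

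The main thing to get right is the index bookkeeping: one must build the nested chain of inversion sets from the $\prec$-large end (the sets $R_l$) rather than the $\prec$-small end, so that the resulting reduced word realizes the sequence $(\beta_k)$ in the exact normalization $\beta_k=s_{i_N}\cdots s_{i_{k+1}}\alpha_{i_k}$ with labels increasing, not decreasing, along $\prec$. Beyond that there is no real obstacle — the only substantive ingredient is the biconvexity theorem, which holds in finite type and can be quoted; in fact, since Proposition~\ref{reford} is precisely that classical characterization transported through $\alpha\leftrightarrow\alpha^\lor$, one may alternatively just invoke \cite[Theorem on page 662]{papi} applied to $\widetilde{\mathfrak g}$.
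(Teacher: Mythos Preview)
Your argument is correct, but note that the paper does not actually prove this proposition: it simply states that the result is well-known and cites \cite[Theorem on page 662]{papi}. Your write-up goes further by unpacking that reference---you explicitly translate Papi's biconvexity characterization through the duality $\alpha\leftrightarrow\alpha^\lor$ and carry out both directions in detail---which is exactly what one would do if asked to supply a proof, and you correctly observe at the end that one may alternatively just invoke Papi's theorem applied to $\widetilde{\mathfrak{g}}$. So your approach and the paper's are the same in spirit (both rest on \cite{papi}); you have simply made explicit what the paper leaves as a citation.
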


Next, we recall from \cite[pages 661--662]{papi} the notion and some properties of a weak reflection order on a finite
subset of $\ptrr$;
we remark that arguments in \cite{papi} work in the general setting of Kac-Moody algebras.

\begin{dfn}
Let $T$ be a finite subset of $\ptrr$,
and $\prec'$ a total order on $T$.
We say that the order $\prec'$ is  a weak reflection order on $T$ 
if it satisfies the following conditions:

\begin{enu}
\item
if $\theta_1, \theta_2 \in T$ satisfy $\theta_1 \prec' \theta_2$ and $\theta_1 +\theta_2 \in \ptrr$,
then $\theta_1 +\theta_2 \in T$ and $\theta_1 \prec' \theta_1 +\theta_2 \prec' \theta_2 $;

\item
if  $\theta_1 , \theta_2 \in \ptrr$ satisfy $\theta_1 +\theta_2 \in T$, 
then $\theta_1 \in T$ and $\theta_1 +\theta_2 \prec' \theta_1$,
or
$\theta_2 \in T$ and $\theta_1 +\theta_2 \prec' \theta_2$.
\end{enu}
\end{dfn}

We remark that
there does not necessarily exist a weak reflection order on an arbitrary finite subset of $\ptrr$.

\begin{prop}\label{affreford}
Let $T$ be a finite subset of $\ptrr$,
and $\prec'$ a weak reflection order on $T$.
We write $ T$ as $\{ \gamma_1 \prec' \gamma_2 \prec' \cdots \prec' \gamma_p \}$.
Then there exists $w \in \widetilde{W}_{\aff}$ such that $\ptrr \cap w^{-1} \ntrr = T$.
Moreover, there exists a {\rm(}unique{\rm)} reduced expression $w =s_{\ell_1}\cdots s_{\ell_p}$ for $w$
such that $s_{\ell_p}\cdots s_{\ell_{j+1}} \alpha^\lor_{\ell_j}=\gamma_j$
for $1\leq j \leq p$.
\end{prop}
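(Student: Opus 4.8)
The plan is to prove Proposition~\ref{affreford} by an induction on $p = \# T$, following the standard argument for ordinary reflection orders but carried out in the affine (Kac--Moody) setting, where the key geometric fact available is that a finite subset $T \subset \ptrr$ is of the form $\ptrr \cap w^{-1}\ntrr$ for some $w \in \widetilde{W}_{\aff}$ precisely when $T$ is ``biconvex'', i.e.\ closed under the operation of adding two roots whose sum is again a positive real root, and co-closed in the complementary sense. The two defining conditions of a weak reflection order on $T$ are designed exactly to force this biconvexity together with the compatibility of the ordering with the root-addition relation. So the first step is to record (citing \cite{papi}) that $T$ admitting a weak reflection order implies $T$ is biconvex, hence $T = \ptrr \cap w^{-1}\ntrr$ for a unique $w \in \widetilde{W}_{\aff}$ with $\ell(w) = p$.

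Next I would set up the induction. For the base case $p = 0$ take $w = e$. For the inductive step, let $\gamma_1$ be the $\prec'$-minimal element of $T$. I claim $\gamma_1$ is a simple root $\alpha^\lor_{\ell_1}$ for some $\ell_1 \in I_{\aff}$: if not, write $\gamma_1 = \theta_1 + \theta_2$ nontrivially with $\theta_1, \theta_2 \in \ptrr$; then condition (2) of the weak reflection order puts one of the $\theta_i$ into $T$ with $\gamma_1 \succ' \theta_i$, contradicting minimality of $\gamma_1$. Now consider $s_{\ell_1}$ and the set $s_{\ell_1}(T \setminus \{\gamma_1\})$. Using conditions (1) and (2) one checks that $s_{\ell_1}$ maps $T \setminus \{\gamma_1\}$ bijectively into $\ptrr$ (the only positive real root sent to a negative one by $s_{\ell_1}$ is $\gamma_1$ itself), so $T' := s_{\ell_1}(T \setminus \{\gamma_1\})$ is a finite subset of $\ptrr$ of size $p - 1$, and $\prec'$ transports to a total order $\prec''$ on $T'$ via $s_{\ell_1}\gamma_j \prec'' s_{\ell_1}\gamma_k \iff \gamma_j \prec' \gamma_k$ (for $j, k \geq 2$). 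One then verifies $\prec''$ is a weak reflection order on $T'$: this reduces to the fact that $s_{\ell_1}$ preserves the coroot-addition relation $\gamma^\lor = \alpha^\lor + \beta^\lor$ among real roots, together with a careful bookkeeping of which sums stay positive after applying $s_{\ell_1}$, where $\gamma_1 = \alpha^\lor_{\ell_1}$ being $\prec'$-minimal is used to rule out the degenerate cases. By the inductive hypothesis there is a unique reduced word $s_{\ell_2}\cdots s_{\ell_p}$ for the element $w'$ with $\ptrr \cap (w')^{-1}\ntrr = T'$, satisfying $s_{\ell_p}\cdots s_{\ell_{j+1}}\alpha^\lor_{\ell_j} = s_{\ell_1}\gamma_j$ for $2 \leq j \leq p$. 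Then $w = s_{\ell_1}w'$ has $\ell(w) = p$ and $\ptrr \cap w^{-1}\ntrr = \{\gamma_1\} \sqcup s_{\ell_1}^{-1}T' = T$, and applying $s_{\ell_1}^{-1} = s_{\ell_1}$ to the recursion for $w'$ gives $s_{\ell_p}\cdots s_{\ell_{j+1}}\alpha^\lor_{\ell_j} = \gamma_j$ for all $1 \leq j \leq p$ (the $j=1$ case being $\gamma_1 = \alpha^\lor_{\ell_1}$ itself).

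For uniqueness of the reduced expression: any reduced word $s_{\ell_1}\cdots s_{\ell_p}$ for $w$ gives the enumeration $\beta_j := s_{\ell_p}\cdots s_{\ell_{j+1}}\alpha^\lor_{\ell_j}$ of $\ptrr \cap w^{-1}\ntrr = T$, and the requirement $\beta_j = \gamma_j$ pins down $\ell_1$ (as the index with $\alpha^\lor_{\ell_1} = \gamma_1$), hence $w'= s_{\ell_1}w$ and the tail $s_{\ell_2}\cdots s_{\ell_p}$, so uniqueness follows by induction. Alternatively one can invoke Proposition~\ref{reford} in spirit: the data of $w$ together with a total order on its inversion set refining the root-addition relation is equivalent to a reduced word, exactly as in the finite case.

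The main obstacle I expect is the verification that $\prec''$ is again a weak reflection order on $T' = s_{\ell_1}(T\setminus\{\gamma_1\})$ --- specifically, checking condition (1) and condition (2) survive the application of $s_{\ell_1}$. The subtlety is that $s_{\ell_1}$ can turn a sum $\theta_1^\lor + \theta_2^\lor$ of positive roots into an expression involving $\gamma_1^\lor = \alpha^\lor_{\ell_1}$ with a sign, so one must treat separately the cases according to whether $\langle\,\cdot\,,\alpha^\lor_{\ell_1}\rangle$-type pairings change signs; this is where the precise form of the definition of a weak reflection order on a finite subset of $\ptrr$ (as opposed to on all of $\Delta^+$) earns its keep, and where I would lean on the detailed treatment in \cite[pages 661--662]{papi} rather than reproving everything from scratch. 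Once that lemma is in hand, the induction closes routinely.
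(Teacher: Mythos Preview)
The paper does not give its own proof of this proposition; it is quoted from \cite{papi} with the remark that Papi's arguments carry over to the Kac--Moody setting. So the relevant comparison is with Papi's induction, and your overall plan (strip off a simple root, reflect, recurse) is indeed that argument. However, you are peeling from the wrong end of the chain, and this is a genuine error, not just a cosmetic one.

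Look again at condition~(2) in the definition of a weak reflection order on $T$: if $\theta_1+\theta_2 \in T$ with $\theta_1,\theta_2 \in \ptrr$, then some $\theta_i$ lies in $T$ with $\theta_1+\theta_2 \prec' \theta_i$. So a sum is \emph{smaller} than one of its summands in $\prec'$. Applied to $\gamma_1$, this yields $\gamma_1 \prec' \theta_i$, which does not contradict minimality of $\gamma_1$; your argument that $\gamma_1$ is simple therefore fails. Concretely, for $w=s_1s_2$ in type $A_2$ one has $T=\{\gamma_1,\gamma_2\}=\{\alpha_1^\lor+\alpha_2^\lor,\ \alpha_2^\lor\}$ with $\gamma_1 \prec' \gamma_2$, and $\gamma_1$ is not simple. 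What condition~(2) \emph{does} force is that the $\prec'$-\emph{maximal} element $\gamma_p$ is simple: if $\gamma_p=\theta_1+\theta_2$, then $\gamma_p \prec' \theta_i \in T$, contradicting maximality. This matches the target formula, since $\gamma_p=s_{\ell_p}\cdots s_{\ell_{p+1}}\alpha^\lor_{\ell_p}=\alpha^\lor_{\ell_p}$.

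The correct recursion is therefore: write $\gamma_p=\alpha^\lor_{\ell_p}$, set $T'=s_{\ell_p}(T\setminus\{\gamma_p\})$ (which lies in $\ptrr$ for the reason you gave), transport $\prec'$ to $T'$ via $s_{\ell_p}$, verify it is again a weak reflection order, and by induction obtain $w'=s_{\ell_1}\cdots s_{\ell_{p-1}}$ with $s_{\ell_{p-1}}\cdots s_{\ell_{j+1}}\alpha^\lor_{\ell_j}=s_{\ell_p}\gamma_j$ for $1\le j\le p-1$. Then $w=w's_{\ell_p}$ satisfies $\ptrr\cap w^{-1}\ntrr = \{\alpha^\lor_{\ell_p}\}\cup s_{\ell_p}T' = T$ and $s_{\ell_p}\cdots s_{\ell_{j+1}}\alpha^\lor_{\ell_j}=\gamma_j$ for all $j$. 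Your remarks about the verification that the transported order is again a weak reflection order, and about uniqueness, apply verbatim once the direction is fixed. (Incidentally, your alternative ``peel $\gamma_1$ and set $T'=s_{\ell_1}(T\setminus\{\gamma_1\})$'' is also wrong at the level of inversion sets: for $w=s_{\ell_1}w'$ one has $N(w)=N(w')\cup\{(w')^{-1}\alpha^\lor_{\ell_1}\}$, so $N(w')=T\setminus\{\gamma_1\}$, not $s_{\ell_1}(T\setminus\{\gamma_1\})$.)
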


The converse of Proposition \ref{affreford}  also holds.
\begin{prop}\label{affrefordinv}
Let $w \in  \widetilde{W}_{\aff}$,
and let  $w =s_{\ell_1}\cdots s_{\ell_p}$ be a reduced expression.
We define a $\gamma_j \eqdef s_{\ell_p}\cdots s_{\ell_{j+1}} \alpha^\lor_{\ell_j}$
for $1 \leq j \leq p$.
We consider the total order $\prec'$ on $\ptrr \cap w^{-1} \ntrr$ as follows{\rm:} for $1\leq j,k \leq p$, $\gamma_j  \prec' \gamma_k
\ardef
j<k$.
Then, the total order $\prec'$ is a weak reflection order on  $\ptrr \cap w^{-1} \ntrr$.
\end{prop}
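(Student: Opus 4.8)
The plan is to derive Proposition~\ref{affrefordinv} as a formal converse of Proposition~\ref{affreford}, using the basic theory of reduced expressions in the affine Weyl group $\widetilde{W}_{\aff}$. First I would recall the standard fact that for a reduced expression $w = s_{\ell_1}\cdots s_{\ell_p}$, the set $\ptrr \cap w^{-1}\ntrr$ (the ``inversion set'' of $w$) consists of exactly $p$ real roots, namely the $\gamma_j := s_{\ell_p}\cdots s_{\ell_{j+1}}\alpha^\lor_{\ell_j}$, $1 \le j \le p$, and these are pairwise distinct; this is the same computation already used in the paper to define $\beta^{\OS}_k$ via \eqref{inversion_root}-type formulas. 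Hence the proposed rule $\gamma_j \prec' \gamma_k \Leftrightarrow j < k$ indeed defines a total order on $\ptrr \cap w^{-1}\ntrr$. It remains to check conditions (1) and (2) in the definition of a weak reflection order on a finite subset $T := \ptrr \cap w^{-1}\ntrr$ of $\ptrr$.

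For condition (1), suppose $\theta_1 = \gamma_j$, $\theta_2 = \gamma_k$ with $j < k$ and $\theta_1 + \theta_2 \in \ptrr$. Apply $w$: since $\gamma_j, \gamma_k \in w^{-1}\ntrr$, we get $w(\theta_1 + \theta_2) = w\gamma_j + w\gamma_k \in \ntrr$ (a sum of two negative real roots that is a root is again a negative root), so $\theta_1 + \theta_2 \in \ptrr \cap w^{-1}\ntrr = T$. For the order part, I would use the explicit description: writing $\theta_1 + \theta_2 = \gamma_m$ for the appropriate index $m$, one shows $j < m < k$ by the standard ``convexity/bracketing'' property of inversion sequences of reduced words — equivalently, one passes to the sub-root-system (or sub-$\widetilde{W}_{\aff}$) generated by $s_{\theta_1}, s_{\theta_2}$ and invokes the rank-two analysis, exactly as in the finite-type statement recorded in Proposition~\ref{reford} and Definition~\ref{311}. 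Condition (2) is handled dually: if $\theta_1, \theta_2 \in \ptrr$ with $\theta_1 + \theta_2 = \gamma_m \in T$, then applying $w$ gives $w\theta_1 + w\theta_2 = w\gamma_m \in \ntrr$, forcing at least one of $w\theta_1, w\theta_2$ to be negative, i.e. at least one of $\theta_1, \theta_2$ lies in $T$; and if, say, $\theta_1 \in T$, then $\theta_1 = \gamma_i$ for some $i$, and the bracketing property again gives $\gamma_m \prec' \gamma_i$, i.e. $m < i$.

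An alternative, cleaner route — and the one I would actually prefer to write — is to invoke Proposition~\ref{affreford} directly rather than reprove the rank-two lemmas. Namely: the set $T = \ptrr \cap w^{-1}\ntrr$ together with \emph{some} order is a weak reflection order by the theory in \cite{papi} (this is essentially the characterization that inversion sets of elements of $\widetilde{W}_{\aff}$ are precisely the finite ``biconvex'' sets, and that the orders arising from reduced words are exactly the weak reflection orders). What Proposition~\ref{affrefordinv} adds is the identification of \emph{which} order: one must check that the order $\prec'$ coming from a reduced expression $w = s_{\ell_1}\cdots s_{\ell_p}$ is one of these. Here I would note that $\prec'$ is manifestly a total order, that $T$ is the inversion set of $w$ of cardinality $\ell(w) = p$, and that the reduced expression at hand realizes $\gamma_j = s_{\ell_p}\cdots s_{\ell_{j+1}}\alpha^\lor_{\ell_j}$ in increasing order; this is precisely the ``moreover'' clause of Proposition~\ref{affreford} applied to $w$ itself, so by the uniqueness statement there, the order $\prec'$ \emph{is} the weak reflection order associated to that reduced expression. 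This turns the proof into a two-line appeal to the already-cited results.

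The main obstacle, on either route, is the affine (Kac--Moody) setting: the convexity/bracketing property of inversion sequences and the fact that biconvex finite sets are exactly inversion sets of Weyl group elements are standard for finite Weyl groups but require care affinely — which is why the paper is careful to remark that the arguments in \cite{papi} ``work in the general setting of Kac--Moody algebras.'' So the honest content is citing those affine versions correctly; everything after that is the bookkeeping of applying $w$ to sums of roots and reading off indices. I would therefore keep the write-up short: state that $T := \ptrr \cap w^{-1}\ntrr$ has cardinality $p$ with the listed description of its elements, observe that this exhibits a reduced expression realizing the $\gamma_j$ in $\prec'$-increasing order, and conclude by Proposition~\ref{affreford} (its uniqueness clause) that $\prec'$ is a weak reflection order on $T$.
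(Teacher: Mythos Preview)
The paper gives no proof of this proposition: it is stated as the converse of Proposition~\ref{affreford}, both being results recalled from \cite{papi}, with the remark that Papi's arguments carry over to the Kac--Moody setting. So there is nothing to compare against beyond a citation.

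Your direct approach (verifying conditions (1) and (2)) is correct in outline and is essentially a sketch of Papi's argument; the ``bracketing/convexity'' property you invoke is exactly the content to be established, so this route amounts to reproving the cited result rather than reducing to something already in the paper.

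Your alternative route, however, has a genuine circularity. Proposition~\ref{affreford} takes as \emph{input} a weak reflection order on $T$ and outputs a (unique) reduced expression realizing it. You cannot appeal to its uniqueness clause to conclude that $\prec'$ is a weak reflection order, because that clause only applies once $\prec'$ is already known to be one. What you would need is the statement that the map from weak reflection orders on $T$ to reduced expressions for $w$ is a \emph{bijection}---but surjectivity of that map is precisely Proposition~\ref{affrefordinv}. When you write that ``the orders arising from reduced words are exactly the weak reflection orders'' and attribute this to \cite{papi}, you are citing the very proposition you are trying to prove. That is fine as a citation (and matches what the paper does), but it is not a derivation from Proposition~\ref{affreford} alone. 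If you want a short write-up, the honest version is simply to cite \cite{papi}, as the paper does.
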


\begin{rem}\label{restriction}
Let
\begin{align*}
v(\lambda_-)&= s_{i_1} \cdots s_{i_M},\\
\lons&= s_{i_{M+1}} \cdots s_{i_N},\\
\lon&= s_{i_1} \cdots s_{i_M} s_{i_{M+1}} \cdots s_{i_N}
\end{align*}
be reduced expressions for $v(\lambda_-)$, $\lons$, and $\lon = v(\lambda_-) \lons$,
respectively,
where  $S = S_\lambda = \{ i \in I \ | \ \langle \lambda , \alpha^\lor_i \rangle =0 \}$;
recall that 
$\lons$ is the longest element of $W_S$.
We set
$\beta_j \eqdef s_{i_N}\cdots s_{i_{j+1}}\alpha_{i_j}$, $1 \leq j \leq N$.
By Proposition \ref{reford}, we have
$\Delta^+\setminus \Delta^+_S = 
\{ \beta_1\prec \beta_2 \prec \cdots \prec \beta_M \}$ and
$\Delta^+_S = \{ \beta_{M+1} \prec \beta_{M+2} \prec \cdots \prec \beta_N \}$,
where $\prec$ is the weak reflection order on $\Delta^+$ determined by the reduced expression above for $\lon$.
In particular, we have
\begin{equation}\label{bunkai}
\theta_1 \prec \theta_2 \ \mbox{for} \ \theta_1 \in \Delta^+\setminus \Delta^+_S \ \mbox{and}  \ \theta_2 \in \Delta^+_S.
\end{equation}
Conversely, 
if a weak reflection order on  $\Delta^+$ satisfies
(\ref{bunkai}), 
then the reduced expression $\lon = s_{\ell_1}\cdots s_{\ell_N}$ for $\lon$ corresponding to
this weak reflection order
is given by
concatenating a reduced expression for $v(\lambda_-)$ with a reduced expression for $\lons$.
Moreover,
if we alter a reduced expression for $\lons$
with a reduced expression for $v(\lambda_-)$ unchanged,
then
the restriction to $\Delta^+\setminus \Delta^+_S$
of the weak reflection order on $\Delta^+$ does not change.
Thus, the restriction to $\Delta^+\setminus \Delta^+_S$
of the weak reflection order on $\Delta^+$ satisfying (\ref{bunkai})
depends only on
a reduced expression for $v(\lambda_-)$.
\end{rem}

First let us take a reduced expression $v (\lambda_-) = s_{i_1}\cdots s_{i_M}$ and 
a weak reflection order $\prec$ on $\Delta^+$
such that
the restriction to $\Delta^+\setminus \Delta^+_S$
of this weak reflection order $\prec$ is
determined by the reduced expression $v (\lambda_-) = s_{i_1}\cdots s_{i_M}$
as in Remark \ref{restriction}.
Also, we define an injective map $\Phi$ by:
\begin{eqnarray*}
\Phi : \ptrr \cap m^{-1}_{\lambda_-} \ntrr
&\rightarrow&
\mathbb{Q}_{\geq 0} \times (\Delta^+\setminus \Delta^+_S ) ,\\
\beta = \overline{\beta} + \dg(\beta) \widetilde{\delta}
&\mapsto &
\left(\frac{\langle {\lambda_-} ,  \overline{\beta} \rangle -  \dg(\beta)}{\langle {\lambda_-} ,  \overline{\beta} \rangle }  
,  \lon \left( \overline{\beta} \right)^\lor  
\right);
\end{eqnarray*}
note that $\langle {\lambda_-} ,  \overline{\beta} \rangle >0$, $\langle {\lambda_-} ,  \overline{\beta} \rangle -  \dg(\beta) \geq 0$,
and 
$\lon \left( \overline{\beta} \right)^\lor \in \Delta^+\setminus \Delta^+_S $
since $\langle {\lambda_-} ,  \overline{\beta} \rangle = \langle {\lambda} ,  \lon \overline{\beta} \rangle >0$,
since  we know from \cite[(2.4.7) (i)]{M}
that
\begin{equation}\label{B}
\ptrr \cap m^{-1}_{\lambda_-} \ntrr =
\{ \alpha^{\lor} + a \tilde{\delta} 
\ | \
\alpha \in \Delta^-,
0 < a \leq  \langle \lambda_- , \alpha^{\lor}
\rangle 
\}.
\end{equation}
We now consider the lexicographic order $<$ on $\mathbb{Q}_{\geq 0} \times (\Delta^+\setminus \Delta^+_S)$
induced by the usual total order on $\mathbb{Q}_{\geq 0}$
and
the restriction to $\Delta^+\setminus \Delta^+_S$
of the weak reflection order $\prec$ on $\Delta^+$;
that is, for $(a, \alpha), (b, \beta) \in \mathbb{Q}_{\geq 0} \times (\Delta^+\setminus \Delta^+_S)$,
\begin{equation*}
(a, \alpha)<(b, \beta) \mbox{ if and only if } 
a<b, \mbox{ or }
a=b \mbox{ and } \alpha \prec \beta.
\end{equation*}
Then we denote by $\prec'$ 
the total order on $\ptrr \cap m^{-1}_{\lambda_-} \ntrr$
induced by  the lexicographic order on $\mathbb{Q}_{\geq 0} \times (\Delta^+\setminus \Delta^+_S)$
through the map $\Phi$,
and write $\ptrr \cap m^{-1}_{\lambda_-} \ntrr$
as 
$\left\{\gamma_1 \prec' \cdots \prec' \gamma_L \right\}$.

\begin{prop}\label{goodreducedexpression}
Keep the notation and setting above.
Then, there exists a unique reduced expression $m_{{\lambda_-}}=u s_{\ell_1}\cdots s_{\ell_L}$ for $m_{\lambda_-}$, $u \in \Omega$, $\{\ell_1, \ldots , \ell_L \} \subset I_{\aff}$,
such that
$\beta^\OS_j 
\left( =
s_{\ell_L}\cdots s_{\ell_{j+1}}\alpha^\lor_{\ell_j} \right) =\gamma_j$
for $1 \leq j \leq L$.
\end{prop}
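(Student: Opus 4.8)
The plan is to verify that the total order $\prec'$ on $\ptrr \cap m_{\lambda_-}^{-1}\ntrr$ constructed above is in fact a \emph{weak reflection order} in the sense of the definition preceding Proposition \ref{affreford}, and then to invoke Proposition \ref{affreford} (applied to $m_{\lambda_-} = t(\lambda_-)$, which lies in $\widetilde{W}_{\aff}$ since $\lambda_-$ is in the $W$-orbit of the dominant weight $\lambda$, hence in $Q$ up to the $\Omega$-part handled by $u$) together with the relation $\ell(m_{\lambda_-}) = \#(\ptrr \cap m_{\lambda_-}^{-1}\ntrr) = L$ from \eqref{B}. Concretely: once $\prec'$ is shown to be a weak reflection order on $T \eqdef \ptrr \cap m_{\lambda_-}^{-1}\ntrr$, Proposition \ref{affreford} furnishes a $w \in \widetilde{W}_{\aff}$ with $\ptrr \cap w^{-1}\ntrr = T$ and a unique reduced expression $w = s_{\ell_1}\cdots s_{\ell_L}$ with $s_{\ell_L}\cdots s_{\ell_{j+1}}\alpha^\lor_{\ell_j} = \gamma_j$; since the inversion set determines the element, $w = $ the $\widetilde{W}_{\aff}$-part of $m_{\lambda_-}$, i.e.\ $m_{\lambda_-} = u\, s_{\ell_1}\cdots s_{\ell_L}$ with $u \in \Omega$, and uniqueness of the reduced expression carries over. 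Thus the whole statement reduces to the single assertion: \emph{$\prec'$ is a weak reflection order on $T$.}

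To check the two axioms of a weak reflection order, I would transport everything through the injection $\Phi$ to the lexicographically-ordered set $\mathbb{Q}_{\geq 0} \times (\Delta^+\setminus\Delta^+_S)$. The first coordinate of $\Phi(\alpha^\lor + a\widetilde\delta)$ is $\sigma \eqdef 1 - a/\langle\lambda_-,\alpha^\lor\rangle \in \mathbb{Q}\cap[0,1)$, and the second is $\lon(\bar\beta)^\lor = -\lon\alpha^\lor$ (note $\alpha \in \Delta^-$ by \eqref{B}, so $\lon\alpha \in \Delta^+$; and it lies outside $\Delta^+_S$ because $\langle\lambda,\lon\alpha^\lor\rangle = \langle\lambda_-,\alpha^\lor\rangle > 0$). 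For the additivity axioms one uses that for real roots $\theta_i = \alpha^\lor_i + a_i\widetilde\delta$, the sum $\theta_1 + \theta_2$ being a real root forces $\alpha_1^\lor + \alpha_2^\lor =: \gamma^\lor$ to be a (co)root, and then a short computation shows the first coordinate of $\Phi(\theta_1+\theta_2)$ is a convex-combination-type average of those of $\Phi(\theta_1),\Phi(\theta_2)$ — actually it equals $1 - (a_1+a_2)/\langle\lambda_-,\gamma^\lor\rangle$, which lies strictly between $\sigma_1$ and $\sigma_2$ when these differ — while if $\sigma_1 = \sigma_2$ the second coordinates satisfy the weak-reflection-order inequality of $\prec$ restricted to $\Delta^+\setminus\Delta^+_S$ (using $\gamma^\lor = (-\lon\alpha_1)^\lor + (-\lon\alpha_2)^\lor$ after applying $-\lon$, together with Remark \ref{restriction} which guarantees $\prec$ restricted to $\Delta^+\setminus\Delta^+_S$ is coherent). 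One also needs the membership claims ($\theta_1+\theta_2 \in T$ in axiom (1), and $\theta_1 \in T$ or $\theta_2 \in T$ in axiom (2)), which follow from the explicit description \eqref{B} of $T$ by checking the sign of the root and the range of the $\widetilde\delta$-coefficient; e.g.\ in axiom (1), $0 < a_1, 0 < a_2$ gives $0 < a_1+a_2$, and $a_i \le \langle\lambda_-,\alpha_i^\lor\rangle$ together with the coroot additivity gives $a_1 + a_2 \le \langle\lambda_-,\gamma^\lor\rangle$ (using dominance of $\lambda$ under $\lon$, i.e.\ $\lon\gamma \in \Delta^+\setminus\Delta^+_S$ so the pairing is positive).

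The main obstacle I anticipate is the case analysis in verifying axiom (2) (the "if $\theta_1 + \theta_2 \in T$ then one of $\theta_i \in T$ with the right order" direction): there one must rule out the possibility that neither $\theta_1$ nor $\theta_2$ is in $T$, which amounts to an argument about signs of roots and the ranges of $\widetilde\delta$-coefficients — precisely, if $\theta_1 + \theta_2 = \alpha^\lor + a\widetilde\delta \in T$ with $\alpha \in \Delta^-$ and $0 < a \le \langle\lambda_-,\alpha^\lor\rangle$, one writes the two summands as $\alpha_i^\lor + a_i\widetilde\delta$ with $\alpha_1 + \alpha_2 = \alpha$ and $a_1 + a_2 = a$, and must show that one of the $\alpha_i$ is negative with its coefficient in the correct range; this uses that $\alpha \in \Delta^-$ forces at least one $\alpha_i \in \Delta^-$ (the other could a priori be positive), combined with a careful bound on $a_i$. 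The $\sigma_1 = \sigma_2$ sub-case, where one falls back on the weak-reflection-order property of $\prec$ on $\Delta^+\setminus\Delta^+_S$, should be routine given Remark \ref{restriction}; the genuinely delicate bookkeeping is entirely in the boundary cases of the coefficient ranges. Everything else — transport through $\Phi$, lexicographic comparisons, and the final appeal to Propositions \ref{affreford} and \ref{affrefordinv} — is formal.
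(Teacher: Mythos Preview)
Your plan is correct and matches the paper's proof essentially step for step: the paper also reduces everything to showing that $\prec'$ is a weak reflection order on $T = \ptrr \cap m_{\lambda_-}^{-1}\ntrr$, verifies the two axioms by transporting through $\Phi$ and comparing lexicographically (treating the equal-first-coordinate case via the weak reflection order $\prec$ on $\Delta^+$), and then applies Proposition~\ref{affreford} together with the fact that two elements of $\widetilde{W}_{\ext}$ with the same inversion set differ by an element of $\Omega$. One minor simplification you can make: the membership claim $\theta_1+\theta_2 \in T$ in axiom (1) is immediate from $T = \ptrr \cap m_{\lambda_-}^{-1}\ntrr$ (apply $m_{\lambda_-}$), and in axiom (2) the paper handles the ``neither in $T$'' case by the contrapositive (if both $\theta_i \in \ptrr \setminus m_{\lambda_-}^{-1}\ntrr$ then so is their sum), then assumes $\theta_1 \in T$, $\theta_2 \notin T$ and proves $\Phi(\theta_1+\theta_2) < \Phi(\theta_1)$ by a three-way case split on the sign/range of $\langle\lambda_-,\overline{\theta_2}\rangle$ and $\deg(\theta_2)$.
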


\begin{proof}
We will show that the total order $\prec'$ 
 is a weak reflection order on $\ptrr \cap m^{-1}_{\lambda_-} \ntrr$.

We check condition (1). 
Assume that
$\theta_1, \theta_2 \in \ptrr \cap m^{-1}_{\lambda_-} \ntrr$ satisfy 
$\theta_1 \prec' \theta_2$ and
$\theta_1+\theta_2 \in \ptrr$.
Then it is clear that
$\theta_1+\theta_2 \in \ptrr \cap m^{-1}_{\lambda_-} \ntrr $.

Consider the case that the first component of $\Phi(\theta_1)$ is less than that of $\Phi(\theta_2)$
(i.e., $\frac{\langle {\lambda_-} ,  \overline{\theta_1}\rangle - \dg(\theta_1) }{\langle {\lambda_-} ,  \overline{\theta_1}\rangle}  <
\frac{\langle {\lambda_-} ,  \overline{\theta_2}\rangle - \dg(\theta_2)}{\langle {\lambda_-} ,  \overline{\theta_2}\rangle}$).
In this case, the first component of $\Phi(\theta_1+\theta_2)$
is equal to
$\frac{\langle {\lambda_-} ,  \overline{\theta_1+\theta_2}\rangle - \dg(\theta_1+\theta_2) }{\langle {\lambda_-},  \overline{\theta_1+\theta_2}\rangle}$, 
which lies between the first components of 
$\Phi(\theta_1)$ and $\Phi(\theta_2)$.
Hence we have
$\Phi(\theta_1)<\Phi(\theta_1+\theta_2)<\Phi(\theta_2)$.

Consider the case that the first component of $\Phi(\theta_1)$ is equal to that of $\Phi(\theta_2)$.
In this case, 
we have $\lon \left( \overline{\theta_1 } \right)^\lor \prec \lon \left( \overline{\theta_2} \right)^\lor$,
where $\prec$ is the restriction to $\Delta^+\setminus \Delta^+_S$ of the weak reflection order on $\Delta^+$.
Note that the first component of $\Phi(\theta_1+\theta_2)$ is 
equal to 
$\frac{\langle {\lambda_-} ,  \overline{\theta_1+\theta_2}\rangle - \dg(\theta_1+\theta_2) }{\langle {\lambda_-} ,  \overline{\theta_1+\theta_2}\rangle}$,
which is equal to both of the first components of 
$\Phi(\theta_1)$ and $\Phi(\theta_2)$.
Moreover,
since
$\theta_1+\theta_2 \in \ptrr \cap m^{-1}_{\lambda_-} \ntrr$,
we have
$\lon \left( \overline{\theta_1 +\theta_2} \right)^\lor \in \Delta^+\setminus \Delta^+_S$.
It follows from the definition of the weak reflection order $\prec$ on $\Delta^+$ that
$\lon \left( \overline{\theta_1 } \right)^\lor 
\prec 
\lon \left( \overline{\theta_1 +\theta_2} \right)^\lor 
\prec
\lon \left( \overline{\theta_2} \right)^\lor$.
Hence we have
$\Phi(\theta_1)<\Phi(\theta_1+\theta_2)<\Phi(\theta_2)$.
Thus, the total order $\prec'$ satisfies condition (1).

We check condition (2).
If $\theta_1, \theta_2 \in \ptrr \setminus m^{-1}_{\lambda_-} \ntrr$
and 
$\theta_1 + \theta_2 \in \ptrr$,
then it is clear that $\theta_1 +\theta_2 \in \ptrr \setminus m^{-1}_{\lambda_-} \ntrr$.
Hence we may assume that 
$\theta_1 \in \ptrr \cap m^{-1}_{\lambda_-} \ntrr$
 and
$\theta_2 \in \ptrr \setminus m^{-1}_{\lambda_-} \ntrr$;
if $\theta_1 ,\theta_2 \in \ptrr \cap m^{-1}_{\lambda_-} \ntrr$, 
the assertion is obvious by condition (1).
Since
$\ptrr \cap m^{-1}_{\lambda_-} \ntrr=
\{ \alpha^{\lor} + a \tilde{\delta} 
\ | \
\alpha \in \Delta^-,
0 < a \leq  \langle {\lambda_-} , \alpha^{\lor}
\rangle 
\}$,
we have 
$0 < \dg(\theta_1) \leq  \langle {\lambda_-} , \overline{\theta_1} \rangle $ and
$0 < \dg(\theta_1+\theta_2) \leq  \langle {\lambda_-} , \overline{\theta_1+\theta_2} \rangle $.
Also, since $\theta_2 \in \ptrr \setminus m^{-1}_{\lambda_-} \ntrr$, 
we find that
$\langle {\lambda_-} , \overline{\theta_2} \rangle <0 \leq \dg(\theta_2)$, 
$\dg(\theta_2) > \langle {\lambda_-} ,
  \overline{\theta_2} \rangle \geq0$,
  or
  $\langle {\lambda_-} , \overline{\theta_2} \rangle =  \dg(\theta_2) =0$;
if $0 > \dg(\theta_2)$, then we have $\theta_2 \in \ntrr$, a contradiction.

In the case that $\langle {\lambda_-} , \overline{\theta_2} \rangle <0 \leq \dg(\theta_2)$,
the first component of $\Phi(\theta_1+\theta_2)$, which is $
\frac{\langle {\lambda_-} ,  \overline{\theta_1+\theta_2}\rangle - \dg(\theta_1+\theta_2) }{\langle {\lambda_-} ,  \overline{\theta_1+\theta_2}\rangle} $,
satisfies the inequality
\begin{eqnarray*}
\frac{\langle {\lambda_-} ,  \overline{\theta_1+\theta_2}\rangle - \dg(\theta_1+\theta_2) }{\langle {\lambda_-} ,  \overline{\theta_1+\theta_2}\rangle}
&\leq&
\frac{\langle {\lambda_-} ,  \overline{\theta_1+\theta_2}\rangle - \dg(\theta_1) }{\langle {\lambda_-} ,  \overline{\theta_1+\theta_2}\rangle} \\
&=&
1-\frac{\dg(\theta_1) }{\langle {\lambda_-} ,  \overline{\theta_1+\theta_2}\rangle} 
<
1-\frac{\dg(\theta_1) }{\langle {\lambda_-} ,  \overline{\theta_1}\rangle} 
=
\frac{\langle {\lambda_-} ,  \overline{\theta_1}\rangle - \dg(\theta_1) }{\langle {\lambda_-} ,  \overline{\theta_1}\rangle} .
\end{eqnarray*}
Therefore,
we deduce that
the first component of $\Phi(\theta_1+\theta_2)$ is less than that of $\Phi(\theta_1)$,
and hence $\Phi(\theta_1+\theta_2)<\Phi(\theta_1)$.

In the case that $\dg(\theta_2) > \langle {\lambda_-} , \overline{\theta_2} \rangle \geq 0$,
the first component of $\Phi(\theta_1+\theta_2)$
satisfies the inequality
\begin{eqnarray*}
\frac{\langle {\lambda_-} ,  \overline{\theta_1+\theta_2}\rangle - \dg(\theta_1+\theta_2) }{\langle {\lambda_-} ,  \overline{\theta_1+\theta_2}\rangle}
&=&
\frac{\left( \langle {\lambda_-} ,  \overline{\theta_1}\rangle - \dg(\theta_1)\right) +\left( \langle {\lambda_-} ,  \overline{\theta_2}\rangle - \dg(\theta_2)\right)  }{\langle {\lambda_-} ,  \overline{\theta_1+\theta_2}\rangle} \\
&<&
\frac{\left( \langle {\lambda_-} ,  \overline{\theta_1}\rangle - \dg(\theta_1)\right) }{\langle {\lambda_-} ,  \overline{\theta_1+\theta_2}\rangle}
\leq
\frac{\langle {\lambda_-} ,  \overline{\theta_1}\rangle - \dg(\theta_1) }{\langle {\lambda_-} ,  \overline{\theta_1}\rangle} .
\end{eqnarray*}
Therefore,
we deduce that
the first component of $\Phi(\theta_1+\theta_2)$ is less than that of $\Phi(\theta_1)$,
and hence that $\Phi(\theta_1+\theta_2)<\Phi(\theta_1)$.

In the case that 
$\langle {\lambda_-} , \overline{\theta_2} \rangle =  \dg(\theta_2) =0$,
the first component of $\Phi(\theta_1+\theta_2)$
is equal to that of $\Phi(\theta_1)$.
Moreover, since $\langle {\lambda_-} , \overline{\theta_2} \rangle = \langle \lambda , \lon \overline{\theta_2} \rangle = 0$,
we have  $\lon (\overline{\theta_2})^\lor \in \Delta^+_S$.
Therefore, by (\ref{bunkai}), 
we see that 
$\lon (\overline{\theta_1+\theta_2})^\lor \prec \lon (\overline{\theta_2})^\lor$.
It follows from the definition of the weak reflection order
on $\Delta^+$ that $
\lon (\overline{\theta_1})^\lor \prec
\lon (\overline{\theta_1+\theta_2})^\lor \prec \lon (\overline{\theta_2})^\lor$,
and hence that 
$\Phi(\theta_1+\theta_2) < \Phi(\theta_1)$.

Thus, we conclude that $\prec'$ satisfies condition (2), and the total order $\prec'$ is a weak reflection order on
$\ptrr \cap m^{-1}_{\lambda_-} \ntrr$.

Now, by Proposition \ref{affreford},
there exists $w\in \widetilde{W}_\aff$
such that $\ptrr \cap m^{-1}_{\lambda_-} \ntrr
=
\ptrr \cap w^{-1} \ntrr$,
and 
there exists
a reduced expression
$w=s_{\ell_1}\cdots s_{\ell_L}$,
$\{\ell_1, \ldots , \ell_L \} \subset I_{\aff}$,
for $w$ such that
$\gamma_j = s_{\ell_L}\cdots s_{\ell_{j+1}}\alpha^\lor_{\ell_j}$ for $1\leq j \leq L$.
Since $\ptrr \cap m^{-1}_{\lambda_-} \ntrr
=
\ptrr \cap w^{-1} \ntrr$,
it follows from \cite[(2.2.6)]{M} that there exists $u \in \Omega$ such that $u w=m_{\lambda_-}$.
Thus, we obtain a reduced expression $m_{\lambda_-}=us_{\ell_1}\cdots s_{\ell_L}$ for $m_{\lambda_-}$,
with $\gamma_j = s_{\ell_L}\cdots s_{\ell_{j+1}}\alpha^\lor_{\ell_j}=\beta^\OS_j$ for $1\leq j \leq L$.
This completes the proof of Proposition \ref{goodreducedexpression}. 
\end{proof}

By Remark \ref{restriction},
the restriction to $\Delta^+ \setminus \Delta_S^+$ of a weak reflection order on $\Delta^+$ satisfying (\ref{bunkai})
corresponds bijectively to a reduced expression $v({\lambda_-}) = s_{i_1}\cdots s_{i_M}$ for $v({\lambda_-})$.
Hence
by Proposition \ref{goodreducedexpression},
we can take a reduced expression $m_{{\lambda_-}}=u s_{\ell_1}\cdots s_{\ell_L}$ for $m_{\lambda_-}$
corresponding to each reduced expression $v({\lambda_-})= s_{i_1}\cdots s_{i_M}$ for $v({\lambda_-})$.
Conversely,
as seen in Lemma \ref{lengthadditive}, 
from the reduced expression $m_{{\lambda_-}}=u s_{\ell_1}\cdots s_{\ell_L}$ for $m_{\lambda_-}$,
we can obtain a reduced expression for $v({\lambda_-})$,
which is identical to the original reduced expression
$v({\lambda_-})= s_{i_1}\cdots s_{i_M}$
(see Lemma \ref{lengthadditive} below).

In the remainder of this subsection, 
we fix reduced expressions
$v(\lambda_-) = s_{i_1}\cdots s_{i_M} $
and
$\lons = s_{i_{M+1}}\cdots s_{i_N}$,
and use the weak reflection order $\prec$ on $\Delta^+$
(which satisfies 
(\ref{bunkai}))
determined by these reduced expressions for $v(\lambda_-)$ and $\lons$.
Also, we use the total order $\prec'$ on $\ptrr \cap m_{\lambda_-}^{-1} \ntrr$ 
defined just before Proposition \ref{goodreducedexpression},
and take a reduced expression 
$m_{\lambda_-} = u s_{\ell_1}\cdots s_{\ell_L}$
for $m_{\lambda_-}$ given by Proposition \ref{goodreducedexpression}.

Recall that $\beta^{\OS}_k=s_{\ell_L}\cdots s_{\ell_{k+1}}\alpha^\lor_{\ell_k}$
for $1 \leq k \leq L$.
We set $a_k \eqdef \dg(\beta^{\OS}_k) \in \mathbb{Z}_{> 0} $;
since 
$\ptrr \cap m_{\lambda_-}^{-1} \ntrr
=
\{
\beta^\OS_1, \ldots , \beta^\OS_L
\}
$,
we see by (\ref{B}) that
$0<a_k \leq \langle {\lambda_-}, \overline{\beta^\OS_k}\rangle$.
Also, for $1 \leq j \leq L$,
we set $\beta^{\rm L}_k \eqdef u s_{\ell_1}\cdots s_{\ell_{k-1}} \alpha^{\lor}_{\ell_k}$
and $b_k \eqdef \dg(\beta^{\rm L}_k) \in \mathbb{Z}_{\geq 0} $.
Then we have 
$\{ \beta^{\rm L}_k \ | \ 1 \leq k \leq L \}= 
\ptrr \cap m_{\lambda_-} \ntrr 
=
\{ \alpha^{\lor} + a \tilde{\delta} 
\ | \
\alpha \in \Delta^+,
0 \leq a < -\langle {\lambda_-} , \alpha^\lor
\rangle 
\}$
(see \cite[(2.4.7) (ii)]{M}).

\begin{rem}\label{akbk}
For $1\leq k \leq L$, we have
\begin{align*}
-t({\lambda_-})\beta^{\OS}_k
&=
-(u s_{\ell_1}\cdots s_{\ell_L}) ( s_{\ell_L} \cdots s_{\ell_{k+1}} \alpha^\lor_{\ell_k})
=
-u s_{\ell_1}\cdots s_{\ell_{k-1}}s_{\ell_k} \alpha^\lor_{\ell_k} \\
&=
-u s_{\ell_1}\cdots s_{\ell_{k-1}}( - \alpha^\lor_{\ell_k})
=
u s_{\ell_1}\cdots s_{\ell_{k-1}} \alpha^\lor_{\ell_k}
=
\beta^{\rm L}_k
=
\overline{\beta^{\rm L}_k}+b_k \widetilde{\delta}.
\end{align*}
From this, together with
$
-t({\lambda_-})\beta^{\OS}_k =
 -\overline{ \beta^{\OS}_k } 
 -(a_k - \langle {\lambda_-}, \overline{\beta^{\OS}_k} \rangle  )\tilde{\delta}
$,
we obtain
$\overline{\beta^{\rm L}_k }=-\overline{\beta^{\OS}_k}$
and
$\langle {\lambda_-} ,\overline{\beta^{\OS}_k} \rangle - a_k = b_k$.

\end{rem}

\begin{lem}\label{lengthadditive}
Keep the notation and setting above.
Since $us_{\ell_k} =s_{i'_k}u$ for some $i'_k \in I_{\aff}$,
$1 \leq k \leq M$,
we can rewrite the reduced expression  $u s_{\ell_1}\cdots s_{\ell_L}$ for $m_{{\lambda_-}}$
as $s_{i'_1} \cdots s_{i'_M} u s_{\ell_{M+1}} \cdots s_{\ell_L}$.
Then, $s_{i'_1} \cdots s_{i'_M}$ is a reduced expression for $v({\lambda_-})$,
and $u s_{\ell_{M+1}} \cdots s_{\ell_L}$ is a reduced expression for $m_\lambda$.
Moreover, $i_k = i'_k$ for $ 1 \leq k \leq M$.
\end{lem}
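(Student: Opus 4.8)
The plan is to argue throughout with inversion sets $N(w) \eqdef \ptrr \cap w^{-1}\ntrr$ in $\widetilde{W}_{\ext}$. The rewriting in the statement is legitimate because $\Omega$ normalizes $\widetilde{W}_{\aff}$ and permutes the affine simple reflections, so $i'_k \eqdef u(\ell_k) \in I_{\aff}$ is well defined with $u s_{\ell_k} = s_{i'_k}u$; iterating this $M$ times gives $m_{\lambda_-} = s_{i'_1}\cdots s_{i'_M}\, u\, s_{\ell_{M+1}}\cdots s_{\ell_L}$, which exhibits $m_{\lambda_-}$ as the product of the length-zero element $u$ with $L = \ell(m_{\lambda_-})$ affine simple reflections. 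Since no cancellation is possible in such a factorization of a length-$L$ element, $v' \eqdef s_{i'_1}\cdots s_{i'_M} \in \widetilde{W}_{\aff}$ and $m'' \eqdef u s_{\ell_{M+1}}\cdots s_{\ell_L}$ are both reduced, with $\ell(m_{\lambda_-}) = \ell(v') + \ell(m'')$; it remains to show $v' = v(\lambda_-)$, $m'' = m_\lambda$, and $i'_k = i_k$.

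To identify $v'$ and $m''$: Lemma~\ref{vmu}(3) with $\mu = \lambda$ (so $v(\lambda) = e$) gives $m_{\lambda_-} = v(\lambda_-)m_\lambda$ with additive lengths, whence $\ell(v(\lambda_-)) = M$ and $\ell(m_\lambda) = L - M$. Additivity yields the disjoint union $N(m_{\lambda_-}) = N(m_\lambda) \sqcup m_\lambda^{-1}N(v(\lambda_-))$ inside $\ptrr$. Using $m_\lambda = t(\lambda)v(\lambda_-)^{-1}$ from \eqref{A}, for $\beta \in N(v(\lambda_-))$ (so necessarily $\dg(\beta) = 0$, $\overline\beta \in \widetilde{\Delta}^+$, $v(\lambda_-)\overline\beta \in \widetilde{\Delta}^-$) one computes $m_\lambda^{-1}\beta = v(\lambda_-)\overline\beta + \pair{\lambda}{\overline\beta}\widetilde{\delta}$, hence $\dg(m_\lambda^{-1}\beta) = \pair{\lambda}{\overline\beta} = \pair{\lambda_-}{\overline{m_\lambda^{-1}\beta}}$; that is, every element of $m_\lambda^{-1}N(v(\lambda_-))$ has vanishing first $\Phi$-coordinate. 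By \eqref{B} there are exactly $M$ elements of $\ptrr \cap m_{\lambda_-}^{-1}\ntrr$ with vanishing first $\Phi$-coordinate, namely the $\prec'$-smallest ones $\gamma_1,\dots,\gamma_M$; since $\# m_\lambda^{-1}N(v(\lambda_-)) = M$, we get $m_\lambda^{-1}N(v(\lambda_-)) = \{\gamma_1,\dots,\gamma_M\}$ and thus $N(m_\lambda) = \{\gamma_{M+1},\dots,\gamma_L\} = N(s_{\ell_{M+1}}\cdots s_{\ell_L}) = N(m'')$. Writing $m_\lambda = \omega w$ with $\omega \in \Omega$, $w \in \widetilde{W}_{\aff}$, the equality $N(w) = N(s_{\ell_{M+1}}\cdots s_{\ell_L})$ in the Coxeter group $\widetilde{W}_{\aff}$ forces $w = s_{\ell_{M+1}}\cdots s_{\ell_L}$; comparing the $\Omega$-components in $v'\, u\, (s_{\ell_{M+1}}\cdots s_{\ell_L}) = m_{\lambda_-} = v(\lambda_-)\,\omega\, (s_{\ell_{M+1}}\cdots s_{\ell_L})$ then gives $u = \omega$, hence $m'' = m_\lambda$, and cancelling $m''$ gives $v' = v(\lambda_-)$.

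For $i'_k = i_k$, I would compare the two reduced words $s_{i_1}\cdots s_{i_M}$ and $s_{i'_1}\cdots s_{i'_M}$ for $v(\lambda_-) \in \widetilde{W}$ via their sequences of inversion roots $s_{i_M}\cdots s_{i_{k+1}}\alpha^\lor_{i_k}$, resp. $s_{i'_M}\cdots s_{i'_{k+1}}\alpha^\lor_{i'_k}$ ($1 \le k \le M$), which determine a reduced word uniquely (recover the last letter from the last inversion root being simple, strip it off, and repeat). In the setup of Remark~\ref{restriction}, $s_{i_M}\cdots s_{i_{k+1}}\alpha^\lor_{i_k} = (\lons\beta_k)^\lor$, where $\beta_k$ is the $k$-th element of $\Delta^+ \setminus \Delta^+_S$ in the order $\prec$. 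On the other side, moving $u$ past the first $M$ reflections leaves the inversion root at position $k$ of the reduced word $s_{i'_1}\cdots s_{i'_M}\, u\, s_{\ell_{M+1}}\cdots s_{\ell_L}$ for $m_{\lambda_-}$ unchanged, equal to $\beta^{\OS}_k = \gamma_k$; peeling off the suffix $m'' = m_\lambda$ then shows the $k$-th inversion root of the standalone word $s_{i'_1}\cdots s_{i'_M}$ is $m_\lambda \gamma_k$. Finally, from the two $\Phi$-coordinates of $\gamma_k$ (first coordinate $0$, second coordinate $\beta_k$) one reads off $\gamma_k = \lon(\beta_k^\lor) + \pair{\lambda}{\beta_k^\lor}\widetilde{\delta}$; substituting $m_\lambda = t(\lambda)\lons\lon$ and using $\lons\lambda = \lambda$ together with $\lons\beta_k \in \Delta^+ \setminus \Delta^+_S$, one obtains $m_\lambda\gamma_k = (\lons\beta_k)^\lor$. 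Hence the two inversion-root sequences coincide, so $i'_k = i_k$ for all $k$.

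The main obstacle is this last paragraph: one must track carefully how inversion roots transform under insertion of the length-zero element $u$ into a reduced word and under passing from the reduced word for $m_{\lambda_-}$ to the standalone word for $v(\lambda_-) = m_{\lambda_-}m_\lambda^{-1}$, and then carry out the computation of $m_\lambda\gamma_k$ consistently in the Langlands-dual affine root datum, keeping $\overline{(\cdot)}$, $\dg(\cdot)$, $(\cdot)^\lor$, $t(\cdot)$, and the $\Omega$-action straight. The identification $v' = v(\lambda_-)$ likewise requires the (routine) care of separating the $\widetilde{W}_{\aff}$- and $\Omega$-components in $\widetilde{W}_{\ext}$.
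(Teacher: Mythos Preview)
Your argument is correct. Both your proof and the paper's hinge on the same key observation: among the $\gamma_j = \beta^{\OS}_j$, exactly the first $M$ have vanishing first $\Phi$-coordinate, and these form $m_\lambda^{-1}N(v(\lambda_-))$ (equivalently, via Remark~\ref{akbk}, the paper's $\{\beta^{\rm L}_k : 1\le k\le M\} = -\lon(\widetilde\Delta^+\setminus\widetilde\Delta^+_S)$). From there the two proofs diverge organizationally.

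The paper first proves $i'_k \in I$ by hand (taking the minimal $k$ with $i'_k=0$ and deriving a contradiction), then identifies $s_{i'_1}\cdots s_{i'_M}$ with $v(\lambda_-)$ by extending the word to a reduced expression for $\lon$ and reading off that the tail lies in $W_S$; only after this does it deduce that $us_{\ell_{M+1}}\cdots s_{\ell_L}$ is a reduced expression for $m_\lambda$. You instead go straight from $N(m_\lambda)=\{\gamma_{M+1},\dots,\gamma_L\}=N(s_{\ell_{M+1}}\cdots s_{\ell_L})$ to $m''=m_\lambda$ via uniqueness in the Coxeter group $\widetilde W_{\aff}$ and then compare $\Omega$-components; this bypasses the paper's detour through $\lon$ and makes the fact $i'_k\in I$ a by-product (since any reduced word in $\widetilde W_{\aff}$ for an element of $W$ uses only finite simple reflections). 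For the final step $i'_k=i_k$, both proofs compare ordered inversion-root sequences; the paper does it through the left roots $\beta^{\rm L}_k$ and the order $\prec$, while you compute $m_\lambda\gamma_k=(\lons\beta_k)^\lor$ directly using $m_\lambda=t(\lambda)\lons\lon$ and $\lons\lambda=\lambda$. Your route is a bit more economical; the paper's has the advantage of making the relation $\beta^{\rm L}_k=\zeta_k^\lor$ explicit, which it reuses elsewhere.
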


\begin{proof}
First we show that $\{\beta^{\rm L}_k \ | \ 1\leq k \leq M \}= -\lon \left( \widetilde{\Delta}^+ \setminus \widetilde{\Delta}^+_S \right)$.
Since $\{
\beta^{\OS}_j \ | \ 1 \leq j \leq L
\}
=
\{ \alpha^{\lor} + a \tilde{\delta} 
\ | \
\alpha \in \Delta^-,
0 < a \leq \langle {\lambda_-} , \alpha^\lor
\rangle 
\}$,
we see that the minimum value of 
the first components of $\Phi(\beta^{\OS}_k)$, 
i.e., $\frac{\langle {\lambda_-} ,  \overline{\beta^{\OS}_k }\rangle - a_k}{\langle {\lambda_-} ,  \overline{\beta^{\OS}_k }\rangle}$ for
$1 \leq k \leq L$,
is equal to 0.
Since 
$\Phi(\beta^\OS_1) < \Phi(\beta^\OS_2)< \cdots < \Phi(\beta^\OS_L)$,
where $<$ denotes
the lexicographic order on $\mathbb{Q}_{\geq 0} \times (\Delta^+\setminus \Delta^+_S)$, 
there exists a positive integer $M'$ such that 
the first component of $\Phi(\beta^{\OS}_k)$  is equal to 0 for $1\leq k \leq M'$,
and greater than 0 for $M'+1 \leq k \leq L$.
Since
$\beta^{\rm L}_k
=\overline{\beta^{\rm L}_k}+b_k\widetilde{\delta}$
and $\langle {\lambda_-} ,\overline{\beta^{\OS}_k} \rangle - a_k = b_k$ by Remark \ref{akbk},
we deduce that the first component of $\Phi(\beta^{\OS}_k)$ is equal to 0 if and only if $\beta^{\rm L}_k = \overline{\beta^{\rm L}_k} \in \widetilde{\Delta}^+ $.
In this case,
we have
$\langle \lambda , - \lon \beta^{\rm L}_k \rangle= \langle \lambda_- , -\beta^{\rm L}_k \rangle \overset{\rm Remark \ \ref{akbk}}= \langle \lambda_- , \overline{\beta^{\OS}_k } \rangle  >0$,
and hence
$\beta^{\rm L}_k \in - \lon (\widetilde{\Delta}^+ \setminus \widetilde{\Delta}^+_S)$.
Therefore, we obtain
$\{\beta^{\rm L}_k \ | \ 1 \leq k \leq L \}\cap -\lon (\widetilde{\Delta}^+ \setminus \widetilde{\Delta}^+_S)=\{\beta^{\rm L}_k \ | \ 1\leq k \leq M' \} \subset - \lon (\widetilde{\Delta}^+ \setminus \widetilde{\Delta}^+_S)$.
Also, because
$\{\beta^{\rm L}_k \ | \ 1 \leq k \leq L \}=\ptrr \cap m_{{\lambda_-}}\ntrr=
\{ \alpha^{\lor} + a \tilde{\delta} 
\ | \
\alpha \in \Delta^+,
0\leq a < - \langle {\lambda_-} , \alpha^{\lor}
\rangle 
\}
\supset - \lon (\widetilde{\Delta}^+ \setminus \widetilde{\Delta}^+_S)
$,
we deduce that 
$\{\beta^{\rm L}_k \ | \ 1\leq k \leq M' \} = - \lon (\widetilde{\Delta}^+ \setminus \widetilde{\Delta}^+_S)$.
Since $\#( \widetilde{\Delta}^+ \setminus \widetilde{\Delta}^+_S)= M$, it follows that $M=M'$,
and hence $\{\beta^{\rm L}_k \ | \ 1\leq k \leq M \} = - \lon (\widetilde{\Delta}^+ \setminus \widetilde{\Delta}^+_S)$.

We show that $i'_k \in I$ for $1 \leq k \leq M$.
We set $\zeta^{\lor}_k  \eqdef s_{i'_1}\cdots s_{i'_{k-1}}\alpha^{\lor}_{i'_k}$
for $1\leq k \leq M$.
Since $u \alpha^{\lor}_{\ell_k} =\alpha^{\lor}_{i'_k}$, 
we have
\begin{equation*}
\beta^{\rm L}_k 
=us_{\ell_1} \cdots s_{\ell_{k-1}}\alpha^{\lor}_{\ell_k}
=s_{i'_1} \cdots s_{i'_{k-1}}u\alpha^{\lor}_{\ell_k}
=s_{i'_1} \cdots s_{i'_{k-1}} \alpha^{\lor}_{i'_k}
=\zeta^{\lor}_k.
\end{equation*}
Hence it follows that
$\{\zeta^{\lor}_k \ | \ 1\leq k \leq M \}=\{\beta^{\rm L}_k \ | \ 1\leq k \leq M \}= - \lon (\widetilde{\Delta}^+ \setminus \widetilde{\Delta}^+_S)$.
If there exists $k \in \{ 1 , \ldots , M \}$ such that $i'_k = 0$,
then, by choosing the minimum of such $k$'s,
we obtain
$\zeta^{\lor}_k = s_{i'_1}\cdots s_{i'_{k-1}}\alpha^{\lor}_{i'_k} \notin \widetilde{\Delta}^+$,
contrary to the equality $\{\zeta^{\lor}_k \ | \ 1\leq k \leq M \}= -\lon (\widetilde{\Delta}^+ \setminus \widetilde{\Delta}^+_S)$.
Therefore, we have $i'_k \in I$ for $1 \leq k \leq M$.

Next, we show that $s_{i'_1} \cdots s_{i'_M}$ is a reduced expression for $v({\lambda_-})$
and 
$u s_{\ell_{M+1}} \cdots s_{\ell_L}$ is a reduced expression for $m_\lambda$.
Since $s_{\ell_1}\cdots s_{\ell_{M}}$ is a reduced expression, so is $s_{i'_1}\cdots s_{i'_{M}}$.
Therefore, there exist $i'_{M+1}, \ldots , i'_{N} \in I$ such that
$ \lon = s_{i'_1}\cdots s_{i'_{M}}s_{i'_{M+1}} \cdots  s_{i'_{N}}$ is a reduced expression for $\lon$.
Because $s_{i'_{N}} \cdots s_{i'_{M+1}} s_{i'_{M}} \cdots s_{i'_{k+1}}\alpha^\lor_{i'_k}
=
-\lon \beta^{\rm L}_k$,
$1 \leq k \leq M$,
by using the reduced expression above for $\lon$,
we obtain
\begin{equation*}
\widetilde{\Delta}^+ =\{ -\lon \beta^{\rm L}_1, \ldots , -\lon\beta^{\rm L}_M,
s_{i'_{N}} \cdots  s_{i'_{M+2}}\alpha^\lor_{i'_{M+1}}, \ldots , \alpha^\lor_{i'_N} \}.
\end{equation*}
Here, $\{\beta^{\rm L}_k \ | \ 1\leq k \leq M \} = -\lon(\widetilde{\Delta}^+ \setminus \widetilde{\Delta}^+_S)$
implies $\{ s_{i'_{N}} \cdots  s_{i'_{M+2}}\alpha^\lor_{i'_{M+1}}, \ldots , \alpha^\lor_{i'_N} \} = \widetilde{\Delta}^+_S$.
From this by descending induction on $M+1 \leq k \leq N$, we deduce that $i'_{M+1}, \ldots , i'_{N} \in S$, and $s_{i'_{M+1}} \cdots  s_{i'_{N}} $ is an element of  $W_S$;
note that the length of this element is equal to $N-M$, which is the cardinality of $ \widetilde{\Delta}^+_S$.
Therefore,  $s_{i'_{M+1}} \cdots  s_{i'_{N}} $ is the longest element $\lons$ of $W_S$,
and hence $s_{i'_1}\cdots s_{i'_{M}} = \lon \lons = v(\lambda_-)$, which is a reduced expression for $v(\lambda_-)$.
Moreover, because $m_{{\lambda_-}}=v(\lambda_-) m_{\lambda}$ with
$\ell(m_{{\lambda_-}})=\ell (v(\lambda_-))+\ell (m_{\lambda})$ by Lemma \ref{vmu} (3) for the case $\mu = \lambda$,
$m_\lambda = v(\lambda_-)^{-1} m_{{\lambda_-}} = u s_{\ell_{M+1}} \cdots s_{\ell_L}$ is a reduced expression for $m_\lambda$.

Finally, we show that $i_k = i'_k$ for $ 1 \leq k \leq M$.
Since $M=M'$ as shown above, 
\begin{equation*}
\Phi(\beta_k^\OS)=\left(\frac{\langle {\lambda_-} ,  \overline{\beta^{\OS}_k }\rangle - a_k }{\langle {\lambda_-} ,  \overline{\beta^{\OS}_k }\rangle}  
,  \lon \left( \overline{\beta^{\OS}_k } \right)^\lor \right) 
=
\left( 0,  \lon \left( \overline{\beta^{\OS}_k } \right)^\lor \right) 
\end{equation*}
for $1\leq k \leq M$
by the definition of $\Phi$,
and 
\begin{align*}
\lon \left( \overline{\beta^{\OS}_k } \right)^\lor
=
-\lon \left( \overline{\beta^{\rm L}_k } \right)^\lor
&=
-\lon \zeta_k
=
-s_{i'_N}\cdots s_{i'_{M+1}}s_{i'_M}\cdots s_{i'_1}s_{i'_1}\cdots s_{i'_{k-1}}\alpha_{i'_{k}} \\
&=
s_{i'_N}\cdots s_{i'_{M+1}}s_{i'_M}\cdots s_{i'_{k+1}}\alpha_{i'_{k}}
\end{align*}
by Remark \ref{akbk}.
Thus, for $1\leq k<j \leq M $,
we have $s_{i'_N}\cdots s_{i'_{M+1}}s_{i'_M}\cdots s_{i'_{k+1}}\alpha_{i'_{k}}
\prec
s_{i'_N}\cdots s_{i'_{M+1}}s_{i'_M}\cdots s_{i'_{j+1}}\alpha_{i'_{j}}$,
where the order $\prec$ is the fixed weak reflection order on $\Delta^+$ defined just before Proposition \ref{goodreducedexpression}.
Here we recall from Remark \ref{restriction} that $\beta_k = s_{i_N}\cdots s_{i_{k+1}}\alpha_{i_k}$, $1\leq k \leq N$.
Because
\begin{equation*}
\{ \beta_k \ | \ 1\leq k \leq M \} =
\{ s_{i'_N}\cdots s_{i'_{M+1}}s_{i'_M}\cdots s_{i'_{k+1}}\alpha_{i'_{k}} \ | \ 1\leq k \leq M \}
= \Delta^+\setminus \Delta^+_S ,
\end{equation*}
it follows from the definition of the weak reflection order
$\prec$ on $\Delta^+$ together with (\ref{bunkai})
that
\begin{equation*}
\left\{ \beta_1 \prec \cdots \prec \beta_M \right\}=
\left\{
s_{i'_N}\cdots s_{i'_{M+1}}s_{i'_M}\cdots s_{i'_{2}}\alpha_{i'_{1}}\prec \cdots \prec 
s_{i'_N}\cdots s_{i'_{M+1}}\alpha_{i'_M}
\right\}
=
\Delta^+\setminus \Delta^+_S.
\end{equation*}
Therefore, noting that $\beta_k =s_{i_N}\cdots s_{i_{k+1}}\alpha_{i_{k}} $ for $ 1 \leq k \leq N$,
we obtain 
\begin{equation}\label{sreduced}
s_{i_N}\cdots s_{i_{k+1}}\alpha_{i_k} =s_{i'_N}\cdots s_{i'_{M+1}} s_{i'_M}\cdots s_{i'_{k+1}}\alpha_{i'_k}, \ \ \mbox{for }  1 \leq k \leq M.
\end{equation}
By substituting the equality $s_{i_{M+1}}\cdots s_{i_{N}}= \lons =s_{i'_{M+1}}\cdots s_{i'_{N}}$ into (\ref{sreduced}),
we have
$s_{i_M}\cdots s_{i_{k+1}}\alpha_{i_k} = s_{i'_M}\cdots s_{i'_{k+1}}\alpha_{i'_k}$ for $1\leq k \leq M$.
In particular, when $k=M$, we have
$\alpha_{i_M}=\alpha_{i'_M}$, which implies that $i_M = i'_M$.
If $i_j = i'_j$ for $ k+1 \leq j \leq M$,
then it follows from  $s_{i_M}\cdots s_{i_{k+1}}\alpha_{i_{k}}= s_{i'_M}\cdots s_{i'_{k+1}}\alpha_{i'_{k}}$
that $\alpha_{i_{k}}=\alpha_{i'_{k}}$, and
hence $i_{k}$= $i'_{k}$.
Thus, by descending induction on $k$,
we deduce that $i_k = i'_k$ for $ 1 \leq k \leq M $. 
\end{proof}

\begin{rem}[\normalfont{\cite[Theorem 8.3]{LNSSS2}}]\label{2.15}
For $1 \leq k \leq L$, we set
	\begin{equation*}
		d_k \eqdef
			 \frac{\langle \lambda_- ,  \overline{\beta^{\OS}_k }\rangle - a_k}{\langle \lambda_- ,  \overline{\beta^{\OS}_k }\rangle}  
			= \frac{b_k}{\langle - \lambda_-  , \overline{ \beta^{\rm L}_k } \rangle};
	\end{equation*}
the second equality follows from Remark \ref{akbk}.
Here $d_k$ is just the first component of $\Phi(\beta^{\OS}_k) \in \mathbb{Q}_{\geq 0} \times (\Delta^+ \setminus \Delta^+_S)$.
For  $1 \leq k,j \leq L$,
$\Phi(\beta^\OS_k)<\Phi(\beta^\OS_j)$ if and only if $k < j$,
and hence we have
	\begin{equation}\label{C}
		0\leq d_1 \leq \cdots \leq d_L \lneqq 1.
	\end{equation}
\end{rem}

\begin{lem}\label{remark2.11}
If $1\leq k<j \leq L$ and $d_k =d_j$,
then	$\lon \left( \overline{ \beta^{\OS}_k } \right)^{\lor} \prec \lon  \left( \overline{ \beta^{\OS}_j } \right)^{\lor}$.
\end{lem}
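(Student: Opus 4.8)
The plan is to derive the claim directly from the way the total order $\prec'$ on $\ptrr \cap m_{\lambda_-}^{-1}\ntrr$ was set up, so that essentially nothing beyond unwinding definitions is needed. Recall that, by the definition of the injective map $\Phi$ and of the quantity $d_k$ in Remark \ref{2.15}, we have
\begin{equation*}
\Phi(\beta^{\OS}_k) = \left( d_k,\ \lon\bigl(\overline{\beta^{\OS}_k}\bigr)^{\lor} \right) \in \mathbb{Q}_{\geq 0} \times (\Delta^+ \setminus \Delta^+_S),
\end{equation*}
so that $\lon(\overline{\beta^{\OS}_k})^{\lor}$ is precisely the second component of $\Phi(\beta^{\OS}_k)$.

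Next I would invoke Proposition \ref{goodreducedexpression} (equivalently, the statement already recorded in Remark \ref{2.15}): the reduced expression $m_{\lambda_-} = u s_{\ell_1}\cdots s_{\ell_L}$ was chosen so that $\beta^{\OS}_j = \gamma_j$ for all $j$, where $\left\{\gamma_1 \prec' \cdots \prec' \gamma_L\right\}$ enumerates $\ptrr \cap m_{\lambda_-}^{-1}\ntrr$ in increasing order with respect to $\prec'$, and $\prec'$ is by construction the pullback under $\Phi$ of the lexicographic order $<$ on $\mathbb{Q}_{\geq 0} \times (\Delta^+ \setminus \Delta^+_S)$. Hence for $1 \leq k < j \leq L$ we have $\beta^{\OS}_k \prec' \beta^{\OS}_j$, and therefore $\Phi(\beta^{\OS}_k) < \Phi(\beta^{\OS}_j)$.

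Finally I would read off the conclusion from the definition of the lexicographic order: $(a,\alpha) < (b,\beta)$ means either $a < b$, or $a = b$ and $\alpha \prec \beta$. Under the hypothesis $d_k = d_j$, the first components of $\Phi(\beta^{\OS}_k)$ and $\Phi(\beta^{\OS}_j)$ coincide, so the inequality $\Phi(\beta^{\OS}_k) < \Phi(\beta^{\OS}_j)$ can only be witnessed by the second components, i.e.\ $\lon(\overline{\beta^{\OS}_k})^{\lor} \prec \lon(\overline{\beta^{\OS}_j})^{\lor}$, which is exactly the assertion. There is no real obstacle here; the only point demanding a moment's care is confirming that $\lon(\overline{\beta^{\OS}_k})^{\lor}$ is literally the second coordinate of $\Phi(\beta^{\OS}_k)$ (not, say, $\lon(\overline{\beta^{\OS}_k}^{\lor})$ or some sign variant), which is immediate from the displayed formula for $\Phi$ in \S 3.1.
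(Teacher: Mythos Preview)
Your argument is correct and is essentially the same as the paper's proof: identify $\Phi(\beta^{\OS}_k)=(d_k,\lon(\overline{\beta^{\OS}_k})^{\lor})$, use $k<j\Rightarrow\Phi(\beta^{\OS}_k)<\Phi(\beta^{\OS}_j)$, and then read off the second-component inequality from the lexicographic order under the hypothesis $d_k=d_j$. The paper is simply more terse, but the logic is identical.
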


\begin{proof}
By the definitions, we obtain
$\Phi(\beta^{\OS}_k )=
\left(d_k, \lon \left( \overline{ \beta^{\OS}_k } \right)^{\lor} \right)$ and 
$\Phi(\beta^{\OS}_j )=
\left(d_j, \lon \left( \overline{ \beta^{\OS}_j } \right)^{\lor} \right)$.
Since $d_k = d_j$ and $\Phi(\beta^{\OS}_k) < \Phi(\beta^{\OS}_j)$,
we have
$
\lon \left( \overline{ \beta^{\OS}_k } \right)^{\lor} \prec \lon \left( \overline{ \beta^{\OS}_j } \right)^{\lor} .$
\end{proof}

\subsection{Orr-Shimozono formula in terms of $\QLS$ paths}
Let $\lambda \in P^+$ be a dominant  weight,
and set $S = S_\lambda = \{ i \in I \ | \ \langle \lambda , \alpha^\lor_i \rangle =0 \}$.
\begin{dfn}[{\cite[Definition 3.1]{LNSSS2}}]\label{def_qls}
A pair $\psi = (w_1, w_2 ,\ldots ,w_s ; \sigma_0, \sigma_1 , \ldots , \sigma_s ) $
of a sequence
$w_1,\ldots , w_s$ of  elements in $W^S$ 
such that $w_k \neq w_{k+1}$ for $1  \leq k \leq s-1$
   and an increasing sequence  
$0=\sigma_0< \cdots < \sigma_s=1$ of rational numbers
is called a quantum Lakshmibai-Seshadri ($\QLS$) path of shape $\lambda$
if 

(C)
for every $1\leq i \leq s-1$, there exists a directed  path from $w_{i+1}$ to $w_{i}$ in $\QBG^S_{\sigma_i \lambda}$.

\noindent
Let $\QLS(\lambda)$ denote the set of all $\QLS$ paths of shape $\lambda$.
\end{dfn}

\begin{rem}\label{def_qls_rem}
We know from \cite[Definition 3.2.2 and Theorem 4.1.1]{LNSSS4}
that condition (C) can be replaced by

(C)'
for every $1\leq i \leq s-1$, there exists a directed path from $w_{i+1}$ to $w_{i}$ in $\QBG^S_{\sigma_i \lambda}$
that is also a shortest directed path from $w_{i+1}$ to $w_{i}$ in $\QBG^S$.
\end{rem}

For $\psi=(w_1, w_2 ,\ldots ,w_s ; \sigma_0, \sigma_1 , \ldots , \sigma_s ) \in \QLS(\lambda)$,
we set 
\begin{equation*}
\wt(\psi) 
\eqdef
 \sum^{s-1}_{i=0}(\sigma_{i+1}-\sigma_i) w_{i+1} \lambda,
\end{equation*}
and
we define 
$\kappa : \QLS(\lambda) \rightarrow W^S$ by $\kappa (\psi) \eqdef w_s$.
Also, for $\mu \in W\lambda$,
we define the degree of $\psi$ at $\mu$ by
	\begin{equation*}
		\dg_{\mu}(\psi) 
		\eqdef - \sum_{i=1}^{s} \sigma_{i} \wt_{\lambda}(w_{i+1} \Rightarrow w_{i});
	\end{equation*}
here we set $w_{s+1} \eqdef v(\mu)$. 
Note that by Remark \ref{def_qls_rem}, 
$\sigma_{i} \wt_{\lambda}(w_{i+1} \Rightarrow w_{i})\in \mathbb{Z}_{\geq 0}$ for $1\leq i \leq s-1$.
Also,
$\sigma_s=1$ for $i=s$
from the definition of a $\QLS$ path.
Hence it follows that ${\dg}_{\mu}(\psi) \in \mathbb{Z}_{\leq 0}$.

Now, we define a subset $\EQB(w)$ of $W$ for each $w \in W$.
Let
$
		w = s_{i_{1}}\cdots s_{i_{p}}
$
be a reduced expression for $w$.
For each 
$
		J = \{ j_{1} < j_{2} < j_{3} < \dots < j_{r} \} \subset \{1,\ldots,p\},
$
we define
\begin{equation*}
p_{J} \eqdef
\left( w=z_0 , \ldots , z_r ;
 \beta_{j_1}, \ldots , \beta_{j_r} \right)
\end{equation*}
 as follows: 
we set $\beta_{k} \eqdef s_{i_{p}}\cdots s_{i_{k+1}}(\alpha_{i_{k}}) \in \Delta^+$ for $1\leq k \leq p$, and set
	\begin{align*}
		z_{0}&=w=s_{i_{1}}\cdots s_{i_{p}},\\
		z_{1}&=w s_{\beta_{j_{1}}} =
s_{i_{1}}\cdots s_{i_{j_1-1}} s_{i_{j_1+1}} \cdots s_{i_{p}}
= s_{i_{1}}\cdots \widebreve{s_{i_{j_{1}}}}  \cdots s_{i_{p}},\\
		z_{2}&=w s_{\beta_{j_{1}}}s_{\beta_{j_2}} =
s_{i_{1}}\cdots s_{i_{j_1-1}} s_{i_{j_1+1}} \cdots 
s_{i_{j_2-1}} s_{i_{j_2+1}} \cdots s_{i_{p}}=
 s_{i_{1}}\cdots 
		\widebreve{ s_{i_{j_{1}}} } \cdots \widebreve{ s_{i_{j_{2}}} } \cdots 	s_{i_{p}} ,\\
		&\vdots \\
		z_{r}&=w s_{\beta_{j_{1}}}\cdots s_{\beta_{j_r}} = s_{i_{1}}\cdots 
		\widebreve{ s_{i_{j_{1}}} } \cdots \widebreve{ s_{i_{j_{r}}}}  \cdots s_{i_{p}}, 
\end{align*}
where the symbol $\widebreve{\cdot \ }$
indicates a term to be omitted; also, we set $\ed(p_J)\eqdef z_r$.
Then we define
${\B}(w)
\eqdef
\{p_{J} \ |  \ J \subset \{ 1,\ldots ,p\} \}$,
and
\begin{equation*}
\QB (w) 
\eqdef
\{ p_{J} \in {\B}(w) \ | \ z_{i} \xrightarrow{\beta_{j_{i+1} }} z_{{i+1}}\,\mbox{is an  edge of }\QBG \mbox{ for all }  0 \leq  i \leq r-1 \} .
\end{equation*}
We remark that $J$ may be an empty set $\emptyset$; in this case, 
$\ed (p_\emptyset) = w$.

\begin{rem}
We identify elements in $\QB(w)$ with directed paths in $\QBG$.
More precisely, for
$
p_J = \left( w=z_0 , \ldots , z_r ;
 \beta_{j_1}, \ldots \beta_{j_r} \right)
\in \QB(w)
$,
we write
\begin{equation*}
p_J = \left( w=z_0 , \ldots , z_r ;
 \beta_{j_1}, \ldots \beta_{j_r} \right) 
=
\left( w = z_0 \xrightarrow{\beta_{j_1}} \cdots 
\xrightarrow{\beta_{j_r}} z_r \right).
\end{equation*}
\end{rem}

\begin{rem}\label{usingshell}
Let $ w=z_0 \xrightarrow{\beta_{j_1}}  z_{1} \xrightarrow{\beta_{j_2}}   \cdots \xrightarrow{\beta_{j_r}} z_{r} = z  $
be a  directed path in $\QBG$.
Then
we see that
	\begin{equation*}
			1 \leq j_1 <j_2 <\cdots <j_r \leq p \Leftrightarrow
			\left( w=z_0 \xrightarrow{\beta_{j_1}}  z_{1} \xrightarrow{\beta_{j_2}}   
			\cdots \xrightarrow{\beta_{j_r}} z_{r} = z  \right) 
			\in {\QB}(w).
	\end{equation*}
Also, it follows from Proposition \normalfont{{\ref{shellability}}} (1) that
the map $\ed : \QB(w) \rightarrow W$ is injective.
\end{rem}

By using the map $\ed :{\B}(w) \rightarrow W$ defined above,
we set
$\EQB (w) \eqdef \ed (\QB (w))$.

\begin{prop}\label{independent_of_reduced_expression}
	 The set $\EQB (w)$ is independent of the choice of a reduced expression for $w$.
\end{prop}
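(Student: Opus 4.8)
The plan is to identify $\EQB(w)$ intrinsically, without reference to a reduced expression, and then conclude that the set defined from any particular reduced expression agrees with this intrinsic description. The natural candidate is
\[
\EQB(w) = \{\, z \in W \ \mid \ \text{there is a directed path from $w$ to $z$ in $\QBG$ with strictly $\prec$-increasing edge labels} \,\},
\]
where $\prec$ is the weak reflection order on $\Delta^+$ attached to the chosen reduced expression of $w$ (extended to a reduced expression of $\lon$ as in Remark \ref{restriction}). First I would make precise the bijection, already hinted at in Remark \ref{usingshell}, between subsets $J = \{j_1 < \cdots < j_r\} \subset \{1,\dots,p\}$ giving rise to elements $p_J \in \QB(w)$ and directed paths in $\QBG$ starting at $w$ whose consecutive edge labels $\beta_{j_1}, \dots, \beta_{j_r}$ satisfy $\beta_{j_1} \prec \beta_{j_2} \prec \cdots \prec \beta_{j_r}$ in the weak reflection order determined by the fixed reduced expression. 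Indeed, by \eqref{inversion_root} and \eqref{reflectionorder}, the map $j \mapsto \beta_j = s_{i_p}\cdots s_{i_{j+1}}\alpha_{i_j}$ is an order-isomorphism from $(\{1,\dots,p\}, <)$ onto a $\prec$-initial segment of $(\Delta^+, \prec)$, so requiring $j_1 < \cdots < j_r$ is exactly requiring $\beta_{j_1} \prec \cdots \prec \beta_{j_r}$. Hence $\ed(p_J)$ ranges precisely over the endpoints of $\prec$-increasing directed paths out of $w$ in $\QBG$.

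Next I would invoke Proposition \ref{shellability}(1), applied to the source vertex $u = w$: for every target $z \in W$ there is a \emph{unique} $\prec$-increasing directed path from $w$ to $z$ in $\QBG$. Therefore the endpoint map $\ed : \QB(w) \to W$ is injective (as already noted in Remark \ref{usingshell}), and its image $\EQB(w)$ is exactly the set of those $z \in W$ for which such a $\prec$-increasing path exists — but since Proposition \ref{shellability}(1) guarantees existence of a $\prec$-increasing path between \emph{any} ordered pair, one must be careful: the condition is not vacuous, because the path must consist of labels lying in the \emph{initial segment} $\{\beta_1, \dots, \beta_p\}$ of $(\Delta^+,\prec)$, not all of $\Delta^+$. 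So the intrinsic characterization I actually want is: $z \in \EQB(w)$ iff the unique $\prec$-increasing path from $w$ to $z$ uses only edge labels $\beta$ with $\beta \preceq \beta_p$, equivalently only labels in $\Delta^+ \cap w\Delta^-$ (the inversion set of $w^{-1}$, via the standard identification with $\{\beta_1,\dots,\beta_p\}$). This description still depends on $\prec$, hence a priori on the reduced expression.

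To remove this dependence, the key point will be that although the total order $\prec$ changes with the reduced expression of $w$, the \emph{set} of attainable endpoints does not. Here I would use the shellability/lexicographic-minimality part, Proposition \ref{shellability}(2): the $\prec$-increasing path from $w$ to $z$ is a \emph{shortest} directed path from $w$ to $z$ and is lexicographically minimal among all shortest paths. Combined with the characterization of the inversion set $\{\beta_1,\dots,\beta_p\} = \Delta^+ \cap w\Delta^-$ — which is an intrinsic invariant of $w$, independent of the reduced expression — one shows: $z \in \EQB(w)$ if and only if there exists \emph{some} shortest directed path from $w$ to $z$ in $\QBG$ all of whose edge labels lie in $\Delta^+ \cap w\Delta^-$. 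One direction is immediate (the $\prec$-increasing path is such a path); the converse uses that any shortest path can be reordered — via the ``reduced-word'' combinatorics of paths in $\QBG$, i.e. repeatedly applying Proposition \ref{shellability}(2) to decrease lexicographically — into the $\prec$-increasing one without introducing labels outside the inversion set, because the label multiset of a shortest path from $w$ to $z$, read as a word, corresponds to a subword of the reduced word for $w$ and any two such are related by the usual braid/commutation moves internal to the inversion set. Since this final characterization of $\EQB(w)$ mentions only $w$, $z$, $\QBG$, and the inversion set $\Delta^+ \cap w\Delta^-$, none of which depend on a choice of reduced expression, the proposition follows.

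\textbf{Main obstacle.} The delicate step is the last one: proving that an arbitrary shortest path from $w$ to $z$ with labels in the inversion set can be transformed into the canonical $\prec$-increasing one while keeping all labels inside that same inversion set. The cleanest route is probably to bypass reordering and instead argue directly that both $\EQB(w)$ (for two different reduced expressions, with orders $\prec$ and $\prec'$) equal the same intrinsic set, by showing each contains the other: given $z$ attained by a $\prec$-increasing path $p_J$, one produces an element of $\QB(w)$ for the other reduced expression with the same endpoint by taking the unique $\prec'$-increasing path from $w$ to $z$ (which exists by Proposition \ref{shellability}(1)) and checking its labels still lie in $\Delta^+ \cap w\Delta^-$ — and this last check is exactly where Proposition \ref{shellability}(2), applied symmetrically in the two orders, is needed, since lexicographic minimality forces the label set of the shortest path to be ``as small as possible'' and in particular contained in the inversion set of $w$.
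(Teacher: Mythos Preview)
Your final approach---take the unique $\prec'$-increasing path from $w$ to $z$ and check that its labels lie in the inversion set---is exactly the paper's proof. But your justification of that check is muddled, and the crucial step is missing.

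Two small points first. The inversion set is $\Delta^+\cap w^{-1}\Delta^-$, not $\Delta^+\cap w\Delta^-$. And in any weak reflection order $\prec$ arising from a reduced expression $\lon=s_{i_{-q}}\cdots s_{i_0}\cdot s_{i_1}\cdots s_{i_p}$ extending that of $w$, the inversion set $\{\beta_1,\dots,\beta_p\}$ is the \emph{final} segment, not the initial one. So ``lex-minimality forces the labels to be as small as possible, hence in the inversion set'' is the wrong intuition: small labels are precisely the ones \emph{outside} the inversion set.

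Here is what actually happens (and what the paper does). Given $z\in\EQB(w)$ via the $\prec$-increasing path $p_J$ with labels $\beta_{j_1},\dots,\beta_{j_r}$, $1\le j_1<\cdots<j_r\le p$, let $\gamma_{n_1},\dots,\gamma_{n_r}$ with $n_1<\cdots<n_r$ be the labels of the $\prec'$-increasing path, and rewrite them in $\prec$-indexing as $\beta_{t_1},\dots,\beta_{t_r}$. Proposition~\ref{shellability}(2), applied once in the order $\prec$, says $p_J$ is lex-minimal among shortest paths; hence $(\beta_{t_1},\dots,\beta_{t_r})\ge_{\mathrm{lex}}(\beta_{j_1},\dots,\beta_{j_r})$, which gives only $t_1\ge j_1\ge1$---nothing directly about $t_2,\dots,t_r$. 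The step you are missing is the propagation: $t_1\ge1$ means $\gamma_{n_1}=\beta_{t_1}\in\Delta^+\cap w^{-1}\Delta^-$, so $n_1\ge1$, and now the $\prec'$-monotonicity $n_1<n_2<\cdots<n_r$ forces all $n_i\ge1$. Lex-minimality in $\prec$ handles the first label; the increasing property in $\prec'$ does the rest. There is no ``symmetric'' application of Proposition~\ref{shellability}(2) in both orders.

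Your earlier intrinsic characterization (``$z\in\EQB(w)$ iff some shortest path from $w$ to $z$ has all labels in $\Delta^+\cap w^{-1}\Delta^-$'') is plausible, but the converse direction is not obvious from lex-minimality alone (lex-minimality of the $\prec$-increasing path bounds its first label \emph{below} by other paths' first labels, which is the wrong direction), and the ``reordering inside the inversion set'' argument you sketch is precisely what you would need to prove; you do not supply it. The direct comparison above sidesteps this entirely.
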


\begin{proof}
Let us take two reduced expressions for $w$:
	\begin{equation*}
		{\bf I} : w=s_{i_{1}}\cdots s_{i_{p}} 
		\mbox{ and }
		{\bf K} : w=s_{k_{1}}\cdots s_{k_{p}}.
	\end{equation*}
In this proof, let ${\EQB(w)}_{\bf I}$ 
(resp., ${\EQB(w)}_{\bf K}$)
denote
the set $\EQB (w)$  associated to ${\bf I}$ 
(resp., ${\bf K}$).

It suffices to show that ${\EQB(w)}_{\bf I} \subset {\EQB}(w)_{\bf K}$.
From the two reduced expressions for $w$, we obtain the following two reduced expressions for $\lon$:
	\begin{eqnarray}
		\lon &=&s_{i_{-q}}\cdots s_{i_{0}}s_{i_{1}}\cdots s_{i_{p}} 
		\label{reduced_expression_i}, \\
		&=&s_{i_{-q}}\cdots s_{i_{0}} s_{k_{1}}\cdots s_{k_{p}} 
		\label{reduced_expression_k}.
	\end{eqnarray}
Using the reduced expression (\ref{reduced_expression_i}) (resp., (\ref{reduced_expression_k})), 
we define $\beta_m$ (resp., $\gamma_m$), $-q \leq m\leq p$, as in (\ref{inversion_root}).
Then we have
	\begin{eqnarray}
			\{ \beta_{-q}, \ldots, \beta_p \} 
		&=& \{ \gamma_{-q}, \ldots , \gamma_{p} \} 
			= \Delta^+ \label{positiveroots},  \\
			\{ \beta_{1}, \ldots, \beta_p \} 
		&=& \{ \gamma_{1}, \ldots , \gamma_{p} \} 
			=\Delta^+ \cap w^{-1} \Delta^-  \label{inversionset}.
	\end{eqnarray}
Let $z \in \EQB(w)_{\bf I}$, and 
	\begin{equation}\label{p_J}
		p_{J} =
		\left( w=z_0 \xrightarrow{\beta_{j_1}}  z_{1} \xrightarrow{\beta_{j_2}}   \cdots \xrightarrow{\beta_{j_r}} z_{r} = z  \right) 
		\in \QB(w)_{\bf I};
	\end{equation}
recall from Remark \ref{usingshell} that $1 \leq j_1 \leq \cdots \leq j_r \leq p$.
It follows from Proposition \ref{shellability} (1) that there exists a unique shortest directed path in $\QBG$  
	\begin{equation}\label{10}
		w=y_0\xrightarrow{\gamma_{n_1}} y_{1} \xrightarrow{\gamma_{n_2}}
		\cdots \xrightarrow{\gamma_{n_r}} y_{r}=z,
	\end{equation}
	with $-q \leq n_1 < n_2 < \cdots < n_r \leq p$;
this is a label-increasing directed path in the weak reflection order defined by
$\gamma_{-q} \prec \cdots \prec \gamma_{p}$.
To prove that $z \in {\EQB(w)}_{\bf K}$,
it suffices to show that $1 \leq n_1$.
It follows from (\ref{positiveroots})
that for $ 1 \leq u \leq r $, there exists $-q \leq t_u \leq p$ such that $\beta_{t_u} = \gamma_{n_u} $.
Therefore, by (\ref{10}),
	\begin{equation*}
		w=y_0 \xrightarrow{\beta_{t_1} } y_{1} \xrightarrow{ \beta_{t_2} } \cdots \xrightarrow{ \beta_{t_r} }  y_{r}=z 
	\end{equation*}
is a  directed path in $\QBG$.
We see from Proposition \ref{shellability} (2)
that this path is 
greater than or equal to the path \eqref{p_J}
in the
lexicographic order with respect to the edge labels.
In particular, we have $t_1 \geq  j_1 \geq 1$.
Since $\gamma_{n_1} = \beta_{t_1} \in \Delta^+ \cap w^{-1} \Delta^-$,
we deduce that $n_1 \geq 1$ by (\ref{inversionset}).
This implies that ${\EQB(w)}_{\bf I} \subset {\EQB}(w)_{\bf K}$.
\end{proof}

Let $\mu \in W\lambda$.
Recall that $v(\mu) \in W^S$ is the minimal-length coset representative for the coset $\{ w \in W  \ | \ w\lambda=\mu\}$.
We set 
\begin{equation*}
{\QLS}^{\mu ,\infty}(\lambda) \eqdef \{\psi \in \QLS(\lambda)\ |\ \kappa(\psi) \in \lfloor \EQB (v(\mu) \lons ) \rfloor  \}.
\end{equation*}

\begin{rem}\label{qls_qlsw0}
If $w=\lon$, then we have $\EQB (\lon)=W$  
	by Proposition \ref{shellability} (1),
since in this case, we can use all the positive roots as an edge label.
If $\mu = \lambda_- = \lon \lambda$, then $v(\mu) \lons = \lon$ by \eqref{eq:red_longest},
and hence $\lfloor \EQB(v(\mu) \lons) \rfloor = W^S$.
Therefore, we have $\QLS^{\lon \lambda, \infty}(\lambda) = \QLS(\lambda)$.
\end{rem}

With the notation above, we set
	\begin{equation*}
		{\gch}_{\mu}{\QLS}^{\mu,\infty}(\lambda) \eqdef \sum_{\psi \in {\QLS}^{\mu,\infty}(\lambda)} e^{\wt(\psi)} q^{\dg_{\mu}(\psi)} .
	\end{equation*}
The following is the main result of this section.
\begin{thm}\label{main}
Let $\lambda \in P^+$ be a dominant weight, and $\mu\in W\lambda$.
Then,
	\begin{equation*}
		E_{\mu}(q, \infty)= {\gch}_{\mu}{\QLS}^{\mu,\infty}(\lambda) .
	\end{equation*}
\end{thm}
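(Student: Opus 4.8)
The plan is to derive Theorem~\ref{main} from the Orr--Shimozono formula (Proposition~\ref{os}) by constructing an explicit bijection
\[
\Xi\colon \overleftarrow{\QB}(\id; m_\mu)\ \xrightarrow{\sim}\ \QLS^{\mu,\infty}(\lambda)
\]
that preserves weights and degrees, in the sense that $\wt(\ed(p^\OS_J)) = \wt(\Xi(p^\OS_J))$ and $-\dg(\qwt^*(p^\OS_J)) = \dg_\mu(\Xi(p^\OS_J))$ for every $p^\OS_J \in \overleftarrow{\QB}(\id; m_\mu)$; once this is in hand, Proposition~\ref{os} and the definition of $\gch_\mu \QLS^{\mu,\infty}(\lambda)$ give the claimed identity term by term. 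Throughout I would work with the good reduced expression $m_{\lambda_-} = u s_{\ell_1}\cdots s_{\ell_L}$ of Proposition~\ref{goodreducedexpression}, keeping track of the data $d_k \in \mathbb{Q}\cap[0,1)$ and the roots $\lon(\overline{\beta^\OS_k})^\lor \in \Delta^+\setminus\Delta^+_S$ encoded by $\Phi$, together with Remarks~\ref{akbk} and~\ref{2.15} and the monotonicity \eqref{C}. For a general $\mu \in W\lambda$ I would first choose a reduced expression of $v(\lambda_-)$ that begins with one of $v(\lambda_-)v(\mu)^{-1}$ (possible since lengths add by Lemma~\ref{vmu}~(4)); then Lemma~\ref{lengthadditive} and Lemma~\ref{vmu}~(3) exhibit a reduced expression of $m_\mu$ as a length-additive suffix of the associated good reduced expression of $m_{\lambda_-}$, so that the alcove walks counted for $m_\mu$ appear as a distinguished sub-collection of those for $m_{\lambda_-}$ (for $\mu = \lambda_-$ one has $\QLS^{\lambda_-,\infty}(\lambda) = \QLS(\lambda)$ by Remark~\ref{qls_qlsw0}).

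For the forward map, fix $p^\OS_J$ with $J = \{ j_1 < \cdots < j_r \}$ and record $y_i \eqdef \dir(z^\OS_i)$; the defining condition of $\overleftarrow{\QB}$ gives a $\QBG$-edge $y_{i+1} \to y_i$ for each $i$, so $y_r \to y_{r-1} \to \cdots \to y_1 \to y_0$ is a directed path in $\QBG$ with $y_0 = v(\mu)v(\lambda_-)^{-1}$ by Lemma~\ref{vmu}~(1). By \eqref{C} the values $d_{j_1}\le \cdots \le d_{j_r}$ split $J$ into maximal blocks of constant $d$-value, and by Lemma~\ref{remark2.11} the roots attached to the indices in a single block are strictly $\prec$-increasing; pushing the corresponding sub-path of the above $\QBG$-path through $\lfloor\cdot\rfloor$ yields, by Lemma~\ref{8.1}, a directed path in $\QBG^S_{\tau\lambda}$, where $\tau$ is the common $d$-value of that block. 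Recording the distinct values $0 = \sigma_0 < \sigma_1 < \cdots < \sigma_s = 1$ obtained from $\{0,1\}$ and the $d_{j_i}$, together with the $\lfloor\cdot\rfloor$-images of the $y_i$ at the block boundaries, and deleting repetitions, produces a pair $\psi = (w_1,\ldots,w_s;\sigma_0,\ldots,\sigma_s)$; condition (C) for $\psi$ is precisely what the previous sentence establishes, so $\psi \in \QLS(\lambda)$, and its final direction $w_s = \kappa(\psi)$ arises from the terminal direction $\dir(z^\OS_r)$. Finally, tracing this $\QBG$-path down to $y_0 = v(\mu)v(\lambda_-)^{-1}$ and using Lemma~\ref{vmu}~(2) together with the definition of $\EQB$ and Proposition~\ref{shellability}~(1), one identifies $\kappa(\psi)$ with an element of $\lfloor\EQB(v(\mu)\lons)\rfloor$, so in fact $\psi \in \QLS^{\mu,\infty}(\lambda)$.

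To see that $\Xi$ is a bijection I would build the inverse directly. Given $\psi = (w_1,\ldots,w_s;\sigma_0,\ldots,\sigma_s) \in \QLS^{\mu,\infty}(\lambda)$, Lemma~\ref{8.5}, applied segment by segment, lifts the parabolic data of $\psi$ to a single label-increasing directed path in $\QBG$ that, after prepending $v(\mu)$, begins at $v(\mu)\lons$; by Remark~\ref{usingshell} its sequence of edge labels picks out an increasing index set $J \subset \{1,\ldots,L\}$, hence a genuine alcove path $p^\OS_J$, which lies in $\overleftarrow{\QB}(\id; m_\mu)$ because each of its direction-edges is, by construction, an edge of $\QBG$; injectivity of $\Xi$ follows from the injectivity of $\ed$ on $\QB$ (Remark~\ref{usingshell}) and of $\Phi$, and the two constructions are mutually inverse. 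It then remains to verify the weight and degree identities: $\wt(\ed(p^\OS_J)) = \wt(\psi)$ is obtained by writing $z^\OS_r = m_\mu s_{\beta^\OS_{j_1}}\cdots s_{\beta^\OS_{j_r}}$ in the form $t(\cdot)\dir(\cdot)$ and telescoping the contributions of the reflections against $\wt(\psi) = \sum_i (\sigma_{i+1} - \sigma_i) w_{i+1}\lambda$; and $-\dg(\qwt^*(p^\OS_J)) = \dg_\mu(\psi)$ follows by matching, block by block, the quantum edges of the walk (each contributing $\beta^\OS_j$ to $\qwt^*(p^\OS_J)$, with $\dg(\beta^\OS_j) = a_j$) against the terms $\sigma_i \wt_\lambda(w_{i+1}\Rightarrow w_i)$, using $\sigma_i = d_j$ on the $i$-th block, Remarks~\ref{akbk} and~\ref{2.15}, and the integrality noted after Remark~\ref{def_qls_rem}.

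The main obstacle is the well-definedness of the forward map together with its converse: one must show that the $d$-value blocks of an element of $\overleftarrow{\QB}(\id; m_\mu)$ produce exactly the data of a $\QLS$ path — i.e. that the blockwise paths descending through $\lfloor\cdot\rfloor$ give condition (C) at the correct rational numbers $\sigma_i$ — and, conversely, that every $\QLS$ path in $\QLS^{\mu,\infty}(\lambda)$ lifts; and one must show that the reachability of $y_0 = v(\mu)v(\lambda_-)^{-1}$ along this $\QBG$-path corresponds precisely to the membership $\kappa(\psi) \in \lfloor\EQB(v(\mu)\lons)\rfloor$. This is where the combinatorics of the total order $\prec'$ from Proposition~\ref{goodreducedexpression}, the shellability results (Proposition~\ref{shellability}, Lemmas~\ref{8.1} and~\ref{8.5}), and the length-additivity statements of Lemmas~\ref{vmu} and~\ref{lengthadditive} must be combined with care; by contrast, the weight- and degree-preservation are essentially bookkeeping once the bijection is set up.
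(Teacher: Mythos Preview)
Your overall strategy matches the paper's exactly: build a bijection $\Xi\colon\overleftarrow{\QB}(\id;m_\mu)\to\QLS^{\mu,\infty}(\lambda)$ preserving weights and degrees, and then invoke Proposition~\ref{os}. The choice of reduced expression for $m_\mu$ as a suffix of the good one for $m_{\lambda_-}$ (Proposition~\ref{goodreducedexpression}, Lemma~\ref{lengthadditive}, Lemma~\ref{vmu}), the blocking by constant $d$-value, and the use of Lemmas~\ref{8.1} and~\ref{8.5} are all as in the paper.

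There is, however, a genuine gap in your forward map. You take the $\QLS$ elements to be $\lfloor y_i\rfloor$ with $y_i=\dir(z^\OS_i)$ at the block boundaries, and the $\QLS$ parameters to be the $d$-values themselves, with $\kappa(\psi)=\lfloor y_r\rfloor$. This already fails for $J=\emptyset$: you would get $\psi=(\lfloor v(\mu)v(\lambda_-)^{-1}\rfloor;0,1)$, whose weight is $v(\mu)v(\lambda_-)^{-1}\lambda\neq\mu$ in general, so $\wt(\psi)\neq\wt(\ed(p^\OS_\emptyset))=\mu$; and $\dg_\mu(\psi)=-\wt_\lambda(v(\mu)\Rightarrow\lfloor v(\mu)v(\lambda_-)^{-1}\rfloor)$ need not vanish, whereas $\qwt^*(p^\OS_\emptyset)=0$. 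The paper inserts two compensating twists that you omit. First, it right-multiplies all directions by $\lon$ via Lemma~\ref{involution}, setting $w_p\eqdef x_{u_p}\lon$; this turns $x_0$ into $v(\mu)\lons$ (Lemma~\ref{vmu}\,(2)) and converts the edge labels into $\lon(\overline{\beta^\OS_{j}})^\lor=\beta_{j}\in\Delta^+\setminus\Delta_S^+$, which is exactly what Lemma~\ref{remark2.11} orders and what the definition of $\EQB(v(\mu)\lons)$ requires. Second, it takes the $\QLS$ parameters to be $1-\sigma_p$ (not $\sigma_p$), with the element sequence reversed, so that
\[
\Xi(p^\OS_J)=\bigl(\lfloor w_s\rfloor,\lfloor w_{s-1}\rfloor,\ldots,\lfloor w_1\rfloor;\,1-\sigma_s,\ldots,1-\sigma_0\bigr),
\]
and hence $\kappa(\Xi(p^\OS_J))=\lfloor w_1\rfloor$. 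The membership $\lfloor w_1\rfloor\in\lfloor\EQB(v(\mu)\lons)\rfloor$ is then Lemma~\ref{final_direction}, proved using that only the block with $d=0$ (hence $K+1\le j_k\le M$) contributes to the path $w_0=v(\mu)\lons\to w_1$; this is precisely why your $\kappa(\psi)=\lfloor y_r\rfloor$, which sits at the opposite end, cannot be handled the way you describe.

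Once the $\lon$-twist and the $d\mapsto 1-d$ reversal are inserted, your outline becomes the paper's proof: bijectivity is Proposition~\ref{bijective} (the inverse is built by lifting each segment via Lemma~\ref{8.5}\,(2),(3), producing affine roots $\widetilde{\gamma}_{p,m}\in\ptrr\cap m_\mu^{-1}\ntrr$ and checking they are $\prec'$-increasing), and weight/degree preservation is Proposition~\ref{qls_qb}, proved by induction on $\#J$ with the key identity $(1-d_{j_r})\langle\lambda,\lon\overline{\beta^\OS_{j_r}}\rangle=a_{j_r}$.
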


\subsection{Proof of Theorem~\ref{main}}
Let $\lambda \in P^+$ be a dominant weight,  $\mu \in W\lambda$,
and set $S=S_\lambda = \{ i \in I \ | \ \langle \lambda , \alpha^\lor_i \rangle =0 \}$.
In this subsection, we give a bijection
	\begin{equation*}
		\Xi : \overleftarrow{{\QB}}(\id; m_{\mu}) \rightarrow {\QLS}^{\mu, \infty}(\lambda)
	\end{equation*}
that preserves  weights and degrees in order to prove Theorem~\ref{main}.

We fix reduced expressions
\begin{align}
v(\lambda_-) v(\mu)^{-1}&=  s_{i_1}\cdots s_{i_{K}} , \nonumber \\ 
v(\mu) &= s_{i_{K+1}}\cdots s_{i_{M}}, \label{eq:red_exp1} \\
\lons &= s_{i_{M+1}}\cdots s_{i_N} \label{eq:red_exp2}
\end{align}
for 
$v(\lambda_-)v(\mu)^{-1}$, $v(\mu)$, and $\lons$, respectively; 
recall that $\lambda_- = \lon \lambda$.
Then, by Lemma \ref{vmu} (4),
$
v(\lambda_-) =  s_{i_1}\cdots s_{i_{M}}
$
is a reduced expression for $v(\lambda_-)$.
As in \S 3.1,
we use the weak reflection order $\prec$ on $\Delta^+$
introduced in Remark \ref{restriction}
 (which satisfies
(\ref{bunkai}))
determined by the reduced expressions above for $v(\lambda_-)$ and $\lons$.
Also, we use the total order $\prec'$ on $\ptrr \cap m_{\lambda_-}^{-1} \ntrr$
defined just before Proposition~\ref{goodreducedexpression},
and take the reduced expression 
$m_{\lambda_-} = u s_{\ell_1}\cdots s_{\ell_L}$
for $m_{\lambda_-}$ given by Proposition \ref{goodreducedexpression};
recall that
$u s_{\ell_k} = s_{i_k} u$ for $1 \leq k \leq M$.
It follows from Lemma \ref{vmu} (3)
that $\left(v(\mu) v(\lambda_-)^{-1} \right)m_{\lambda_-} = m_{\mu}$
and
$- \ell(v(\mu) v(\lambda_-)^{-1} ) + \ell(m_{\lambda_-})  
=  \ell(m_{\mu})$.
Moreover, we see that
\begin{align*}
 \left( v(\mu) v(\lambda_-)^{-1} \right)   m_{\lambda_-}
& = 
\left( s_{i_{K}}\cdots s_{i_1} \right)  us_{\ell_{1}} \cdots s_{\ell_L} \\
& \overset{\rm Lemma  \ \ref{lengthadditive}}{=} 
u s_{\ell_{K}}\cdots s_{\ell_1} s_{\ell_1} \cdots s_{\ell_L}
=
us_{\ell_{K+1}}\cdots s_{\ell_L},
\end{align*}
and hence
$m_{\mu}=us_{\ell_{K+1}}\cdots s_{\ell_L}$
is a reduced expression for $m_{\mu}$.
In particular, when $\mu = \lambda$
(note that $v(\lambda)=e$), $m_{\lambda}=us_{\ell_{M+1}}\cdots s_{\ell_L}$
is a reduced expression for $m_{\lambda}$.

Also,
recall from Remark \ref{restriction} and the beginning of \S 3.1 that
$\beta_k = s_{i_N} \cdots s_{i_{k+1}}\alpha_{i_k}$,
$1 \leq k \leq N$,
and
$\beta^\OS_k = s_{\ell_L} \cdots s_{\ell_{k+1}}\alpha^\lor_{\ell_k}$,
$1 \leq k \leq L$.

\begin{rem}
Keep the notation above.
We have
\begin{align*}
\ptrr \cap m^{-1}_{\lambda_-} \ntrr &= \{ \beta^\OS_1 , \ldots , \beta^\OS_L \}, \\
\ptrr \cap m^{-1}_{\mu} \ntrr &= \{ \beta^\OS_{K+1} , \ldots , \beta^\OS_L \}, \\
\ptrr \cap m^{-1}_{\lambda} \ntrr &= \{ \beta^\OS_{M+1} , \ldots , \beta^\OS_L \}. 
\end{align*}
In particular, we have 
$\ptrr \cap m^{-1}_{\lambda} \ntrr
\subset
\ptrr \cap m^{-1}_{\mu} \ntrr
\subset
\ptrr \cap m^{-1}_{\lambda_-} \ntrr$.
\end{rem}

\begin{lem}[\normalfont{\cite[(2.4.7) (i)]{M}}]\label{shortest}
If we denote by $\varsigma$ the characteristic function of $\Delta^-$, i.e.,
\begin{align*}
\varsigma(\gamma) \eqdef
\left\{
			\begin{array}{ll}
				0  & \ \ \mbox{if} \ \gamma \in \Delta^+  ,  \\
				1  & \ \ \mbox{if} \ \gamma \in \Delta^-  ,
			\end{array}
\right.
\end{align*}
then
\begin{equation*}
\ptrr \cap m^{-1}_{\mu} \ntrr
=
\{
\alpha^\lor + a\widetilde{\delta}
\ | \
\alpha \in \Delta^-,
0 < a < \varsigma(v(\mu) v(\lambda_-)^{-1} \alpha) +\langle  \lambda , \lon \alpha^\lor \rangle
\}.
\end{equation*}
\end{lem}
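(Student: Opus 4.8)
\emph{Proof proposal.} Although this is exactly \cite[(2.4.7)\,(i)]{M}, one can derive it from the antidominant case \eqref{B} together with the length-additive factorization in Lemma~\ref{vmu}, and this is the route I would take. First I would invoke Lemma~\ref{vmu}\,(3), which gives $\bigl(v(\lambda_-)v(\mu)^{-1}\bigr)m_\mu = m_{\lambda_-}$ with $\ell\bigl(v(\lambda_-)v(\mu)^{-1}\bigr)+\ell(m_\mu)=\ell(m_{\lambda_-})$. For a length-additive product in $\widetilde{W}_{\ext}$ the inversion sets satisfy the usual disjoint-union identity, so $\ptrr \cap m_{\lambda_-}^{-1}\ntrr = \bigl(\ptrr \cap m_\mu^{-1}\ntrr\bigr)\sqcup m_\mu^{-1}\bigl(\ptrr \cap (v(\lambda_-)v(\mu)^{-1})^{-1}\ntrr\bigr)$, hence $\ptrr \cap m_\mu^{-1}\ntrr = \bigl(\ptrr \cap m_{\lambda_-}^{-1}\ntrr\bigr)\setminus m_\mu^{-1}\bigl(\ptrr \cap (v(\lambda_-)v(\mu)^{-1})^{-1}\ntrr\bigr)$. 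Into the first term I would substitute \eqref{B}, rewriting the bound there via $\langle\lambda_-,\alpha^\lor\rangle = \langle\lon\lambda,\alpha^\lor\rangle = \langle\lambda,\lon\alpha^\lor\rangle$.

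Next I would analyze the subtracted set $m_\mu^{-1}(X)$, where $X \eqdef \ptrr \cap (v(\lambda_-)v(\mu)^{-1})^{-1}\ntrr$. Since $v(\lambda_-)v(\mu)^{-1}\in W$, the set $X$ contains only roots with vanishing $\widetilde{\delta}$-component, namely $X=\{\gamma^\lor\in\widetilde{\Delta}^+\mid v(\lambda_-)v(\mu)^{-1}\gamma^\lor\in\widetilde{\Delta}^-\}$; parametrizing $\gamma^\lor\in X$ by $\alpha^\lor\eqdef v(\lambda_-)v(\mu)^{-1}\gamma^\lor$, this amounts to letting $\alpha$ run over $\{\alpha\in\Delta^-\mid v(\mu)v(\lambda_-)^{-1}\alpha\in\Delta^+\}$. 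From $m_\mu = t(\mu)v(\mu)v(\lambda_-)^{-1}$ (see \eqref{A}) one has $m_\mu^{-1}=v(\lambda_-)v(\mu)^{-1}t(-\mu)$, and applying the displayed action of $\widetilde{W}_{\ext}$ on $\mathfrak{h}\oplus\mathbb{C}\widetilde{\delta}$ gives $m_\mu^{-1}\gamma^\lor = \alpha^\lor + \langle\mu,\gamma^\lor\rangle\widetilde{\delta}$. It then remains to check $\langle\mu,\gamma^\lor\rangle = \langle\lambda,\lon\alpha^\lor\rangle$, which I would deduce from $v(\mu)\lambda=\mu$ (so $v(\mu)^{-1}\mu=\lambda$), the identity $v(\lambda_-)=\lon\lons$ recorded after \eqref{eq:red_longest}, and $\lons\lambda=\lambda$ (valid since $S=S_\lambda$). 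Hence $m_\mu^{-1}(X) = \{\alpha^\lor+\langle\lambda,\lon\alpha^\lor\rangle\widetilde{\delta}\mid \alpha\in\Delta^-,\ v(\mu)v(\lambda_-)^{-1}\alpha\in\Delta^+\}$.

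Finally, for a fixed $\alpha\in\Delta^-$ I would merge the two cases. If $v(\mu)v(\lambda_-)^{-1}\alpha\in\Delta^-$, so $\varsigma(v(\mu)v(\lambda_-)^{-1}\alpha)=1$, then nothing is removed and the surviving $a$-range is $0<a\le\langle\lambda,\lon\alpha^\lor\rangle$, that is, $0<a<\varsigma(v(\mu)v(\lambda_-)^{-1}\alpha)+\langle\lambda,\lon\alpha^\lor\rangle$. If $v(\mu)v(\lambda_-)^{-1}\alpha\in\Delta^+$, so $\varsigma=0$, then exactly the top value $a=\langle\lambda,\lon\alpha^\lor\rangle$ is deleted, leaving $0<a<\langle\lambda,\lon\alpha^\lor\rangle=\varsigma(v(\mu)v(\lambda_-)^{-1}\alpha)+\langle\lambda,\lon\alpha^\lor\rangle$. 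Either way we arrive at the asserted inequality. The disjoint-union identity is routine (it applies verbatim here because $v(\lambda_-)v(\mu)^{-1}$ lies in the finite part of $\widetilde{W}_{\aff}$ and the factorization is length additive); the points needing care are the weight identity $\langle\mu,\gamma^\lor\rangle=\langle\lambda,\lon\alpha^\lor\rangle$ and the check that the two cases genuinely combine into the single inequality with $\varsigma$ — and here the self-consistency of the disjoint-union identity already forces $\langle\lambda,\lon\alpha^\lor\rangle\ge 1$ whenever $v(\mu)v(\lambda_-)^{-1}\alpha\in\Delta^+$, so no degenerate subcase requires separate treatment.
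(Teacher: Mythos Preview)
Your argument is correct. The paper itself does not prove this lemma: it is simply quoted from \cite[(2.4.7)\,(i)]{M} and used as input. What you have written is therefore not a comparison target but an independent derivation, and it is a clean one: you reduce the general $\mu$ to the antidominant case \eqref{B} via the length-additive factorization $m_{\lambda_-}=\bigl(v(\lambda_-)v(\mu)^{-1}\bigr)m_\mu$ of Lemma~\ref{vmu}\,(3), invoke the standard disjoint-union identity for inversion sets under length-additive products, and then explicitly compute the finite piece being removed. The weight identity $\langle\mu,\gamma^\lor\rangle=\langle\lambda,\lon\alpha^\lor\rangle$ is checked correctly (using $v(\mu)^{-1}\mu=\lambda$, $v(\lambda_-)^{-1}=\lons\lon$, and $\lons\lambda=\lambda$), and the two $\varsigma$-cases merge as you say. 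This is essentially how one would unpack Macdonald's formula in the present notation; it has the virtue of staying entirely within the paper's own toolkit rather than sending the reader to \cite{M}.
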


\begin{rem}\label{affinereflectionorder}
Let
$\gamma_1, \gamma_2 ,\ldots, \gamma_r \in \ptrr \cap m^{-1}_\mu \ntrr$,
and 
consider the sequence 
\\
$\left( y_0 , y_1 , \ldots , y_r ; \gamma_1 , \gamma_2, \ldots , \gamma_r \right)$
defined by
$y_0=m_\mu$, and $y_i =y_{i-1}s_{\gamma_i}$ for $1 \leq i \leq r$.
Then, the sequence 
$\left( y_0 , y_1 , \ldots , y_r ; \gamma_1 , \gamma_2, \ldots , \gamma_r \right)$
is an element of 
$\overleftarrow{{\QB}}({\id}; m_{\mu})$
if and only if the following conditions hold{\rm :}
\begin{enu}
\item
$\gamma_1\prec' \gamma_2 \prec' \cdots \prec' \gamma_r$,
where the order $\prec'$ is 
the weak reflection order on $ \ptrr \cap m^{-1}_\mu \ntrr$
introduced at the beginning of \S 3.3;

\item
$\dir(y_{i-1}) \xleftarrow{-\left( \overline{\gamma_i} \right)^\lor} \dir(y_{i})$
is
an edge of $\QBG$ for $1\leq i \leq r$.
\end{enu}
\end{rem}

In the following,
we define a map
$\Xi : \overleftarrow{{\QB}}(\id; m_{\mu}) \rightarrow {\QLS}^{\mu , \infty}(\lambda)$.
Let $p^{\OS}_{J}$ be an arbitrary element of  $\overleftarrow{{\QB}}(\id; m_{\mu})$ of the form
	\begin{equation*}
		p^{\OS}_{J} = \left( m_{\mu} = z^{\OS}_0 ,  z^{\OS}_{1} , \ldots , z^{\OS}_{r}
; \beta^{\OS}_{j_1} , \beta^{\OS}_{j_2}, \ldots , \beta^{\OS}_{j_r} \right) \in \overleftarrow{{\QB}}(\id; m_{\mu}),
	\end{equation*}
with $J = \{ j_1 < \cdots < j_r\} \subset \{ K+1 , \ldots , L \}$.
	We set $x_k \eqdef {\dir}(z^{\OS}_k)$,
	$0 \leq k \leq r$. Then, by the definition of $\overleftarrow{\QB}({\id} ; m_{\mu})$, 
	\begin{equation}\label{2.15.5}
		v(\mu)v(\lambda_-)^{-1} \overset{\rm Lemma \ \ref{vmu}}{=}  x_0 \xleftarrow{- \left( \overline{ \beta^{\OS}_{j_1} } \right)^{\lor}  }   x_1
		\xleftarrow{- \left( \overline {\beta^{\OS}_{j_2} } \right)^{\lor} } \cdots  \xleftarrow{ - \left( \overline{ \beta^{\OS}_{j_r} } \right)^{\lor} }  x_r
	\end{equation}
	is a directed path in $\QBG$.
	We take $0 = u_0 \leq u_1 < \cdots < u_{s-1} < u_s=r$ and $0 = \sigma_0 \leq \sigma_1 < \cdots <\sigma_{s-1} < 1 = \sigma_{s}$ in such a way that
(see (\ref{C}))
	\begin{equation}\label{2.16}
		 \underbrace{0 = d_{j_1} = \cdots = d_{j_{u_1}} }_{=\sigma_0}
		< \underbrace{d_{j_{u_1 +1}} = \cdots =d_{j_{u_2}}}_{=\sigma_1} < \cdots <
		\underbrace{ d_{j_{u_{s-1}+1}} = \cdots =d_{j_r} }_{=\sigma_{s-1}} <1 = \sigma_{s};
	\end{equation}
	note that $d_{j_1}>0$ if and only if $u_1=0$.
	We set
	$w'_p \eqdef x_{u_p}$ for $0 \leq p \leq s-1$, and
	$w'_s \eqdef x_r$. 
	Then, by taking a subsequence of (\ref{2.15.5}), we obtain the following  directed path in $\QBG$
	for each $0 \leq p \leq s-1$:
	\begin{equation*}
		w'_p = x_{{u_p}} \xleftarrow{- \left( \overline{ \beta^{\OS}_{j_{u_p +1}} } \right)^{\lor} }   x_{{u_p +1}} 
		\xleftarrow{- \left( \overline {  \beta^{\OS}_{j_{u_p +2}} } \right)^{\lor} } \cdots  
		\xleftarrow{ -\left( \overline{  \beta^{\OS}_{j_{u_{p+1}}} } \right)^{\lor} }  x_{{u_{p+1}}} = w'_{p+1}.
	\end{equation*}
	Multiplying this directed path on the right by $\lon$, 
	we obtain the following directed path in $\QBG$
	for each $0 \leq p \leq s-1$
	(see Lemma \ref{involution}):
	\begin{equation}\label{w'_p}
		w_p \eqdef
		w'_p \lon =
		x_{{u_p}} \lon \xrightarrow{ \lon \left( \overline{ \beta^{\OS}_{j_{u_p +1}} } \right)^{\lor} }  \cdots  
		\xrightarrow{ \lon \left( \overline{ \beta^{\OS}_{j_{u_{p+1}}} } \right)^{\lor} }  x_{{u_{p+1}}} \lon
		= w'_{p+1} \lon \eqdef w_{p+1}.
	\end{equation}
	Note that the edge labels of this directed path are increasing
in the weak reflection order $\prec$ on $\Delta^+$ introduced at the beginning of \S 3.3
(see Lemma \ref{remark2.11})
	  and lie in $\Delta^+ \setminus \Delta^+_S$;
	this property will be used to give the inverse to $\Xi$.
	Because
	\begin{equation*}
		(1-\sigma_p) \langle \lambda , \lon \overline{ {\beta^{\OS}_{j_u}} }   \rangle
		=
		(1-d_{j_u}) \langle \lambda , \lon \overline{ {\beta^{\OS}_{j_u}} }   \rangle
		=
		-\frac{a_{j_u}}{\langle \lambda_- ,  -\overline{ \beta^{\OS}_{j_u} }   \rangle}
		\langle \lambda_- , \overline{ \beta^{\OS}_{j_u} }   \rangle
		=
		 a_{j_u} \in \mathbb{Z}
	\end{equation*}
	for $u_p +1 \leq u \leq u_{p+1}$, $0 \leq p \leq s-1$,
	we find that (\ref{w'_p}) is a  directed  path in $\QBG_{(1-\sigma_p) \lambda}$ for $0 \leq p \leq s-1$.
	Therefore, by Lemma \ref{8.1}, there exists a  directed path in $\QBG^S_{(1-\sigma_p) \lambda}$ from $\lfloor w_p \rfloor$ to $\lfloor w_{p+1} \rfloor$,
where
$S = \{ i \in I \ | \ \langle \lambda , \alpha^\lor_i \rangle =0 \}$.
Also, we claim that $\lfloor w_p \rfloor \neq \lfloor w_{p+1} \rfloor$ for $1 \leq p \leq s-1$. Suppose,
for a contradiction, that $\lfloor w_p \rfloor = \lfloor w_{p+1} \rfloor$
for some $p$.
Then, $w_p W_S = w_{p+1} W_S$,
and hence $\min(w_{p+1}W_S, w_p) = \min(w_{p}W_S, w_p) =w_p$.
Recall that 
the directed path (\ref{w'_p}) is a path in QBG from $w_{p}$ to $w_{p+1}$ whose labels are increasing and lie in $\Delta^+ \setminus \Delta^+_S$.
By Lemma \ref{8.5} (1), (2), the directed path (\ref{w'_p}) is a shortest path in QBG from $w_p$ to $\min(w_{p+1}W_S, w_p) = \min(w_{p}W_S, w_p) =w_p$,
which implies that the length of the directed path (\ref{w'_p}) is equal to $0$.
Therefore, $\{ j_{u_p +1}, \ldots , j_{u_{p+1}} \} = \emptyset$, and hence $u_p = u_{p+1}$,
which contradicts the fact that $u_p < u_{p+1}$.

	Thus we obtain 
	\begin{equation}\label{D}
		\psi \eqdef 
		(\lfloor w_{s} \rfloor , \lfloor w_{s-1} \rfloor ,\ldots \ , \lfloor w_{1} \rfloor ; 1-{\sigma}_{s},\ldots, 1-{\sigma}_{0})  \in \QLS(\lambda).
	\end{equation}
We now define $\Xi (p^{\OS}_{J}) \eqdef \psi$.

\begin{lem}\label{final_direction}
Keep the notation and setting  above{\rm;}
let $s_{i_{K+1}} \cdots s_{i_M} s_{i_{M+1}} \cdots s_{i_N}$ be a reduced expression for 
$v(\mu)\lons$ obtained
by combining \eqref{eq:red_exp1} and \eqref{eq:red_exp2}. 
Then, $\lfloor w_1 \rfloor \in \lfloor \EQB(v(\mu)\lons) \rfloor$.
Hence we obtain a map
$
\Xi:
\overleftarrow{{\QB}}(\id; m_{\mu})
\rightarrow
\QLS^{\mu , \infty}(\lambda)
$.
\end{lem}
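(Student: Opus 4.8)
The goal is to check that $\kappa(\psi)=\lfloor w_1\rfloor\in\lfloor\EQB(v(\mu)\lons)\rfloor$; combined with the fact that $\psi\in\QLS(\lambda)$ already established in \eqref{D}, this will show that the assignment $p^{\OS}_J\mapsto\psi$ is a well-defined map $\Xi:\overleftarrow{\QB}(\id;m_\mu)\to\QLS^{\mu,\infty}(\lambda)$. The plan is to exhibit $w_1$ as the endpoint $\ed(p_{J'})$ of a suitable element $p_{J'}$ of $\QB(v(\mu)\lons)$ coming from the reduced expression $v(\mu)\lons=s_{i_{K+1}}\cdots s_{i_N}$ obtained by concatenating \eqref{eq:red_exp1} and \eqref{eq:red_exp2}.

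First I would identify $w_0$. By Lemma \ref{vmu}(1) we have $\dir(m_\mu)=v(\mu)v(\lambda_-)^{-1}$, hence $w_0=x_0\lon=\dir(z^{\OS}_0)\lon=\dir(m_\mu)\lon=v(\mu)v(\lambda_-)^{-1}\lon$, which equals $v(\mu)\lons$ by Lemma \ref{vmu}(2). Thus the $p=0$ segment of \eqref{w'_p} is a directed path in $\QBG$
\[
v(\mu)\lons=w_0\xrightarrow{\ \lon(\overline{\beta^{\OS}_{j_1}})^{\lor}\ }\cdots\xrightarrow{\ \lon(\overline{\beta^{\OS}_{j_{u_1}}})^{\lor}\ }w_1 ,
\]
whose edge labels (indexed by those $j_i$ with $d_{j_i}=\sigma_0=0$) are strictly increasing in the weak reflection order $\prec$ on $\Delta^+$ and lie in $\Delta^+\setminus\Delta^+_S$, as recorded in the discussion following \eqref{w'_p}. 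If $d_{j_1}>0$ then $u_1=0$, this path is trivial, $w_1=v(\mu)\lons=\ed(p_\emptyset)\in\EQB(v(\mu)\lons)$, and the assertion is clear; so from now on assume $u_1\ge 1$.

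The key step is to recognize these labels as inversion roots of $v(\mu)\lons$. By the argument in the proof of Lemma \ref{lengthadditive} (the equality $M=M'$ there), one has $d_k=0$ exactly for $1\le k\le M$, so every index $j_i$ with $1\le i\le u_1$ satisfies $K<j_i\le M$; and, again by Lemma \ref{lengthadditive}, for $1\le k\le M$ the label $\lon(\overline{\beta^{\OS}_k})^{\lor}$ coincides with $\beta_k=s_{i_N}\cdots s_{i_{k+1}}\alpha_{i_k}$. Now $v(\mu)\lons=s_{i_{K+1}}\cdots s_{i_N}$ is a reduced expression whose associated inversion roots are precisely $\beta_{K+1},\dots,\beta_N$, and since $K<j_1<\cdots<j_{u_1}\le M\le N$, the displayed path above is exactly $p_{J'}$ for $J'=\{\,j_1-K<\cdots<j_{u_1}-K\,\}\subseteq\{1,\dots,N-K\}$. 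By Remark \ref{usingshell} this path therefore lies in $\QB(v(\mu)\lons)$, whence $w_1=\ed(p_{J'})\in\EQB(v(\mu)\lons)$ and $\lfloor w_1\rfloor\in\lfloor\EQB(v(\mu)\lons)\rfloor$, as required.

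The step I expect to be the real obstacle is the bookkeeping in the previous paragraph: matching the edges traversed while the parameter $\sigma$ still equals $0$ with the inversion roots of $v(\mu)\lons$. This rests entirely on the fine structure of the reduced expression $m_{\lambda_-}=us_{\ell_1}\cdots s_{\ell_L}$ supplied by Proposition \ref{goodreducedexpression} and analyzed in Lemma \ref{lengthadditive} — in particular on the identification of its ``finite part'' $1\le k\le M$ (where $d_k=0$) with the fixed reduced expression of $v(\lambda_-)$ and with the weak reflection order $\prec$ on $\Delta^+$ — together with the constraint $J\subseteq\{K+1,\dots,L\}$, which forces all relevant indices into $\{K+1,\dots,M\}$. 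Once this dictionary is in place, the conclusion is immediate from Remark \ref{usingshell} and Proposition \ref{shellability}(1).
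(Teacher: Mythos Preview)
Your proof is correct and follows essentially the same approach as the paper's: both identify $w_0=v(\mu)\lons$ via Lemma~\ref{vmu}, dispose of the trivial case $u_1=0$, use the analysis in the proof of Lemma~\ref{lengthadditive} to see that the relevant indices satisfy $K+1\le j_1<\cdots<j_{u_1}\le M$ and that the edge labels equal the $\beta_{j_k}$, and then invoke Remark~\ref{usingshell} with the reduced expression $v(\mu)\lons=s_{i_{K+1}}\cdots s_{i_N}$. Your explicit shift $J'=\{j_1-K,\dots,j_{u_1}-K\}$ is just a notational convenience absent from the paper, and the reference to Proposition~\ref{shellability}(1) at the end is not actually needed.
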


\begin{proof}
Since it is clear that 
$v(\mu) \in \lfloor \EQB(v(\mu) \lons ) \rfloor$,
we may assume that $\lfloor w_1 \rfloor \neq v(\mu)$.

Since $z^\OS_0 = m_{\mu}$, we have $w'_0 = x_0 = \dir (z^\OS_0) = v(\mu)v(\lambda_-)^{-1}$.
It follows that $w_{0} = w'_0 \lon = \left( v(\mu)v(\lambda_-)^{-1}\right) \lon 
\overset{\rm Lemma \ \ref{vmu}\ (2)}{=}
 v(\mu) \lons $.
If $u_1 = 0$, then we obtain $w_1 =w_0 = v(\mu)\lons$ , contrary to the assumption that $\lfloor w_1 \rfloor \neq v(\mu)$.
Hence it follows that $u_1 \geq 1$.
This implies that $j_{u_1} \leq M$ by  the definition of $u_1$ in (\ref{2.16})  and the proof of Lemma \ref{lengthadditive}.
Thus, we obtain $K+1 \leq j_1 < j_2 < \cdots < j_{u_1} \leq M$.

Now, consider the directed  path (\ref{w'_p}) in the case $p=0$.
This is a (nontrivial) directed path in $\QBG$ from $w_0=v(\mu)\lons$ to $w_1$ whose edge labels are increasing
in the weak reflection order $\prec$ on $\Delta^+$
introduced at the beginning of \S 3.3.
Because these edge labels are $\lon \left( \overline{ \beta^{\OS}_{j_{k}} }
 \right)^{\lor} = \beta_{j_{k}}= s_{i_N}\cdots s_{i_{j_k+1}}\alpha_{i_{j_k}}$ for $ 1 \leq k \leq u_1 $
(the first equality follows from the proof of Lemma \ref{lengthadditive}),
it follows from the fact that $K+1 \leq j_1 < j_2 < \cdots < j_{u_1} \leq M$ and Remark \ref{usingshell} 
(recall that we take a reduced expression for $\lon$ given by concatenating the reduced expressions for $v(\lambda_-)v(\mu)^{-1} $ and $v(\mu)\lons$)
that $w_1 \in \EQB(v(\mu) \lons )$.
Hence $\lfloor w_1 \rfloor \in \lfloor \EQB(v(\mu)\lons ) \rfloor$.
\end{proof}

\begin{prop}\label{bijective}
		The map $\Xi:\overleftarrow{{\QB}}({\id}; m_{\mu}) \rightarrow \QLS^{\mu ,\infty}(\lambda)$
is bijective.
\end{prop}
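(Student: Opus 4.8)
The plan is to construct an explicit inverse map $\Xi^{-1} : \QLS^{\mu,\infty}(\lambda) \to \overleftarrow{\QB}(\id; m_\mu)$, and to verify that it is indeed a two-sided inverse of $\Xi$. The construction of $\Xi$ in the preceding paragraphs already highlights precisely which data must be recovered: given $\psi = (\lfloor w_s \rfloor, \ldots, \lfloor w_1 \rfloor; 1-\sigma_s, \ldots, 1-\sigma_0) \in \QLS^{\mu,\infty}(\lambda)$, I must reconstruct the sequence of positive roots $\beta^{\OS}_{j_1} \prec' \cdots \prec' \beta^{\OS}_{j_r}$ together with the intermediate directions $x_0, x_1, \ldots, x_r$. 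The key observation, already flagged in the construction, is that on each block $[u_p+1, u_{p+1}]$ the directed path \eqref{w'_p} in $\QBG$ from $w_p$ to $w_{p+1}$ has edge labels that are strictly increasing in the weak reflection order $\prec$ on $\Delta^+ \setminus \Delta^+_S$ and is a shortest path. So to go backwards, I would: first lift each $\lfloor w_{p+1} \rfloor$ to the canonical representative $w_{p+1} \in W$ using Lemma~\ref{8.5} (the $w_p$-tilted minimal coset representative, where $w_0 := v(\mu)\lons$), then invoke Proposition~\ref{shellability}(1) to recover the unique label-increasing shortest path from $w_p$ to $w_{p+1}$ in $\QBG$, read off its labels $\beta_{j_k} \in \Delta^+ \setminus \Delta^+_S$, translate them back to the affine roots $\beta^{\OS}_{j_k}$ via the correspondence $\lon(\overline{\beta^{\OS}_k})^\vee = \beta_k$ from the proof of Lemma~\ref{lengthadditive}, recording the index $j_k$ and hence the rational number $d_{j_k} = 1 - \sigma_p$ which matches the required first coordinate of $\Phi$.

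\textbf{Second, I would check that the reconstructed data genuinely lands in $\overleftarrow{\QB}(\id; m_\mu)$}, using the characterization in Remark~\ref{affinereflectionorder}: the $\gamma_i$'s must satisfy $\gamma_1 \prec' \cdots \prec' \gamma_r$ and the direction sequence $\dir(y_{i-1}) \xleftarrow{-(\overline{\gamma_i})^\vee} \dir(y_i)$ must be an edge of $\QBG$. The ordering $\prec'$ is lexicographic on $\mathbb{Q}_{\geq 0} \times (\Delta^+\setminus\Delta^+_S)$ via $\Phi$; within a block the first coordinate is the constant $d_{j_k}=1-\sigma_p$ and the second coordinates are $\prec$-increasing by the shellability path, while across blocks the first coordinates strictly increase since $\sigma_0 > \sigma_1 > \cdots$ — so condition (1) of Remark~\ref{affinereflectionorder} holds. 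Condition (2) — that consecutive directions form $\QBG$-edges — holds because after multiplying by $\lon$ (using Lemma~\ref{involution}) this is exactly the statement that \eqref{w'_p} is a path in $\QBG$. The fact that $\kappa(\psi) = \lfloor w_s \rfloor$ and that $\psi \in \QLS^{\mu,\infty}(\lambda)$ means $\lfloor w_1\rfloor \in \lfloor\EQB(v(\mu)\lons)\rfloor$, which via Remark~\ref{usingshell} guarantees $K+1 \leq j_1$, so the resulting $J$ indeed lies in $\{K+1, \ldots, L\}$ as required for $p^{\OS}_J \in \overleftarrow{\QB}(\id; m_\mu)$.

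\textbf{Third, I would verify $\Xi^{-1} \circ \Xi = \id$ and $\Xi \circ \Xi^{-1} = \id$.} One direction is essentially built into the construction of $\Xi$: the blocks $[u_p+1, u_{p+1}]$ are by \eqref{2.16} exactly the maximal runs of equal $d_{j_k}$, so running the reconstruction recovers the original block decomposition, and the label-increasing property noted in the paragraph after \eqref{w'_p} means the original path \emph{is} the unique shellability path, hence is recovered verbatim. The other direction requires knowing that distinct $p^{\OS}_J$ give distinct $\psi$: this follows because the degree data $(d_{j_k})$, the root data $(\beta^{\OS}_{j_k})$, and hence $J$ itself are all determined by $\psi$ through the reconstruction, using injectivity of $\ed : \QB(w) \to W$ (Remark~\ref{usingshell}) blockwise.

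\textbf{The main obstacle} I anticipate is the lifting step: a $\QLS$ path only records the cosets $\lfloor w_p \rfloor \in W^S$, not the representatives $w_p \in W$, whereas the Orr–Shimozono side needs honest elements of $\widetilde W_{\ext}$. The resolution is Lemma~\ref{8.5}(1),(2): there is a canonical lift, namely $w_{p+1} := \min(w_{p+1}W_S, <_{w_p})$ built inductively from $w_0 = v(\mu)\lons = w'_0\lon$, and Lemma~\ref{8.5}(3) (together with the fact from Definition~\ref{def_qls} that there is a path in $\QBG^S_{\sigma_i\lambda}$, equivalently $\QBG^S_{(1-\sigma_p)\lambda}$) ensures the lifted path stays inside $\QBG_{(1-\sigma_p)\lambda}$, which is exactly the congruence condition $d_{j_k}\langle\lambda,\ldots\rangle \in \mathbb{Z}$ needed for the $a_{j_k}$ to be integers and hence for the affine roots $\beta^{\OS}_{j_k}$ to lie in $\ptrr \cap m^{-1}_\mu\ntrr$ by Lemma~\ref{shortest}. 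Checking that this congruence matches up on the nose — that the integrality forced by $\psi$ being a genuine $\QLS$ path is precisely what is needed to land in the correct affine inversion set — is the technical heart of the argument; everything else is bookkeeping with the dictionary $\beta^{\OS}_k \leftrightarrow \beta_k \leftrightarrow (d_k, \lon(\overline{\beta^{\OS}_k})^\vee)$ established in \S3.1.
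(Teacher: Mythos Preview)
Your proposal is correct and takes essentially the same route as the paper's own proof: the paper constructs an explicit inverse $\Theta$ by setting $v_{s+1}=v(\mu)\lons$, inductively lifting each $y_p$ to $v_p=\min(y_pW_S,\le_{v_{p+1}})$ via Lemma~\ref{8.5}(1),(2), reading off the unique label-increasing path in $\Delta^+\setminus\Delta^+_S$ (Lemma~\ref{8.5}(2),(3)), converting the labels back to affine roots $\widetilde{\gamma}_{p,m}$, and checking via Lemma~\ref{shortest} and Remark~\ref{affinereflectionorder} that the result lies in $\overleftarrow{\QB}(\id;m_\mu)$; the two-sided inverse verification is then exactly your blockwise uniqueness argument. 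One small slip: in your write-up $\kappa(\psi)=\lfloor w_1\rfloor$ (the last listed element), not $\lfloor w_s\rfloor$, and the delicate boundary case you flag is precisely the paper's Claim~1 for $p=s$, where the $\EQB$ condition is unpacked (via splitting an increasing path into its $\Delta^+\setminus\Delta^+_S$ and $\Delta^+_S$ parts and invoking uniqueness) to force $\varsigma(v(\mu)v(\lambda_-)^{-1}(-\gamma_{s,m}))=1$.
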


\begin{proof}
Let us give the inverse to $\Xi$.
Take an arbitrary
$
		\psi =(y_{1} , \ldots \ , y_{s} ; {\tau}_{0},\ldots, {\tau}_{s})  \in \QLS^{\mu, \infty}(\lambda)
$.
By convention, we set $y_{s+1}=v(\mu) \in W^S$.
We define the elements $v_{p}$, $1\leq p \leq s+1$, by:
$v_{s+1} =v(\mu) \lons $,
and
$v_{p} = \min(y_{p}W_S , \leq_{v_{p+1}})$  for $1\leq p \leq s$.

Because there exists a  directed path in $\QBG^S_{\tau_p \lambda}$ from $y_{p+1}$ to $y_{p}$ for $1\leq p \leq s-1$,
we see from Lemma \ref{8.5} (2), (3) that
there exists a unique  directed path 
\begin{equation}\label{path3}
v_p \xleftarrow{-\lon \gamma_{p,1}} \cdots \xleftarrow{-\lon \gamma_{p,t_{p}}} v_{p+1}
\end{equation}
in $\QBG_{\tau_p \lambda}$ from $v_{p+1}$ to $v_p$ whose edge labels $-\lon\gamma_{p, t_p}, \ldots, -\lon \gamma_{p, 1}$ are increasing 
in the weak reflection order $\prec$
and lie in $\Delta^+ \setminus \Delta^+_S$
for $1\leq p \leq s-1$;
we remark that this is also true for $p=s$, since
$\tau_s=1$.
Multiplying this directed path on the right by $\lon$, 
we obtain by Lemma \ref{involution} the following directed  paths
	\begin{equation*}
		v_{p,  0} \eqdef v_{p}\lon  
		\xrightarrow{ \gamma_{p, 1} }  v_{p,  1} 
		\xrightarrow{ \gamma_{p, 2} } 
		\cdots \xrightarrow{ \gamma_{p, t_p} } v_{p+1}\lon 
		\eqdef v_{p,  t_p},
\ \
1 \leq p \leq s.
	\end{equation*}
Concatenating these paths for $1\leq p\leq s$, we obtain the following directed  path
	\begin{align}\label{recover}
		&v_{1,0} \xrightarrow{ \gamma_{1, 1} } \cdots  
		\xrightarrow{ \gamma_{1, t_1} }v_{1,  t_1}=v_{2,  0}
		\xrightarrow{ \gamma_{2, 1} } 
		\cdots 
 \xrightarrow{ \gamma_{s-2, t_{s-2}} }v_{s-2, t_{s-2}} = v_{s-1,  0}  \xrightarrow{ \gamma_{s-1, 1} }  \cdots
 \nonumber
\\
&\cdots \xrightarrow{ \gamma_{s-1, t_{s-1}} }  v_{s-1,  t_{s-1}} =
		v_{s,  0}  \xrightarrow{ \gamma_{s, 1} }  \cdots \xrightarrow{ \gamma_{s, t_s}}   v_{s,  t_s} = v_{s+1,0}=v(\mu)v(\lambda_-)^{-1}
	\end{align}
	in $\QBG$. 
	Now, for $1\leq p \leq s$ and $1 \leq m \leq t_p$,
	we set $d_{p,m}  \eqdef 1-\tau_{p} \in \mathbb{Q}\cap [0,1)$, 
	$a_{p,m} \eqdef (d_{p,m}-1)\langle \lambda_- , {\gamma}^{\lor}_{p,m}   \rangle$, 
	and 
	$\widetilde{\gamma}_{p,m}  \eqdef a_{p,m} \tilde{\delta} - \gamma_{p,m}^{\lor}$.

\begin{nclaim}
$\widetilde{\gamma}_{p,m}   \in 
\ptrr \cap m^{-1}_{\mu} \ntrr$.
\end{nclaim}

\noindent $Proof \ of \ Claim \ 1.$
Since $\tau_p > 0$, and  since the path (\ref{path3}) is a  directed path in $\QBG_{\tau_p \lambda}$ whose edge labels are  increasing and lie in $\Delta^+ \setminus \Delta^+_S$,
we obtain 
$a_{p,m}=- \tau_p \langle \lambda_- , \gamma_{p,m}^\lor \rangle 
=\tau_p \langle \lambda , -\lon \gamma_{p,m}^\lor \rangle 
\in \mathbb{Z}_{>0}$.

We will show that $ a_{p,m} < \varsigma(v(\mu)v(\lambda_-)^{-1} (- \gamma_{p,m}) ) +\langle  \lambda , \lon \left(-\gamma_{p,m}^\lor \right) \rangle$.
Here we note that the inequality $\langle \lambda , \lon \left(-\gamma_{p,m}^\lor \right) \rangle = -\langle \lambda_- , \gamma_{p,m}^\lor \rangle \geq  -\tau_p \langle \lambda_- , \gamma_{p,m}^\lor \rangle=a_{p,m}$ holds,
with equality if and only if $p=s$.
Hence
it suffices to consider the case $p=s$.
In the case $p=s$, the path (\ref{path3}) is the unique  directed path in $\QBG$ from $v(\mu)\lons =v_{s+1}$ to $v_s$ whose edge labels are  increasing and lie in $\Delta^+ \setminus \Delta^+_S$.
Also, since  $\psi \in \QLS^{\mu, \infty} (\lambda)$ and  $\kappa(\psi)=y_s = \lfloor v_s \rfloor$,
we find that there exists $v'_s \in \EQB(v(\mu)\lons )$ such that $\lfloor v'_s \rfloor = y_s$.
By the definition of $\EQB(v(\mu) \lons )$, there exists a unique  directed path in $\QBG$ from $v(\mu)\lons$ to $v'_s$ whose edge labels are  increasing;
we see from (\ref{bunkai}) that this directed path is obtained as the concatenation of the two directed paths:
the one whose edge labels lie in $\Delta^+ \setminus \Delta^+_S$, and
the one whose edge labels lie in $\Delta^+_S$.
Therefore, by removing all the edges whose labels lie in $\Delta^+_S$ from the path above,
we obtain a  directed path in $\QBG$ from $v(\mu)\lons$ to some $v''_s \in y_s W_S \cap \EQB(v(\mu)\lons )$ whose edge labels are  increasing and lie in $\Delta^+ \setminus \Delta^+_S$.
Here, since $\lfloor v_s \rfloor = \lfloor v''_s \rfloor$ 
and $v_s = \min (y_s W_S, \leq_{v(\mu) \lons })$,
Lemma \ref{8.5} (2) shows that $v_s= v''_s$.
Hence we have $v_s \in \EQB(v(\mu) \lons )$.
Moreover, by the definition of $\EQB(v(\mu) \lons )$,
the edge labels $ - \lon \gamma_{s ,1} , \ldots ,  - \lon \gamma_{s ,t_s}$  in the given directed path in $\QBG$ from  $v(\mu) \lons =v_{s+1}$ to $v_s$  are elements of $ \Delta^+ \cap (v(\mu) \lons )^{-1}\Delta^-$,
and hence $v(\mu) \lons \left(- \lon \gamma_{s ,m} \right) 
\overset{\rm Lemma \ \ref{vmu}\ (2)}{=}
v(\mu)v(\lambda_-)^{-1} (- \gamma_{s,m}) \in \Delta^-$.
Therefore, in the case $p=s$, we have $\varsigma(v(\mu)v(\lambda_-)^{-1} (- \gamma_{s,m}) ) =1$.
Thus we have shown that $ a_{s,m} = \langle  \lambda , \lon \left(-\gamma_{s,m}^\lor \right) \rangle < \varsigma(v(\mu)v(\lambda_-)^{-1}  (-\gamma_{s,m} ) ) +\langle  \lambda , \lon \left(-\gamma_{s,m}^\lor \right) \rangle$.
Hence we conclude that $\widetilde{\gamma}_{p,m} \in \ptrr \cap m^{-1}_{\mu} \ntrr$ by Lemma \ref{shortest}.
\bqed

\begin{nclaim}
\mbox{}
\begin{enu}
\item
We have
\begin{equation*}
\widetilde{\gamma}_{s,t_s} \prec'  \cdots \prec' \widetilde{\gamma}_{s, 1} 
\prec'  
\widetilde{\gamma}_{s-1,t_{s-1}} \prec' \cdots \prec' \widetilde{\gamma}_{1,1},
\end{equation*}
where $\prec'$ denotes the weak reflection order on $\ptrr \cap m^{-1}_{\lambda_-} \ntrr$
introduced at the beginning of \S 3.3;
we choose 
$J' = \{ j'_1 , \ldots , j'_{r'} \} \subset \{ K+1 , \ldots , L \}$ in such way that
\begin{equation*}
\left( \beta^{\OS}_{j'_1} , \cdots , \beta^{\OS}_{j'_{r'}} \right)
=
\left(  
\widetilde{\gamma}_{s,t_s} ,  \cdots , \widetilde{\gamma}_{s, 1} ,
\widetilde{\gamma}_{s-1,t_{s-1}} , \cdots , \widetilde{\gamma}_{1,1}
\right) .
\end{equation*}

\item
Let $1 \leq k \leq r'$, and take $1 \leq p \leq s$,
$0< m \leq t_p$
  such that
$\left( \beta^{\OS}_{j'_1} \prec' \cdots \prec' \beta^{\OS}_{j'_{k}} \right)
=
\left(  \widetilde{\gamma}_{s,t_s} \prec'  \cdots \prec'  \widetilde{\gamma}_{p,m} 
 \right)$.
Then, we have
$\dir(z^{\OS}_{k}) = v_{p,m-1}$.
Moreover, 
$\dir(z^{\OS}_{k-1})\xleftarrow{-\left(\overline{ \beta^\OS_{j'_k} } \right)^\lor} \dir(z^{\OS}_k)$ is an edge of $\QBG$.
\end{enu}
\end{nclaim}

\noindent $Proof \ of \ Claim \ 2.$
(1)
It suffices to show the following:

(i)
for $1 \leq p \leq s$ and $1<m \leq t_p$,
we have
$  \widetilde{\gamma}_{p,m}\prec' \widetilde{\gamma}_{p,m-1}$;

(ii)
for $2 \leq p \leq s$,
we have
$\widetilde{\gamma}_{p,1} \prec' \widetilde{\gamma}_{p-1,t_{p-1}}$.

(i)
Because
$\frac{\langle \lambda_- , -\gamma_{p,m}^{\lor}\rangle - a_{p,m}}{\langle \lambda_- , -\gamma_{p,m}^{\lor}\rangle}=d_{p,m}$ and
$\frac{\langle \lambda_- , -\gamma_{p,m-1}^{\lor}\rangle - a_{p,m-1}}{\langle \lambda_- , -\gamma_{p,m-1}^{\lor}\rangle}=d_{p,m-1}$,
we have
\begin{align*}
\Phi (\widetilde{\gamma}_{p,m}) &=
 (d_{p,m}, - \lon \gamma_{p,m} ), \\
\Phi (\widetilde{\gamma}_{p,m-1}) &=
 (d_{p,m-1}, - \lon \gamma_{p,m-1} ).
\end{align*}
Therefore, the first component of $ \Phi (\widetilde{\gamma}_{p,m})$ is equal to that of $\Phi (\widetilde{\gamma}_{p,m-1})  $
since $d_{p,m} =1 - \tau_p =d_{p,m-1}$.
Moreover, since $- \lon \gamma_{p,m} \prec -\lon \gamma_{p,m-1}$,
we have $ \Phi (\widetilde{\gamma}_{p,m}) < \Phi (\widetilde{\gamma}_{p,m-1})  $.
This implies that $  \widetilde{\gamma}_{p,m}\prec' \widetilde{\gamma}_{p,m-1}$ by Proposition \ref{goodreducedexpression}.

(ii)
The proof of (ii) is similar to that of (i).
The first components of $\Phi (\widetilde{\gamma}_{p,1}) $ and $\Phi (\widetilde{\gamma}_{p-1,t_{p-1}}) $ are $d_{p,1}$ and $d_{p-1,t_{p-1}}$, respectively.
Since $d_{p,1}= 1-\tau_p < 1-\tau_{p-1}= d_{p-1,t_{p-1}}$,
we have $\Phi (\widetilde{\gamma}_{p,1}) < \Phi (\widetilde{\gamma}_{p-1,t_{p-1}}) $.
This implies that
$\widetilde{\gamma}_{p,1} \prec' \widetilde{\gamma}_{p-1,t_{p-1}}$.

(2)
We proceed 
by induction on $k$.
Since 
$\dir (z^\OS_0) = \dir(m_\mu)= v(\mu)v(\lambda_-)^{-1}$ 
and $\beta^{\OS}_{j'_1}  = \widetilde{\gamma}_{s,t_s}$,
we have
$\dir (z^\OS_1) = \dir (z^\OS_0) s_{-\overline{\beta^{\OS}_{j'_1}}} =v(\mu)v(\lambda_-)^{-1}  s_{\gamma_{s,t_s}} = v_{s, t_s-1}$.
Hence the assertion holds in the case $k=1$.

Assume that $\dir(z^{\OS}_{k-1}) = v_{p,m}$
for $0 \lneqq m \leq t_p$;
here we remark that $v_{p,m-1}$ is the predecessor of $v_{p,m}$ in the  directed path (\ref{recover}) since $0 \leq m-1 \leq t_{p-1}$.
Hence we have
$\dir(z^{\OS}_{k}) 
= \dir(z^{\OS}_{k-1}) s_{-\overline{ \beta^\OS_{j'_k} } }
= v_{p,m}s_{\gamma_{p,m}}
\overset{(\ref{recover})}=v_{p,m-1}$.
Also, since (\ref{recover}) is a directed path in $\QBG$, 
$ v_{p,m} = \dir(z^{\OS}_{k-1})\xleftarrow{-\left(\overline{ \beta^\OS_{j'_k} } \right)^\lor} \dir(z^{\OS}_k) = v_{p,m-1} $ is an edge of $\QBG$.
\bqed

Since $J' = \{ j_1 , \ldots , j'_{r'} \} \subset \{ K+1 , \ldots , L \}$,
we can define an element $p^{\OS}_{J'}$ to be
$\left( m_{\mu} = z^{\OS}_0 ,  z^{\OS}_{1} , \ldots , z^{\OS}_{r'}
; \beta^{\OS}_{j'_1} , \beta^{\OS}_{j'_2}, \ldots , \beta^{\OS}_{j'_{r'}} \right)$,
where
$z^{\OS}_{0}=m_{\mu}$, $z^{\OS}_{k}=z^\OS_{k-1} s_{\beta^{\OS}_{j'_{k}}}$ for $1\leq k \leq r'$;
it follows from Remark \ref{affinereflectionorder} and Claim 2 that
 $p^{\OS}_{J'} \in \overleftarrow{\QB}({\id}; m_{\mu})$.
Hence we can define a map $\Theta : \QLS^{\mu, \infty}(\lambda) \rightarrow \overleftarrow{\QB}({\id}; m_{\mu})$ by
$\Theta (\psi) \eqdef p^{\OS}_{J'}$.

%

It remains to show that the map $\Theta$ is the inverse to the map $\Xi$, i.e., the following two claims.

\begin{nclaim}
For $\psi =(y_{1} , \ldots \ , y_{s} ; {\tau}_{0},\ldots, {\tau}_{s})  \in {\QLS}(\lambda)$,
we have $\Xi \circ \Theta(\psi)=\psi$.
\end{nclaim}

\begin{nclaim}
For $p^{\OS}_{J} 
= \left( m_{\mu} = z^{\OS}_0 ,  z^{\OS}_{1} , \ldots , z^{\OS}_{r}
; \beta^{\OS}_{j_1} , \beta^{\OS}_{j_2}, \ldots , \beta^{\OS}_{j_r} \right) \in \overleftarrow{\QB}(\id; m_{\mu})$,
we have $\Theta \circ   \Xi (p^{\OS}_{J}) = p^{\OS}_{J}$.
\end{nclaim}

\noindent $Proof \ of \ Claim \ 3.$
We set $\Theta(\psi)= p_{J'}^\OS$, with $J'=\{ j'_1 , \ldots , j'_r \}$;
in the following description of $\Theta (\psi) = p^\OS_{J'}$,
we employ the notation  $u_p$, $\sigma_p$, $w'_p$, and $w_p$ 
used  in the definition of $\Xi (p^\OS_{J})$.

For $1 \leq k \leq r'$, if we set $\beta^{\OS}_{j'_k}=\widetilde{\gamma}_{p,m}$
with $m>0$,
then we have
$d_{j'_k}=
1 + \frac{\dg(\beta^{\OS}_{j'_k})}{\langle \lambda_- , -\overline{ \beta^{\OS}_{j'_k} } \rangle}
=
1 + 
\frac{\dg(\widetilde{\gamma}_{p,m})}{\langle \lambda_- , -\overline{\widetilde{\gamma}_{p,m}} \rangle}
=
1 + \frac{a_{p,m}}{\langle \lambda_- , \gamma_{p,m}^\lor \rangle}
=
d_{p,m}
$.
Therefore,
the sequence (\ref{2.16}) determined by $\Theta(\psi)=p^{\OS}_{J'}$ is 
\begin{equation}\label{2.17}
		\underbrace{ 0 = d_{s,t_s} = \cdots d_{s,1} }_{=1-\tau_s}
		< \underbrace{d_{s-1,t_{s-1}} = \cdots =d_{s-1, 1}}_{=1-\tau_{s-1}}
		 < \cdots <
		\underbrace{ d_{1,t_1} = \cdots =d_{1,1} }_{=1-\tau_{1}} <1 =1- \tau_{0} = \sigma_s.
	\end{equation}
Because the sequence (\ref{2.17}) of rational numbers is just the sequence (\ref{2.16}) for 
$\Theta ( \psi ) = p_{J'}^\OS$,
we deduce that $\beta^\OS_{j'_{u_p}} = \widetilde{\gamma}_{s-p+1,1}$ for $1\leq p \leq s$,
and
$\sigma_p = 1 - \tau_{s-p}$ for $0 \leq p \leq s$.
Therefore, we have $w'_p = \dir(z^\OS_{u_p})=v_{s-p+1,0}$ and $w_p = v_{s-p+1,0}\lon =v_{s-p+1}$.
Since $\lfloor w_p \rfloor = \lfloor v_{s-p+1} \rfloor = y_{s-p+1}$,
we conclude that $\Xi \circ \Theta(\psi) = (\lfloor w_s \rfloor ,\ldots , \lfloor w_1 \rfloor ; 1 - \sigma_s, \ldots , 1-\sigma_0)=(y_1 ,\ldots, y_s ; \tau_0 ,\ldots , \tau_s)= \psi$.
\bqed

\noindent $Proof \ of \ Claim \ 4.$
%
We set $\Xi(p^{\OS}_J)=\psi$,
and write it as $\psi = (y_1 ,\ldots, y_s ; \tau_0 ,\ldots , \tau_s)$,
where $y_p = \lfloor w_{s+1-p} \rfloor$  for $1\leq p \leq s$
and $\tau_p = 1- \sigma_{s-p}$ for $0 \leq p \leq s$
in the notation of (\ref{D}) (and the comment preceding it).
Also,
in the following description of $\Xi(p^{\OS}_J)=\psi$,
we employ the notation  $v_{p,m}$, $d_{p,m}$, $a_{p,m}$, $\gamma_{p,m}$, $\widetilde{\gamma}_{p,m}$, and $J'$ used in the definition of $\Theta(\psi)$.

Recall that $w_0= v(\mu) \lons =v_{s+1}$.
For $0 \leq p \leq s-1$,
\begin{equation*}
v_{s-p+1} 
\xrightarrow{- \lon \gamma_{s-p, t_{s-p}}}
\cdots  \xrightarrow{- \lon \gamma_{s-p,1}}
v_{s-p}
\end{equation*}
is a  directed path in $\QBG$ whose edge labels are  increasing and lie in $\Delta^+ \setminus \Delta^+_S$
(see (\ref{path3})).
Now we can show by induction on $p$ that $w_p = v_{s-p+1}$ for $1 \leq p \leq s$.
Indeed,
if $w_p=v_{s-p+1}$, then both of the path above and the path (\ref{w'_p}) start from $w_p$ and end with some element in $w_{p+1} W_S = v_{s-p}W_S$ 
(this equality follows from the definition of $v_{s-p}$), 
and have increasing edge labels in $\Delta^+ \setminus \Delta^+_S$.
Therefore, by Lemma \ref{8.5} (2), we deduce that the ends of these two paths are identical,
and hence that we have $w_{p+1} = v_{s-p}$.
Moreover,
since these two paths are identical, so are the edge labels of them:
\begin{equation*}
\left(\lon \left( \overline{\beta^{\OS}_{j_{u_p+1}} }\right)^\lor  \prec \cdots \prec
\lon \left( \overline{\beta^{\OS}_{j_{u_{p+1}}}} \right)^\lor \right)
=
\left( -\lon \gamma_{s-p,t_{s-p}} \prec \cdots \prec -\lon \gamma_{s-p,1}\right),
\ \ 
0 \leq p \leq s-1.
\end{equation*}
From the above, we have $u_{p+1}-u_p=t_{s-p}$ and
 $-\left( \overline{\beta^{\OS}_{j_{u_p+k}} }\right)^\lor 
=
\gamma_{s-p, t_{s-p}-k+1}
$
for $0\leq p \leq s-1$, $1 \leq k \leq t_{s-p}$.
Since
$ \sigma_p = d_{j_{u_{p}+1}}=\cdots=d_{j_{u_{p+1}}} $ for $0 \leq p \leq s-1$,
$1-\sigma_p = \tau_{s-p}$ for $0 \leq p \leq s$, 
and $ 1-\tau_{s-p} = d_{s-p , 1}=\cdots=d_{s-p, t_{s-p}}$ for $0\leq p\leq s-1$,
we see that for $1\leq k \leq t_{s-p}$,
\begin{align*}
\beta^{\OS}_{j_{u_p+k}} &=
\overline{\beta^{\OS}_{j_{u_p+k}}} + a_{j_{u_p+k}} \widetilde{\delta} \\
&=
\overline{\beta^{\OS}_{j_{u_p+k}}} - (d_{j_{u_p+k}}-1) 
\langle \lambda_-, \overline{\beta^{\OS}_{j_{u_p+k}}} \rangle \widetilde{\delta} \\
&=
-\gamma^\lor_{s-p, t_{s-p}-k+1}+ (d_{s-p, t_{s-p}-k+1 }-1) 
\langle \lambda_- \gamma^\lor_{s-p, t_{s-p}-k+1} \rangle \widetilde{\delta} \\
&=
-\gamma^\lor_{s-p, t_{s-p}-k+1}+ a_{s-p, t_{s-p}-k+1 } \widetilde{\delta} \\
&=
\widetilde{\gamma}_{s-p, t_{s-p}-k+1}.
\end{align*}
Therefore, we have
$$
\left( \beta^{\OS}_{j_{u_p+1}} \prec' \cdots \prec'
\beta^{\OS}_{j_{u_{p+1}}}  \right)
=
\left( \widetilde{\gamma}_{s-p,t_{s-p}} \prec' \cdots \prec' \widetilde{\gamma}_{s-p,1} \right),
\ \
0\leq p \leq s-1.
$$
Concatenating the sequences above for $0 \leq p \leq s-1$,
we obtain 
\begin{align*}
\left( \beta^{\OS}_{j_1} \prec' \cdots \prec' \beta^{\OS}_{j_{r}} \right)
&=
\left(  \widetilde{\gamma}_{s,t_s} \prec'  \cdots \prec' \widetilde{\gamma}_{s, 1} \prec'  \widetilde{\gamma}_{s-1,t_{s-1}} \prec' \cdots \prec' \widetilde{\gamma}_{1,1}
\right)\\
&=
\left( \beta^{\OS}_{j'_1} \prec' \cdots \prec' \beta^{\OS}_{j'_{r'}} \right).
\end{align*}
Hence the set $J'$ determined by $\Xi (p^{\OS}_{J})  = \psi$ is identical to $J$.
Thus we conclude that
$\Theta \circ   \Xi (p^{\OS}_{J}) 
=p^{\OS}_{J'}
=p^{\OS}_{J}$.
\bqed

This completes the proof of Proposition \ref{bijective}.
\end{proof}
We recall from \eqref{eq:dfn_deg} and \eqref{eq:dfn_wt}
that 
$\dg(\beta)$ is defined by: $\beta = \overline{\beta} + \dg (\beta)\widetilde{\delta}$ for $\beta \in \mathfrak{h} \oplus \mathbb{C}\widetilde{\delta}$,
and 
$\wt(u) \in P$ and $\dir (u)$ are defined by:
$u=t({{\wt}(u)}) \dir (u)$ for $u \in \widetilde{W}_{\ext} = t(P) \rtimes W$.
\begin{prop}\label{qls_qb}
The bijection $\Xi:\overleftarrow{\QB}({\id}; m_{\mu}) \rightarrow \QLS^{\mu , \infty}(\lambda)$ satisfies the following:
\begin{enu}
\item
		$\wt (\ed (p^{\OS}_{J})) = {\wt}(\Xi(p^\OS_{J}) )${\rm;}

\item
		${\dg}({\qwt}^{*}(p^{\OS}_{J})) = -{\dg}_{\mu}(\Xi(p^{\OS}_{J}))$.
\end{enu}
\end{prop}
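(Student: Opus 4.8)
The plan is to prove both identities by tracking the left--hand sides step by step along the Orr--Shimozono alcove path $p^{\OS}_J$, and then matching the result against the $\QLS$ data of $\psi=\Xi(p^{\OS}_J)$ by means of the block decomposition built into the definition of $\Xi$. Throughout I write $x_k=\dir(z^{\OS}_k)$ as in \eqref{2.15.5}, and I use the blocks $0=u_0\le u_1<\cdots<u_s=r$ and $0=\sigma_0\le\sigma_1<\cdots<\sigma_s=1$ of \eqref{2.16}; thus for $u_p+1\le k\le u_{p+1}$ one has $d_{j_k}=\sigma_p$, equivalently (by Remark~\ref{2.15}) $a_{j_k}=(1-\sigma_p)\pair{\lambda_-}{\overline{\beta^{\OS}_{j_k}}}$, and $w_p=x_{u_p}\lon$, while $\psi=(\lfloor w_s\rfloor,\ldots,\lfloor w_1\rfloor;1-\sigma_s,\ldots,1-\sigma_0)$ as in \eqref{D}.

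I would do (2) first, as it is the simpler of the two. One has $\dg(\qwt^{*}(p^{\OS}_J))=\sum_{j\in J^{+}}\dg(\beta^{\OS}_j)=\sum_{k:\,j_k\in J^{+}}a_{j_k}$. Unwinding the definition of $\dg_\mu(\psi)$ (noting $y_{s+1}=v(\mu)=\lfloor w_0\rfloor$, since $w_0=v(\mu)\lons$) and reindexing by $p=s-i$ gives $-\dg_\mu(\psi)=\sum_{p=0}^{s-1}(1-\sigma_p)\,\wt_\lambda(\lfloor w_p\rfloor\Rightarrow\lfloor w_{p+1}\rfloor)$, and by \cite[Lemma~7.2]{LNSSS2} this $\lambda$-weight equals $\wt_\lambda(w_p\Rightarrow w_{p+1})$ in $\QBG$ with the lifts $w_p,w_{p+1}$. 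The directed path \eqref{w'_p} from $w_p$ to $w_{p+1}$ has strictly increasing labels lying in $\Delta^+\setminus\Delta^+_S$, hence is a shortest directed path by Lemma~\ref{8.5}(2); its labels are $\theta_k:=\lon(\overline{\beta^{\OS}_{j_k}})^\vee$ for $u_p+1\le k\le u_{p+1}$, and by Lemma~\ref{involution} the $k$-th edge is quantum precisely when the defining edge $x_{k-1}\xleftarrow{-(\overline{\beta^{\OS}_{j_k}})^\vee}x_k$ of $\overleftarrow{\QB}(\id;m_\mu)$ is, i.e. precisely when $j_k\in J^{+}$. Since $\theta_k^\vee=\lon\overline{\beta^{\OS}_{j_k}}$ gives $\pair{\lambda}{\theta_k^\vee}=\pair{\lambda_-}{\overline{\beta^{\OS}_{j_k}}}$, multiplying $\wt_\lambda(w_p\Rightarrow w_{p+1})$ by $(1-\sigma_p)$ and using $a_{j_k}=(1-\sigma_p)\pair{\lambda_-}{\overline{\beta^{\OS}_{j_k}}}$ turns the $p$-th summand into $\sum_{u_p+1\le k\le u_{p+1},\ j_k\in J^{+}}a_{j_k}$; summing over $p$ yields $-\dg_\mu(\psi)=\sum_{k:\,j_k\in J^{+}}a_{j_k}=\dg(\qwt^{*}(p^{\OS}_J))$, which is (2).

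For (1) I would compute the translation part $\wt(\ed(p^{\OS}_J))=\wt(z^{\OS}_r)$ from the recursion $\wt(z^{\OS}_k)=\wt(z^{\OS}_{k-1})+x_{k-1}\,\wt(s_{\beta^{\OS}_{j_k}})$ (coming from $\wt(ab)=\wt(a)+\dir(a)\wt(b)$ in $\widetilde{W}_{\ext}=t(P)\rtimes W$), the identity $\wt(s_\beta)=-\dg(\beta)(\overline\beta)^\vee$ valid for every real root $\beta\in\trr$ (a direct computation with the affine reflection, cf. $s_0=t(\varphi)s_\varphi$), and the base value $\wt(m_\mu)=\mu=x_0\lambda_-$ (from \eqref{A} and $\mu=v(\mu)\lambda=v(\mu)v(\lambda_-)^{-1}\lambda_-$). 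Using $x_{k-1}=x_ks_{(\overline{\beta^{\OS}_{j_k}})^\vee}$, hence $x_{k-1}(\overline{\beta^{\OS}_{j_k}})^\vee=-x_k(\overline{\beta^{\OS}_{j_k}})^\vee$, one obtains $\wt(z^{\OS}_r)=\mu+\sum_{k=1}^{r}a_{j_k}\,x_k(\overline{\beta^{\OS}_{j_k}})^\vee$. The same reflection formula gives the telescoping identity $\pair{\lambda_-}{\overline{\beta^{\OS}_{j_k}}}\,x_k(\overline{\beta^{\OS}_{j_k}})^\vee=x_k\lambda_--x_{k-1}\lambda_-$; since $a_{j_k}=(1-\sigma_p)\pair{\lambda_-}{\overline{\beta^{\OS}_{j_k}}}$ within block $p$, the block-$p$ contribution telescopes to $(1-\sigma_p)(x_{u_{p+1}}\lambda_--x_{u_p}\lambda_-)$, so $\wt(z^{\OS}_r)=\mu+\sum_{p=0}^{s-1}(1-\sigma_p)(x_{u_{p+1}}\lambda_--x_{u_p}\lambda_-)$. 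An Abel summation using $\sigma_0=0$, $\sigma_s=1$, $x_0\lambda_-=\mu$ and $x_{u_s}=x_r$ rewrites the right--hand side as $\sum_{p=1}^{s}(\sigma_p-\sigma_{p-1})\,x_{u_p}\lambda_-$. On the other hand, expanding $\wt(\psi)$ from \eqref{D} and substituting $\lfloor w_{s-i}\rfloor\lambda=w_{s-i}\lambda=x_{u_{s-i}}\lon\lambda=x_{u_{s-i}}\lambda_-$ together with the relations between the $1-\sigma$'s gives exactly the same sum, proving (1).

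The main obstacle is the bookkeeping in (1): $\wt(z^{\OS}_r)$ is intrinsically a sum over the individual steps $k$, with coefficients $a_{j_k}$, whereas $\wt(\psi)$ is a sum over the $s$ linear pieces of $\psi$, with coefficients the $\sigma_p$'s. Reconciling the two relies crucially on the fact built into the construction of $\Xi$ through \eqref{2.16}, namely that all steps in one block share the common value $d_{j_k}=\sigma_p$, so that $a_{j_k}$ is a single rescaling, by the factor $1-\sigma_p$, of $\pair{\lambda_-}{\overline{\beta^{\OS}_{j_k}}}$ across that block; this is exactly what makes the within--block sums telescope against $x_k\lambda_--x_{k-1}\lambda_-=\pair{\lambda_-}{\overline{\beta^{\OS}_{j_k}}}\,x_k(\overline{\beta^{\OS}_{j_k}})^\vee$, after which the Abel summation converts the step--indexed expression into the vertex--indexed weight of the $\QLS$ path. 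A minor point to be checked separately is the degenerate case $u_1=0$ (i.e. $d_{j_1}>0$), but there the $p=0$ summand vanishes and all the identities persist.
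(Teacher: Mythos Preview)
Your proof is correct and takes a genuinely different route from the paper's. The paper proves Proposition~\ref{qls_qb} by induction on $\#J$: removing the last index $j_r$ to form $K=J\setminus\{j_r\}$, it shows in two separate claims that $\wt(\ed(\cdot))$, $\wt(\Xi(\cdot))$, $\dg(\qwt^*(\cdot))$, and $\dg_\mu(\Xi(\cdot))$ each change by the same explicit amount when passing from $p^{\OS}_K$ to $p^{\OS}_J$. This forces a case split according to whether $d_{j_r}=d_{j_{r-1}}$ (so $j_r$ extends the last block) or $d_{j_r}>d_{j_{r-1}}$ (so $j_r$ opens a new block), and the paper tracks how the $\QLS$ path $\Xi(p^{\OS}_J)$ differs from $\Xi(p^{\OS}_K)$ in each case. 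Your argument instead computes both sides in closed form directly from the block decomposition \eqref{2.16}: for (1) you obtain $\wt(z^{\OS}_r)=\mu+\sum_k a_{j_k}x_k(\overline{\beta^{\OS}_{j_k}})^\vee$, observe that $a_{j_k}=(1-\sigma_p)\pair{\lambda_-}{\overline{\beta^{\OS}_{j_k}}}$ is a constant rescaling within each block, telescope to $(1-\sigma_p)(x_{u_{p+1}}\lambda_--x_{u_p}\lambda_-)$, and Abel-sum to the $\QLS$ weight; for (2) you use that \eqref{w'_p} is label-increasing, hence a shortest path, to read off $\wt_\lambda(w_p\Rightarrow w_{p+1})$ edge by edge. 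Your route avoids the induction and the case analysis at the cost of a one-shot global computation; the paper's inductive proof is more local but has to manage the two structural cases for $\Xi(p^{\OS}_J)$ separately. One small correction: your appeal to Lemma~\ref{8.5}(2) for ``label-increasing $\Rightarrow$ shortest'' is not quite the right citation; that lemma concerns uniqueness and the endpoint $\min(vW_S,<_{uw})$, whereas the fact you actually need---that the unique label-increasing path between two given elements of $\QBG$ is a shortest path---is Proposition~\ref{shellability}(2).
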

\setcounter{nclaim}{0} 

\begin{proof}
We proceed by induction on $ \#J$.

If $J=\emptyset$, it is obvious that ${\dg}({\qwt}^{*}(p^{\OS}_{J})) = {\dg}_{\mu}(\Xi (p^{\OS}_{J}))=0$ and 
${\wt}(\ed (p^{\OS}_{J})) = \wt(\Xi(p^{\OS}_{J}) )=\mu$,
since
$\Xi(p^{\OS}_{J})=(v(\mu)\lons; 0,1)$.
	
Let	$J=\{ j_1 <j_2 <\cdots <j_r\}$, and set
	$K \eqdef J\setminus \{ j_r \}$;
	assume that $\Xi (p^{\OS}_{K})$ is of the form: 
$\Xi (p^{\OS}_{K}) =
  (\lfloor w_{s} \rfloor , \lfloor w_{s-1} \rfloor ,\ldots , \lfloor w_{1} \rfloor ; 1-{\sigma}_{s},\ldots, 1-{\sigma}_{0})  $.
In the following, we employ the notation $w_p$, $0\leq p \leq s$, used in the definition of the map $\Xi$. 
Note that
$\dir(p^\OS_K)=w_s \lon$
 and 
$w_0=v(\mu)\lons$
by the definition of $\Xi$.
Also, observe that
if $d_{j_{r}}=d_{j_{r-1}}={\sigma}_{s-1}$, 
then $\{ d_{j_1} \leq \cdots \leq d_{j_{r-1}}\leq d_{j_r} \}
=
\{ d_{j_1} \leq \cdots \leq d_{j_{r-1}} \} $,
and
if $d_{j_{r}}>d_{j_{r-1}}={\sigma}_{s-1}$,
then
$\{ d_{j_1} \leq \cdots \leq d_{j_{r-1}}\leq d_{j_r} \}
= 
\{ d_{j_1} \leq \cdots \leq d_{j_{r-1}}< d_{j_r} \}
$.
From these, we deduce that
	\begin{align*}
	\Xi (p^{\OS}_{J})=
\begin{cases}
			\rlap{$(\lfloor w_{s} s_{\lon \overline {\beta^{\OS}_{j_r} }  } \rfloor , \lfloor w_{s-1}\rfloor ,\ldots  , \lfloor w_{1} \rfloor ; 1-{\sigma}_{s},1-\sigma_{s-1}, \ldots, 1-{\sigma}_{0})$} &  \\
\phantom{(\lfloor w_{s} s_{\lon \overline {\beta^{\OS}_{j_r} }  } \rfloor , \lfloor w_{s-1}\rfloor ,\ldots  , \lfloor w_{1} \rfloor ; 1-{\sigma}_{s},1-\sigma_{s-1}, \ldots)}
& \mbox{if } d_{j_{r}}=d_{j_{r-1}} ={\sigma}_{s-1}, \\
\rlap{$(\lfloor w_{s}s_{\lon \overline {\beta^{\OS}_{j_r }} } \rfloor , \lfloor w_{s} \rfloor , \lfloor w_{s-1} \rfloor ,\ldots  , \lfloor w_{1} \rfloor ; 1-{\sigma}_{s}, 1-d_{j_{r}} ,1-\sigma_{s-1}  
			\ldots, 1-{\sigma}_{0})$}
& \\
	& \mbox{if } d_{j_{r}}>d_{j_{r-1}} ={\sigma}_{s-1}.
\end{cases}
	\end{align*}

For the induction step, it suffices to show the following claims.

\begin{nclaim}
\mbox{}
\begin{enu}
\item
We have
	\begin{equation*}
		\wt ( \Xi (p^{\OS}_{J}) ) 
		= \wt ( \Xi (p^{\OS}_{K}) )+ 
		a_{j_{r}}w_{s}\lon \left( -\overline {\beta^{\OS}_{j_r } } \right)^\lor.
	\end{equation*}

\item
We have
\begin{equation*}
		\dg_{\mu} ( \Xi (p^{\OS}_{J}) ) 
		= \dg_{\mu} ( \Xi (p^{\OS}_{K}) )- \chi a_{j_r},
	\end{equation*}
where
$\chi \eqdef 0$ (resp., $\chi \eqdef 1$) 
if $w_{s}s_{\lon \overline {\beta^{\OS}_{j_r } }  } \leftarrow w_{s}$ is a Bruhat (resp., quantum) edge.
\end{enu}
\end{nclaim}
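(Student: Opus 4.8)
The plan is to verify (1) and (2) by direct computation, handling the two cases $d_{j_r}=d_{j_{r-1}}=\sigma_{s-1}$ and $d_{j_r}>d_{j_{r-1}}=\sigma_{s-1}$ together. Set $\gamma\eqdef\lon\bigl(\overline{\beta^{\OS}_{j_r}}\bigr)^{\lor}$; then $\gamma\in\Delta^+\setminus\Delta^+_S$ by the remark after the definition of $\overleftarrow{\QB}(\id;m_{\mu})$ and the properties of the path \eqref{w'_p}, and $w_s\xrightarrow{\gamma}w_s s_{\gamma}$ is an edge of $\QBG$ --- this is precisely the new edge that $\Xi$ appends when it processes $j_r$, and here $s_{\gamma}=s_{\lon\overline{\beta^{\OS}_{j_r}}}$. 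I will use two elementary identities: $\gamma^{\lor}=\lon\overline{\beta^{\OS}_{j_r}}$, whence $\langle\lambda,\gamma^{\lor}\rangle=\langle\lon\lambda,\overline{\beta^{\OS}_{j_r}}\rangle=\langle\lambda_-,\overline{\beta^{\OS}_{j_r}}\rangle$; and $(1-d_{j_r})\langle\lambda_-,\overline{\beta^{\OS}_{j_r}}\rangle=a_{j_r}$, which is immediate from the definition of $d_{j_r}$ in Remark~\ref{2.15}. Finally, the explicit descriptions of $\Xi(p^{\OS}_J)$ recorded above show that, in both cases, the only change in the underlying $\QLS$ data when passing from $\Xi(p^{\OS}_K)$ to $\Xi(p^{\OS}_J)$ is localized at the front: a direction $\lfloor w_s s_{\gamma}\rfloor$ occupies an initial subinterval of length $1-d_{j_r}$ --- replacing $\lfloor w_s\rfloor$ there when $d_{j_r}=\sigma_{s-1}$ (since then $1-\sigma_{s-1}=1-d_{j_r}$), or inserted before $\lfloor w_s\rfloor$ otherwise --- while every other direction and breakpoint is unchanged.

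For (1), I would compute $\wt(\Xi(p^{\OS}_J))-\wt(\Xi(p^{\OS}_K))$ straight from $\wt(\psi)=\sum(\sigma_{i+1}-\sigma_i)w_{i+1}\lambda$. By the observation above this difference collapses, in both cases, to $(1-d_{j_r})\bigl(w_s s_{\gamma}\lambda-w_s\lambda\bigr)=(1-d_{j_r})\,w_s\bigl(s_{\gamma}\lambda-\lambda\bigr)$, where $W_S\lambda=\lambda$ lets one drop $\lfloor\cdot\rfloor$ and identify $s_{\lon\overline{\beta^{\OS}_{j_r}}}$ with $s_{\gamma}$. Since $s_{\gamma}\lambda-\lambda=-\langle\lambda,\gamma^{\lor}\rangle\gamma$, the two identities give $\wt(\Xi(p^{\OS}_J))-\wt(\Xi(p^{\OS}_K))=-a_{j_r}w_s\gamma=a_{j_r}w_s\lon\bigl(-\overline{\beta^{\OS}_{j_r}}\bigr)^{\lor}$, which is (1).

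For (2), I would compute $\dg_{\mu}(\Xi(p^{\OS}_J))-\dg_{\mu}(\Xi(p^{\OS}_K))$ from $\dg_{\mu}(\psi)=-\sum_i\sigma_i\,\wt_\lambda(w_{i+1}\Rightarrow w_i)$. Comparing term by term, all summands coincide except the one attached to the new front direction, so the difference equals $-(1-d_{j_r})\,\wt_\lambda(\lfloor w_s\rfloor\Rightarrow\lfloor w_s s_{\gamma}\rfloor)$ when $d_{j_r}>\sigma_{s-1}$, and $-(1-\sigma_{s-1})\bigl(\wt_\lambda(\lfloor w_{s-1}\rfloor\Rightarrow\lfloor w_s s_{\gamma}\rfloor)-\wt_\lambda(\lfloor w_{s-1}\rfloor\Rightarrow\lfloor w_s\rfloor)\bigr)$ when $d_{j_r}=\sigma_{s-1}$ (here $\lfloor w_{s-1}\rfloor$ is the adjacent direction, to be read as $v(\mu)$ if $s=1$). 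In the first case the single edge $w_s\xrightarrow{\gamma}w_s s_{\gamma}$ is automatically a shortest path, so by \cite[Lemma 7.2]{LNSSS2} the bracketed quantity is $\langle\lambda,\wt(w_s\to w_s s_{\gamma})\rangle$. In the second case I would argue that appending $w_s\xrightarrow{\gamma}w_s s_{\gamma}$ to the $\prec$-increasing $\QBG$ path from $w_{s-1}$ to $w_s$ built by $\Xi$ keeps the labels increasing --- the new label $\gamma$ exceeds $\lon\bigl(\overline{\beta^{\OS}_{j_{r-1}}}\bigr)^{\lor}$ by Lemma~\ref{remark2.11}, since $j_{r-1}<j_r$ and $d_{j_{r-1}}=d_{j_r}$ --- so the extended path is still shortest by Proposition~\ref{shellability}(2), $\wt_\lambda$ is additive along it, and (after passing between $\QBG$ and $\QBG^S$ via \cite[Lemma 7.2]{LNSSS2}) the bracketed quantity again reduces to $\langle\lambda,\wt(w_s\to w_s s_{\gamma})\rangle$, now with coefficient $1-\sigma_{s-1}=1-d_{j_r}$. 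In either case: if $w_s s_{\lon\overline{\beta^{\OS}_{j_r}}}\leftarrow w_s$ is a Bruhat edge then $\wt(w_s\to w_s s_{\gamma})=0$ and the difference is $0=-\chi a_{j_r}$; if it is a quantum edge then $\wt(w_s\to w_s s_{\gamma})=\gamma^{\lor}$, and by the two identities the difference is $-(1-d_{j_r})\langle\lambda,\gamma^{\lor}\rangle=-a_{j_r}=-\chi a_{j_r}$. This proves (2).

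The step I expect to be the main obstacle is the case $d_{j_r}=\sigma_{s-1}$ of (2): one must be sure that appending $w_s\xrightarrow{\gamma}w_s s_{\gamma}$ to the path produced by $\Xi$ genuinely preserves $\prec$-increasingness of the labels, so that the additivity $\wt_\lambda(\lfloor w_{s-1}\rfloor\Rightarrow\lfloor w_s s_{\gamma}\rfloor)=\wt_\lambda(\lfloor w_{s-1}\rfloor\Rightarrow\lfloor w_s\rfloor)+\langle\lambda,\wt(w_s\to w_s s_{\gamma})\rangle$ is legitimate. This hinges on Lemma~\ref{remark2.11} together with the fact (from the construction of $\Xi$) that the labels along \eqref{w'_p} are $\prec$-increasing and lie in $\Delta^+\setminus\Delta^+_S$. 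The remaining work is routine index-chasing in the $\QLS$ data combined with the two displayed numerical identities.
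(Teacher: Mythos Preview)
Your proposal is correct and follows essentially the same route as the paper. For part~(1) both you and the paper reduce the difference of weights to $(1-d_{j_r})\bigl(w_s s_{\lon\overline{\beta^{\OS}_{j_r}}}\lambda - w_s\lambda\bigr)$ and then simplify via $(1-d_{j_r})\langle\lambda_-,\overline{\beta^{\OS}_{j_r}}\rangle=a_{j_r}$; for part~(2) both handle $d_{j_r}>\sigma_{s-1}$ directly and, in the case $d_{j_r}=\sigma_{s-1}$, invoke Lemma~\ref{remark2.11} to see that appending the edge $w_s\xrightarrow{\lon(\overline{\beta^{\OS}_{j_r}})^{\lor}}w_s s_{\lon\overline{\beta^{\OS}_{j_r}}}$ preserves $\prec$-increasingness, hence shortestness, so that $\wt_\lambda$ is additive along the extended path (the paper states this as equation~\eqref{wtbunkai}). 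Your identification of that step as the only nontrivial one is exactly right, and your justification via Lemma~\ref{remark2.11} plus Proposition~\ref{shellability}(2) matches the paper's.
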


\begin{nclaim}
\mbox{}
\begin{enu}
\item
We have
	\begin{equation*}
		\wt ( \ed (p^{\OS}_{J}) ) 
		= \wt ( \ed (p^{\OS}_{K}) )+ 
		a_{j_{r}}w_{s}\lon \left( -\overline {\beta^{\OS}_{j_r } } \right)^\lor.
	\end{equation*}

\item
We have
	\begin{equation*}
		\dg ( \qwt^* (p^{\OS}_{J}) )
		= \dg ( \qwt^* (p^{\OS}_{K}) ) + \chi a_{j_r} .
	\end{equation*}
\end{enu}
\end{nclaim}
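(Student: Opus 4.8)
The statement is a single-step identity comparing the data of $p^{\OS}_{J}$ and $p^{\OS}_{K}=p^{\OS}_{J\setminus\{j_r\}}$, so no induction is needed inside it; the whole proof runs off the factorization $\ed(p^{\OS}_{J})=z^{\OS}_{r}=z^{\OS}_{r-1}\,s_{\beta^{\OS}_{j_{r}}}=\ed(p^{\OS}_{K})\,s_{\beta^{\OS}_{j_{r}}}$ in $\widetilde{W}_{\ext}$. The inputs I will use are already in place: $\dir(\ed(p^{\OS}_{K}))=\dir(z^{\OS}_{r-1})=w_{s}\lon$ (noted just before the claims), and $\beta^{\OS}_{j_{r}}\in\ptrr$ with $a_{j_{r}}=\dg(\beta^{\OS}_{j_{r}})\in\mathbb{Z}_{>0}$ and $-(\overline{\beta^{\OS}_{j_{r}}})^{\lor}\in\Delta^{+}$, so that $\gamma\eqdef(\overline{\beta^{\OS}_{j_{r}}})^{\lor}\in\Delta^{-}$.

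For part (1) I will write $\beta^{\OS}_{j_{r}}=\gamma^{\lor}+a_{j_{r}}\widetilde{\delta}$ and use the affine-reflection identity $s_{\gamma^{\lor}+a\widetilde{\delta}}=t(-a\gamma)\,s_{\gamma}$ in $\widetilde{W}_{\ext}=t(P)\rtimes W$; this is the obvious generalization of the relation $s_{0}=t(\varphi)s_{\varphi}$ (the case $\gamma^{\lor}=-\varphi^{\lor}$, $a=1$, since $\alpha^{\lor}_{0}=\widetilde{\delta}-\varphi^{\lor}$) and follows directly from the description in \S 2.2 of the affine action on $\mathfrak{h}^{*}$. Writing $\ed(p^{\OS}_{K})=t(\wt(z^{\OS}_{r-1}))\,\dir(z^{\OS}_{r-1})$ and commuting $t(-a_{j_{r}}\gamma)$ leftward past $\dir(z^{\OS}_{r-1})=w_{s}\lon$ via $x\,t(\nu)=t(x\nu)\,x$, I get $\ed(p^{\OS}_{J})=t\bigl(\wt(z^{\OS}_{r-1})-a_{j_{r}}\,w_{s}\lon\,\gamma\bigr)\bigl(w_{s}\lon\,s_{\gamma}\bigr)$, whence $\wt(\ed(p^{\OS}_{J}))=\wt(z^{\OS}_{r-1})+a_{j_{r}}\,w_{s}\lon(-\gamma)=\wt(\ed(p^{\OS}_{K}))+a_{j_{r}}\,w_{s}\lon(-\overline{\beta^{\OS}_{j_{r}}})^{\lor}$, using $-\gamma=(-\overline{\beta^{\OS}_{j_{r}}})^{\lor}$; this is exactly (1).

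For part (2) I will observe that $p^{\OS}_{K}$ and $p^{\OS}_{J}$ share their first $r-1$ steps, so the sets of quantum steps satisfy $J^{+}=K^{+}$ if the final edge $\dir(z^{\OS}_{r-1})\xleftarrow{-(\overline{\beta^{\OS}_{j_{r}}})^{\lor}}\dir(z^{\OS}_{r})$ is Bruhat and $J^{+}=K^{+}\sqcup\{j_{r}\}$ if it is quantum; hence $\qwt^{*}(p^{\OS}_{J})=\qwt^{*}(p^{\OS}_{K})+\varepsilon\,\beta^{\OS}_{j_{r}}$ with $\varepsilon=1$ exactly in the quantum case. Then I will identify $\varepsilon$ with $\chi$: applying Lemma~\ref{involution} to the edge $\dir(z^{\OS}_{r})\xrightarrow{-(\overline{\beta^{\OS}_{j_{r}}})^{\lor}}\dir(z^{\OS}_{r-1})$ and right-multiplying by $\lon$ produces, using $\dir(z^{\OS}_{r-1})\lon=w_{s}$ and the identification $s_{\alpha}=s_{\alpha^{\lor}}$, the edge $w_{s}\,s_{\lon\overline{\beta^{\OS}_{j_{r}}}}\leftarrow w_{s}$ of the same Bruhat/quantum type, so $\varepsilon=\chi$. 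Finally, since $\dg$ is linear on $\mathfrak{h}\oplus\mathbb{C}\widetilde{\delta}$ and $\dg(\beta^{\OS}_{j_{r}})=a_{j_{r}}$ by definition, I conclude $\dg(\qwt^{*}(p^{\OS}_{J}))=\dg(\qwt^{*}(p^{\OS}_{K}))+\chi\,a_{j_{r}}$.

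There is no genuine obstacle here; the only thing requiring care is bookkeeping — keeping the iterated $(\cdot)^{\lor}$ of the dual root datum and the signs in the affine-reflection identity mutually consistent, and transporting the Bruhat/quantum label of an edge correctly across right multiplication by $\lon$ (Lemma~\ref{involution}) — which is the same kind of manipulation already carried out in the proof of Lemma~\ref{lengthadditive} and in the construction of $\Xi$. Combined with the companion recursion for the $\QLS$-side (the preceding claim, for $\wt(\Xi(\cdot))$ and $\dg_{\mu}(\Xi(\cdot))$) and the base case $J=\emptyset$ already treated, an induction on $\#J$ then yields Proposition~\ref{qls_qb}.
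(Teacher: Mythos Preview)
Your proof is correct and follows essentially the same route as the paper: both use the factorization $\ed(p^{\OS}_{J})=\ed(p^{\OS}_{K})\,s_{\beta^{\OS}_{j_r}}$, the affine-reflection decomposition $s_{\beta^{\OS}_{j_r}}=t\bigl(a_{j_r}(-\overline{\beta^{\OS}_{j_r}})^{\lor}\bigr)s_{\overline{\beta^{\OS}_{j_r}}}$, and the commutation $x\,t(\nu)=t(x\nu)\,x$ with $x=w_s\lon$ for part~(1); and for part~(2) both compare $J^{+}$ with $K^{+}$ and invoke Lemma~\ref{involution} to identify the Bruhat/quantum type of the last edge with $\chi$. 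The only cosmetic difference is that the paper applies Lemma~\ref{involution} starting from the $\chi$-defining edge $w_s s_{\lon\overline{\beta^{\OS}_{j_r}}}\leftarrow w_s$, whereas you start from the $\overleftarrow{\QB}$-edge $\dir(z^{\OS}_{r})\to\dir(z^{\OS}_{r-1})$; since the lemma is an equivalence, this makes no difference.
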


\noindent $Proof \ of \ Claim \ 1.$
(1)
If $d_{j_{r}}=d_{j_{r-1}}={\sigma}_{s-1}$, then we compute:
\begin{align*}
		\wt ( \Xi (p^{\OS}_{J}) ) 
		=& (\sigma_s -\sigma_{s-1})
			\lfloor w_{s}s_{\lon \overline {\beta^{\OS}_{j_r } }  } \rfloor \lambda
			+\sum_{p=1}^{s-1} ({\sigma}_{p} - {\sigma}_{p-1} ) \lfloor w_{p}\rfloor \lambda \\
		=& (\sigma_s -\sigma_{s-1})
			 w_{s}s_{\lon \overline {\beta^{\OS}_{j_r } }  } \lambda
			+\sum_{p=1}^{s-1} ({\sigma}_{p} - {\sigma}_{p-1} ) w_{p}\lambda \\
=& \sum_{p=1}^{s} ({\sigma}_{p} - {\sigma}_{p-1} ) w_{p} \lambda 
		+ (\sigma_s- \sigma_{s-1}) w_{s}s_{\lon \overline {\beta^{\OS}_{j_r } }  }\lambda - (\sigma_s- \sigma_{s-1}) w_{s}\lambda \\
		\overset{\sigma_s=1,\ d_{j_r}=\sigma_{s-1}}{=}&
		 \sum_{p=1}^{s} ({\sigma}_{p} - {\sigma}_{p-1} ) w_{p}\lambda 
		+ (1-d_{j_{r}}) w_{s}s_{\lon \overline {\beta^{\OS}_{j_r } }  }\lambda - (1-d_{j_{r}}) w_{s}\lambda .
	\end{align*}
If $d_{j_{r}}>d_{j_{r-1}}={\sigma}_{s-1}$, then we compute:
\begin{align*}
		\wt ( \Xi (p^{\OS}_{J}) ) 
		=& (\sigma_s - d_{j_r})
			\lfloor w_{s}s_{\lon \overline {\beta^{\OS}_{j_r } }  } \rfloor \lambda
			+(d_{j_r}-\sigma_{s-1})\lfloor w_s \rfloor \lambda
			+\sum_{p=1}^{s-1} ({\sigma}_{p} - {\sigma}_{p-1} ) \lfloor w_{p}\rfloor \lambda \\
		=& (\sigma_s - d_{j_r})
			w_{s}s_{\lon \overline {\beta^{\OS}_{j_r } }  }  \lambda
			+(d_{j_r}-\sigma_{s-1}) w_s  \lambda
			+\sum_{p=1}^{s-1} ({\sigma}_{p} - {\sigma}_{p-1} ) w_{p} \lambda \\
		=& \sum_{p=1}^{s} ({\sigma}_{p} - {\sigma}_{p-1} ) w_{p}\lambda 
			-({\sigma}_{s} - {\sigma}_{s-1} ) w_{s}\lambda +
			(\sigma_s - d_{j_r})
			w_{s}s_{\lon \overline {\beta^{\OS}_{j_r } }  }\lambda
			+(d_{j_r}-\sigma_{s-1})w_s \lambda		\\	
=& \sum_{p=1}^{s} ({\sigma}_{p} - {\sigma}_{p-1} ) w_{p} \lambda 
		+ (\sigma_s- d_{j_r}) w_{s}s_{\lon \overline {\beta^{\OS}_{j_r } }  }\lambda - (\sigma_s-d_{j_r}) w_{s}\lambda \\
		\overset{\sigma_s=1}=&
		 \sum_{p=1}^{s} ({\sigma}_{p} - {\sigma}_{p-1} ) w_{p}\lambda 
		+ (1-d_{j_{r}}) w_{s}s_{\lon \overline {\beta^{\OS}_{j_r } }  }\lambda - (1-d_{j_{r}}) w_{s}\lambda .
	\end{align*}

In both cases above, since 
\begin{equation*}
\wt(\Xi(p^{\OS}_K)) =\sum_{p=1}^{s} ({\sigma}_{p} - {\sigma}_{p-1} ) \lfloor w_{p} \rfloor \lambda = \sum_{p=1}^{s} ({\sigma}_{p} - {\sigma}_{p-1} ) w_{p}\lambda,
\end{equation*}
and since
\begin{eqnarray*}
 (1-d_{j_{r}}) w_{s}s_{\lon \overline {\beta^{\OS}_{j_r } }  }\lambda - (1-d_{j_{r}}) w_{s}\lambda &=&
-(1-d_{j_{r}}) w_s {\langle \lambda,  \lon \overline {\beta^{\OS}_{j_r } }   \rangle} \lon \left( \overline { \beta^{\OS}_{j_r } }\right)^\lor  \\
&\overset{{\rm Remark} \ \ref{2.15}}{=}&
- \frac{a_{j_r}}{\langle \lambda_-, \overline{ \beta^{\OS}_{j_r} }     \rangle}
{\langle \lambda_-, \overline{ \beta^{\OS}_{j_r} }    \rangle}
 w_s \lon \left( \overline { \beta^{\OS}_{j_r } }\right)^\lor \\
&=&
 a_{j_{r}}w_{s}\lon \left( -\overline { \beta^{\OS}_{j_r } }\right)^\lor,
\end{eqnarray*}
it follows that
	\begin{eqnarray*}
		\wt ( \Xi (p^\OS_{J}) ) &=& \sum_{p=1}^{s} ({\sigma}_{p} - {\sigma}_{p-1} ) w_{p}\lambda 
		+ (1-d_{j_{r}}) w_{s}s_{\lon \overline {\beta^{\OS}_{j_r } }  }\lambda - (1-d_{j_{r}}) w_{s}\lambda \\
		&=& \wt ( \Xi (p^\OS_{K}) )+ a_{j_{r}}w_{s}\lon \left( -\overline {\beta^{\OS}_{j_r } } \right)^\lor.
	\end{eqnarray*}

(2)
From the relation between $p^\OS_J$ and $p^\OS_K$, and from the definition of $\overleftarrow{\QB}({\id}; m_{\mu})$,
we find that $w_{s}\lon s_{-\overline {\beta^{\OS}_{j_r } }  } \xrightarrow{-\left( \overline {\beta^{\OS}_{j_r } }\right)^\lor } w_{s}\lon$
is an edge of $\QBG$.
Hence, by Lemma \ref{involution}, 
$w_{s}s_{\lon \overline {\beta^{\OS}_{j_r } }  } \xleftarrow{\lon \left( \overline {\beta^{\OS}_{j_r } }\right)^\lor } w_{s}$
is an edge of $\QBG$.

If $d_{j_{r}}=d_{j_{r-1}}={\sigma}_{s-1}$,
then by the definition of $\dg_{\mu}$ (along with \cite[Lemma 7.2]{LNSSS2}), we see that
\begin{align}\label{eq2}
\dg_{\mu} ( \Xi (p^{\OS}_{J}) ) 
		=& -\sum_{p=0}^{s-2}(1-{\sigma}_p)\wt_{\lambda}( \lfloor w_{p+1} \rfloor \Leftarrow \lfloor w_{p} \rfloor )
				-(1-d_{j_{r}})\wt_{\lambda}( \lfloor w_{s}s_{\lon \overline {\beta^{\OS}_{j_r } }  } \rfloor \Leftarrow \lfloor w_{s-1} \rfloor) \nonumber \\
=& -\sum_{p=0}^{s-2}(1-{\sigma}_p)\wt_{\lambda}(w_{p+1} \Leftarrow w_{p} )
				-(1-d_{j_{r}})\wt_{\lambda}(w_{s}s_{\lon \overline {\beta^{\OS}_{j_r } }  } \Leftarrow w_{s-1}).
\end{align}
Here, $w_0 = v(\mu)\lons$ as mentioned in the proof of Lemma \ref{final_direction}, so that
$\lfloor w_0 \rfloor = v(\mu)$.
Since $d_{j_{r}}=d_{j_{r-1}}={\sigma}_{s-1}$,
we have 
$\lon \left( \overline{\beta^{\OS}_{j_{r-1}}} \right)^\lor 
\prec 
\lon \left( \overline{\beta^{\OS}_{j_{r}}} \right)^\lor$
by
Lemma \ref{remark2.11}.
Because
the (unique) label-increasing  directed  path in $\QBG$ from $w_{s-1}$ to $w_s $
has the final label $\lon \left( \overline{\beta^{\OS}_{j_{r-1}}} \right)^\lor$,
by concatenating this directed path from $w_{s-1}$ to $w_s $ with 
$w_{s} \xrightarrow{\lon \left( \overline {\beta^{\OS}_{j_r } }\right)^\lor } w_{s}s_{\lon \overline {\beta^{\OS}_{j_r } }  }$,
we obtain a label-increasing (hence shortest)  directed path from $w_{s-1}$ to $w_{s}s_{\lon \overline {\beta^{\OS}_{j_r } }  }$
passing through $w_s $.
Therefore, we deduce that
\begin{equation}\label{wtbunkai}
\wt_{\lambda}(w_{s}s_{\lon \overline {\beta^{\OS}_{j_r } }  } \Leftarrow w_{s-1})
=
\wt_{\lambda}(w_{s}s_{\lon \overline {\beta^{\OS}_{j_r } }  } \leftarrow w_{s})
+
\wt_{\lambda}(w_{s} \Leftarrow w_{s-1}).
\end{equation}
It follows from (\ref{eq2}) and (\ref{wtbunkai}) that
\begin{equation*}
\dg_{\mu} ( \Xi (p^{\OS}_{J}) ) 
		= -\sum_{p=0}^{s-1}(1-{\sigma}_p)\wt_{\lambda}(w_{p+1} \Leftarrow w_{p} )
				-(1-d_{j_{r}})\wt_{\lambda}(w_{s}s_{\lon \overline {\beta^{\OS}_{j_r } }  } \leftarrow w_{s}).
\end{equation*}

If $d_{j_{r}}>d_{j_{r-1}}={\sigma}_{s-1}$,
then by the definition of $\dg_{\mu}$ (along with \cite[Lemma 7.2]{LNSSS2}), we see that
\begin{equation*}
\dg_{\mu} ( \Xi (p^{\OS}_{J}) ) 
		= -\sum_{p=0}^{s-1}(1-{\sigma}_p)\wt_{\lambda}(w_{p+1} \Leftarrow w_{p} )
				-(1-d_{j_{r}})\wt_{\lambda}(w_{s}s_{\lon \overline {\beta^{\OS}_{j_r } }  } \leftarrow w_{s}),
\end{equation*}
where $w_0 = v(\mu) \lons$.
Also, by the definition of $\dg_{\mu}$ (along with \cite[Lemma 7.2]{LNSSS2}), we have 
\begin{equation*}
\dg_{\mu} ( \Xi (p^{\OS}_{K}) ) 
		= -\sum_{p=0}^{s-1}(1-{\sigma}_p)\wt_{\lambda}(w_{p+1} \Leftarrow w_{p} ),
\end{equation*}
where $w_0 = v(\mu) \lons$.

In both cases above, we deduce that 
\begin{equation*}
\dg_{\mu} ( \Xi (p^{\OS}_{J}) ) 
		= \dg_{\mu} ( \Xi (p^{\OS}_{K}) ) 
				-(1-d_{j_{r}})\wt_{\lambda}(w_{s}s_{\lon \overline {\beta^{\OS}_{j_r } }  } \leftarrow w_{s}).
\end{equation*}
If the edge $w_{s}s_{\lon \overline {\beta^{\OS}_{j_r } }  } \leftarrow w_{s}$ is a Bruhat edge,
then we have 
$\wt_{\lambda}(w_{s}s_{\lon \overline {\beta^{\OS}_{j_r } }  } \leftarrow w_{s})=0$.
If the edge $w_{s}s_{\lon \overline {\beta^{\OS}_{j_r } }  } \leftarrow w_{s}$ is a quantum edge,
then we have  $\wt_{\lambda}(w_{s}s_{\lon \overline {\beta^{\OS}_{j_r } }  } \leftarrow w_{s})=
\langle \lambda , \lon \overline {\beta^{\OS}_{j_r } } \rangle$; 
note that
\begin{equation*}
(1-d_{j_{r}})\langle \lambda ,\lon \overline {\beta^{\OS}_{j_r } }  \rangle
\overset{{\rm Remark} \ \ref{2.15}}{=}
 \frac{a_{j_r}}{\langle \lambda_-, \overline{ \beta^{\OS}_{j_r} }   \rangle}
{\langle \lambda_- , \overline{ \beta^{\OS}_{j_r} }    \rangle}
=a_{j_r}.
\end{equation*}
Thus, in both cases,
we have 
$		\dg_{\mu} ( \Xi (p^{\OS}_{J}) ) 
		= \dg_{\mu} ( \Xi (p^{\OS}_{K}) )- \chi a_{j_r}$,
and Claim 1 (2) is proved.
\bqed

\noindent $Proof \ of \ Claim \ 2.$
Let us prove (1).
Note that $\ed (p^{\OS}_{J}) ) =  \ed (p^{\OS}_{K})   s_{\beta^{\OS}_{j_r }}$,
and that
\begin{equation*}
 \ed (p^{\OS}_{K}) = t(\wt ( \ed (p^{\OS}_{K}) ) ) \dir  ( \ed (p^{\OS}_{K}) )  = t(\wt ( \ed (p^{\OS}_{K}) ) )w_s \lon ;
\end{equation*}
the second equality follows from the comment at the beginning of the proof of Proposition \ref{qls_qb}.
Also, we have
$s_{\beta^{\OS}_{j_r }} 
= s_{a_{j_r}\tilde{\delta}+\overline {\beta^{\OS}_{j_r } }}
=t\left( a_{j_r}\left( -\overline {\beta^{\OS}_{j_r }}\right)^\lor \right) s_{\overline {\beta^{\OS}_{j_r }}}$.
Combining these, we obtain
\begin{eqnarray*}
 \ed (p^{\OS}_{J}) 
&=&
\left(
 t(\wt ( \ed (p^{\OS}_{K}) ) )w_s \lon
\right)
\left(
 t\left(a_{j_r}\left( -\overline {\beta^{\OS}_{j_r }}\right)^\lor \right)s_{\overline {\beta^{\OS}_{j_r }}}
\right) \\
&=&t\left( \wt ( \ed (p^{\OS}_{K}) ) +a_{j_r}w_s \lon \left( -\overline {\beta^{\OS}_{j_r }}\right)^\lor \right)
w_s \lon s_{\overline {\beta^{\OS}_{j_r }}},
\end{eqnarray*}
and hence
\begin{equation*}
\wt( \ed (p^{\OS}_{J}) )
=
\wt ( \ed (p^{\OS}_{K}) ) +a_{j_r}w_s \lon \left( -\overline {\beta^{\OS}_{j_r }}\right)^\lor .
\end{equation*}

Let us prove (2).
Since $\dir (\ed( p^{\OS}_{K}) ) =w_{s} \lon $,
we have $\dir (\ed( p^{\OS}_{J}) ) =w_{s}\lon s_{\overline {\beta^{\OS}_{j_r } }  } $.
If $w_{s}s_{\lon \overline{\beta^{\OS}_{j_r }} }\xleftarrow{\lon \left(\overline{\beta^{\OS}_{j_r }}\right)^\lor }  w_{s}$
is a Bruhat edge,
then it follows from Lemma \ref{involution} that
$w_{s}\lon s_{-\overline {\beta^{\OS}_{j_r } }}  \xrightarrow{\left(-\overline {\beta^{\OS}_{j_r } } \right)^\lor}
w_{s}\lon$ is also a Bruhat edge.
Hence we obtain $J^+ = K^+$. This implies that $\dg ( \qwt^* (p^{\OS}_{J}) )
		= \dg ( \qwt^* (p^{\OS}_{K}) )$.
If $w_{s}s_{\lon \overline{\beta^{\OS}_{j_r }} }\xleftarrow{\lon \left(\overline{\beta^{\OS}_{j_r }}\right)^\lor }  w_{s}$
is a quantum edge,
then it follows from Lemma \ref{involution} that
$w_{s}\lon s_{-\overline {\beta^{\OS}_{j_r } } } \xrightarrow{\left(-\overline {\beta^{\OS}_{j_r } } \right)^\lor}
w_{s}\lon$ is also a quantum edge.
Hence we obtain
$J^+ = K^+ \sqcup \{ j_r \}$.
This implies that
$\dg ( \qwt^* (p^{\OS}_{J}) )
= \dg ( \qwt^* (p^{\OS}_{K}) ) + \dg (\beta^{\OS}_{j_r })
= \dg ( \qwt^* (p^{\OS}_{K}) ) + a_{j_r}
$.
Therefore, in both cases, we have $\dg ( \qwt^* (p^{\OS}_{J}) )
= \dg ( \qwt^* (p^{\OS}_{K}) ) +\chi  a_{j_r}
$, and Claim 2 (2) is proved.
\bqed

This completes the proof of Proposition \ref{qls_qb}.
\end{proof}



\begin{proof}[Proof of Theorem~$\ref{main}$]
We know from Proposition \ref{os} that
	\begin{equation*}
		E_{\mu}(q,\infty)=
		\sum_{p^{\OS}_{J} \in {\overleftarrow{\QB}}({\id} ; m_{\mu}) } 
		e^{\wt({\ed}(p^{\OS}_{J}))}q^{-{\dg}({\qwt}^{*}(p^{\OS}_{J}))}.
	\end{equation*}
Therefore, it follows from Propositions \ref{bijective} and \ref{qls_qb} that
\begin{equation*}
		E_{\mu}(q,\infty)=
		\sum_{\psi \in {\QLS^{\mu , \infty}(\lambda)} }
		e^{\wt(\psi)}q^{{\dg}_\mu(\psi)}.
	\end{equation*}
Hence we conclude that $E_{\mu}(q,\infty) = \gch_\mu \QLS^{\mu, \infty}(\lambda)$, as desired.
\end{proof}

%
%
\section{Demazure submodules of level-zero extremal weight modules}
\label{sec:review}
%
%
\subsection{Untwisted affine root data}
\label{subsec:affalg}
Let $\Fg_{\aff}$ be the untwisted affine Lie algebra over $\BC$ associated to 
the finite-dimensional simple Lie algebra $\Fg$, and
$\Fh_{\aff}=
\bigl(\bigoplus_{j \in I_{\aff}} \BC \alpha_{j}^{\vee}\bigr) \oplus \BC D$
its Cartan subalgebra, where 
$\bigl\{\alpha_{j}^{\vee}\bigr\}_{j \in I_{\aff}} \subset \Fh_{\aff}$ 
is the set of simple coroots, with $I_{\aff}=I \sqcup \{0\}$, and 
$D \in \Fh_{\aff}$ is the degree operator. 
We denote by 
$\bigl\{\alpha_{j}\bigr\}_{j \in I_{\aff}} \subset \Fh_{\aff}^{\ast}$ 
the set of simple roots, and by 
$\Lambda_{j} \in \Fh_{\aff}^{\ast}$, $j \in I_{\aff}$, 
the fundamental weights; 
note that $\pair{\alpha_{j}}{D}=\delta_{j,0}$ and 
$\pair{\Lambda_{j}}{D}=0$ for $j \in I_{\aff}$, 
where $\pair{\cdot\,}{\cdot}:
\Fh_{\aff}^{\ast} \times \Fh_{\aff} \rightarrow \BC$ denotes 
the canonical pairing of $\Fh_{\aff}$ and 
$\Fh_{\aff}^{\ast} \eqdef \Hom_{\BC}(\Fh_{\aff},\,\BC)$. 
Also, let $\delta=\sum_{j \in I_{\aff}} a_{j}\alpha_{j} \in \Fh_{\aff}^{\ast}$ and 
$c=\sum_{j \in I_{\aff}} a^{\vee}_{j} \alpha_{j}^{\vee} \in \Fh_{\aff}$ denote 
the null root and the canonical central element of 
$\Fg_{\aff}$, respectively. 
Here we note that $\Fh_{\aff}=\Fh \oplus \BC c \oplus \BC D$; if we regard an element 
$\lambda \in \Fh^{\ast}$ as an element of $\Fh_{\aff}^{\ast}$ by: 
$\pair{\lambda}{c}=\pair{\lambda}{D}=0$, then we have 
$\varpi_i = \Lambda_i - a_{i}^{\vee} \Lambda_0$ for $i \in I$. 
We take a weight lattice $P_{\aff}$ for $\Fg_{\aff}$ as follows: 
$P_{\aff} = 
\bigl(\bigoplus_{j \in I_{\aff}} \BZ \Lambda_{j}\bigr) \oplus 
   \BZ \delta \subset \Fh_{\aff}^{\ast}$, and
 set $Q_{\aff}\eqdef \bigoplus_{j \in I_{\aff}} \BZ \alpha_{j}$.

\begin{rem}
We should warn the reader that the root datum of the affine Lie algebra $\Fg_\aff$ is not necessarily dual to that of the untwisted affine Lie algebra
associated to $\widetilde{\Fg}$ in \S 2.2, though the root datum of $\widetilde{\Fg}$ is dual to that of $\Fg$.
In particular, for the index $0 \in I_\aff$, the simple coroot $\alpha_0^\lor = c - \theta^\lor$, with $\theta \in \Delta^+$ the highest root of $\Fg$,
does not agree with the simple root $\widetilde{\delta}-\varphi^\lor$ in \S 2.2, which is denoted by $\alpha_0^\lor$ there.
\end{rem}

The Weyl group $W_{\aff}$ of $\Fg_{\aff}$ is defined to be the subgroup
$\langle \sss_j \mid j \in I_{\aff} \rangle \subset \GL(\Fh_{\aff}^{\ast})$ 
generated by the simple reflections $\sss_{j}$
associated to $\alpha_{j}$ for $j \in I_{\aff}$, 
with length function $\ell:W_{\aff} \rightarrow \BZ_{\ge 0}$ and 
identity element $e \in W_{\aff}$. 
For $\xi \in Q^{\vee}=\bigoplus_{i \in I} \BZ \alpha_{i}^{\vee}$, 
let $t(\xi) \in W_{\aff}$ denote 
the translation in $\Fh^{\ast}_{\aff}$ by $\xi$ (see \cite[\S6.5]{Kac}). 
Then we know from \cite[Proposition 6.5]{Kac} that 
$\bigl\{ t(\xi) \mid \xi \in Q^{\vee} \bigr\}$ forms 
an abelian normal subgroup of $W_{\aff}$, 
and that $t(\xi) t(\zeta) = t(\xi + \zeta)$, 
$\xi,\,\zeta \in Q^{\vee}$, and 
$W_{\aff} = W \ltimes \bigl\{ t(\xi) \mid \xi \in Q^{\vee} \bigr\}$. 
%
%
%
We denote by $\rr$ the set of real roots, i.e., 
$\rr\eqdef \bigl\{x\alpha_{j} \mid x \in W_{\aff},\,j \in I_{\aff}\bigr\}$, 
and by $\prr \subset \rr$ the set of positive real roots; 
we know from \cite[Proposition 6.3]{Kac} that
\begin{align*}
\begin{split}
\rr & = 
\bigl\{ \alpha + n \delta \mid \alpha \in \Delta,\, n \in \BZ \bigr\}, \\
\prr & = 
\Delta^{+} \sqcup 
\bigl\{ \alpha + n \delta \mid \alpha \in \Delta,\, n \in \BZ_{> 0}\bigr\}. 
\end{split}
\end{align*}
For $\beta \in \rr$, we denote by $\beta^{\vee} \in \Fh_{\aff}$ 
the dual root of $\beta$, and by $\sss_{\beta} \in W_{\aff}$ 
the reflection with respect to $\beta$; 
note that if $\beta \in \Delta_{\aff}$ is 
of the form $\beta = \alpha + n \delta$ 
with $\alpha \in \Delta $ and $n \in \BZ$, then 
$\sss_{\beta} =\sss_{\alpha} t(n\alpha^{\vee})$.

%
%
\subsection{Peterson's coset representatives}
\label{subsec:W^J_af}

Let $S$ be a subset of $I$. Following \cite{Pet97} 
(see also \cite[\S10]{LS10}), we set
\begin{align}
Q_S^\lor &\eqdef  \sum_{i \in S} \mathbb{Z}\alpha_i^\lor, \\
(\Delta_S)_{\aff} 
  & \eqdef  \bigl\{ \alpha + n \delta \mid 
  \alpha \in \Delta_S , n \in \BZ \bigr\} \subset \Delta_{\aff}, \\
(\Delta_S)_{\aff}^{+}
  &\eqdef  (\Delta_S)_{\aff} \cap \prr = 
  \Delta_S^+ \sqcup \bigl\{ \alpha + n \delta \mid 
  \alpha \in \Delta_S,\,n \in \BZ_{> 0} \bigr\}, \\
\label{eq:stabilizer}
(W_S)_{\aff} 
 & \eqdef  W_S \ltimes \bigl\{ t(\xi) \mid \xi \in Q_S^{\vee} \bigr\}
   = \langle \sss_{\beta} \mid \beta \in (\Delta_S)_{\aff}^{+} \rangle, \\
\label{eq:Pet}
(W^S)_{\aff}
 &\eqdef  \bigl\{ x \in W_{\aff} \mid 
 \text{$x\beta \in \prr$ for all $\beta \in (\Delta_S)_{\aff}^+$} \bigr\}. 
\end{align}
Then we know the following from \cite{Pet97} 
(see also \cite[Lemma~10.6]{LS10}).
%
%
\begin{prop} \label{prop:P}
For each $x \in W_{\aff}$, there exist a unique 
$x_1 \in (W^S)_{\aff}$ and a unique $x_2 \in (W_S)_{\aff}$ 
such that $x = x_1 x_2$.
\end{prop}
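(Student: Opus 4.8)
The plan is to recognize $(W^S)_{\aff}$ as the set of minimal-length representatives of the left cosets of $(W_S)_{\aff}$ in $W_{\aff}$, and then to deduce existence from a length-minimization argument and uniqueness from an inversion-set argument. I will freely use that $(W_S)_{\aff}$ is a subgroup of $W_{\aff}$, which is precisely \eqref{eq:stabilizer}, and the following structural fact: $(W_S)_{\aff}$ is the affine Weyl group of the finite root system $\Delta_S$, hence it is a Coxeter group whose set of real roots is $(\Delta_S)_{\aff}$ with positive system $(\Delta_S)_{\aff}^{+}$ (this is classical; see \cite[Chapter~6]{Kac} or \cite{Pet97}). In addition I will use the standard fact that for $x \in W_{\aff}$ and $\beta \in \prr$ one has $\ell(x\sss_{\beta}) < \ell(x)$ if and only if $x\beta \in -\prr$.

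For existence, given $x \in W_{\aff}$, I would choose an element $x_1$ of minimal length in the coset $x (W_S)_{\aff}$ and claim that $x_1 \in (W^S)_{\aff}$. Indeed, suppose not; then, by \eqref{eq:Pet}, there is $\beta \in (\Delta_S)_{\aff}^{+}$ with $x_1 \beta \in -\prr$. Since $\sss_{\beta} \in (W_S)_{\aff}$ by \eqref{eq:stabilizer}, the element $x_1 \sss_{\beta}$ again lies in $x (W_S)_{\aff}$, yet $\ell(x_1 \sss_{\beta}) < \ell(x_1)$ by the fact quoted above, contradicting the minimality of $x_1$. Hence $x_1 \in (W^S)_{\aff}$; writing $x_1 = x v$ with $v \in (W_S)_{\aff}$ and setting $x_2 \eqdef v^{-1} \in (W_S)_{\aff}$ gives the desired decomposition $x = x_1 x_2$.

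For uniqueness, it is enough to show that every coset of $(W_S)_{\aff}$ contains at most one element of $(W^S)_{\aff}$: given two decompositions $x = x_1 x_2 = x_1' x_2'$ of the required form, $x_1$ and $x_1'$ lie in the same coset and both in $(W^S)_{\aff}$, so $x_1 = x_1'$, and then $x_2 = x_1^{-1} x = (x_1')^{-1} x = x_2'$. So suppose $x_1, x_1' \in (W^S)_{\aff}$ with $x_1' = x_1 w$, $w \in (W_S)_{\aff}$, and assume for contradiction $w \neq e$. Viewing $(W_S)_{\aff}$ as the Coxeter group above, whose positive real roots are exactly $(\Delta_S)_{\aff}^{+}$, the hypothesis $w \neq e$ gives (by nonemptiness of the inversion set of $w$) a root $\beta \in (\Delta_S)_{\aff}^{+}$ with $w\beta \in -(\Delta_S)_{\aff}^{+}$. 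Then $-w\beta \in (\Delta_S)_{\aff}^{+}$, so $x_1(-w\beta) \in \prr$ since $x_1 \in (W^S)_{\aff}$, and therefore $x_1' \beta = x_1(w\beta) = -x_1(-w\beta) \in -\prr$; this contradicts $x_1' \in (W^S)_{\aff}$ together with $\beta \in (\Delta_S)_{\aff}^{+}$. Hence $w = e$, i.e.\ $x_1 = x_1'$, and uniqueness follows. The only genuinely non-formal input is the identification of $(W_S)_{\aff}$ as a Coxeter group with root system $(\Delta_S)_{\aff}$ and positive system $(\Delta_S)_{\aff}^{+}$ — the main obstacle, resolved by invoking the classical structure theory of affine Weyl groups (\cite[Chapter~6]{Kac}, \cite{Pet97}) — after which both halves are a direct transcription of the standard theory of minimal coset representatives in Coxeter groups.
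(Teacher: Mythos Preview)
Your argument is correct. The paper does not actually prove this proposition: it simply records it as known from Peterson's lecture notes \cite{Pet97} (with a pointer to \cite[Lemma~10.6]{LS10}) and moves on. So there is no ``paper's own proof'' to compare against.

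What you have written is precisely the standard proof one finds behind those citations. The existence half is the usual length-minimization for coset representatives, and the uniqueness half is the usual inversion-set argument; both rest on the single structural input you flagged, namely that $(W_S)_{\aff}=W_S\ltimes t(Q_S^{\vee})$ is itself a Coxeter group (the affine Weyl group of the finite root system $\Delta_S$, or a product of such when $\Delta_S$ is reducible) whose positive real roots, viewed inside $\prr$, are exactly $(\Delta_S)_{\aff}^{+}$. That identification is classical (and can alternatively be obtained from Dyer's theory of reflection subgroups, since $(W_S)_{\aff}$ is the reflection subgroup of $W_{\aff}$ generated by $\{s_\beta\mid\beta\in(\Delta_S)_{\aff}^{+}\}$, with canonical positive system $(\Delta_S)_{\aff}^{+}$). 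Once it is in hand, your two paragraphs go through verbatim.
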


We define a (surjective) map $\PJ : W_{\aff} \rightarrow (W^S)_{\aff}$ 
by $\PJ (x) \eqdef  x_1$ if $x= x_1 x_2$ with $x_1 \in (W^S)_{\aff}$ and 
$x_2 \in (W_S)_{\aff}$.
%
%
\begin{lem}[{\cite{Pet97}; see also \cite[Proposition 10.10]{LS10}}] 
\label{lem:PJ} \mbox{}
\begin{enu}
\item $\PJ (w) = \lfloor w \rfloor $ for every $w \in W$.

\item $\PJ (x t(\xi)) = \PJ (x) \PJ (t(\xi))$ 
for every $x \in W_{\aff}$ and $\xi \in Q^{\vee}$.
\end{enu}
\end{lem}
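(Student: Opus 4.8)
The plan is to obtain both assertions directly from the uniqueness of the factorization in Proposition \ref{prop:P}, using only the elementary observation that a positive real root $\alpha+n\delta$ (with $\alpha\in\Delta$, $n\in\BZ_{\ge 0}$) lies in $(\Delta_S)_{\aff}^{+}$ precisely when $\alpha\in\Delta_S$, that this classical part $\alpha$ is unchanged by every $w\in W$ (which fixes $\delta$), and that each $t(\xi)$, $\xi\in Q^{\vee}$, alters a real root only in its $\delta$-coefficient and so stabilizes $(\Delta_S)_{\aff}$.

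For part (1), write $w=\lfloor w\rfloor\,v$ with $v\in W_S$ in the standard way, so that $\lfloor w\rfloor(\Delta_S^{+})\subseteq\Delta^{+}$. For $\beta=\alpha+n\delta\in(\Delta_S)_{\aff}^{+}$ we then have $\lfloor w\rfloor\beta=\lfloor w\rfloor\alpha+n\delta\in\prr$: this is automatic when $n>0$, and when $n=0$ we have $\alpha\in\Delta_S^{+}$, so $\lfloor w\rfloor\alpha\in\Delta^{+}$. Hence $\lfloor w\rfloor\in(W^S)_{\aff}$; since also $v\in W_S\subseteq(W_S)_{\aff}$, the uniqueness in Proposition \ref{prop:P} forces $\PJ(w)=\lfloor w\rfloor$.

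For part (2), I would first record three facts, each a quick consequence of Proposition \ref{prop:P} and the definitions. (a) $\PJ$ is constant on right $(W_S)_{\aff}$-cosets: if $x=\PJ(x)u$ with $u\in(W_S)_{\aff}$ and $v\in(W_S)_{\aff}$, then $xv=\PJ(x)(uv)$ with $uv\in(W_S)_{\aff}$, so $\PJ(xv)=\PJ(x)$. (b) Every $t(\xi)$, $\xi\in Q^{\vee}$, normalizes $(W_S)_{\aff}$: it stabilizes $(\Delta_S)_{\aff}$ and hence conjugates the reflections $s_\beta$, $\beta\in(\Delta_S)_{\aff}$, generating $(W_S)_{\aff}$ to reflections of the same form. (c) $\PJ(t(\xi))$ maps $(\Delta_S)_{\aff}^{+}$ into $(\Delta_S)_{\aff}^{+}$, not merely into $\prr$: writing $t(\xi)=\PJ(t(\xi))\,v$ with $v\in(W_S)_{\aff}$, the element $\PJ(t(\xi))=t(\xi)v^{-1}$ stabilizes $(\Delta_S)_{\aff}$ (both $t(\xi)$ and $v^{-1}$ do), while it sends $(\Delta_S)_{\aff}^{+}$ into $\prr$ because it lies in $(W^S)_{\aff}$; intersecting, its image lies in $(\Delta_S)_{\aff}\cap\prr=(\Delta_S)_{\aff}^{+}$.

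Granting these, part (2) follows in a few lines. Writing $x=\PJ(x)u$ and $t(\xi)=\PJ(t(\xi))v$ with $u,v\in(W_S)_{\aff}$, and using (b) to move $t(\xi)$ to the left past $u$, one gets $xt(\xi)=\PJ(x)\,\PJ(t(\xi))\,v'$ with $v'\in(W_S)_{\aff}$; hence $\PJ(xt(\xi))=\PJ\bigl(\PJ(x)\PJ(t(\xi))\bigr)$ by (a). Finally, for $\beta\in(\Delta_S)_{\aff}^{+}$, fact (c) gives $\PJ(t(\xi))\beta\in(\Delta_S)_{\aff}^{+}$, and then $\PJ(x)$ sends this into $\prr$ since $\PJ(x)\in(W^S)_{\aff}$; thus $\PJ(x)\PJ(t(\xi))\in(W^S)_{\aff}$, so it equals its own $\PJ$-image, and $\PJ(xt(\xi))=\PJ(x)\PJ(t(\xi))$. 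The one genuinely non-formal point is fact (c): knowing merely $\PJ(t(\xi))\in(W^S)_{\aff}$ controls its action on $(\Delta_S)_{\aff}^{+}$ only up to the $\delta$-shift needed to land in $\prr$, and one must use the translation-stability of $(\Delta_S)_{\aff}$ from (b) to see that $\PJ(t(\xi))$ preserves the \emph{positive} subsystem $(\Delta_S)_{\aff}^{+}$ — which is exactly what keeps the product $\PJ(x)\PJ(t(\xi))$ inside $(W^S)_{\aff}$.
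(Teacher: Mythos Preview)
Your argument is correct. The paper itself does not supply a proof of this lemma at all; it merely records the statement with a citation to \cite{Pet97} and \cite[Proposition~10.10]{LS10}. So there is no ``paper's own proof'' to compare against, and your self-contained derivation from Proposition~\ref{prop:P} is a genuine addition. The key observation you isolate in fact~(c) --- that $\PJ(t(\xi))$ not only sends $(\Delta_S)_{\aff}^{+}$ into $\prr$ but actually preserves $(\Delta_S)_{\aff}^{+}$, because it factors as $t(\xi)v^{-1}$ with both factors stabilizing $(\Delta_S)_{\aff}$ --- is exactly the point that makes the product $\PJ(x)\PJ(t(\xi))$ land in $(W^S)_{\aff}$, and your emphasis on it is well placed.
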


An element $\xi \in Q^{\vee}$ is said to be $S$-adjusted 
if $\pair{\gamma}{\xi} \in \bigl\{ -1,\,0 \bigr\}$ 
for all $\gamma \in \Delta_{S}^{+}$ (see \cite[Lemma~3.8]{LNSSS1}). 
Let $\Qad$ denote the set of $S$-adjusted elements.
%
%
\begin{lem}[{\cite[Lemma~2.3.5]{INS}}] \label{lem:S-adj}
\mbox{}
\begin{enu}
\item For each $\xi \in Q^{\vee}$, there exists 
a unique $\phi_S (\xi) \in Q_{S}^{\vee}$ such that 
$\xi + \phi_S (\xi) \in \Qad$. In particular, 
$\xi \in \Qad$ if and only if $\phi_{S}(\xi)=0$. 

\item For each $\xi \in Q^{\vee}$, 
the element $\PJ (t(\xi)) \in (W^{S})_{\aff}$ is of the form 
$\PJ (t(\xi)) = z_{\xi} t(\xi + \phi_S (\xi))$ 
for a specific element $z_{\xi} \in W_S$. 
Also, $\PJ (w t(\xi)) = 
\mcr{w} z_{\xi} t(\xi + \phi_S (\xi))$ 
for every $w \in W$ and $\xi \in Q^{\vee}$.

\item We have 
%
%
\begin{equation}\label{eq:WSaf}
(W^S)_{\aff} = 
\bigl\{ w z_{\xi} t(\xi) \mid w \in W^S,\,\xi \in \Qad \bigr\}.
\end{equation}
\end{enu}
\end{lem}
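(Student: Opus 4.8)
The plan is to obtain all three parts from Peterson's machinery — Proposition~\ref{prop:P} and Lemma~\ref{lem:PJ} — together with the semidirect-product presentation $W_{\aff}=W\ltimes\{t(\xi)\mid\xi\in Q^{\vee}\}$ and the defining inclusion condition for $(W^S)_{\aff}$. The existence/uniqueness assertion in (1) I would take from \cite[Lemma~3.8]{LNSSS1} (existence in fact also falls out of the computation below); granting (1), parts (2) and (3) become almost formal, so the bulk of the argument is the analysis of $\PJ(t(\xi))$ in (2).

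For (2), the first step is to pin down the shape of $\PJ(t(\xi))\in(W^S)_{\aff}$. Write $\PJ(t(\xi))=v\,t(\zeta)$ with $v\in W$, $\zeta\in Q^{\vee}$, and, as in Proposition~\ref{prop:P}, $t(\xi)=\PJ(t(\xi))\cdot x_{2}$ with $x_{2}=w'\,t(\eta)\in(W_S)_{\aff}$, $w'\in W_S$, $\eta\in Q_S^{\vee}$. Multiplying out in $W\ltimes t(Q^{\vee})$ and comparing the $W$- and translation-parts gives $v=(w')^{-1}\in W_S$ and $\zeta=w'(\xi-\eta)$; since $W_S$ stabilizes $Q_S^{\vee}$ and $s_i\xi-\xi\in\BZ\alpha_i^{\vee}\subseteq Q_S^{\vee}$ for $i\in S$ (whence $w'\xi\equiv\xi\pmod{Q_S^{\vee}}$ by induction on $\ell(w')$), this yields $\zeta\in\xi+Q_S^{\vee}$. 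Next I would show $\zeta\in\Qad$: feeding the roots $\gamma\in\Delta_S^{+}$ and $-\gamma+\delta\in(\Delta_S)_{\aff}^{+}$ into the condition $\PJ(t(\xi))\beta\in\prr$ and using $v\in W_S$ forces $\langle\gamma,\zeta\rangle\in\{-1,0\}$ for every $\gamma\in\Delta_S^{+}$ (with $\langle\gamma,\zeta\rangle=-1$ precisely when $v\gamma\in\Delta^{-}$), i.e. $\zeta\in\Qad$. By the uniqueness in (1) we must have $\zeta=\xi+\phi_S(\xi)$, and setting $z_{\xi}:=v\in W_S$ gives $\PJ(t(\xi))=z_{\xi}\,t(\xi+\phi_S(\xi))$. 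The second formula of (2) is then immediate from Lemma~\ref{lem:PJ}: for $w\in W$, $\PJ(wt(\xi))=\PJ(w)\PJ(t(\xi))=\lfloor w\rfloor\,z_{\xi}\,t(\xi+\phi_S(\xi))$.

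Part (3) follows formally. For $\supseteq$: if $w\in W^S$ (so $\lfloor w\rfloor=w$) and $\xi\in\Qad$ (so $\phi_S(\xi)=0$), the second formula of (2) reads $w\,z_{\xi}\,t(\xi)=\PJ(wt(\xi))\in(W^S)_{\aff}$. For $\subseteq$: take $x\in(W^S)_{\aff}$, so $\PJ(x)=x$ (by uniqueness in Proposition~\ref{prop:P}, since $x=x\cdot e$), and write $x=u\,t(\mu)$ with $u\in W$, $\mu\in Q^{\vee}$. Put $\xi:=\mu+\phi_S(\mu)\in\Qad$; since $t(\mu)=t(\xi)\,t(-\phi_S(\mu))$ with $-\phi_S(\mu)\in Q_S^{\vee}$, Lemma~\ref{lem:PJ} gives $\PJ(t(\mu))=\PJ(t(\xi))\PJ(t(-\phi_S(\mu)))=\PJ(t(\xi))=z_{\xi}t(\xi)$, hence $x=\PJ(x)=\PJ(ut(\mu))=\PJ(u)\PJ(t(\mu))=\lfloor u\rfloor\,z_{\xi}\,t(\xi)$ with $\lfloor u\rfloor\in W^S$ and $\xi\in\Qad$. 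This establishes \eqref{eq:WSaf}.

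The only non-formal ingredient is the uniqueness in (1): each coset $\xi+Q_S^{\vee}$ ($\xi\in Q^{\vee}$) carries at most one $S$-adjusted element. After subtracting an $S$-adjusted representative and conjugating by $W_S$ into the anti-dominant chamber, this reduces to the assertion that $0$ is the only $S$-adjusted element of $Q_S^{\vee}$ — equivalently that no nonzero sum of fundamental coweights of $\Delta_S$ attached to mark-one (cominuscule) nodes lies in $Q_S^{\vee}$ — which is checked irreducible-component by irreducible-component from the classification and is recorded in \cite[Lemma~3.8]{LNSSS1}. Apart from this, the argument is bookkeeping in the semidirect product and with the defining condition for $(W^S)_{\aff}$; the one place that needs care is the positivity check establishing $\zeta\in\Qad$, where both $\gamma$ and $-\gamma+\delta$ must be tested against $\PJ(t(\xi))\beta\in\prr$.
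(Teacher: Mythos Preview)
The paper does not prove this lemma; it is quoted verbatim from \cite[Lemma~2.3.5]{INS} with no argument given, so there is no in-paper proof to compare against.

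That said, your proof is correct. The key computation---testing both $\gamma$ and $-\gamma+\delta$ against the positivity condition defining $(W^S)_{\aff}$ to force $\langle\gamma,\zeta\rangle\in\{-1,0\}$---is exactly the right move, and the rest of (2) and all of (3) are formal bookkeeping with Proposition~\ref{prop:P} and Lemma~\ref{lem:PJ}, as you say. Your handling of uniqueness in (1) is also on target: pairing with the highest root of each irreducible component of $\Delta_S$ forces the indices in $T=\{i\in S:\langle\alpha_i,\xi\rangle=-1\}$ to be cominuscule, after which the cited lemma \cite[Lemma~3.8]{LNSSS1} finishes it. The one place I would tighten the exposition is that reduction step---you jump from ``$S$-adjusted element of $Q_S^{\vee}$'' to ``sum of cominuscule fundamental coweights'' without explicitly invoking the highest-root constraint; adding that one line would make the sketch self-contained.
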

%
%
\begin{rem} \label{rem:trans}
(1) Let $\xi,\,\zeta \in Q^{\vee}$. If $\xi \equiv \zeta$ mod $Q_{S}^{\vee}$, i.e., $\xi-\zeta \in Q_{S}^{\vee}$,
then $\PJ(t(\xi))=\PJ(t(\zeta))$ since $t(\xi-\zeta) \in (W_{S})_{\aff}$.
Hence we see by Lemma~\ref{lem:S-adj}\,(2) that 
$\xi + \phi_{S}(\xi) = \zeta + \phi_{S}(\zeta)$ and 
$z_{\xi} = z_{\zeta}$. In particular, 
$z_{\xi+\phi_{S}(\xi)} = z_{\xi}$ for every $\xi \in Q^{\vee}$. 

(2) Let $x=wz_{\xi}t(\xi) \in (W^{S})_{\aff}$, 
with $w \in W^{S}$ and $\xi \in \Qad$; note that $\PJ(x)=x$. 
Then it follows from Lemma~\ref{lem:PJ}\,(2) that 
for every $\zeta \in Q^\lor$, 
\begin{equation} \label{eq:t1}
x \PJ(t(\zeta)) = \PJ(x)\PJ(t(\zeta)) 
 = \PJ(xt(\zeta)) \in (W^{S})_{\aff}. 
\end{equation}
%
%
%
\end{rem}

%
\subsection{Parabolic semi-infinite Bruhat graph} 
\label{subsec:SBG}
In this subsection,
we prove some technical lemmas, which we use later.

%
%
\begin{dfn}[{\cite{Pet97}}] \label{dfn:sell}
Let $x \in W_{\aff}$, and 
write it as $x = w t(\xi)$ for $w \in W$ and $\xi \in Q^{\vee}$. 
Then we define the semi-infinite length $\sell(x)$ of $x$ by
$\sell (x) \eqdef  \ell (w) + 2 \pair{\rho}{\xi}$,
where $\rho = (1/2)\sum_{\alpha \in \Delta^+} \alpha$. 
\end{dfn}
Let us fix a subset $S$ of $I$. 
%
%
\begin{dfn}\label{def:SiB}
(1) We define the (parabolic) semi-infinite Bruhat graph $\SBG^{S}$ 
to be the $\prr$-labeled, directed graph with vertex set $(W^S)_{\aff}$ 
and $\prr$-labeled, directed edges of the following form:
$x \edge{\beta} \sss_{\beta} x$ for $x \in (W^S)_{\aff}$ and $\beta \in \prr$, 
where $\sss_{\beta } x \in (W^S)_{\aff}$ and 
$\sell (\sss_{\beta} x) = \sell (x) + 1$.

(2) The semi-infinite Bruhat order is a partial order 
$\sile$ on $(W^S)_{\aff}$ defined as follows: 
for $x,\,y \in (W^S)_{\aff}$, we write $x \sile y$ 
if there exists a directed path from $x$ to $y$ in $\SBG^{S}$; 
also, we write $x \sil y$ if $x \sile y$ and $x \ne y$. 
\end{dfn}

Let
$[\,\cdot\,]=[\,\cdot\,]_{I \setminus S} : 
Q^{\vee} \twoheadrightarrow Q_{I \setminus S}^{\vee}$ 
denote the projection from $Q^{\vee}$ onto $Q_{I \setminus S}^{\vee}$ 
with kernel $Q_{S}^{\vee}$.
Also, for $\xi, \zeta \in Q^\lor$, we write
\begin{equation}\label{eq:4.7}
\xi \geq \zeta \
\mbox{ if } \
\xi-\zeta \in Q^{\lor, +} \eqdef
\sum_{i \in I}
\mathbb{Z}_{\geq 0}
\alpha_i^\lor.
\end{equation}
The next lemma follows from \cite[Remark~2.3.3]{NS-D}.
%
%
\begin{lem} \label{lem:sig0}
Let $u,\,v \in W^{S}$, $\xi,\,\zeta \in \Qad$, 
and $\beta \in \Delta_{\aff}^{+}$. 
If $uz_{\zeta}t(\zeta) \edge{\beta} vz_{\xi}t(\xi)$ in $\SBG^{S}$, 
then $[\xi] \ge [\zeta]$. 
\end{lem}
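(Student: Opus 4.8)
The plan is to unwind the definition of an edge in $\SBG^S$ and track the translation parts through the semi-infinite length condition. First I would write $\beta = \gamma + n\delta$ with $\gamma \in \Delta$ and $n \in \BZ_{\ge 0}$ (using $\beta \in \prr$), so that $\sss_\beta = \sss_\gamma t(n\gamma^\vee)$. By hypothesis, $\sss_\beta (uz_\zeta t(\zeta)) = v z_\xi t(\xi)$ as elements of $W_\aff$, and both sides already lie in $(W^S)_\aff$; by Proposition~\ref{prop:P} this factorization is unique. Computing the left-hand side, $\sss_\gamma t(n\gamma^\vee) u z_\zeta t(\zeta) = \sss_\gamma (uz_\zeta) \, t\bigl(n(uz_\zeta)^{-1}\gamma^\vee + \zeta\bigr)$. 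Applying $\PJ$ and Lemma~\ref{lem:S-adj}\,(2) (together with Remark~\ref{rem:trans}\,(1)), the $Q_{I\setminus S}^\vee$-component of the translation part is unchanged by the $\phi_S$-adjustment, so I get $[\xi] = [\zeta] + n\,[(uz_\zeta)^{-1}\gamma^\vee]$. Thus it suffices to show $n\,[(uz_\zeta)^{-1}\gamma^\vee] \in Q^{\vee,+}_{I\setminus S}$, i.e. (since $n \ge 0$) that either $n = 0$ or $[(uz_\zeta)^{-1}\gamma^\vee] \ge 0$.

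The key input is the length condition $\sell(\sss_\beta x) = \sell(x) + 1$, which I would combine with the description of $\sell$ in Definition~\ref{dfn:sell}. Writing $x = uz_\zeta t(\zeta)$ and $\sss_\beta x = v z_\xi t(\xi)$, we have $\sell(x) = \ell(uz_\zeta) + 2\pair{\rho}{\zeta}$ and $\sell(\sss_\beta x) = \ell(vz_\xi) + 2\pair{\rho}{\xi}$, and also $\xi = n(uz_\zeta)^{-1}\gamma^\vee + \zeta$ from the computation above, so $2\pair{\rho}{\xi} - 2\pair{\rho}{\zeta} = 2n\pair{\rho}{(uz_\zeta)^{-1}\gamma^\vee}$. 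Hence the increment of $\sell$ by $1$ forces $\ell(vz_\xi) - \ell(uz_\zeta) = 1 - 2n\pair{\rho}{(uz_\zeta)^{-1}\gamma^\vee}$. Now I distinguish cases on the sign of $\pair{\gamma}{(uz_\zeta)\rho^{\vee}}$-type quantities, or more directly on whether $(uz_\zeta)^{-1}\gamma^\vee$ is a positive or negative coroot: if $n = 0$ we are done immediately, and if $n \ge 1$ the boundedness of $\ell$ on $W$ forces $\pair{\rho}{(uz_\zeta)^{-1}\gamma^\vee} > 0$ (otherwise the right side would be too large), which means $(uz_\zeta)^{-1}\gamma^\vee \in \Delta^{\vee,+}$, and therefore $[(uz_\zeta)^{-1}\gamma^\vee] \ge 0$ in $Q^{\vee,+}_{I\setminus S}$ since projecting a positive coroot onto $Q^\vee_{I\setminus S}$ along $Q^\vee_S$ yields a nonnegative combination. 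This is essentially \cite[Remark~2.3.3]{NS-D}, which I am free to cite, but I would spell out the translation-part bookkeeping since that is what the later lemmas will need.

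The main obstacle I anticipate is the careful bookkeeping of the $\phi_S$-adjustments: after multiplying out $\sss_\beta x$ one does not immediately get an element in $S$-adjusted form, so one must apply $\PJ$ and invoke Lemma~\ref{lem:S-adj}\,(2) to rewrite $\PJ(\sss_\gamma(uz_\zeta)\,t(n(uz_\zeta)^{-1}\gamma^\vee + \zeta))$ in the canonical form $v z_\xi t(\xi)$ with $\xi \in \Qad$, and then check that passing to the adjusted translation $\xi$ does not alter the class $[\,\cdot\,]$ in $Q^\vee_{I\setminus S}$ — this is exactly Remark~\ref{rem:trans}\,(1), since $\phi_S$ takes values in $Q^\vee_S = \ker[\,\cdot\,]$. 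Once that identification is in hand, the sign analysis via $\sell$ is routine. A cleaner alternative, which I would use if the direct computation gets unwieldy, is to cite \cite[Remark~2.3.3]{NS-D} directly for the inequality $[\xi] \ge [\zeta]$ and merely record here how it specializes to the parabolic setting $(W^S)_\aff$, since the statement is explicitly attributed to that remark.
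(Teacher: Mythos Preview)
Your approach is correct and matches the paper's: the paper simply records that the lemma follows from \cite[Remark~2.3.3]{NS-D}, which is precisely the citation you flag as the clean alternative, and your direct computation unpacks that remark. One small sharpening: in the case $n=1$ the phrase ``boundedness of $\ell$ on $W$'' is not quite enough---what you actually need is the standard inequality $\ell(ws_{\gamma'}) - \ell(w) \le 2\pair{\rho}{(\gamma')^{\vee}} - 1$ for $\gamma' \in \Delta^{+}$ (the bound underlying the quantum-edge condition in Definition~\ref{QBG}), after which your contradiction goes through for all $n \ge 1$.
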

%
%
\begin{lem} \label{lem:sig1}
Let $x \in W^{S}$, and $\xi,\,\zeta \in \Qad$. Then, 
$xz_{\xi}t(\xi) \sige xz_{\zeta}t(\zeta)$ 
if and only if $[\xi] \ge [\zeta]$. 
\end{lem}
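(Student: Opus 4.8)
The plan is to establish the two implications separately. The forward implication (``only if'') is a quick consequence of Lemma~\ref{lem:sig0}: if $xz_{\xi}t(\xi) \sige xz_{\zeta}t(\zeta)$, choose a directed path $xz_{\zeta}t(\zeta) = y_{0} \edge{\beta_{1}} y_{1} \edge{\beta_{2}} \cdots \edge{\beta_{N}} y_{N} = xz_{\xi}t(\xi)$ in $\SBG^{S}$; since each $y_{k} \in (W^{S})_{\aff}$, Lemma~\ref{lem:S-adj}\,(3) lets us write $y_{k} = w_{k}z_{\eta_{k}}t(\eta_{k})$ with $w_{k} \in W^{S}$ and $\eta_{k} \in \Qad$, and applying Lemma~\ref{lem:sig0} to each edge gives $[\eta_{k}] \ge [\eta_{k-1}]$ for $1 \le k \le N$. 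As $\ge$ on $Q^{\vee}$ (in the sense of \eqref{eq:4.7}) is transitive, we conclude $[\xi] = [\eta_{N}] \ge [\eta_{0}] = [\zeta]$.

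For the reverse implication, the plan is to reduce to a single simple coroot. Assume $[\xi] \ge [\zeta]$. Since $[\xi]-[\zeta]$ lies in both $Q^{\vee}_{I \setminus S}$ and $Q^{\vee,+}$, linear independence of the simple coroots forces $\psi \eqdef [\xi]-[\zeta] = \sum_{i \in I \setminus S} c_{i}\alpha_{i}^{\vee}$ with all $c_{i} \in \BZ_{\ge 0}$; in particular $\psi \in Q^{\vee,+}$ and $[\psi] = \psi$. Using that $xz_{\zeta} \in W$ with $\lfloor xz_{\zeta} \rfloor = x$, Lemma~\ref{lem:S-adj}\,(2) and Remark~\ref{rem:trans}\,(1) give $\PJ\bigl(xz_{\zeta}t(\zeta)\,t(\psi)\bigr) = \PJ\bigl(xz_{\zeta}t(\zeta+\psi)\bigr) = xz_{\zeta+\psi}t\bigl((\zeta+\psi)+\phi_{S}(\zeta+\psi)\bigr) = xz_{\xi}t(\xi)$, where the last equality uses $[\zeta+\psi]=[\xi]$ together with $\xi \in \Qad$. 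Hence it suffices to prove the sub-claim: \emph{for every $y \in (W^{S})_{\aff}$ and every $i \in I$ one has $y \sile \PJ(y\,t(\alpha_{i}^{\vee}))$ in $\SBG^{S}$.} Granting this, write $\psi = \alpha_{i_{1}}^{\vee} + \cdots + \alpha_{i_{r}}^{\vee}$, apply the sub-claim repeatedly together with the identity $\PJ(z\,t(\eta)) = \PJ(\PJ(z)\,t(\eta))$ (which follows from Lemma~\ref{lem:PJ}\,(2) and $\PJ \circ \PJ = \PJ$), and use transitivity of $\sile$ to obtain $xz_{\zeta}t(\zeta) \sile \PJ(xz_{\zeta}t(\zeta)\,t(\psi)) = xz_{\xi}t(\xi)$.

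The main obstacle is the sub-claim $y \sile \PJ(y\,t(\alpha_{i}^{\vee}))$. I would prove it by an explicit path construction in $\SBG^{S}$: writing $y = wz_{\eta}t(\eta)$ with $w \in W^{S}$ and $\eta \in \Qad$, one has $\sell(y\,t(\alpha_{i}^{\vee})) = \sell(y)+2$, and by Lemma~\ref{lem:S-adj}\,(2) the target $\PJ(y\,t(\alpha_{i}^{\vee})) = wz_{\eta+\alpha_{i}^{\vee}}t\bigl((\eta+\alpha_{i}^{\vee})+\phi_{S}(\eta+\alpha_{i}^{\vee})\bigr)$ differs from $y$ only by a right translation and a correction in $W_{S}$ controlled by $z_{\eta+\alpha_{i}^{\vee}}z_{\eta}^{-1}$ and $\phi_{S}$. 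Reflecting successively by the positive real roots built from $\delta-\theta$ (to realise the translation part) and from the roots $wz_{\eta}(\Delta_{S})$ (to realise the $W_{S}$-correction), one checks, using the semi-infinite length formula of Definition~\ref{dfn:sell} and the membership criterion \eqref{eq:Pet}, that each reflection raises $\sell$ by exactly $1$ and keeps the vertex in $(W^{S})_{\aff}$; the resulting path witnesses $y \sile \PJ(y\,t(\alpha_{i}^{\vee}))$. Alternatively, if one invokes the characterization of the semi-infinite Bruhat order on $(W^{S})_{\aff}$ in terms of the parabolic quantum Bruhat graph $\QBG^{S}$ from \cite{NS-D}, then Lemma~\ref{lem:sig1} is essentially immediate: the two elements in question share the same $W^{S}$-component $x$, so the $\QBG^{S}$-path part of that criterion is vacuous and only the weight inequality $[\xi] \ge [\zeta]$ survives. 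I expect that verifying the semi-infinite length increments and the $(W^{S})_{\aff}$-membership at each step of the explicit construction will account for the bulk of the work.
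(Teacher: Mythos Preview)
Your ``only if'' direction is correct and matches the paper's argument.

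For the ``if'' direction, your reduction to the sub-claim $y \sile \PJ\bigl(y\,t(\alpha_{i}^{\vee})\bigr)$ is valid, but the sub-claim itself is not proven, and your sketch of its proof contains a concrete error. You assert $\sell\bigl(y\,t(\alpha_{i}^{\vee})\bigr) = \sell(y)+2$, which is true for $y\,t(\alpha_{i}^{\vee}) \in W_{\aff}$; however, $y\,t(\alpha_{i}^{\vee})$ need not lie in $(W^{S})_{\aff}$, and what you must compare with $y$ is $\PJ\bigl(y\,t(\alpha_{i}^{\vee})\bigr)$. By the computation in the proof of Lemma~\ref{lem:trans} (using \cite[Lemma~A.2.1]{INS}), $\sell\bigl(\PJ(y\,t(\alpha_{i}^{\vee}))\bigr) = \sell(y) + 2\pair{\rho-\rho_{S}}{\alpha_{i}^{\vee}}$, and for $i \in I \setminus S$ adjacent to $S$ this exceeds $\sell(y)+2$. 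So the path you need may have many edges, and your description ``reflecting successively by the positive real roots built from $\delta-\theta$'' does not specify them; there is no reason the highest root $\theta$ should be relevant for an arbitrary $i$. The alternative via the characterization in \cite{NS-D} would work, but you have not carried it out either, and invoking that characterization is essentially assuming the result.

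The paper proceeds quite differently and avoids your sub-claim altogether: it inducts on $\ell(x)$. The base case $x=e$ is precisely \cite[Lemma~6.2.1]{INS}. For the inductive step, pick $i \in I$ with $\ell(\sss_{i}x)=\ell(x)-1$; then $\sss_{i}x \in W^{S}$ and, by induction, $\sss_{i}xz_{\xi}t(\xi) \sige \sss_{i}xz_{\zeta}t(\zeta)$. Since $\pair{\sss_{i}xz_{\xi}t(\xi)\lambda}{\alpha_{i}^{\vee}} = \pair{\sss_{i}xz_{\zeta}t(\zeta)\lambda}{\alpha_{i}^{\vee}} = \pair{\sss_{i}x\lambda}{\alpha_{i}^{\vee}} > 0$, one applies \cite[Lemma~2.3.6\,(3)]{NS-D} (left-multiplication by $\sss_{i}$ preserves $\sige$ in this situation) to conclude $xz_{\xi}t(\xi) \sige xz_{\zeta}t(\zeta)$. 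This is a two-line argument once the cited lemmas are in hand, and it sidesteps the explicit path construction entirely.
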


\begin{proof}
The ``only if'' part is obvious by Lemma~\ref{lem:sig0}. 
We show the ``if'' part by induction on $\ell(x)$. 
If $\ell(x) = 0$, i.e., $x=e$, then 
the assertion $z_{\xi}t(\xi) \sige z_{\zeta}t(\zeta)$ follows from 
\cite[Lemma~6.2.1]{INS} (with $a=1$ and $J$ replaced by $S$). 
Assume now that $\ell(x) > 0$, and take $i \in I$ such that 
$\ell(\sss_{i}x) = \ell(x) - 1$; note that $\sss_{i}x \in W^{S}$ and 
$-x^{-1}\alpha_{i} \in \Delta^{+} \setminus \Delta_{S}^{+}$. 
By induction hypothesis, we have 
$\sss_{i}xz_{\xi}t(\xi) \sige \sss_{i}xz_{\zeta}t(\zeta)$. 
If we take a dominant weight $\lambda \in P^+$ 
such that 
$S_{\lambda}= \bigl\{ i \in I \mid \pair{\lambda}{\alpha_{i}^{\vee}}=0\bigr\} = S$ , then we see that 
\begin{equation*}
\pair{\sss_{i}xz_{\xi}t(\xi)\lambda}{\alpha_{i}^{\vee}} = 
\pair{\sss_{i}xz_{\zeta}t(\zeta)\lambda}{\alpha_{i}^{\vee}} = 
\pair{\sss_{i}x\lambda}{\alpha_{i}^{\vee}} > 0.
\end{equation*}
Therefore, we deduce from \cite[Lemma~2.3.6\,(3)]{NS-D} that 
$xz_{\xi}t(\xi) \sige xz_{\zeta}t(\zeta)$, as desired. 
\end{proof}
%
%
\begin{lem} \label{lem:trans}
Let $x,\,y \in (W^S)_{\aff}$ and $\beta \in \prr$ be such that 
$x \edge{\beta} y$ in $\SBG^{S}$. Then, 
$\PJ(xt(\xi)) \edge{\beta} \PJ(yt(\xi))$ in $\SBG^{S}$ 
for every $\xi \in Q^{\vee}$. 
Therefore, if $x,\,y \in (W^S)_{\aff}$ satisfy $x \sile y$, 
then $\PJ(xt(\xi)) \sile \PJ(yt(\xi))$. 
\end{lem}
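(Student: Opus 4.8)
The plan is to prove the edge statement first, since the ``therefore'' assertion then follows at once by applying it to each edge along a directed path from $x$ to $y$ in $\SBG^{S}$ and concatenating the resulting edges. So suppose $x \edge{\beta} y$ in $\SBG^{S}$, i.e., $x,y \in (W^{S})_{\aff}$, $y = \sss_{\beta}x$, and $\sell(y)=\sell(x)+1$; I want $\PJ(xt(\xi)) \edge{\beta} \PJ(yt(\xi))$ in $\SBG^{S}$ for every $\xi \in Q^{\vee}$.

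First I would pin down the underlying group elements. Since $x,y \in (W^{S})_{\aff}$ we have $\PJ(x)=x$ and $\PJ(y)=y$, so Lemma~\ref{lem:PJ}\,(2) gives $\PJ(xt(\xi)) = x\,\PJ(t(\xi))$ and $\PJ(yt(\xi)) = y\,\PJ(t(\xi)) = \sss_{\beta}x\,\PJ(t(\xi)) = \sss_{\beta}\,\PJ(xt(\xi))$; by Remark~\ref{rem:trans}\,(2) both $\PJ(xt(\xi))$ and $\PJ(yt(\xi))$ lie in $(W^{S})_{\aff}$. Thus the relation $y=\sss_{\beta}x$ is transported verbatim, and it only remains to verify the length condition $\sell(\PJ(yt(\xi))) = \sell(\PJ(xt(\xi)))+1$ (which in particular forces $\PJ(xt(\xi)) \ne \PJ(yt(\xi))$).

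For the length condition I would first lift the situation to the non-parabolic semi-infinite Bruhat graph $\SBG^{\emptyset}$ on $W_{\aff}$, where translation invariance is elementary: writing an element as $wt(\zeta)$ with $w \in W$, $\zeta \in Q^{\vee}$, one has $wt(\zeta)t(\xi)=wt(\zeta+\xi)$, so $\sell(\cdot\,t(\xi))$ adds the constant $2\pair{\rho}{\xi}$; applying this to $x$ and to $y=\sss_{\beta}x$ and using $\sell(y)=\sell(x)+1$ shows $\sell(yt(\xi))=\sell(xt(\xi))+1$ with $yt(\xi)=\sss_{\beta}\,xt(\xi)$, i.e., $xt(\xi) \edge{\beta} yt(\xi)$ in $\SBG^{\emptyset}$. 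Next, by Proposition~\ref{prop:P} write $xt(\xi)=\PJ(xt(\xi))\cdot c$ and $yt(\xi)=\PJ(yt(\xi))\cdot c'$ with $c,c' \in (W_{S})_{\aff}$; from $yt(\xi)=\sss_{\beta}\cdot xt(\xi)$ and $\PJ(yt(\xi))=\sss_{\beta}\,\PJ(xt(\xi))$ one gets $c=c'$, so $xt(\xi)$ and $yt(\xi)$ sit over $\PJ(xt(\xi))$ and $\PJ(yt(\xi))$ with the \emph{same} coprojection $c \in (W_{S})_{\aff}$.

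The remaining (and main) point is to descend the identity $\sell(\PJ(yt(\xi))c) - \sell(\PJ(xt(\xi))c) = 1$ back to $\sell(\PJ(yt(\xi))) - \sell(\PJ(xt(\xi))) = 1$; this is the step I expect to be the real obstacle, since right multiplication by an element of $(W_{S})_{\aff}$ shifts $\sell$ by an amount that genuinely depends on the element it is applied to. The plan is to control this shift by combining the additivity $\ell(x_{1}x_{2})=\ell(x_{1})+\ell(x_{2})$ for $x_{1}\in (W^{S})_{\aff}$, $x_{2}\in (W_{S})_{\aff}$ (Peterson's coset decomposition, \cite{Pet97}; see also \cite[\S10]{LS10}) with the formula $\sell(wt(\zeta))=\ell(w)+2\pair{\rho}{\zeta}$, the description $\PJ(t(\xi))=z_{\xi}t(\xi+\phi_{S}(\xi))$ of Lemma~\ref{lem:S-adj}, and the $W_{S}$-invariance of $\rho-\rho_{S}$, so that the discrepancy $\sell(Xc)-\sell(X)$, as a function of $X\in(W^{S})_{\aff}$, changes between $X=\PJ(xt(\xi))$ and $X=\sss_{\beta}X=\PJ(yt(\xi))$ in exactly the way needed to cancel against $\sell(\sss_{\beta}x)-\sell(x)$ — here one uses crucially that $\xi$, hence $z_{\xi}$ and $\phi_{S}(\xi)$, is common to both. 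Alternatively, and perhaps more cleanly, this compatibility of $\sell$ and the edges of $\SBG^{S}$ with Peterson's projection $\PJ$ can be quoted directly from the semi-infinite Bruhat order machinery of \cite[\S6.2]{INS} (cf.\ the proof of Lemma~\ref{lem:sig1}) or from \cite[\S2.3]{NS-D}. Once the length condition is established, $\PJ(xt(\xi)) \edge{\beta} \PJ(yt(\xi))$ is an edge of $\SBG^{S}$ by definition, which completes the proof.
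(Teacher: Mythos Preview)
Your setup is exactly the paper's: use $\PJ(xt(\xi))=x\,\PJ(t(\xi))$ and $\PJ(yt(\xi))=y\,\PJ(t(\xi))=\sss_{\beta}\,\PJ(xt(\xi))$, so only the semi-infinite length condition remains. Where you diverge is in how you verify that condition. The paper does not lift to $\SBG^{\emptyset}$ and then descend through a common $(W_{S})_{\aff}$-factor; instead it quotes directly the formula from \cite[Lemma~A.2.1 and (A.2.1)]{INS}: for $x=wz_{\zeta}t(\zeta)\in(W^{S})_{\aff}$ with $w\in W^{S}$, $\zeta\in\Qad$, one has $\sell(\PJ(xt(\xi)))=\ell(w)+2\pair{\rho-\rho_{S}}{\zeta+\xi}$, and in particular $\sell(\PJ(xt(\xi)))=\sell(x)+2\pair{\rho-\rho_{S}}{\xi}$. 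The same shift $2\pair{\rho-\rho_{S}}{\xi}$ applies to $y$, so $\sell(\PJ(yt(\xi)))-\sell(\PJ(xt(\xi)))=\sell(y)-\sell(x)=1$ in one line.

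Your primary route (show $\sell(Xc)-\sell(X)$ is the same for $X=\PJ(xt(\xi))$ and $X=\PJ(yt(\xi))$) is not wrong, and the ingredients you list---length additivity in Peterson's decomposition, the description $\PJ(t(\xi))=z_{\xi}t(\xi+\phi_{S}(\xi))$, and especially the $W_{S}$-invariance of $\rho-\rho_{S}$---are precisely those one would unpack to \emph{prove} the formula above. But you have correctly flagged this descent as the delicate step, and going through it from scratch is an unnecessary detour. Your fallback (``quote directly from the semi-infinite Bruhat order machinery of \cite{INS}'') is in fact what the paper does, and is the cleaner choice.
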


\begin{proof}
We see \eqref{eq:t1} that $\PJ(xt(\xi)) = x\PJ(t(\xi))$ and 
$\PJ(yt(\xi)) = y\PJ(t(\xi))$. Since $y=\sss_{\beta}x$ by the assumption, 
we obtain $\PJ(yt(\xi)) = \sss_{\beta} \PJ(xt(\xi))$. 
Hence it suffices to show that 
\begin{equation} \label{eq:trans1}
\sell(\PJ(yt(\xi))) = 
\sell(\PJ(xt(\xi)))+1. 
\end{equation}
We write $x \in (W^{S})_{\aff}$ as
$x = w z_{\zeta}t(\zeta)$, with $w \in W^{S}$ and 
$\zeta \in \Qad$ (see \eqref{eq:WSaf}). 
Then we see from \cite[Lemma~A.2.1 and (A.2.1)]{INS} that
\begin{align*}
\sell(\PJ(xt(\xi))) 
& = \ell(w) + 2 \pair{\rho-\rho_{S}}{\zeta+\xi} \\
& = \ell(w) + 2 \pair{\rho-\rho_{S}}{\zeta} + 2 \pair{\rho-\rho_{S}}{\xi} \\
& = \sell(\PJ(x))+ 2 \pair{\rho-\rho_{S}}{\xi} \\ 
& = \sell(x)+2 \pair{\rho-\rho_{S}}{\xi}. 
\end{align*}
Similarly, we see that 
$\sell(\PJ(yt(\xi))) = \sell(y)+2 \pair{\rho-\rho_{S}}{\xi}$. 
Since $\sell(y)=\sell(x)+1$ by the assumption, 
we obtain \eqref{eq:trans1}, as desired. 
\end{proof}

Let $x,\,y \in W^{S}$, and take a shortest directed path 
\begin{equation*}
\bp : x=x_{0} \edge{\gamma_{1}} x_{1} \edge{\gamma_{2}} x_{2} \edge{\gamma_{3}} \cdots 
\edge{\gamma_{p}} x_{p} = y
\end{equation*}
from $x$ to $y$ in $\QBG^{S}$; recall from \S 2.1 that the weight $\wt^S(\bp)$ of 
this directed path is defined to be
\begin{equation*}
\wt^S(\bp) = \sum_{
 \begin{subarray}{c} 
 1 \le k \le p \\[1mm]
 \text{$x_{k-1} \edge{\gamma_{k}} x_{k}$ is } \\[1mm]
 \text{a quantum edge}
 \end{subarray}} \gamma_{k}^{\vee} \in Q^{\vee,+}.
\end{equation*}
We set 
%
%
\begin{equation} \label{eq:xi}
\xi_{x,y}\eqdef \wt^S(\bp)+\phi_{S}(\wt^S(\bp)) \in \Qad
\end{equation}
in the notation of Lemma~\ref{lem:S-adj}\,(1). 
We now claim that $\xi_{x,y}$ does not depend on 
the choice of a shortest directed path $\bp$ from $x$ to $y$ in $\QBG^{S}$. 
Indeed, let $\bp'$ be another directed path from $x$ to $y$ in $\QBG^{S}$. 
We know from \cite[Proposition~8.1]{LNSSS1} that $\wt^S (\bp )= \wt^S (\bp')$ mod $Q_{S}^{\vee}$.
Therefore, by Remark~\ref{rem:trans} (1), we obtain 
$\wt^S(\bp)+\phi_{S}(\wt^S(\bp)) =\wt^S(\bp')+\phi_{S}(\wt^S(\bp'))$.
%
This proves the claim.  

%
\begin{lem} \label{lem:min1}
Let $x,\,y \in W^{S}$. Then we have $yz_{\xi_{x,y}}t(\xi_{x,y}) \sige x$.  
\end{lem}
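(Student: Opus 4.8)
The statement to be proved is Lemma~\ref{lem:min1}: for $x, y \in W^S$, one has $y z_{\xi_{x,y}} t(\xi_{x,y}) \sige x$ in the semi-infinite Bruhat order on $(W^S)_{\aff}$. The natural strategy is induction on the length $p$ of a shortest directed path
\begin{equation*}
\bp : x = x_0 \edge{\gamma_1} x_1 \edge{\gamma_2} \cdots \edge{\gamma_p} x_p = y
\end{equation*}
in $\QBG^S$ realizing the weight $\wt^S(\bp)$ used to define $\xi_{x,y}$. The base case $p = 0$ is $x = y$ and $\xi_{x,y} = 0$, so $y z_{\xi_{x,y}} t(\xi_{x,y}) = x$ and the claim is trivial (recall $z_0 = e$ by Remark~\ref{rem:trans}\,(1) with $\xi = 0$).

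**The inductive step.** For $p \ge 1$, split off the last edge $x_{p-1} \edge{\gamma_p} x_p = y$. By the induction hypothesis applied to the shortest path $x_0 \edge{} \cdots \edge{} x_{p-1}$, we have $x_{p-1} z_{\xi'} t(\xi') \sige x$, where $\xi' \eqdef \xi_{x, x_{p-1}}$. It then suffices to produce a directed path in $\SBG^S$ from $x_{p-1} z_{\xi'} t(\xi')$ up to $y z_{\xi_{x,y}} t(\xi_{x,y})$, and to conclude by transitivity of $\sige$. Here I would distinguish the two cases for the last edge. If $x_{p-1} \edge{\gamma_p} y$ is a \emph{Bruhat} edge, then $\wt^S$ does not change, so $\xi_{x,y} = \xi'$ (more precisely $\wt^S(\bp) = \wt^S(\bp \setminus \{\text{last edge}\})$ mod $Q_S^\vee$, so after applying $\phi_S$ they coincide, using Remark~\ref{rem:trans}\,(1)); one then needs to know that a Bruhat edge $x_{p-1} \edge{\gamma_p} y$ in $\QBG^S$ lifts to an edge $x_{p-1} z_{\xi'} t(\xi') \edge{} y z_{\xi'} t(\xi')$ in $\SBG^S$ — this is the standard compatibility of $\QBG^S$ with $\SBG^S$ (an instance of the lifting results from \cite{INS}, and it is the analogue of Lemma~\ref{lem:trans}), increasing $\sell$ by $1$. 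If $x_{p-1} \edge{\gamma_p} y$ is a \emph{quantum} edge, then $\wt^S(\bp) = \wt^S(\bp') + \gamma_p^\vee$, and correspondingly I need a directed path (of length $\pair{2\rho}{\gamma_p^\vee}$ or so) in $\SBG^S$ from $x_{p-1} z_{\xi'} t(\xi')$ to $y z_{\xi_{x,y}} t(\xi_{x,y})$, reflecting that adding a quantum step to the $\QBG^S$-path corresponds to adding $\gamma_p^\vee$ to the translation part and moving up in the semi-infinite order. The monotonicity in the translation part is exactly Lemma~\ref{lem:sig1} ($[\xi] \ge [\zeta]$ implies $x z_\xi t(\xi) \sige x z_\zeta t(\zeta)$); the quantum-edge lift itself is, again, the $\QBG^S$-to-$\SBG^S$ correspondence from \cite{INS}.

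**The main obstacle.** The technical heart is managing the coset representatives $z_\xi \in W_S$ and the $S$-adjustment map $\phi_S$ along the path: adding $\gamma_p^\vee$ (or $0$) to $\wt^S$ changes the $Q_S^\vee$-component in a controlled way, but one must verify that after replacing $\xi'$ by $\xi' + \gamma_p^\vee$ (resp. keeping $\xi'$) and then $S$-adjusting, the factor $z_{(\cdot)}$ transforms exactly so that the lifted $\SBG^S$-edges (or paths) compose correctly — i.e. that $\PJ$ is compatible with this bookkeeping, which is where Lemma~\ref{lem:PJ}, Lemma~\ref{lem:S-adj}, and Remark~\ref{rem:trans} all get used. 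Once the correct $\SBG^S$-edge lift of a single $\QBG^S$-edge is in hand (Bruhat edge $\mapsto$ one $\SBG^S$-edge at translation $\xi'$; quantum edge $\mapsto$ a short $\SBG^S$-chain from translation $\xi'$ to $\xi' + \gamma_p^\vee$), the induction closes by transitivity. I expect the quantum-edge case, and in particular checking that the $z$-factors line up after $S$-adjustment, to be the delicate point, whereas the Bruhat-edge case and the final concatenation are routine.
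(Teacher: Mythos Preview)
Your induction on the length of a shortest $\QBG^{S}$-path, together with the single-edge lift from $\QBG^{S}$ to $\SBG^{S}$ (\cite[Proposition~A.1.2]{INS}), Lemma~\ref{lem:trans}, and the $\PJ$/$z_\xi$-bookkeeping of Lemma~\ref{lem:PJ}, Lemma~\ref{lem:S-adj}, and Remark~\ref{rem:trans}, is exactly the paper's argument---with one organizational difference: you peel off the \emph{last} edge $x_{p-1}\edge{\gamma_p}y$ and must then lift it at the translated level $z_{\xi'}t(\xi')$, whereas the paper peels off the \emph{first} edge $x\edge{\gamma_1}x_1$, lifts it at ground level (so \cite[Proposition~A.1.2]{INS} applies directly to produce $x\edge{\beta}s_\beta x$ in $\SBG^{S}$), and only then invokes Lemma~\ref{lem:trans} on the induction hypothesis $yz_{\xi_{x_1,y}}t(\xi_{x_1,y})\sige x_1$ in the quantum case. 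Both orders work and use the same lemmas; the paper's order is marginally cleaner since the edge lift needs no translation. Two small corrections to your quantum case: the lift of a quantum edge to $\SBG^{S}$ is a \emph{single} edge (label $x_{p-1}\gamma_p+\delta$, so $\sell$ increases by exactly $1$), not a path of length roughly $\pair{2\rho}{\gamma_p^\vee}$; and Lemma~\ref{lem:sig1} is not needed, because that lifted edge, once translated by $t(\xi')$ via Lemma~\ref{lem:trans} and pushed through $\PJ$, already lands precisely at $y z_{\xi_{x,y}} t(\xi_{x,y})$ in one step.
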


\begin{proof}
We proceed by induction on the length $p$ of a shortest directed path 
from $x$ to $y$ in $\QBG^S$. 
If $p=0$, i.e., $x=y$, then $\xi_{x,y}=\xi_{x,x}=0$, 
and hence $z_{\xi_{x,y}}=t(\xi_{x,y})=e$. 
Thus the assertion of the lemma is obvious. 
Assume now that $p > 0$; let
\begin{equation*}
\bp : x=x_{0} \edge{\gamma_1} x_1 \edge{\gamma_2} \cdots \edge{\gamma_p} x_{p}=y
\end{equation*}
be a shortest directed path from $x$ to $y$ in $\QBG^S$. 
Then we deduce from \cite[Proposition~A.1.2]{INS} that 
$x \edge{\beta} \sss_{\beta}x$ in $\SBG^{S}$ 
(in particular, $\sss_{\beta}x \sige x$), 
where 
\begin{equation*}
\beta\eqdef 
 \begin{cases}
 x_{0}\gamma_1 & \text{if $x=x_{0} \edge{\gamma_1} x_1$ is a Bruhat edge}, \\[1.5mm]
 x_{0}\gamma_1+\delta & \text{if $x=x_{0} \edge{\gamma_1} x_1$ is a quantum edge}; 
 \end{cases}
\end{equation*}
note that 
\begin{equation*}
\sss_{\beta}x = \sss_{\beta}x_{0} = 
  \begin{cases}
  x_{1} 
  & \text{if $x=x_{0} \edge{\gamma_1} x_1$ is a Bruhat edge}, \\[1.5mm]
  x_{1}t(\gamma_{1}^{\vee})
  & \text{if $x=x_{0} \edge{\gamma_1} x_1$ is a quantum edge}.
  \end{cases}
\end{equation*}
In the case that $x=x_{0} \edge{\gamma_1} x_1$ is a quantum edge, 
we have $x_{1}t(\gamma_{1}^{\vee})= \sss_{\beta}x \in (W^{S})_{\aff}$, 
which implies, by \eqref{eq:WSaf} and the fact that $x_{1} \in W^{S}$, that
%
%
\begin{equation} \label{eq:min1a}
\text{$\gamma_{1}^{\vee} \in \Qad$ 
and $z_{\gamma_{1}^{\vee}}=e$.}
\end{equation}

Assume first that $x=x_{0} \edge{\gamma_1} x_1$ is a Bruhat edge. 
Note that $\bp':x_1 \edge{\gamma_2} \cdots \edge{\gamma_p} x_{p}=y$ 
is a shortest directed path from $x_{1}$ to $y$ in $\QBG^S$. 
Since $\wt^S(\bp)=\wt^S(\bp')$ by the definition, 
we deduce that $\xi_{x,y} = \xi_{x_1,y}$. 
Also, by the induction hypothesis, we have $yz_{\xi_{x_1,y}}t(\xi_{x_1,y}) \sige x_1$. 
Combining these, we obtain $yz_{\xi_{x,y}}t(\xi_{x,y}) = 
yz_{\xi_{x_1,y}}t(\xi_{x_1,y}) \sige x_1 = \sss_{\beta}x \sige x$, as desired. 

Next, assume that $x=x_{0} \edge{\gamma_1} x_1$ is a quantum edge; 
we have $\wt^S(\bp)=\wt^S(\bp')+\gamma_{1}^{\vee}$, which implies that 
$\xi_{x,y} \equiv \xi_{x_{1},y} + \gamma_{1}^{\vee}$ mod $Q_{S}^{\vee}$. 
We compute
\begin{align*}
yz_{\xi_{x,y}}t(\xi_{x,y}) 
& = y\PJ(t(\xi_{x,y})) \quad 
  \text{by Lemma~\ref{lem:S-adj}\,(2)} \\
& = y\PJ(t(\xi_{x_1,y})t(\xi_{x,y}-\xi_{x_1,y})) \\
& = y\PJ(t(\xi_{x_1,y}))\PJ(t(\xi_{x,y}-\xi_{x_1,y})) 
  \quad \text{by Lemma~\ref{lem:PJ}\,(2)} \\
& = yz_{\xi_{x_1,y}}t(\xi_{x_1,y})\PJ(t(\xi_{x,y}-\xi_{x_1,y})).
\end{align*}
Since $\xi_{x,y} \equiv \xi_{x_{1},y} + \gamma_{1}^{\vee}$ mod $Q_{S}^{\vee}$, 
we see from Remark~\ref{rem:trans}\,(1) and \eqref{eq:min1a} that 
$\PJ(t(\xi_{x,y}-\xi_{x_1,y})) = t(\gamma_{1}^{\vee})$. 
Therefore, we deduce that, using the induction hypothesis 
$yz_{\xi_{x_1,y}}t(\xi_{x_1,y}) \sige x_1$ and Lemma~\ref{lem:trans}, 
\begin{align*} 
\underbrace{yz_{\xi_{x,y}}t(\xi_{x,y})}_{\in (W^{S})_{\aff}} 
& = \bigl(yz_{\xi_{x_1,y}}t(\xi_{x_1,y})\bigr) t(\gamma_{1}^{\vee})
  = \PJ\bigl(\bigl(yz_{\xi_{x_1,y}}t(\xi_{x_1,y})\bigr) t(\gamma_{1}^{\vee})\bigr) 
  \sige \PJ(x_{1}t(\gamma_{1}^{\vee})) \\
& = \PJ(\sss_{\beta}x) = \sss_{\beta}x \sige x. 
\end{align*}
This proves the lemma. 
\end{proof}
%
%
\begin{lem} \label{lem:min2}
Let $x,\,y \in W^{S}$, and $\zeta \in \Qad$. 
If $yz_{\zeta}t(\zeta) \sige x$, then 
$[\zeta] \ge [\xi_{x,y}]$. 
\end{lem}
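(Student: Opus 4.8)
The plan is to prove Lemma~\ref{lem:min2} by induction on $\ell(x)$, in parallel with the proof of Lemma~\ref{lem:sig1}, with Lemma~\ref{lem:min1} serving as the companion existence statement. (An alternative is to induct on the $\SBG^{S}$-distance $\sell(yz_{\zeta}t(\zeta))-\sell(x)$, peeling off the last edge of a directed path from $x$ to $yz_{\zeta}t(\zeta)$ and combining Lemma~\ref{lem:sig0} with the induction hypothesis; this reduces the claim to a comparison of $\xi_{x,v}$ with $\xi_{x,y}$ for $v,y\in W^{S}$ joined by the $\QBG^{S}$-shadow of an $\SBG^{S}$-edge.)

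If $\ell(x)=0$, i.e.\ $x=e$, then the assertion is that $yz_{\zeta}t(\zeta)\sige e$ forces $[\zeta]\ge[\xi_{e,y}]$. I would prove this by a secondary induction on $\ell(e\Rightarrow y)$: the base case $y=e$, $\xi_{e,e}=0$ is immediate from Lemma~\ref{lem:sig1}; for the inductive step one peels off the first edge of a shortest directed path from $e$ to $y$ in $\QBG^{S}$, lifts it to an edge of $\SBG^{S}$ issuing from $e$ exactly as in the proof of Lemma~\ref{lem:min1} (using \cite[Proposition~A.1.2]{INS}), and transfers the conclusion with Lemma~\ref{lem:trans} and Lemma~\ref{lem:PJ}\,(2); alternatively, this base case can be extracted from \cite{INS}.

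For the inductive step, assume $\ell(x)>0$ and choose $i\in I$ with $\ell(\sss_i x)=\ell(x)-1$; then $\sss_i x\in W^{S}$, $\gamma_{0}\eqdef-x^{-1}\alpha_i\in\Delta^{+}\setminus\Delta^{+}_{S}$, the arrow $\sss_i x\edge{\alpha_i}x$ is an edge of $\SBG^{S}$ (so $x\sige\sss_i x$), and $\sss_i x\edge{\gamma_{0}}x$ is a Bruhat edge of $\QBG^{S}$. Applying the quantum analogue of Deodhar's lemma for $\QBG^{S}$ (cf.\ \cite{LNSSS1}) to $\sss_i x\edge{\gamma_{0}}x$ and the vertex $y$, either $\ell(\sss_i x\Rightarrow y)=\ell(x\Rightarrow y)+1$ or $\ell(x\Rightarrow y)=\ell(\sss_i x\Rightarrow y)+1$. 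In the first case, a shortest directed path $\sss_i x\to y$ in $\QBG^{S}$ may be taken to begin with $\sss_i x\edge{\gamma_{0}}x$, so $\xi_{\sss_i x,y}=\xi_{x,y}$; since $yz_{\zeta}t(\zeta)\sige x\sige\sss_i x$, the induction hypothesis for $\sss_i x$ gives $[\zeta]\ge[\xi_{\sss_i x,y}]=[\xi_{x,y}]$, as required. In the second case, $x\edge{\gamma_{0}}\sss_i x$ is a quantum edge of $\QBG^{S}$ beginning a shortest directed path $x\to y$, hence $\xi_{x,y}\equiv\xi_{\sss_i x,y}+\gamma_{0}^{\vee}\pmod{Q^{\vee}_{S}}$, so $[\xi_{x,y}]=[\xi_{\sss_i x,y}]+[\gamma_{0}^{\vee}]$; moreover, arguing as in the proof of Lemma~\ref{lem:min1} via \cite[Proposition~A.1.2]{INS}, one finds $\gamma_{0}^{\vee}\in\Qad$, $z_{\gamma_{0}^{\vee}}=e$, and an edge $x\edge{\delta-\alpha_i}\sss_i x\,t(\gamma_{0}^{\vee})$ of $\SBG^{S}$.

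The crux of the second (quantum) case, which I expect to be the main obstacle, is to promote the hypothesis $yz_{\zeta}t(\zeta)\sige x$ to the relation $\PJ\bigl((yz_{\zeta}t(\zeta))\,t(-\gamma_{0}^{\vee})\bigr)\sige\sss_i x$. Granting this, one writes $\PJ\bigl((yz_{\zeta}t(\zeta))\,t(-\gamma_{0}^{\vee})\bigr)=yz_{\eta}t(\eta)$ with $\eta\in\Qad$ and $[\eta]=[\zeta]-[\gamma_{0}^{\vee}]$ (using Lemma~\ref{lem:PJ}\,(2), Lemma~\ref{lem:S-adj}, and $z_{\gamma_{0}^{\vee}}=e$), and the induction hypothesis for $\sss_i x$ then yields $[\zeta]-[\gamma_{0}^{\vee}]=[\eta]\ge[\xi_{\sss_i x,y}]$, i.e.\ $[\zeta]\ge[\xi_{x,y}]$. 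The promotion is a lifting-type property of $\SBG^{S}$ along the covering edge $x\edge{\delta-\alpha_i}\sss_i x\,t(\gamma_{0}^{\vee})$; I would deduce it from \cite[Lemma~2.3.6]{NS-D} (the transfer package already invoked in the proof of Lemma~\ref{lem:sig1}) after transporting the relation by $\PJ(\,\cdot\,t(-\gamma_{0}^{\vee}))$ via Lemma~\ref{lem:trans} and using that $\PJ(x\,t(-\gamma_{0}^{\vee}))\edge{\delta-\alpha_i}\sss_i x$ is an edge of $\SBG^{S}$. The remaining bookkeeping (the behaviour of $\phi_{S}$ and $z_{\bullet}$, and the degenerate ``equal-length'' subcase of the dichotomy, handled as in the first case) is routine.
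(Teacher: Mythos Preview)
Your inductive step contains a genuine gap. You assert a dichotomy $\ell(s_i x \Rightarrow y) = \ell(x \Rightarrow y) \pm 1$ for $i \in I$ with $s_i x < x$, and in the ``$-1$'' case you assert that $x \xrightarrow{\gamma_0} s_i x$ (with $\gamma_0 = -x^{-1}\alpha_i$) is a quantum edge of $\QBG^S$ beginning a shortest path to $y$. Both claims fail already in type $A_2$ with $S = \emptyset$, $x = s_1 s_2$, $i = 1$ (so $s_i x = s_2$ and $\gamma_0 = \alpha_1 + \alpha_2$). For $y = s_2$ one has $\ell(x \Rightarrow y) = 3$ and $\ell(s_i x \Rightarrow y) = 0$, so the dichotomy is not exhaustive. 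For $y = s_2 s_1$ one has $\ell(x \Rightarrow y) = 2$ and $\ell(s_i x \Rightarrow y) = 1$ (your Case~2), yet $x \xrightarrow{\gamma_0} s_i x$ is not an edge of $\QBG$, since a quantum edge would require $\ell(s_i x) = \ell(x) - 2\langle\rho,\gamma_0^\vee\rangle + 1 = -1$; and indeed $[\xi_{x,y}] = \alpha_2^\vee$ here, whereas your formula $[\xi_{s_i x,y}] + [\gamma_0^\vee]$ gives $\alpha_1^\vee + \alpha_2^\vee$. The subsequent ``promotion'' step therefore has nothing correct to promote.

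The paper's proof uses a different induction and a different case split. It fixes a directed path $x = x_0 \to \cdots \to x_n = e$ in $\QBG^S$ whose steps are left moves by $\ti{s}_{i_k}$ with $i_k \in I_\aff$ (so $\ti{s}_0 = s_\theta$ is allowed), and inducts on $n$; setting $i := i_1$, one then has $x^{-1}\ti{\alpha}_i \in \Delta^+ \setminus \Delta_S^+$, the opposite sign convention to yours. The case split is on the position of $y^{-1}\ti{\alpha}_i$, not on path lengths: if $y^{-1}\ti{\alpha}_i \in (-\Delta^+) \cup \Delta_S^+$, one keeps $y$ and combines \cite[Lemma~4.1.6(2)]{INS} with the relation $[\xi_{\ti{s}_i x, y}] = [\xi_{x,y}] - \delta_{i,0}[x^{-1}\ti{\alpha}_i^\vee]$ from \cite[Lemma~7.7(3)]{LNSSS1}; if $y^{-1}\ti{\alpha}_i \in \Delta^+ \setminus \Delta_S^+$, one replaces $y$ by $\lfloor\ti{s}_i y\rfloor$ and combines \cite[Lemma~2.3.6(3)]{NS-D} with the relation $[\xi_{\ti{s}_i x, \lfloor\ti{s}_i y\rfloor}] = [\xi_{x,y}] - \delta_{i,0}[x^{-1}\ti{\alpha}_i^\vee] + \delta_{i,0}[y^{-1}\ti{\alpha}_i^\vee]$ from \cite[Lemma~7.7(4)]{LNSSS1}; Lemma~\ref{lem:trans} handles the translations when $i=0$. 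The base case $x = e$ is immediate: an all-Bruhat path $e \to y$ gives $\xi_{e,y} = 0$, and Lemma~\ref{lem:sig0} yields $[\zeta] \ge 0$; your secondary induction there is unnecessary.
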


\begin{proof}
We set
\begin{equation*}
\ti{s}_j \eqdef  
  \begin{cases} 
   \sss_{j} & \text{if $j \ne 0$}, \\[1mm]
   \sss_{\theta} & \text{if $j = 0$},
  \end{cases}
\qquad \text{and} \qquad
\ti{\alpha}_{j} \eqdef  
   \begin{cases} 
   \alpha_{j} & \text{if $j \ne 0$}, \\[1mm]
   -\theta & \text{if $j = 0$}.
  \end{cases}
\end{equation*}
We know from \cite[Lemma~6.12]{LNSSS1} that 
there exist a sequence $x=x_{0}$, $x_{1}$, $\dots$, $x_{n}=e$ of 
elements of $W^{S}$ and a sequence $i_{1}$, $\dots$, $i_{n} \in 
I_{\aff}=I \sqcup \{0\}$ such that 
\begin{equation*}
x=x_{0} \edge{x_{0}^{-1}\ti{\alpha}_{i_1}} x_{1} 
\edge{x_{1}^{-1}\ti{\alpha}_{i_2}} \cdots 
\edge{x_{n-1}^{-1}\ti{\alpha}_{i_n}} x_{n}=e \quad \text{in $\QBG^{S}$};
\end{equation*}
note that $x_{k-1}^{-1}\ti{\alpha}_{i_k} \in \Delta^{+} \setminus \Delta^{+}_{S}$ 
for all $1 \le k \le n$. We prove the assertion of the lemma by induction on $n$. 

Assume first that $n=0$, i.e., $x=e$. 
Because $y \in W^{S}$ is greater than or equal to $e$ 
in the (ordinary) Bruhat order, there exists a directed path $\bp$ 
from $e$ to $y$ in $\QBG^{S}$ whose edges are all Bruhat edges 
(see, e.g., \cite[Theorem~2.5.5]{BB}); 
since $\wt^S(\bp)=0$, we obtain $\xi_{e,y}=
\wt^S(\bp)+\phi_{S}(\wt^S(\bp))=0$. 
Also, if $yz_{\zeta}t(\zeta) \sige x = e = ez_{0}t(0)$, then 
it follows from Lemma~\ref{lem:sig0} 
that $[\zeta] \ge [0] = [\xi_{e, y}]$, which proves 
the assertion in the case $n=0$. 

Assume next that $n > 0$; we set $i\eqdef i_{1}$ for simplicity of notation. 
Then, $x^{-1}\ti{\alpha}_{i}=x_{0}^{-1}\ti{\alpha}_{i} 
\in \Delta^{+} \setminus \Delta_{S}^{+}$, and 
the assertion of the lemma holds for $x_{1} = \ti{s}_{i}x_{0} = \ti{s}_{i}x$ 
by the induction hypothesis. 

{\bf Case 1.} Assume that $y^{-1}\ti{\alpha}_{i} \in (-\Delta^{+}) \cup \Delta^{+}_{S}$. 
We deduce by \cite[Lemma~7.7\,(3)]{LNSSS1} that 
\begin{equation} \label{eq:xi1}
\xi_{\ti{s}_{i}x,y} \equiv \xi_{x,y}-
 \delta_{i,0} x^{-1}\ti{\alpha}_{i}^{\vee} \mod Q_{S}^{\vee}. 
\end{equation}
Assume first that $i \ne 0$. 
Let $\zeta \in \Qad$ be such that $yz_{\zeta}t(\zeta) \sige x$. 
Because $x^{-1}\alpha_{i} \in \Delta^{+} \setminus \Delta_{S}^{+}$, 
and $y^{-1}\alpha_{i} \in (-\Delta^{+}) \cup \Delta_{S}^{+}$, 
we see from \cite[Lemma~4.1.6\,(2)]{INS} that 
$yz_{\zeta}t(\zeta) \sige \sss_{i}x = \ti{s}_{i}x$. 
Therefore, by the induction hypothesis, we obtain 
$[\zeta] \ge [\xi_{\ti{s}_{i}x,y}] \stackrel{\eqref{eq:xi1}}{=} [\xi_{x,y}]$.

Assume next that $i=0$. 
Let $\zeta \in Q^{\vee}$ be such that $yz_{\zeta}t(\zeta) \sige x$. 
Because $x^{-1}\ti{\alpha}_{0}=-x^{-1}\theta$ ($=$ 
the finite part of $x^{-1}\alpha_{0}$) $\in \Delta^{+} \setminus \Delta_{S}^{+}$, 
and $y^{-1}\ti{\alpha}_{0}=-y^{-1}\theta$ 
($=$ the finite part of $y^{-1}\alpha_{0}$) $\in (-\Delta^{+}) \cup \Delta_{S}^{+}$, 
we see from \cite[Lemma~4.1.6\,(2)]{INS} that 
\begin{equation*}
y z_{\zeta} t(\zeta) \sige \sss_{0}x = \sss_{\theta} x t(-x^{-1}\theta^{\vee}) = 
\underbrace{\ti{s}_{0}x}_{=x_{1}}t(x^{-1}\ti{\alpha}_{0}^{\vee})
\end{equation*}
Therefore, by Lemma~\ref{lem:trans}, 
\begin{align*}
\PJ(y z_{\zeta} t(\zeta-x^{-1}\ti{\alpha}_{0}^{\vee})) & = 
\PJ\bigl( (y z_{\zeta} t(\zeta)) t(-x^{-1}\ti{\alpha}_{0}^{\vee})\bigr) \\
& \sige 
\PJ(\ti{s}_{0}xt(x^{-1}\ti{\alpha}_{0}^{\vee})t(-x^{-1}\ti{\alpha}_{0}^{\vee})) 
= \PJ(\ti{s}_{0}x) \\
& = \PJ(x_{1}) = x_{1}= \ti{s}_{0}x. 
\end{align*}
If we write the left-hand side of this inequality as
$\PJ(y z_{\zeta} t(\zeta-x^{-1}\ti{\alpha}_{0}^{\vee}))= yz_{\zeta'}t(\zeta')$ 
for some $\zeta' \in \Qad$ (see Lemma~\ref{lem:S-adj}\,(2)), 
then we have $\zeta' \equiv \zeta-x^{-1}\ti{\alpha}_{0}^{\vee}$ mod $Q_{S}^{\vee}$. 
Also, by the induction hypothesis, we have $[\zeta'] \ge [\xi_{\ti{s}_{0}x,y}]$. 
Combining these, we obtain 
\begin{equation*}
[\zeta] = [\zeta'+x^{-1}\ti{\alpha}_{0}^{\vee}] \ge 
[\xi_{\ti{s}_{0}x,y} + x^{-1}\ti{\alpha}_{0}^{\vee}] 
\stackrel{\eqref{eq:xi1}}{=} [\xi_{x,y}], 
\end{equation*}
as desired. 

{\bf Case 2.} Assume that $y^{-1}\ti{\alpha}_{i} \in 
\Delta^{+} \setminus \Delta_{S}^{+}$. 
By \cite[Lemma~7.7\,(4)]{LNSSS1}, we have
\begin{equation} \label{eq:xi2}
\xi_{\ti{s}_{i}x,\mcr{\ti{s}_{i}y}} \equiv \xi_{x,y}
-\delta_{i,0} x^{-1}\ti{\alpha}_{i}^{\vee}
+\delta_{i,0} y^{-1}\ti{\alpha}_{i}^{\vee} \mod Q_{S}^{\vee}. 
\end{equation}
Assume first that $i \ne 0$; note that $\ti{s}_{i}y= \sss_{i}y \in W^{S}$ 
(see, e.g., \cite[Proposition~5.10]{LNSSS1}). 
Let $\zeta \in Q^{\vee}$ be such that $yz_{\zeta}t(\zeta) \sige x$. 
Because $x^{-1}\alpha_{i} \in \Delta^{+} \setminus \Delta_{S}^{+}$ and 
$y^{-1}\alpha_{i} \in \Delta^{+} \setminus \Delta_{S}^{+}$, we see that
\begin{equation*}
\ti{s}_{i}yz_{\zeta}t(\zeta) = \sss_{i}yz_{\zeta}t(\zeta) \sige \sss_{i}x = \ti{s}_{i}x
\quad \text{by \cite[Lemma~2.3.6\,(3)]{NS-D}}.
\end{equation*}
Therefore, by the induction hypothesis, we obtain 
$[\zeta] \ge [\xi_{\ti{s}_{i}x,\ti{s}_{i}y}] \stackrel{\eqref{eq:xi2}}{=} 
[\xi_{x,y}]$. 

Assume next that $i=0$. 
Let $\zeta \in Q^{\vee}$ be such that $yz_{\zeta}t(\zeta) \sige x$. 
Because $x^{-1}\ti{\alpha}_{0}=-x^{-1}\theta$ ($=$ 
the finite part of $x^{-1}\alpha_{0}$) $\in \Delta^{+} \setminus \Delta_{S}^{+}$ and 
$y^{-1}\ti{\alpha}_{0}=-y^{-1}\theta$ ($=$ 
the finite part of $y^{-1}\alpha_{0}$) $\in \Delta^{+} \setminus \Delta_{S}^{+}$, 
we see from \cite[Lemma~2.3.6\,(3)]{NS-D} that 
$\sss_{0}yz_{\zeta}t(\zeta) \sige \sss_{0}x$. Therefore, by Lemma~\ref{lem:trans}, 
we have
\begin{equation*}
\PJ\bigl((\sss_{0}yz_{\zeta}t(\zeta))t(-x^{-1}\ti{\alpha}_{0}^{\vee})\bigr) 
\sige \PJ\bigl((\sss_{0}x)t(-x^{-1}\ti{\alpha}_{0}^{\vee})\bigr).
\end{equation*}
Here we have
\begin{equation*}
\PJ\bigl((\sss_{0}x)t(-x^{-1}\ti{\alpha}_{0}^{\vee})\bigr) = 
\PJ\bigl((\ti{s}_{0}xt(x^{-1}\ti{\alpha}_{0}^{\vee}))t(-x^{-1}\ti{\alpha}_{0}^{\vee})\bigr) = 
\ti{s}_{0}x = x_{1}. 
\end{equation*}
Also, using Lemma~\ref{lem:S-adj}\,(2), we compute
\begin{align*}
& \PJ\bigl((\sss_{0}yz_{\zeta}t(\zeta))t(-x^{-1}\ti{\alpha}_{0}^{\vee})\bigr) 
  = \PJ(\sss_{0}yz_{\zeta}t(\zeta-x^{-1}\ti{\alpha}_{0}^{\vee})) \\
& \qquad 
  = \PJ(\sss_{0}yz_{\zeta})\PJ(t(\zeta-x^{-1}\ti{\alpha}_{0}^{\vee}))
  = \PJ(\sss_{0}y)\PJ(t(\zeta-x^{-1}\ti{\alpha}_{0}^{\vee})) \\
& \qquad = \PJ(\ti{s}_{0}yt(y^{-1}\ti{\alpha}_{0}^{\vee}))
    \PJ(t(\zeta-x^{-1}\ti{\alpha}_{0}^{\vee}))
  = \PJ(\ti{s}_{0}yt(y^{-1}\ti{\alpha}_{0}^{\vee})t(\zeta-x^{-1}\ti{\alpha}_{0}^{\vee})) \\
& \qquad 
  = \PJ(\ti{s}_{0}yt(\zeta+y^{-1}\ti{\alpha}_{0}^{\vee}-x^{-1}\ti{\alpha}_{0}^{\vee})). 
\end{align*}
If we write this element as
$\PJ\bigl((\sss_{0}yz_{\zeta}t(\zeta))t(-x^{-1}\ti{\alpha}_{0}^{\vee})\bigr) 
= \mcr{\sss_{0}y}z_{\zeta''}t(\zeta'')$ for some $\zeta'' \in \Qad$
(see Lemma~\ref{lem:S-adj}\,(2)), we see that 
$\zeta'' \equiv \zeta +y^{-1}\ti{\alpha}_{0}^{\vee}-x^{-1}\ti{\alpha}_{0}^{\vee}$ mod $Q_{S}^{\vee}$. 
In addition, by the induction hypothesis, 
we have $[\zeta''] \ge [\xi_{\ti{s}_{0}x,\mcr{\ti{s}_{0}y}}]$. 
Combining these, we obtain 
\begin{align*}
[\zeta] & = [\zeta''-y^{-1}\ti{\alpha}_{0}^{\vee}+x^{-1}\ti{\alpha}_{0}^{\vee}] \\
& \ge [\xi_{\ti{s}_{0}x,\mcr{\ti{s}_{0}y}} -y^{-1}\ti{\alpha}_{0}^{\vee} + x^{-1}\ti{\alpha}_{0}^{\vee}] 
\stackrel{\eqref{eq:xi2}}{=} [\xi_{x,y}], 
\end{align*}
as desired. This completes the proof of the lemma.
\end{proof}
%
%
\subsection{Semi-infinite Lakshmibai-Seshadri paths}
\label{subsec:SLS}
Let $\lambda \in P^+$ be a dominant weight; 
we set $S = S_{\lambda} = \bigl\{ i \in I \mid 
\pair{\lambda}{\alpha_i^{\vee}}=0 \bigr\} \subset I$.
%
%
\begin{dfn} \label{def:QBa}
For a rational number $0 < \sigma \le 1$, 
define $\SBa$ to be the subgraph of $\SBG^{S}$ 
with the same vertex set but having only the edges of the form:
$x \edge{\beta} y$ with 
$\sigma \pair{x\lambda}{\beta^{\vee}} \in \BZ$; 
note that $\SBx{1} = \SBG^{S}$. 
\end{dfn}
%
%
\begin{dfn}\label{dfn:SiLS}
A semi-infinite Lakshmibai-Seshadri (SiLS for short) path of 
shape $\lambda $ is, by definition, a pair $\eta=(x_1 \sig \cdots \sig x_s \,;\,0 = \sigma_0 < \sigma_1 < \cdots  < \sigma_s =1)$ of 
a (strictly) decreasing sequence $x_1 \sig \cdots \sig x_s$ 
of elements in $(W^S)_{\aff}$ and an increasing sequence 
$0 = \sigma_0 < \sigma_1 < \cdots  < \sigma_s =1$ of rational numbers 
such that there exists a directed path 
from $x_{u+1}$ to  $x_u$ in $\SBx{\sigma_u}$ 
for all $u = 1,\,2,\,\dots,\,s-1$. 
We denote by $\SLS(\lambda)$ 
the set of all SiLS paths of shape $\lambda$. 
\end{dfn}

Following \cite[\S3.1]{INS} (see also \cite[\S2.4]{NS-D}), 
we endow the set $\SLS(\lambda)$ with a crystal structure 
with weights in $P_{\aff}$ by the root operators 
$e_{i}$, $f_{i}$, $i \in I_{\aff}$, and the map $\wt : \SLS(\lambda) \rightarrow P_\aff$ defined by  
\begin{equation} \label{eq:wt}
\begin{split}
& \wt (\eta) : = \sum_{u=1}^{s}(\sigma_{u}-\sigma_{u-1}) x_{u}\lambda \in P_{\aff} \\
& \qquad 
\text{for $\eta=(x_{1},\,\dots,\,x_{s}\,;\,\sigma_{0},\,\sigma_{1},\,\dots,\,\sigma_{s}) 
\in \SLS(\lambda)$}.  
\end{split}
\end{equation}
Let $\Conn(\SLS(\lambda))$ denote the set of 
all connected components of $\SLS(\lambda)$, and 
let $\SLS_{0}(\lambda) \in \Conn(\SLS(\lambda))$ denote 
the connected component of $\SLS(\lambda)$ 
containing $\eta_{e}\eqdef (e\,;\,0,\,1) \in \SLS(\lambda)$. 

Also, we define a surjective map $\cl : (W^{S})_{\aff} \twoheadrightarrow W^{S}$ by
\begin{equation*}
\cl (x) = w \quad \text{if $x = wz_{\xi}t(\xi)$, 
   with $w \in W^{S}$ and $\xi \in \Qad$},
\end{equation*}
and for $\eta = (x_{1},\,\dots,\,x_{s}\,;\,\sigma_{0},\,\sigma_{1},\,\dots,\,\sigma_{s}) 
\in \SLS(\lambda)$, we set
\begin{equation*}
\cl(\eta)\eqdef (\cl(x_{1}),\,\dots,\,\cl(x_{s})\,;\,\sigma_{0},\,\sigma_{1},\,\dots,\,\sigma_{s});
\end{equation*}
where, for each $1 \le p < q \le s$ such that $\cl(x_{p})= \cdots = \cl(x_{q})$, 
we drop $\cl(x_{p}),\,\dots,\,\cl(x_{q-1})$ and $\sigma_{p},\,\dots,\,\sigma_{q-1}$. 
We know from \cite[\S6.2]{NS-D} that $\cl(\eta) \in \QLS(\lambda)$. 
Thus we obtain a map $\cl:\SLS(\lambda) \rightarrow \QLS(\lambda)$. 
%
%
\begin{rem} \label{rem:pie}
Recall that $\psi_{e}\eqdef (e\,;\,0,\,1) \in \QLS(\lambda)$. 
We see from the definition that an element in $\cl^{-1}(\psi_{e})$ 
is of the form: 
%
%
\begin{equation} \label{eq:pie}
 (z_{\xi_1}t(\xi_1),\, 
  z_{\xi_2}t(\xi_2),\,\dots,\,z_{\xi_{s-1}}t(\xi_{s-1}),\,
  z_{\xi_{s}}t(\xi_{s}) \,;\, 
  \sigma_{0},\,\sigma_{1},\,\dots,\,\sigma_{s-1},\,\sigma_{s})
\end{equation}
for some $s \ge 1$ and $\xi_{1},\,\xi_{2},\,\ldots,\,\xi_{s} \in \Qad$. 
\end{rem}

The final direction of $\eta \in \SLS(\lambda)$ is defined to be
%
%
\begin{equation} \label{eq:kappa}
\kappa(\eta)\eqdef x_{s} \in (W^{S})_{\aff} \quad 
\text{if $\eta=(x_{1},\,\dots,\,x_{s}\,;\,\sigma_{0},\,\sigma_{1},\,\dots,\,\sigma_{s})$}.
\end{equation}
Then, for $x \in (W^{S})_{\aff}$, we set
%
%
\begin{equation} \label{eq:BBx}
\SLS_{\sige x}(\lambda)\eqdef 
 \bigl\{ \eta \in \SLS(\lambda) \mid \kappa(\eta) \sige x \bigr\}. 
\end{equation}

The next lemma follows from \cite[Lemma~7.1.4]{INS}. 
%
%
\begin{lem} \label{lem:INS714}
Let $\eta \in \SLS_{0}(\lambda)$, and let $X$ be a monomial in root operators 
such that $\eta=X\eta_{e}$. Assume that $\eta_{0} \in \SLS(\lambda)$ is 
of the form \eqref{eq:pie}. Then, $\kappa(X\eta_{0})=\kappa(\eta)\kappa(\eta_{0})$. 
\end{lem}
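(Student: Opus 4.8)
The plan is to obtain Lemma~\ref{lem:INS714} directly from \cite[Lemma~7.1.4]{INS}, the only work being to align hypotheses and notation. By Remark~\ref{rem:pie}, an SiLS path of the form \eqref{eq:pie} is exactly a lift of the straight-line $\QLS$ path $\psi_{e}=(e\,;\,0,1)$, i.e.\ an element of $\cl^{-1}(\psi_{e})\subset\SLS(\lambda)$, and this is precisely the setting of \cite[Lemma~7.1.4]{INS}. Its conclusion, rendered in the present notation, asserts both that $X\eta_{0}\neq 0$ and that $\kappa(X\eta_{0})=\kappa(X\eta_{e})\,\kappa(\eta_{0})=\kappa(\eta)\,\kappa(\eta_{0})$; so, once the dictionary between the ``final direction'' map used in \cite{INS} and our $\kappa$ is fixed, nothing more is needed.

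Should one want a self-contained proof, I would induct on the length of the monomial $X$ in the root operators $e_{i},f_{i}$ ($i\in I_{\aff}$). The base case $X=\mathrm{id}$ is immediate: then $\eta=\eta_{e}=(e\,;\,0,1)$, so $\kappa(\eta)=e$ and $\kappa(X\eta_{0})=\kappa(\eta_{0})=e\cdot\kappa(\eta_{0})=\kappa(\eta)\kappa(\eta_{0})$. For the inductive step, write $X=aX'$ with $a\in\{e_{i},f_{i}\}$, put $\eta'\eqdef X'\eta_{e}\in\SLS_{0}(\lambda)$, and assume $X'\eta_{0}\neq 0$ and $\kappa(X'\eta_{0})=\kappa(\eta')\kappa(\eta_{0})$. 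Writing $\eta_{0}=(z_{\xi_{1}}t(\xi_{1}),\dots,z_{\xi_{s}}t(\xi_{s})\,;\,\sigma_{0},\dots,\sigma_{s})$ with $\xi_{u}\in\Qad$, and using that $z_{\xi_{u}}\in W_{S}$ fixes $\lambda$ and $\pair{\delta}{\alpha_{j}^{\vee}}=0$ for all $j\in I_{\aff}$, one computes $\pair{z_{\xi_{u}}t(\xi_{u})\lambda}{\alpha_{i}^{\vee}}=\pair{\lambda}{\alpha_{i}^{\vee}}$ for $i\in I$ (and $=-\pair{\lambda}{\theta^{\vee}}$ for $i=0$), a value independent of $u$. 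Hence the height function controlling $a$ is affine-linear along the $\eta_{0}$-part of $X'\eta_{0}$, with exactly the slope of a straight segment in direction $\lambda$; consequently the turning points that $a$ selects on $X'\eta_{0}$ are governed entirely by the $X'\eta_{e}$-part, where they reproduce the action of $a$ on $\eta'$, while on the $\eta_{0}$-part $a$ acts uniformly --- either doing nothing or applying one and the same reflection (by $\sss_{i}$ if $i\in I$, by $\sss_{\theta}$ if $i=0$) to every direction $z_{\xi_{u}}t(\xi_{u})$. Reading off the final direction in each case then yields $\kappa(a(X'\eta_{0}))=\kappa(a\eta')\,\kappa(\eta_{0})$, which is the inductive step.

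The hard part of the self-contained route is the bookkeeping in this inductive step: one must check that $a(X'\eta_{0})\neq 0$ (i.e.\ that the $\eta_{0}$-tail affects $\varepsilon_{i}$ and $\varphi_{i}$ only in a controlled way), and that no collision of directions forces a merge in the pair $(\,\cdots\,;\,\cdots\,)$ defining the path --- which would destroy the product form of $\kappa$ --- especially when the turning point of $a$ lands at the junction of the $X'\eta_{e}$-part and the $\eta_{0}$-part. This is exactly where the structural results already recalled in this section come in: the compatibility of the semi-infinite Bruhat order $\sile$ with right multiplication by translations (Lemma~\ref{lem:trans}), and the lifting of $\QBG^{S}$-edges to $\SBG^{S}$-edges (\cite[Lemma~4.1.6]{INS}, \cite[Proposition~A.1.2]{INS}). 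Since precisely this bookkeeping is packaged by \cite[Lemma~7.1.4]{INS}, the cited result is invoked directly in the text.
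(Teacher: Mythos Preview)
Your proposal is correct and matches the paper's approach exactly: the paper also simply cites \cite[Lemma~7.1.4]{INS} without further argument, and your alignment of hypotheses (identifying paths of the form \eqref{eq:pie} with $\cl^{-1}(\psi_{e})$ via Remark~\ref{rem:pie}) is the right dictionary. The self-contained inductive sketch you offer goes beyond what the paper provides and is a reasonable outline of how one would reprove the cited result.
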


Now, we recall from \S 3.2 the degree function $\dg_{\lambda}:\QLS(\lambda) \rightarrow \BZ_{\le 0}$ for the case $\mu = \lambda$. 
We know the following lemma from \cite[Lemma~6.2.3]{NS-D}. 
%
%
\begin{lem} \label{lem:pi-eta}
For each $\psi \in \QLS(\lambda)$, 
there exists a unique $\eta_{\psi} \in \SLS_{0}(\lambda)$ 
such that $\cl(\eta_{\psi})=\psi$ and $\kappa(\eta_{\psi}) \in W^{S}$. 
\end{lem}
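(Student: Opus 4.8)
\emph{Setup and the easy half of existence.} Since $\cl:\SLS(\lambda)\to\QLS(\lambda)$ is a surjective strict morphism of crystals, it commutes with all root operators (with the convention $\cl(0)=0$), so the image of any subset stable under the $e_{i},f_{i}$ is a union of connected components of $\QLS(\lambda)$. Applying this to $\SLS_{0}(\lambda)$ and using that $\QLS(\lambda)$ is connected (it is the crystal basis of a quantum Weyl module, by \cite{LNSSS2}) and that $\psi_{e}=\cl(\eta_{e})$ with $\eta_{e}\in\SLS_{0}(\lambda)$, we obtain $\cl(\SLS_{0}(\lambda))=\QLS(\lambda)$. Hence $\cl^{-1}(\psi)\cap\SLS_{0}(\lambda)\neq\emptyset$ for every $\psi$; the content of the lemma is to single out, inside this set, the unique element whose final direction has trivial translation part.

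\emph{Construction of the candidate $\eta_{\psi}$.} Write $\psi=(w_{1},\dots,w_{s}\,;\,\sigma_{0},\dots,\sigma_{s})$ with $w_{p}\in W^{S}$. I would build $\eta_{\psi}=(x_{1},\dots,x_{s}\,;\,\sigma_{0},\dots,\sigma_{s})$ recursively from the right: set $x_{s}:=w_{s}\in W^{S}\subset(W^{S})_{\aff}$, and, given $x_{p+1}$, use Remark~\ref{def_qls_rem} (condition (C)$'$) to pick a \emph{shortest} directed path from $w_{p+1}$ to $w_{p}$ in $\QBG^{S}_{\sigma_{p}\lambda}$ and lift it edge by edge to $\SBG^{S}$ via the mechanism of \cite[Proposition~A.1.2]{INS} (already invoked in the proof of Lemma~\ref{lem:min1}), obtaining a directed path in $\SBx{\sigma_{p}}$ from $x_{p+1}$ to some lift $x_{p}$ of $w_{p}$; by Lemmas~\ref{lem:min1} and \ref{lem:min2}, $x_{p}=w_{p}z_{\xi}t(\xi)$ with $[\xi]$ determined by the $[\cdot]$ of the translation part of $x_{p+1}$ and by $[\xi_{w_{p+1},w_{p}}]$, and $x_{p}$ is the $[\cdot]$-minimal lift of $w_{p}$ with $x_{p}\sige x_{p+1}$. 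One then checks directly that the resulting pair is a genuine SiLS path (the divisibility $\sigma_{p}\pair{x_{p}\lambda}{\beta^{\vee}}\in\BZ$ along each lifted edge transfers from the $\QBG^{S}_{\sigma_{p}\lambda}$-membership of the original path), that $\cl(\eta_{\psi})=\psi$, and that $\kappa(\eta_{\psi})=x_{s}=w_{s}\in W^{S}$.

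\emph{The main obstacle, and uniqueness.} What remains — and what I expect to be the crux — is to show that $\eta_{\psi}$ built above actually lies in the prescribed component $\SLS_{0}(\lambda)$, and that it is the only element of $\cl^{-1}(\psi)\cap\SLS_{0}(\lambda)$ with $\kappa\in W^{S}$. The naive translations $T_{\xi}\colon(x_{1},\dots,x_{s};\sigma_{0},\dots,\sigma_{s})\mapsto(\PJ(x_{1}t(\xi)),\dots,\PJ(x_{s}t(\xi));\sigma_{0},\dots,\sigma_{s})$, which by Lemma~\ref{lem:trans} preserve $\cl$ and can be tuned to force $\kappa\in W^{S}$, do \emph{not} preserve $\SLS_{0}(\lambda)$ when $\pair{\lambda}{\xi}\neq0$, so they are not directly usable. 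Instead I would argue through Lemma~\ref{lem:INS714}: fix a monomial $X$ in the root operators with $\psi=X\psi_{e}$; strictness of $\cl$ gives $\cl^{-1}(\psi)\cap\SLS_{0}(\lambda)=\{\,X\eta_{0}\mid\eta_{0}\in\cl^{-1}(\psi_{e})\cap\SLS_{0}(\lambda)\,\}$ (the reversed monomial $X^{-1}$ is defined and nonzero on any such element because it is on $\cl$ of it), and Lemma~\ref{lem:INS714} yields $\kappa(X\eta_{0})=\kappa(X\eta_{e})\,\kappa(\eta_{0})$. Since every $\eta_{0}$ of the form \eqref{eq:pie} has $\kappa(\eta_{0})=\PJ(t(\xi))$ for some $\xi\in\Qad$, a short computation with \eqref{eq:t1} and Lemma~\ref{lem:S-adj}\,(2) shows that $\kappa(X\eta_{0})\in W^{S}$ pins $[\xi]$ down to a single residue class modulo $Q_{S}^{\vee}$, hence determines $\xi$ and therefore $\kappa(\eta_{0})$ uniquely. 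It then suffices to establish (i) that some $\eta_{0}\in\cl^{-1}(\psi_{e})\cap\SLS_{0}(\lambda)$ realizes this value of $\kappa$ — here the explicit description of $\cl^{-1}(\psi_{e})$ from \cite{INS} together with the minimality in Lemmas~\ref{lem:min1}--\ref{lem:min2} should both produce such an $\eta_{0}$ and identify $X\eta_{0}$ with the $\eta_{\psi}$ of the previous paragraph — and (ii) that within a single connected component the elements of $\cl^{-1}(\psi_{e})$ are distinguished by their final direction, so that $\eta_{0}$ (hence $\eta_{\psi}$) is unique. Granting (i) and (ii), both existence and uniqueness follow.
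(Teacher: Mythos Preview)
The paper does not prove this lemma; it is quoted as \cite[Lemma~6.2.3]{NS-D}, so there is no in-paper argument to compare against.

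Your step-3 strategy via Lemma~\ref{lem:INS714} is the right one and can be completed, but the appeal to Lemmas~\ref{lem:min1}--\ref{lem:min2} for gap (i) is misplaced: those lemmas concern minimal translations relative to the semi-infinite order $\sige$, which is not what is needed here. Both gaps (i) and (ii) are instead closed by Remark~\ref{rem:extp}. For (ii): that remark says every $\eta_{0}\in\cl^{-1}(\psi_{e})$ lying in a component $C$ satisfies $\eta_{0}=\kappa(\eta_{0})\cdot\eta^{C}$; within $\SLS_{0}(\lambda)$ one has $\eta^{C}=\eta_{e}$, so $\eta_{0}$ is determined by $\kappa(\eta_{0})$. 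For (i): write $\kappa(X\eta_{e})=w_{s}z_{\zeta}t(\zeta)$ with $w_{s}=\kappa(\psi)\in W^{S}$ and $\zeta\in\Qad$, and set $\eta_{0}:=t(-\zeta)\cdot\eta_{e}=(\PJ(t(-\zeta));0,1)$. Since the $W_{\aff}$-action \eqref{eq:W1} is through root operators it preserves components, so $\eta_{0}\in\SLS_{0}(\lambda)\cap\cl^{-1}(\psi_{e})$, and by \eqref{eq:t1} one gets $\kappa(X\eta_{0})=w_{s}z_{\zeta}t(\zeta)\,\PJ(t(-\zeta))=\PJ(w_{s}z_{\zeta})=w_{s}\in W^{S}$. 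This settles existence; the computation you already did forces $[\xi]$ (hence $\xi\in\Qad$, hence $\kappa(\eta_{0})$, hence $\eta_{0}$ by (ii)) to be unique, giving uniqueness of $\eta_{\psi}$.

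Your step-2 direct lift is a correct construction of \emph{some} SiLS path with the right $\cl$ and $\kappa$, but it does not by itself place the result in $\SLS_{0}(\lambda)$; once step 3 is completed as above, step 2 is redundant, and the identification you hope for then follows a posteriori from uniqueness.
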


Let $\psi \in \QLS(\lambda)$. 
We know from \cite[(6.2.5)]{NS-D} that $\wt(\eta_{\psi})$ is of the form: 
%
%
\begin{equation} \label{eq:wtep}
\wt(\eta_{\psi}) = 
\underbrace{\lambda-\gamma}_{=\wt(\psi)} + K \delta \quad 
\text{for some $\gamma \in Q^{+}$ and $K \in \BZ_{\le 0}$};
\end{equation}
also,
we know from \cite[Corollary~4.8]{LNSSS2} 
(see also the comment after \cite[(6.2.5)]{NS-D}) that
%
%
\begin{equation} \label{eq:deg}
K = 
- \sum_{u=1}^{s-1} \sigma_{u} \wt_{\lambda}(w_{u+1} \Rightarrow w_{u}) = \dg_\lambda (\psi)
\end{equation}
for $\psi = (w_{1},\,\dots,\,w_{s}\,;\,\sigma_{0},\,\sigma_{1},\,\dots,\,\sigma_{s}) \in \QLS(\lambda)$;
here
we should note that in the definition of $\dg_\lambda (\psi)$, $w_{s+1} = v(\lambda) = e$, and hence that
$\wt_\lambda (w_{s+1} \Rightarrow w_{s}) = \wt_\lambda (e \Rightarrow w_{s})=0$.

Let us write a dominant weight $\lambda \in P^+$ as 
$\lambda = \sum_{i \in I} m_{i} \varpi_{i}$ 
with $m_{i} \in \BZ_{\ge 0}$ for $i \in I$, and define 
$\ol{\Par(\lambda)}$ (resp., $\Par(\lambda)$) to be 
the set of multi-partitions $\brho = (\rho^{(i)})_{i \in I}$ such that  $\rho^{(i)}$ is a partition of length less than or equal to $m_i$ (resp., strictly less than $m_i$) 
for each $i \in I$; a partition of length less 
than $0$ is understood to be the empty partition $\emptyset$;
note that $\Par(\lambda) \subset \ol{\Par(\lambda)}$.
Also, for $\brho = (\rho^{(i)})_{i \in I} \in \ol{\Par(\lambda)}$, we set 
$|\brho|\eqdef \sum_{i \in I} |\rho^{(i)}|$, where for a partition 
$\chi = (\chi_1 \ge \chi_2 \ge \cdots \ge \chi_{m})$, 
we set $|\chi| \eqdef  \chi_{1}+\cdots+\chi_{m}$. 
Following \cite[(3.2.2)]{INS}, 
we endow the set $\Par(\lambda)$ with a crystal structure 
with weights in $P_{\aff}$; note  that $\wt (\brho) = - |\brho| \delta$. 
%
%
\begin{prop} \label{prop:SLS}
Keep the notation above. 

\begin{enu}
\item Each connected component $C \in \Conn(\SLS(\lambda))$ of 
$\SLS(\lambda)$ contains a unique element of the form:
%
%
\begin{equation} \label{eq:etaC}
\eta^{C} = 
 (z_{\xi_1}t(\xi_1),\, 
  z_{\xi_2}t(\xi_2),\,\dots,\,z_{\xi_{s-1}}t(\xi_{s-1}),\,e \,;\, 
  \sigma_{0},\,\sigma_{1},\,\dots,\,\sigma_{s-1},\,\sigma_{s})
\end{equation}
for some $s \ge 1$ and $\xi_{1},\,\xi_{2},\,\ldots,\,\xi_{s-1} \in \Qad$ 
{\rm(}see \cite[Proposition~7.1.2]{INS}{\rm)}.

\item There exists a bijection 
$\Theta:\Conn(\SLS(\lambda)) \rightarrow \Par(\lambda)$ such that 
$\wt(\eta^{C})=\lambda-|\Theta(C)|\delta$ 
{\rm(}see \cite[Proposition~7.2.1 and its proof]{INS}{\rm)}. 

\item Let $C \in \Conn(\SLS(\lambda)$. 
Then, there exists an isomorphism $C \stackrel{\sim}{\rightarrow}
\bigl\{\Theta(C)\bigr\} \otimes \SLS_{0}(\lambda)$ 
of crystals that maps $\eta^{C}$ to $\Theta(C) \otimes \eta_{e}$. 
Consequently, $\SLS(\lambda)$ is 
isomorphic as a crystal to $\Par(\lambda) \otimes \SLS_{0}(\lambda)$ 
{\rm(}see \cite[Proposition~3.2.4 and its proof]{INS}{\rm)}. 
\end{enu}
\end{prop}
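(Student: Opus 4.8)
The plan is to deduce the three assertions from the corresponding results of \cite{INS}, the only genuine work being to reconcile their notation and normalizations with ours. For (1) I would quote \cite[Proposition~7.1.2]{INS} essentially verbatim: it asserts precisely that each connected component of $\SLS(\lambda)$ contains a unique element whose final direction equals $e$ and all of whose other directions have the form $z_{\xi}t(\xi)$ with $\xi \in \Qad$; by Remark~\ref{rem:pie} this is the unique element $\eta^{C}$ of $C$ that lies in $\cl^{-1}(\psi_{e})$ and has final direction exactly $e \in W^{S}$.

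For (2) I would first take the bijection $\Theta\colon\Conn(\SLS(\lambda))\to\Par(\lambda)$ furnished by \cite[Proposition~7.2.1]{INS}, and then compute $\wt(\eta^{C})$ directly. Writing $\eta^{C}=(z_{\xi_1}t(\xi_1),\dots,z_{\xi_{s-1}}t(\xi_{s-1}),e\,;\,\sigma_0,\dots,\sigma_s)$ as in \eqref{eq:etaC}, and using that $z_{\xi_u}\in W_S$ fixes $\lambda$ (because $S=S_\lambda$) while $t(\xi_u)$ acts on the level-zero weight $\lambda$ by subtracting $\pair{\lambda}{\xi_u}\delta$, formula \eqref{eq:wt} collapses (using $\sum_{u}(\sigma_u-\sigma_{u-1})=1$) to $\wt(\eta^{C})=\lambda-N_C\delta$ with $N_C=\sum_{u=1}^{s-1}(\sigma_u-\sigma_{u-1})\pair{\lambda}{\xi_u}\in\BZ$. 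Comparing with the construction of $\Theta$ carried out in the proof of \cite[Proposition~7.2.1]{INS} identifies $N_C$ with $|\Theta(C)|$ (which in particular shows $N_C\ge 0$). Note that $\Theta$ then sends $\SLS_0(\lambda)$, the component of $\eta_e$ (for which $\eta^{C}=\eta_e$), to the empty multi-partition $\bm{\emptyset}$.

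For (3) I would invoke \cite[Proposition~3.2.4 and its proof]{INS}. Equipping $\Par(\lambda)$ with the crystal structure of \cite[(3.2.2)]{INS}, for which $\wt(\brho)=-|\brho|\delta$, the point is that the $\delta$-grading of $\SLS(\lambda)$ within a component is recorded faithfully by a one-element $\Par(\lambda)$-factor while the quotient data is captured by $\cl$ into $\QLS(\lambda)$; concretely, the assignment $\eta^{C}\mapsto\Theta(C)\otimes\eta_e$ extends to an isomorphism of crystals $C\xrightarrow{\ \sim\ }\{\Theta(C)\}\otimes\SLS_0(\lambda)$, and $\SLS_0(\lambda)\cong\{\bm{\emptyset}\}\otimes\SLS_0(\lambda)$ is the component indexed by $\bm{\emptyset}$. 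Taking the disjoint union over all $C\in\Conn(\SLS(\lambda))$ and using the bijection $\Theta$ from (2) then yields $\SLS(\lambda)\cong\Par(\lambda)\otimes\SLS_0(\lambda)$.

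Since each of the three parts is essentially a transcription of a result of \cite{INS}, I do not expect a substantive obstacle; the care required is purely bookkeeping — matching our conventions for the crystal structure on $\SLS(\lambda)$ and $\Par(\lambda)$, for the maps $\cl$, $\kappa$, and $\wt$, and for Peterson's coset representatives $z_\xi$, with those of \cite{INS}. The one mildly delicate point is (2): one must verify that the abstract indexing bijection $\Theta$ of \cite{INS} is the very same one for which $\wt(\eta^{C})=\lambda-|\Theta(C)|\delta$ holds, which is why I would read $\Theta$ off from the proof of \cite[Proposition~7.2.1]{INS} rather than from its statement alone.
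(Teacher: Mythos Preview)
Your proposal is correct and matches the paper's approach exactly: the paper does not prove this proposition at all but simply records it with the parenthetical citations to \cite[Proposition~7.1.2]{INS}, \cite[Proposition~7.2.1 and its proof]{INS}, and \cite[Proposition~3.2.4 and its proof]{INS}, which is precisely what you propose to invoke. Your added bookkeeping (the weight computation for $\eta^{C}$ and the remark that one must read $\Theta$ from the proof rather than the statement of \cite[Proposition~7.2.1]{INS}) is a helpful gloss but not something the paper itself spells out.
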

%
%
\subsection{Extremal weight modules}
\label{subsec:ext}

In this and the next subsection, we mainly follow the notation
of \cite[Sections 4 and 5]{NS-D};
we use the symbol  ``$q_s$" for the
quantum parameter in order to distinguish it from $q = e^{\delta}$.
Let $\lambda \in P^+$ be a dominant  weight. 
We denote by $V(\lambda)$ the extremal weight module of 
extremal weight $\lambda$ over $\qad$; this is 
the integrable $\qad$-module generated 
by a single element $v_{\lambda}$ with 
the defining relation that $v_{\lambda}$ is 
an ``extremal weight vector'' of weight $\lambda$
(for details, see \cite[\S8]{K-mod} and \cite[\S3]{K-lv0}).
We know from \cite[Proposition~8.2.2]{K-mod} that $V(\lambda)$ has 
a crystal basis $(\CL(\lambda),\,\CB(\lambda))$ with global basis 
$\bigl\{G(b) \mid b \in \CB(\lambda)\bigr\}$. 
Denote by $u_{\lambda}$ the element of $\CB(\lambda)$ 
such that $G(u_{\lambda})=v_{\lambda} \in V(\lambda)$, 
and by $\CB_{0}(\lambda)$ the connected component of $\CB(\lambda)$ 
containing $u_{\lambda}$. 

Let $\qa \subset \qad$ denote 
the quantized universal enveloping algebra without the degree operator. 
We know the following from \cite{K-lv0} (see also \cite[\S5.2]{NS-D}): 
\begin{enu}
\item[(i)]
for each $i \in I$, there exists a $\qa$-module 
automorphism $z_{i}:V(\vpi_i) \rightarrow V(\vpi_i)$ 
that maps $v_{\vpi_{i}}$ to $v_{\vpi_{i}}^{[1]}\eqdef G(u_{\vpi_{i}}^{[1]})$, 
where $u_{\vpi_{i}}^{[1]} \in \CB(\vpi_{i})$ is 
a (unique) element of weight $\vpi_{i}+\delta$;

\item[(ii)] the map $z_{i}:V(\vpi_{i}) \rightarrow V(\vpi_{i})$ 
induces a bijection $z_{i}:\CB(\vpi_{i}) \rightarrow \CB(\vpi_{i})$ 
that maps $u_{\vpi_{i}}$ to $u_{\vpi_{i}}^{[1]}$; this map 
commutes with the Kashiwara operators $e_{j}$, $f_{j}$, $j \in I_{\aff}$, 
on $\CB(\vpi_{i})$. 
\end{enu}

Let us write a dominant weight $\lambda \in P^+$ as 
$\lambda = \sum_{i \in I} m_{i} \vpi_{i}$, 
with $m_{i} \in \BZ_{\ge 0}$ for $i \in I$. 
We fix an arbitrary total ordering on $I$, and then set 
$\ti{V}(\lambda)\eqdef 
\bigotimes_{i \in I} V(\vpi_{i})^{\otimes m_{i}}$.
By \cite[eq.\,(4.8) and Corollary~4.15]{BN}, 
there exists a $\qad$-module embedding
$\Phi_{\lambda}:V(\lambda) \hookrightarrow \ti{V}(\lambda)$
that maps $v_{\lambda}$ to $\ti{v}_{\lambda}\eqdef 
\bigotimes_{i \in I} v_{\vpi_{i}}^{\otimes m_{i}}$. 
Also, for each $i \in I$ and $1 \le k \le m_{i}$, we define $z_{i,k}$ to be 
the $\qa$-module automorphism of $\ti{V}(\lambda)$ 
which acts as $z_{i}$ only on the $k$-th factor of $V(\vpi_{i})^{\otimes m_{i}}$ 
in $\ti{V}(\lambda)$, and as the identity map on the other factors of $\ti{V}(\lambda)$; 
these $z_{i,k}$'s, $i \in I$, $1 \le k \le m_{i}$, 
commute with each other. 
Now, for $\brho=(\rho^{(i)})_{i \in I} \in \ol{\Par(\lambda)}$, we set
%
%
\begin{equation} \label{eq:Schur}
s_{\brho}(z^{-1})\eqdef \prod_{i \in I} 
s_{\rho^{(i)}}(z_{i,1}^{-1},\,\dots,\,z_{i,m_i}^{-1}). 
\end{equation}
Here, for a partition 
$\rho=(\rho_{1} \ge \cdots \ge \rho_{m-1} \ge 0)$ 
of length less than $m \in \BZ_{\ge 1}$, 
$s_{\rho}(x)=s_{\rho}(x_{1},\,\dots,\,x_{m})$ denotes 
the Schur polynomial in the variables $x_{1},\,\dots,\,x_{m}$ 
corresponding to the partition $\rho$. 
We can easily show (see \cite[\S7.3]{NS-D}) that 
$s_{\brho}(z^{-1})(\Img \Phi_{\lambda}) \subset \Img \Phi_{\lambda}$ 
for each $\brho = (\rho^{(i)})_{i \in I} \in \ol{\Par(\lambda)}$. 
Hence we can define a $\qa$-module homomorphism
$z_{\brho}:V(\lambda) \rightarrow V(\lambda)$ in such a way that 
the following diagram commutes:
%
%
\begin{equation} \label{eq:zrho}
\begin{CD}
V(\lambda) @>{\Phi_{\lambda}}>> \ti{V}(\lambda) \\
@V{z_{\brho}}VV @VV{s_{\brho}(z^{-1})}V \\
V(\lambda) @>{\Phi_{\lambda}}>> \ti{V}(\lambda);
\end{CD}
\end{equation}
note that $z_{\brho}v_{\lambda} = S_{\brho}^{-}v_{\lambda}$ 
in the notation of \cite{BN} (and \cite{NS-D}). 
The map $z_{\brho}:V(\lambda) \rightarrow V(\lambda)$ induces a $\BC$-linear map 
$z_{\brho}:\CL(\lambda)/q_{s}\CL(\lambda) \rightarrow \CL(\lambda)/q_{s}\CL(\lambda)$; 
this map commutes with Kashiwara operators. 
It follows from \cite[p.\,371]{BN} that
%
%
\begin{equation} \label{eq:CBlam}
\CB(\lambda) = \bigl\{
 z_{\brho}b \mid 
 \brho \in \Par(\lambda),\,b \in \CB_{0}(\lambda) \bigr\};
\end{equation}
for $\brho \in \Par(\lambda)$, we set 
%
%
\begin{equation} \label{eq:u-rho}
u^{\brho} \eqdef  z_{\brho}u_{\lambda} \in \CB(\lambda). 
\end{equation}
%
%
\begin{rem} \label{rem:zGb}
We see from \cite[Theorem~4.16\,(ii)]{BN} (see also the argument after 
\cite[(7.3.8)]{NS-D}) that $z_{\brho}G(b) = G(z_{\brho}b)$ for 
$b \in \CB_{0}(\lambda)$ and $\brho \in \ol{\Par(\lambda)}$. 
\end{rem}
%
%
\subsection{Demazure submodules}
\label{subsec:Dem}
Let $\lambda \in P^+$ be a dominant weight.
For each $x \in W_{\aff}$, we set
%
%
\begin{equation} \label{eq:dem}
V_{x}^{-}(\lambda)\eqdef U_{q_s}^{-}(\Fg_{\aff})S_{x}^{\norm}v_{\lambda} 
\subset V(\lambda),
\end{equation}
where $S_{x}^{\norm}v_{\lambda}$ denotes the extremal weight vector 
of weight $x\lambda$ (see, e.g., \cite[(3.2.1)]{NS-D});
since $V_{x}^-(\lambda) = V_{\PJ (x)}^- (\lambda)$
for $x \in W_\aff$ by \cite[Lemma 4.1.2]{NS-D},
we consider Demazure submodules $V_x^- (\lambda)$
only for  $x \in (W^{S})_{\aff}$ in what follows.
%
We know from \cite[\S2.8]{K-rims} and \cite[\S4.1]{NS-D} that 
$V_{x}^{-}(\lambda)$ is ``compatible'' 
with the global basis of $V(\lambda)$, that is, 
there exists a subset $\CB_{x}^{-}(\lambda) \subset 
\CB(\lambda)$ such that 
%
%
\begin{equation} \label{eq:deme}
V_{x}^{-}(\lambda) = 
\bigoplus_{b \in \CB_{x}^{-}(\lambda)} \BC(q_{s}) G(b) 
\subset
V(\lambda) = 
\bigoplus_{b \in \CB(\lambda)} \BC(q_{s}) G(b).
\end{equation}

We know the following theorem 
from \cite[Theorem~3.2.1]{INS} and \cite[Theorem~4.2.1]{NS-D}. 
%
%
\begin{thm} \label{thm:isom}
Let $\lambda \in P^+$ be a dominant weight. 
There exists an isomorphism $\Psi_{\lambda}:\CB(\lambda) 
\stackrel{\sim}{\rightarrow} \SLS(\lambda)$ of crystals 
such that 
\begin{enu}
\item[(a)] $\Psi_{\lambda}(u^{\brho}) = \eta^{\Theta^{-1}(\brho)}$ 
for all $\brho \in \Par(\lambda)$ {\rm(}in particular, $\Psi_{\lambda}(u_{\lambda})=\eta_{e}${\rm)}{\rm;} 

\item[(b)] $\Psi_{\lambda}(\CB_{x}^{-}(\lambda)) = \SLS_{\sige x}(\lambda)$ 
for all $x \in (W^{S})_{\aff}$. 
\end{enu}

\end{thm}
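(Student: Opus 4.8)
The plan is to build $\Psi_{\lambda}$ out of two ingredients — the parametrization of $\CB(\lambda)$ recorded in \eqref{eq:CBlam}--\eqref{eq:u-rho} and the structural decomposition of $\SLS(\lambda)$ in Proposition~\ref{prop:SLS} — and then to verify (a) and (b) separately. First I would reduce to the connected components. By \eqref{eq:CBlam} every element of $\CB(\lambda)$ is uniquely of the form $z_{\brho}b$ with $\brho \in \Par(\lambda)$ and $b \in \CB_{0}(\lambda)$, and since $z_{\brho}$ commutes with the Kashiwara operators and shifts the weight by $-|\brho|\delta$, this gives an isomorphism of crystals $\CB(\lambda) \cong \Par(\lambda) \otimes \CB_{0}(\lambda)$ carrying $u^{\brho}=z_{\brho}u_{\lambda}$ to $\brho \otimes u_{\lambda}$. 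On the other side, Proposition~\ref{prop:SLS}\,(2),(3) gives $\SLS(\lambda) \cong \Par(\lambda) \otimes \SLS_{0}(\lambda)$ carrying $\eta^{\Theta^{-1}(\brho)}$ to $\brho \otimes \eta_{e}$. Hence it suffices to produce an isomorphism of crystals $\Psi^{0}_{\lambda} : \CB_{0}(\lambda) \xrightarrow{\sim} \SLS_{0}(\lambda)$ with $\Psi^{0}_{\lambda}(u_{\lambda}) = \eta_{e}$; then $\Psi_{\lambda} \eqdef \mathrm{id} \otimes \Psi^{0}_{\lambda}$ satisfies (a) by construction.

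For the isomorphism $\CB_{0}(\lambda) \cong \SLS_{0}(\lambda)$ (the core of \cite[Theorem~3.2.1]{INS}), I would exploit the $\qa$-module embedding $\Phi_{\lambda} : V(\lambda) \hookrightarrow \widetilde{V}(\lambda) = \bigotimes_{i} V(\vpi_{i})^{\otimes m_{i}}$, which at the level of crystals realizes $\CB_{0}(\lambda)$ as the connected component of $\bigotimes_{i} \CB(\vpi_{i})^{\otimes m_{i}}$ containing the tensor product of the extremal vectors. Since each $V(\vpi_{i})$ is a level-zero fundamental module whose crystal basis is realized by SiLS paths of shape $\vpi_{i}$, the tensor product rule for crystals identifies this component with the corresponding connected subcrystal of $\bigotimes_{i} \SLS(\vpi_{i})^{\otimes m_{i}}$; matching the latter with $\SLS_{0}(\lambda)$ is a combinatorial computation using the $\cl$-projection $\SLS(\lambda) \twoheadrightarrow \QLS(\lambda)$ and the realization of $\QLS(\lambda)$ from \cite{LNSSS2} as a tensor product of level-zero Kirillov--Reshetikhin crystals. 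Connectedness of both components then forces any such isomorphism to send $u_{\lambda} \mapsto \eta_{e}$.

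For (b) I would first note that the extremal weight vector $S_{x}^{\norm}v_{\lambda}$ of weight $x\lambda$ equals $G(b_{x})$ for a unique $b_{x} \in \CB(\lambda)$, and that $\Psi_{\lambda}(b_{x}) = \eta_{x} \eqdef (x\,;\,0,1) \in \SLS(\lambda)$, the straight-line path, which is extremal of weight $x\lambda$. Since $V_{x}^{-}(\lambda)$ is compatible with the global basis (see \eqref{eq:deme}), $\CB_{x}^{-}(\lambda)$ is exactly the Demazure-type subset generated from $b_{x}$ under the lowering operators together with those raising operators that keep one inside the Demazure piece; being a crystal isomorphism sending $b_{x} \mapsto \eta_{x}$, $\Psi_{\lambda}$ carries it onto the analogous Demazure-type subset of $\SLS(\lambda)$ generated from $\eta_{x}$. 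It then remains to identify the latter with $\SLS_{\sige x}(\lambda) = \{\eta \mid \kappa(\eta) \sige x\}$: one checks $\eta_{x} \in \SLS_{\sige x}(\lambda)$, that $\SLS_{\sige x}(\lambda)$ is stable under $f_{i}$ for all $i \in I_{\aff}$ and under $e_{i}$ whenever the result still has final direction $\sige x$, and that it is generated from $\eta_{x}$ in this way. This is where the semi-infinite Bruhat order machinery of \S\ref{subsec:SBG} enters: the argument proceeds by induction on $\sell(x)$ and uses Lemma~\ref{lem:trans} (translation-equivariance of $\SBG^{S}$), Lemmas~\ref{lem:min1}--\ref{lem:min2} (the minimal coset representative $yz_{\xi_{x,y}}t(\xi_{x,y})$ and its characterization), and Lemma~\ref{lem:INS714} (the formula $\kappa(X\eta_{0}) = \kappa(\eta)\kappa(\eta_{0})$) to track how $\kappa$ moves under the root operators and the $z_{\brho}$-shifts.

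The main obstacle is twofold, and each part is the content of one of the cited theorems: first, the genuinely representation-theoretic identification of the abstract crystal basis $\CB_{0}(\lambda)$ — whose existence and tensor behaviour rest on \cite{K-mod,K-lv0,BN} — with the combinatorially defined $\SLS_{0}(\lambda)$ (that is, \cite[Theorem~3.2.1]{INS}); and second, proving that the geometric condition $\kappa(\eta) \sige x$ cuts out precisely the Demazure subset $\CB_{x}^{-}(\lambda)$ (that is, \cite[Theorem~4.2.1]{NS-D}), where the delicate point is the stability and generation statement for $\SLS_{\sige x}(\lambda)$ under the affine root operators, which hinges on the good behaviour of the semi-infinite Bruhat order (length-additivity and the minimality lemmas). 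The bookkeeping relating translations $t(\xi)$ to the $\cl$-projection, handled by Lemma~\ref{lem:trans} and Lemmas~\ref{lem:min1}--\ref{lem:min2}, is the technical heart of that second part.
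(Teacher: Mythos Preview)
The paper does not prove this theorem at all: immediately before the statement it says ``We know the following theorem from \cite[Theorem~3.2.1]{INS} and \cite[Theorem~4.2.1]{NS-D},'' and no argument is given. So there is no ``paper's own proof'' to compare your proposal against; the theorem is imported as a black box, and the machinery of \S\ref{subsec:SBG} (Lemmas~\ref{lem:sig1}, \ref{lem:trans}, \ref{lem:min1}, \ref{lem:min2}) is developed in this paper not to prove Theorem~\ref{thm:isom} but to \emph{use} it in the proof of Theorem~\ref{thm:gch} in \S\ref{subsec:prf-gch}.

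Your sketch is a reasonable outline of how the cited results are actually established in \cite{INS} and \cite{NS-D}, and you correctly identify that (a) comes from the component parametrization (Proposition~\ref{prop:SLS} plus \eqref{eq:CBlam}) together with the connected-component isomorphism of \cite[Theorem~3.2.1]{INS}, while (b) is \cite[Theorem~4.2.1]{NS-D}. One caution: you propose to prove part (b) using Lemmas~\ref{lem:min1}--\ref{lem:min2}, but those lemmas are new contributions of \emph{this} paper and postdate \cite{NS-D}; the original argument in \cite{NS-D} for the identification $\Psi_{\lambda}(\CB_{x}^{-}(\lambda)) = \SLS_{\sige x}(\lambda)$ proceeds differently (via Kashiwara's characterization of Demazure crystals and a direct analysis of how $\kappa$ behaves under $e_i$, $f_i$), so invoking them here is anachronistic even if not logically circular.
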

%
%
\subsection{Weyl group action}
\label{subsec:Weyl}

Let $\CB$ be a regular crystal
for $\qad$
in the sense of \cite[\S2.2]{K-lv0} (or \cite[p.\,389]{K-mod}); 
in particular, as a crystal for $U_{q_s} (\Fg) \subset \qad$, 
it decomposes into a disjoint union of ordinary highest weight crystals.
By \cite[\S7]{K-mod}, the Weyl group $W_{\aff}$ acts on $\CB$ by
%
%
\begin{equation} \label{eq:W1}
\sss_{j} \cdot b\eqdef 
\begin{cases}
f_{j}^{n}b & \text{if $n\eqdef \pair{\wt b}{\alpha_{j}^{\vee}} \ge 0$}, \\[1.5mm]
e_{j}^{-n}b & \text{if $n\eqdef \pair{\wt b}{\alpha_{j}^{\vee}} \le 0$}
\end{cases}
\end{equation}
for $b \in \CB$ and $j \in I_{\aff}$. 
Here we note that $\SLS(\lambda)$ is a regular crystal for $\qad$
for a dominant weight $\lambda \in P^+$.
%
%
\begin{rem}[{\cite[Remark~3.5.2]{NS-D}}] \label{rem:extp}
Recall from Remark~\ref{rem:pie} that 
every element $\eta \in \cl^{-1}(\psi_{e})$ is of the form \eqref{eq:pie}. 
Then, for each $x \in W_{\aff}$, 
%
%
\begin{equation} \label{eq:xeta}
x \cdot \eta=
 \bigl(\PJ(xz_{\zeta_1}t(\zeta_1)),\,\dots,\,\PJ(xz_{\zeta_{s}}t(\zeta_{s})) \,;\, 
  \sigma_{0},\,\sigma_{1},\,\dots,\,\sigma_{s}\bigr),
\end{equation}
where 
$S = S_\lambda = \{ i \in I \mid \pair{\lambda}{\alpha_i^\lor} =0 \}$.
In particular, we see by \eqref{eq:xeta} and 
the uniqueness of $\eta^{C}$ that 
$\eta= (z_{\zeta_{s}}t(\zeta_{s})) \cdot \eta^{C}$, 
with $C \in \Conn(\SLS(\lambda))$ 
the connected component containing the $\eta$. 
\end{rem}
%
%
\begin{rem} \label{rem:par}
Let $\brho = (\rho^{(i)})_{i \in I} \in \ol{\Par(\lambda)}$.
Denote by $c_{i} \in \BZ_{\ge 0}$, $i \in I$, the number of columns of 
length $m_{i}$ in the Young diagram corresponding to the partition $\rho^{(i)}$, 
and set $\xi : = \sum_{i \in I} c_{i}\alpha_{i}^{\vee} \in Q^{\vee,+}$; 
note that $c_{i}=0$ for all $i \in S$. 
Also, for $i \in I$, let $\varrho^{(i)}$ denote
the partition corresponding to the Young diagram 
obtained from 
that of $\varrho^{(i)}$
by removing all columns of length $m_{i}$ 
(i.e., the first $c_{i}$-columns), 
and set $\bvrho\eqdef (\varrho^{(i)})_{i \in I}$; 
note that $\bvrho \in \Par(\lambda)$. Then we deduce from 
\cite[Lemma~4.14 and its proof]{BN} that
%
%
\begin{equation} \label{eq:zcu}
z_{\brho}u_{\lambda} = t(\xi) \cdot (z_{\bvrho}u_{\lambda}) = 
t(\xi) \cdot u^{\bvrho}.
\end{equation}
\end{rem}

%
%
\section{Graded character formulas for Demazure submodules \\ and their certain quotients}
\label{sec:gch}
%
%
\subsection{Graded character formula for Demazure submodules}
\label{subsec:gch}

Fix a dominant weight $\lambda \in P^+$; 
recall that $S=S_{\lambda}=\bigl\{ i \in I \mid 
\pair{\lambda}{\alpha_{i}^{\vee}}=0 \bigr\}$.

Because every weight space of the Demazure submodule 
$V_{x}^{-}(\lambda)$ corresponding to $x \in W^S = W \cap (W^{S})_{\aff}$ 
is finite-dimensional, we can define the (ordinary) character 
$\ch V_{x}^{-}(\lambda)$ of $V_{x}^{-}(\lambda)$ by
\begin{equation*}
\ch V_{x}^{-}(\lambda) \eqdef  \sum_{\beta \in Q_{\aff}}
\dim V_{x}^{-}(\lambda)_{\lambda-\beta}\,e^{\lambda-\beta},
\end{equation*}
where $V_{x}^{-}(\lambda)_{\lambda-\beta}$ denotes the $\lambda-\beta$ weight space of $V_{x}^{-}(\lambda)$.
Here we recall that an element $\beta \in Q_{\aff}$ 
can be written uniquely in the form: $\beta = \gamma + k\delta$ 
for $\gamma \in Q$ and $k \in \BZ$; if we set 
$e^{\delta}\eqdef q$, then $e^{\lambda-\beta} = 
e^{\lambda-\gamma} q^{-k}$. 
Now we define the graded character
$\gch V_{x}^{-}(\lambda)$ of $V_{x}^{-}(\lambda)$ to be 
\begin{equation*}
\gch V_{x}^{-}(\lambda) \eqdef  \sum_{\gamma \in Q,\,k \in \BZ} 
\dim V_{x}^{-}(\lambda)_{\lambda-\gamma-k\delta}\,e^{\lambda-\gamma}q^{-k},
\end{equation*}
which is obtained from the ordinary character $\ch V_{x}^{-}(\lambda)$ 
by replacing $e^{\delta}$ with $q$. 
%
%
\begin{thm} \label{thm:gch}
Keep the notation and setting above. 
Let $x \in W^{S}$. 
The graded character $\gch V_{x}^{-}(\lambda)$ of 
$V_{x}^{-}(\lambda)$ can be expressed as 
%
%
\begin{equation} \label{eq:gch}
\gch V_{x}^{-}(\lambda) = 
	\left( \prod_{i \in I} \prod_{r=1}^{m_{i}}(1-q^{-r})^{-1} \right)
  \sum_{\psi \in \QLS(\lambda)} e^{\wt(\psi)} q^{\dg_{x\lambda}(\psi)}.
\end{equation}
%
\end{thm}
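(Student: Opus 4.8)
The plan is to transport the computation to the combinatorial model of semi-infinite Lakshmibai--Seshadri paths and then to fiber it over $\QLS(\lambda)$. First I would use Theorem~\ref{thm:isom}\,(b) together with the compatibility \eqref{eq:deme} of $V_{x}^{-}(\lambda)$ with the global basis of $V(\lambda)$ and the fact that $\Psi_{\lambda}$ preserves weights, to rewrite
\[
\gch V_{x}^{-}(\lambda) = \sum_{\eta \in \SLS_{\sige x}(\lambda)} e^{\wt(\cl(\eta))}\,q^{\Deg(\eta)},
\]
where for $\eta = (x_{1},\dots,x_{s}\,;\,\sigma_{0},\dots,\sigma_{s}) \in \SLS(\lambda)$ I write $\Deg(\eta)$ for the coefficient of $\delta$ in $\wt(\eta) \in P_{\aff}$. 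Here one uses that, writing each $x_{u} = w_{u}z_{\xi_{u}}t(\xi_{u})$ with $w_{u} \in W^{S}$ and $\xi_{u} \in \Qad$ as in \eqref{eq:WSaf}, the level-zero weight $\lambda$ vanishes on $Q_{S}^{\vee}$, so that $x_{u}\lambda = w_{u}\lambda - \pair{\lambda}{\xi_{u}}\delta$; hence the classical part of $\wt(\eta)$ equals $\wt(\cl(\eta))$, while $\Deg(\eta) = -\sum_{u=1}^{s}(\sigma_{u}-\sigma_{u-1})\pair{\lambda}{\xi_{u}}$.

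Next I would peel off the partition directions using Proposition~\ref{prop:SLS}\,(3): under the isomorphism $\SLS(\lambda) \cong \Par(\lambda) \otimes \SLS_{0}(\lambda)$, an element corresponding to $\brho \otimes \eta'$ (with $\brho \in \Par(\lambda)$, $\eta' \in \SLS_{0}(\lambda)$) satisfies $\cl(\cdot) = \cl(\eta')$ (as $\cl$ is a strict morphism of crystals) and $\Deg(\cdot) = \Deg(\eta') - |\brho|$, and — by Lemma~\ref{lem:INS714}, applied to $\eta_{0} = \eta^{\Theta^{-1}(\brho)}$, which is of the form \eqref{eq:etaC} and hence of the form in Remark~\ref{rem:pie} with $\kappa(\eta^{\Theta^{-1}(\brho)}) = e$ — also $\kappa(\cdot) = \kappa(\eta')$. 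Consequently
\[
\gch V_{x}^{-}(\lambda) = \Bigl(\sum_{\brho \in \Par(\lambda)} q^{-|\brho|}\Bigr) \sum_{\substack{\eta' \in \SLS_{0}(\lambda)\\ \kappa(\eta') \sige x}} e^{\wt(\cl(\eta'))}\,q^{\Deg(\eta')},
\]
and since $\sum_{\brho \in \Par(\lambda)} q^{-|\brho|} = \prod_{i \in I}\prod_{r=1}^{m_{i}-1}(1-q^{-r})^{-1}$ it remains to prove, for each $\psi \in \QLS(\lambda)$ with final direction $w \eqdef \kappa(\psi) \in W^{S}$, that
\[
\sum_{\substack{\eta' \in \SLS_{0}(\lambda),\ \cl(\eta')=\psi\\ \kappa(\eta') \sige x}} q^{\Deg(\eta')} = \Bigl(\prod_{i \notin S}(1-q^{-m_{i}})^{-1}\Bigr)\, q^{\dg_{x\lambda}(\psi)}.
\]

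To establish this I would start from the distinguished element $\eta_{\psi} \in \SLS_{0}(\lambda)$ of Lemma~\ref{lem:pi-eta}, for which $\kappa(\eta_{\psi}) = w \in W^{S}$ and, by \eqref{eq:wtep}--\eqref{eq:deg}, $\Deg(\eta_{\psi}) = \dg_{\lambda}(\psi)$. Using the translation action of Remark~\ref{rem:extp} and the identity \eqref{eq:zcu} of Remark~\ref{rem:par} (a column of length $m_{i}$ in $\rho^{(i)}$ corresponding to a translation by $\alpha_{i}^{\vee}$), one identifies the elements $\eta'$ of $\SLS_{0}(\lambda)$ with $\cl(\eta') = \psi$ obtained from $\eta_{\psi}$ by adjoining full columns; such an $\eta'$ has final direction with translation part $[\xi]$ for a unique $\xi \in \sum_{i \in I \setminus S}\BZ_{\ge 0}\alpha_{i}^{\vee}$, and $\Deg(\eta') = \dg_{\lambda}(\psi) - \pair{\lambda}{\xi}$. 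The decisive point is that, by Lemmas~\ref{lem:min1} and \ref{lem:min2} together with the monotonicity in Lemma~\ref{lem:sig1}, one has $\kappa(\eta') \sige x$ if and only if $[\xi] \ge [\xi_{x,w}]$, where $\xi_{x,w}$ is the canonical $S$-adjusted element \eqref{eq:xi} attached to a shortest $\QBG^{S}$-path from $x$ to $w$; moreover $\pair{\lambda}{\xi_{x,w}} = \wt_{\lambda}(x \Rightarrow w)$ because $\pair{\lambda}{\phi_{S}(\cdot)} = 0$, and $\dg_{x\lambda}(\psi) = \dg_{\lambda}(\psi) - \wt_{\lambda}(x \Rightarrow w)$ because $v(x\lambda) = x$ and $\wt_{\lambda}(e \Rightarrow w) = 0$. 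Hence the fiber-sum equals
\[
q^{\dg_{\lambda}(\psi)} \sum_{\substack{\xi \in \sum_{i \in I \setminus S}\BZ_{\ge 0}\alpha_{i}^{\vee}\\ [\xi] \ge [\xi_{x,w}]}} q^{-\pair{\lambda}{\xi}} = q^{\dg_{\lambda}(\psi)}\,q^{-\pair{\lambda}{\xi_{x,w}}}\prod_{i \notin S}(1-q^{-m_{i}})^{-1} = q^{\dg_{x\lambda}(\psi)}\prod_{i \notin S}(1-q^{-m_{i}})^{-1},
\]
and multiplying by the $\Par(\lambda)$-factor collects the geometric series into $\prod_{i \in I}\prod_{r=1}^{m_{i}}(1-q^{-r})^{-1}$, which is \eqref{eq:gch}.

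I expect the main obstacle to be the third step above: giving the precise description of the fiber $\cl^{-1}(\psi) \cap \SLS_{0}(\lambda)$ and of the subset cut out by $\kappa(\eta') \sige x$, i.e.\ proving that this subset is parametrized exactly by $\{\xi \in \sum_{i \in I \setminus S}\BZ_{\ge 0}\alpha_{i}^{\vee} : [\xi] \ge [\xi_{x,w}]\}$ with the stated value of $\Deg$, compatibly with the semi-infinite Bruhat order on $(W^{S})_{\aff}$. This is where the $S$-adjustment machinery ($\phi_{S}$, the projection $[\,\cdot\,]$, Lemmas~\ref{lem:S-adj}, \ref{lem:PJ} and \ref{lem:trans}) and the two minimality lemmas \ref{lem:min1} and \ref{lem:min2} must be dovetailed, and where one must check that the partition degrees of freedom carried by $\Par(\lambda)$ and the translation degrees of freedom recorded in the final direction merge into $\ol{\Par(\lambda)}$ with neither over- nor under-counting.
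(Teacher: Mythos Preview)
Your approach is correct and parallels the paper's closely; both rest on the same key ingredients (Lemmas~\ref{lem:min1}, \ref{lem:min2}, \ref{lem:sig1}, \ref{lem:INS714}, and the description of $\cl^{-1}(\psi)$ from \cite{NS-D}). The difference is purely organizational: the paper fibers $\SLS_{\sige x}(\lambda)$ directly over $\QLS(\lambda)$ and shows that each fiber, parametrized by pairs $(C,[\zeta])$ with $[\zeta] \ge [\xi_{x,\kappa(\psi)}]$ as in \eqref{eq:grch1-1}, is in bijection with $\ol{\Par(\lambda)}$ via $(C,[\zeta]) \mapsto (c_i + \rho^{(i)})_{i \in I}$; you instead first peel off $\Par(\lambda)$ via Proposition~\ref{prop:SLS}\,(3) (using Lemma~\ref{lem:INS714} to see that $\kappa$ is unchanged across components), then fiber the remaining $\SLS_0(\lambda) \cap \SLS_{\sige x}(\lambda)$ over $\QLS(\lambda)$ with each fiber given by the translation cone $\{[\zeta] \ge [\xi_{x,w}]\}$, and finally reassemble the two geometric factors into $\prod_{i \in I}\prod_{r=1}^{m_i}(1-q^{-r})^{-1}$. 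Both routes give the same answer with the same effort. One point to make explicit in your write-up: the full fiber $\cl^{-1}(\psi) \cap \SLS_0(\lambda)$ is $\{X_\psi(t(\zeta)\cdot\eta_e) : [\zeta] \in Q_{I \setminus S}^{\vee}\}$, parametrized by \emph{all} of $Q_{I \setminus S}^{\vee}$, not just its nonnegative cone; the restriction to $[\zeta] \ge 0$ is a consequence of imposing $\kappa(\eta') \sige x$, since Lemma~\ref{lem:min2} then forces $[\zeta] \ge [\xi_{x,w}] \ge 0$.
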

%
%
By combining the special case $x = \lfloor \lon \rfloor \in W^S$ of Theorem~\ref{thm:gch}
with the special case $\mu = \lon \lambda$ of Theorem~\ref{main}, we obtain the following theorem;
recall from Remark~\ref{qls_qlsw0} that $\QLS^{\lon \lambda, \infty} (\lambda)=\QLS(\lambda)$.

\begin{thm} \label{thm:demazure_character}
Let $\lambda \in P^+$ be a dominant weight of the from $\lambda = \sum_{i \in I} m_i \varpi_i$,
with $m_i \in \BZ_{\geq 0}$, $i \in I$.
Then, the graded character $\gch V_{\lon}^{-}(\lambda)$ is equal to 
\begin{equation*}
\left(
\prod_{i \in I} \prod_{r=1}^{m_i}(1 - q^{-r})^{-1} \right) E_{\lon \lambda} (q, \infty)
.
\end{equation*}
\end{thm}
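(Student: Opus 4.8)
The plan is to derive Theorem~\ref{thm:demazure_character} by combining two results already established in the excerpt: Theorem~\ref{main} (the Orr--Shimozono formula rewritten in terms of $\QLS$ paths) in the special case $\mu = \lon\lambda$, and Theorem~\ref{thm:gch} (the graded character formula for Demazure submodules) in the special case $x = \lfloor \lon \rfloor \in W^S$. First I would invoke Remark~\ref{qls_qlsw0}, which tells us that when $\mu = \lambda_- = \lon\lambda$, one has $v(\mu)\lons = \lon$ by \eqref{eq:red_longest}, hence $\lfloor \EQB(v(\mu)\lons)\rfloor = \lfloor\EQB(\lon)\rfloor = \lfloor W \rfloor = W^S$ (using $\EQB(\lon) = W$ from Proposition~\ref{shellability}~(1)), and therefore $\QLS^{\lon\lambda,\infty}(\lambda) = \QLS(\lambda)$. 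Consequently $\gch_{\lon\lambda}\QLS^{\lon\lambda,\infty}(\lambda) = \sum_{\psi \in \QLS(\lambda)} e^{\wt(\psi)} q^{\dg_{\lon\lambda}(\psi)}$, so Theorem~\ref{main} with $\mu = \lon\lambda$ reads
\begin{equation*}
E_{\lon\lambda}(q,\infty) = \sum_{\psi \in \QLS(\lambda)} e^{\wt(\psi)} q^{\dg_{\lon\lambda}(\psi)}.
\end{equation*}

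Next I would apply Theorem~\ref{thm:gch} with $x = \lfloor\lon\rfloor$. The right-hand side of \eqref{eq:gch} in that case is $\left(\prod_{i\in I}\prod_{r=1}^{m_i}(1-q^{-r})^{-1}\right)\sum_{\psi\in\QLS(\lambda)} e^{\wt(\psi)} q^{\dg_{\lfloor\lon\rfloor\lambda}(\psi)}$. Here one must observe that $\dg_{\lfloor\lon\rfloor\lambda}(\psi) = \dg_{\lon\lambda}(\psi)$: the degree function $\dg_\mu$ depends on $\mu$ only through $v(\mu)$ (it appears in the formula via $w_{s+1} \eqdef v(\mu)$), and since $\lon\lambda = \lfloor\lon\rfloor\lambda = \lambda_-$ as weights, we have $v(\lfloor\lon\rfloor\lambda) = v(\lon\lambda) = v(\lambda_-)$. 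Similarly $V_{\lon}^-(\lambda) = V_{\lfloor\lon\rfloor}^-(\lambda)$ by \cite[Lemma~4.1.2]{NS-D} (cited just before Theorem~\ref{thm:isom}), since $\PJ(\lon) = \lfloor\lon\rfloor$ by Lemma~\ref{lem:PJ}~(1). Therefore $\gch V_{\lon}^-(\lambda) = \gch V_{\lfloor\lon\rfloor}^-(\lambda) = \left(\prod_{i\in I}\prod_{r=1}^{m_i}(1-q^{-r})^{-1}\right)\sum_{\psi\in\QLS(\lambda)} e^{\wt(\psi)} q^{\dg_{\lon\lambda}(\psi)}$.

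Finally, substituting the displayed identity for $E_{\lon\lambda}(q,\infty)$ into this last expression gives
\begin{equation*}
\gch V_{\lon}^-(\lambda) = \left(\prod_{i\in I}\prod_{r=1}^{m_i}(1-q^{-r})^{-1}\right) E_{\lon\lambda}(q,\infty),
\end{equation*}
which is exactly the assertion of Theorem~\ref{thm:demazure_character} (note that $\prod_{r=1}^{m_i}(1-q^{-r})^{-1} = \left(\prod_{r=1}^{m_i}(1-q^{-r})\right)^{-1}$, matching the form in Theorem~\ref{TheoremA}).

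Since both main ingredients, Theorem~\ref{main} and Theorem~\ref{thm:gch}, are taken as given, this proof is essentially a bookkeeping argument; the only genuine point requiring care is the identification $\QLS^{\lon\lambda,\infty}(\lambda) = \QLS(\lambda)$ together with the matching of degree functions $\dg_{\lon\lambda} = \dg_{\lfloor\lon\rfloor\lambda}$ and of Demazure submodules $V_\lon^-(\lambda) = V_{\lfloor\lon\rfloor}^-(\lambda)$ under passage to minimal coset representatives. I expect no real obstacle here, as each of these is either Remark~\ref{qls_qlsw0} verbatim or an immediate consequence of the cited lemmas; the "hard work" has already been done in Sections~3 and 4.
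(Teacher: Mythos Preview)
Your proposal is correct and follows exactly the paper's approach: the theorem is stated immediately after Theorem~\ref{thm:gch} with the one-line justification that it follows by combining the special case $x=\lfloor\lon\rfloor$ of Theorem~\ref{thm:gch} with the special case $\mu=\lon\lambda$ of Theorem~\ref{main}, using Remark~\ref{qls_qlsw0}. You have in fact been slightly more careful than the paper in spelling out the identifications $V_{\lon}^-(\lambda)=V_{\lfloor\lon\rfloor}^-(\lambda)$ and $\dg_{\lon\lambda}=\dg_{\lfloor\lon\rfloor\lambda}$.
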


%
%
\subsection{Proof of Theorem~\ref{thm:gch}}
\label{subsec:prf-gch}

We see from Theorem~\ref{thm:isom}  that 
\begin{equation*}
\ch V_{x}^{-}(\lambda) = 
\sum_{\eta \in \SLS_{\sige x}(\lambda)} e^{\wt (\eta)};
\end{equation*}
since 
\begin{equation*}
\SLS_{\sige x}(\lambda) = 
\bigsqcup_{\psi \in \QLS(\lambda)} 
 \bigl(\cl^{-1}(\psi) \cap \SLS_{\sige x}(\lambda)\bigr), 
\end{equation*}
we deduce that
\begin{equation} \label{eq:ch1}
\ch V_{x}^{-}(\lambda) = 
\sum_{\psi \in \QLS(\lambda)} 
\Biggl(\underbrace{\sum_{\eta \in \cl^{-1}(\psi) \cap \SLS_{\sige x}(\lambda)} 
e^{\wt (\eta)}}_{(\ast)}\Biggr).
\end{equation}
In order to obtain the graded character formula \eqref{eq:gch} for $V_{x}^{-}(\lambda)$, 
we will compute the sum ($\ast$) of the terms $e^{\wt(\eta)}$ 
over all $\eta \in \cl^{-1}(\psi) \cap \SLS_{\sige x}(\lambda)$
for each $\psi \in \QLS(\lambda)$. 
Let $\psi \in \QLS(\lambda)$, and take 
$\eta_{\psi} \in \SLS_{0}(\lambda)$ as in Lemma~\ref{lem:pi-eta}. 
Let $X$ be a monomial in root operators 
such that $\eta_{\psi}=X\eta_{e}$, where $\eta_{e}=(e\,;\,0,1)$. 
We see by \cite[Lemma~6.2.2]{NS-D} that
%
%
\begin{equation} \label{eq:gch1}
\cl^{-1}(\psi)=\bigl\{X (t(\zeta) \cdot \eta^{C}) \mid 
 C \in \Conn(\SLS(\lambda)),\,
 \zeta \in Q^{\vee} \bigr\};
\end{equation}
for the definition of $\eta^{C}$, see \eqref{eq:etaC}. We claim that
%
%
\begin{equation} \label{eq:grch1-1} 
\cl^{-1}(\psi) \cap \SLS_{\sige x}(\lambda) = 
\left\{ X (t(\zeta) \cdot \eta^{C}) \ \Biggm| \ 
 \begin{array}{l}
 C \in \Conn(\SLS(\lambda)), \\[1mm]
 \zeta \in Q^{\vee},\,
 [\zeta] \ge [\xi_{x,\kappa(\psi)}]
 \end{array}
\right\}. 
\end{equation}
We first show the inclusion $\subset$. 
Let $\eta \in \cl^{-1}(\psi) \cap \SLS_{\sige x}(\lambda)$, 
and write it as $\eta=X (t(\zeta) \cdot \eta^{C})$ 
for some $C \in \Conn(\SLS(\lambda))$ and 
some $\zeta \in Q^{\vee}$ (see \eqref{eq:gch1}). 
Also, we set $y\eqdef \kappa(\psi) = \kappa(\eta_\psi) \in W^{S}$.
We see by \eqref{eq:xeta} that $t(\zeta) \cdot \eta^{C}$ is
of the form \eqref{eq:pie}, with $\kappa(t(\zeta) \cdot \eta^{C}) = 
\PJ(t(\zeta)) = z_{\zeta}t(\zeta+\phi_{S}(\zeta))$. 
Therefore, we deduce from Lemma~\ref{lem:INS714} that 
$\kappa(X (t(\zeta) \cdot \eta^{C}) ) = \kappa(\eta_\psi) \kappa(t(\zeta) \cdot \eta^C)  = yz_{\zeta} t(\zeta+\phi_{S}(\zeta))$. 
Since $\eta = X (t(\zeta) \cdot \eta^{C}) \in \SLS_{\sige x}(\lambda)$ by the assumption, 
we have $yz_{\zeta} t(\zeta+\phi_{S}(\zeta)) \sige x$. 
Hence it follows from Lemma~\ref{lem:min2} that 
$[\zeta]=[\zeta+\phi_{S}(\zeta)] \ge [\xi_{x,y}]=[\xi_{x,\kappa(\psi)}]$. 
Thus, $\eta$ is contained in the set on the right-hand side of \eqref{eq:grch1-1}. 

For the opposite inclusion $\supset$, let $C \in \Conn(\SLS(\lambda))$, and 
let $\zeta \in Q^{\vee}$ be such that $[\zeta] \ge [\xi_{x,\kappa(\psi)}]$. 
It is obvious by \eqref{eq:gch1} that 
$X(t(\zeta) \cdot \eta^{C}) \in \cl^{-1}(\psi)$. 
Hence it suffices to show that 
$X(t(\zeta) \cdot \eta^{C}) \in \SLS_{\sige x}(\lambda)$. 
The same argument as above shows that 
$\kappa(X (t(\zeta) \cdot \eta^{C}) ) = yz_{\zeta} t(\zeta+\phi_{S}(\zeta))$, with $y \eqdef  \kappa(\psi) \in W^{S}$. 
Therefore, we see that
\begin{align*}
\kappa(X (t(\zeta) \cdot \eta^{C}) )
 & = yz_{\zeta} t(\zeta+\phi_{S}(\zeta))
 \sige yz_{\xi_{x,y}}t(\xi_{x,y}) \quad \text{by Lemma~\ref{lem:sig1}} \\
 & \sige x \quad \text{by Lemma~\ref{lem:min1}}, 
\end{align*}
which implies that $X(t(\zeta) \cdot \eta^{C}) \in 
\SLS_{\sige x}(\lambda)$. 
This proves \eqref{eq:grch1-1}. 

Let $C \in \Conn(\SLS(\lambda))$, and 
write $\Theta(C) \in \Par(\lambda)$ as: 
$\Theta(C)=(\rho^{(i)})_{i \in I}$, with 
$\rho^{(i)} = (\rho^{(i)}_{1} \ge \cdots \ge \rho^{(i)}_{m_{i}-1})$ for each $i \in I$. 
Also, let $\zeta \in Q^{\vee}$ be such that 
$[\zeta] \ge [\xi_{x,\kappa(\psi)}]$, and 
write the difference $[\zeta] - [\xi_{x,\kappa(\psi)}] \in Q^{\vee,+}$ as 
\begin{equation*}
[\zeta] - [\xi_{x,\kappa(\psi)}] = \sum_{i \in I} c_{i}\alpha_{i}^{\vee};
\end{equation*} 
note that $c_{i} = 0$ for all $i \in S$. 
Now, for each $i \in I$, we set $c_{i}+\rho^{(i)} \eqdef  
(c_{i}+\rho^{(i)}_{1} \ge \cdots \ge c_{i}+\rho^{(i)}_{m_{i}-1} \ge c_{i})$, 
which is a partition of length less than or equal to $m_{i}$, and then set 
%
%
\begin{equation} \label{eq:par+}
(c_{i})_{i \in I} + \Theta(C) : = 
(c_{i}+\rho^{(i)})_{i \in I} \in \ol{\Par(\lambda)}. 
\end{equation}
Noting that $\pair{\lambda}{Q_{S}^{\vee}}=\{0\}$, 
we compute: 
\begin{align*}
& \wt (t(\zeta) \cdot \eta^{C}) = t(\zeta) (\wt (\eta^{C})) \\
& \quad 
= t(\zeta)\bigl(\lambda-|(\rho^{(i)})_{i \in I}|\delta\bigr) \quad 
  \text{by Proposition~\ref{prop:SLS}\,(2)} \\
& \quad
= \lambda - \pair{\lambda}{\zeta}\delta - |(\rho^{(i)})_{i \in I}|\delta \\[1.5mm]
& \quad
= \lambda - \pair{\lambda}{\xi_{x,\kappa(\psi)}}\delta 
  - \Bpair{\lambda}{ \sum_{i \in I} c_{i}\alpha_{i}^{\vee}}\delta
  - |(\rho^{(i)})_{i \in I}|\delta \\[1.5mm]
& \quad
= \lambda - \wt_{\lambda} \bigl(x \Rightarrow \kappa(\psi)\bigr)\delta
  - \left( \sum_{i \in I} m_{i}c_{i} \right) \delta
  - |(\rho^{(i)})_{i \in I}|\delta \\
& \quad
  = \wt (\eta_{e}) - \wt_{\lambda} \bigl(x \Rightarrow \kappa(\psi)\bigr)\delta
    - |(c_{i}+\rho^{(i)})_{i \in I}|\delta. 
\end{align*}
From this computation, together with \eqref{eq:wtep}, 
we deduce that
%
%
\begin{equation} \label{eq:wtXS}
\begin{split}
& \wt ( X (t(\zeta) \cdot \eta^{C}) ) = 
  \wt ( X \eta_{e} ) - \wt_{\lambda} \bigl(x \Rightarrow \kappa(\psi)\bigr)\delta
    - |(c_{i}+\rho^{(i)})_{i \in I}|\delta \\
& \quad 
  = \wt ( \eta_{\psi} ) - \wt_{\lambda} \bigl(x \Rightarrow \kappa(\psi)\bigr)\delta 
  - |(c_{i}+\rho^{(i)})_{i \in I}|\delta \\
& \quad 
  = \wt (\psi) + \Bigl\{\dg_{\lambda}(\psi) - 
    \wt_{\lambda}\bigl(x \Rightarrow \kappa(\psi)\bigr)\Bigr\}\delta
  - |(c_{i}+\rho^{(i)})_{i \in I}| \delta. 
\end{split}
\end{equation}
Because
$\dg_{\lambda}(\psi) - \wt_{\lambda} \bigl(x \Rightarrow \kappa(\psi)\bigr) 
 = \dg_{x\lambda}(\psi)$ by the definitions of $\dg_{x\lambda} (\psi)$
 and $\dg_{\lambda} (\psi)$ , 
we obtain
\begin{equation*}
\wt ( X (t(\zeta) \cdot \eta^{C}) ) 
= \wt (\psi) + \bigl(\dg_{x\lambda}(\psi) 
- |(c_{i}+\rho^{(i)})_{i \in I}|\bigr)\delta.
\end{equation*}
Summarizing, we find that
for each $\psi \in \QLS(\lambda)$, 
\begin{align*}
& \sum_{\eta \in \cl^{-1}(\psi) \cap \SLS_{\sige x}(\lambda)} e^{\wt (\eta)} 
\stackrel{\eqref{eq:grch1-1}}{=} 
  \sum_{
      \begin{subarray}{c} 
       C \in \Conn(\SLS(\lambda)) \\[1mm] 
       \zeta \in Q^{\vee}, \, [\zeta] \ge [\xi_{x,\kappa(\psi)}]
      \end{subarray}}
     e^{\wt ( X (t(\zeta) \cdot \eta^{C}) )} \\[3mm]
& \quad 
  =  e^{\wt(\psi)}e^{\dg_{x\lambda}(\psi)\delta} 
     \sum_{\brho \in \ol{\Par(\lambda)}} x^{-|\brho|\delta}
  \stackrel{(e^{\delta}=q)}{=}  e^{\wt(\psi)} q^{\dg_{x\lambda}(\psi)} 
     \sum_{\brho \in \ol{\Par(\lambda)}} q^{-|\brho|} \\[3mm]
&\quad 
 = e^{\wt(\psi)} q^{\dg_{x\lambda}(\psi)}
     \prod_{i \in I} \prod_{r=1}^{m_{i}}(1-q^{-r})^{-1}.
\end{align*}
Substituting this into \eqref{eq:ch1}, 
we finally obtain \eqref{eq:gch}. 
This completes the proof of Theorem~\ref{thm:gch}. \qed
%
%
%
%
\subsection{Graded character formula for certain quotients of Demazure submodules}
\label{subsec:quotient}

Let $\lambda \in P^+$ be a dominant weight; 
recall that $S=S_{\lambda}=\bigl\{ i \in I \mid 
\pair{\lambda}{\alpha_{i}^{\vee}}=0 \bigr\}$.

For each $x \in W^S = W \cap (W^{S})_{\aff}$, we set
%
%
\begin{equation} \label{eq:X1}
X_{x}^{-}(\lambda)\eqdef 
\sum_{
 \begin{subarray}{c}
  \brho \in \ol{\Par(\lambda)} \\[1.5mm]
  \brho \ne (\emptyset)_{i \in I}
 \end{subarray}
 } U_{q_s}^{-} S_{x}^{\norm} z_{\brho}v_{\lambda} = 
\sum_{
 \begin{subarray}{c}
  \brho \in \ol{\Par(\lambda)} \\[1.5mm]
  \brho \ne (\emptyset)_{i \in I}
 \end{subarray}
 } z_{\brho} \Bigl(V_{x}^{-}(\lambda)\Bigr); 
\end{equation}
for the definition of $z_{\brho}:V(\lambda) \rightarrow V(\lambda)$, 
see \eqref{eq:zrho}. 

For $\psi \in \QLS(\lambda)$, we take and 
fix a monomial $X_{\psi}$ in root operators such that 
$X_{\psi}\eta_{e}=\eta_{\psi}$, and set 
\begin{equation*}
\eta_{\psi} \cdot t(\xi) \eqdef  X_{\psi}(t(\xi) \cdot \eta_{e}) 
 \quad \text{for $\xi \in Q^{\vee}$}. 
\end{equation*}
%
%
\begin{rem} \label{rem:etatxi}
Note that $t(\xi) \cdot \eta_{e}=(\PJ(t(\xi))\,;\,0,\,1)$ 
(see \eqref{eq:xeta}). We deduce from \cite[Lemma~7.1.4]{INS} that 
if $\eta_{\psi}=X_{\psi}\eta_{e}$ is of the form
$\eta_{\psi} = (x_{1},\,\dots,\,x_{s}\,;\,\sigma_{0},\,\sigma_{1},\,\dots,\,\sigma_{s})$, 
then 
\begin{equation*}
\eta_{\psi} \cdot t(\xi) = X_{\psi}(t(\xi) \cdot \eta_{e}) = 
(x_{1}\PJ(t(\xi)),\,\dots,\,x_{s}\PJ(t(\xi))\,;\,\sigma_{0},\,\sigma_{1},\,\dots,\,\sigma_{s}).
\end{equation*}
In particular, the element $\eta_{\psi} \cdot t(\xi)$ 
does not depend on the choice of $X_{\psi}$. Also, 
since $x_u \PJ (t(\xi)) \lambda = x_u \lambda - \pair{\lambda}{\xi}\delta$
for all $1 \le u \le s$, we see by \eqref{eq:wt} that 
\begin{equation} \label{eq:etatxi1}
\begin{split}
\wt (\eta_{\psi} \cdot t(\xi)) 
& = \wt (\eta_{\psi}) - \pair{\lambda}{\xi}\delta \\
& \stackrel{\eqref{eq:wtep}}{=} 
 \wt (\psi) + \bigl(\dg_{\lambda}(\psi) - \pair{\lambda}{\xi}  \bigr)\delta,
\end{split}
\end{equation}
and that 
\begin{equation} \label{eq:etatxi2}
\cl(\eta_{\psi} \cdot t(\xi)) = \psi.
\end{equation}
\end{rem}
%
%
\begin{thm} \label{thm:quotient} 
Keep the notation and setting above. 
For each $x \in W^S$, there
 exists a subset $\CB(X_{x}^{-}(\lambda))$ of $\CB(\lambda)$ such that 
%
%
\begin{equation} \label{eq:GX}
X_{x}^{-}(\lambda) = 
 \bigoplus_{b \in \CB(X_{x}^{-}(\lambda))} \BC(q_{s}) G(b).
\end{equation}
Moreover, under the isomorphism 
$\Psi_{\lambda}:\CB(\lambda) \stackrel{\sim}{\rightarrow} \SLS(\lambda)$ 
of crystals in Theorem~\ref{thm:isom}, 
the subset $\CB(X_{x}^{-}(\lambda)) \subset \CB(\lambda)$ 
is mapped to the following subset of $\SLS(\lambda)$: 
\begin{equation}
\SLS_{\sige x}(\lambda) \setminus 
 \bigl\{\eta_{\psi} \cdot t(\xi_{x,\kappa(\psi)}) \mid \psi \in \QLS(\lambda)\bigr\}. 
\end{equation}
\end{thm}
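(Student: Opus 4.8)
The plan is to prove Theorem~\ref{thm:quotient} by identifying the ``missing'' global basis vectors, i.e.\ those $G(b)$ lying in $V_x^-(\lambda)$ but \emph{not} in $X_x^-(\lambda)$, and showing that under $\Psi_\lambda$ they correspond exactly to the elements $\eta_\psi \cdot t(\xi_{x,\kappa(\psi)})$, one for each $\psi \in \QLS(\lambda)$. First I would establish the compatibility statement \eqref{eq:GX}: since $X_x^-(\lambda) = \sum_{\brho \neq (\emptyset)} z_{\brho}(V_x^-(\lambda))$ and each $V_x^-(\lambda)$ is spanned by a subset $\CB_x^-(\lambda)$ of the global basis (by \eqref{eq:deme}), it suffices to recall that each $z_{\brho}$ sends global basis vectors to global basis vectors up to the connected-component decomposition. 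More precisely, using Remark~\ref{rem:zGb} ($z_{\brho}G(b) = G(z_{\brho}b)$ for $b \in \CB_0(\lambda)$, $\brho \in \ol{\Par(\lambda)}$) together with \eqref{eq:CBlam} and the factorization $z_{\brho}u_\lambda = t(\xi)\cdot u^{\bvrho}$ from Remark~\ref{rem:par}, one sees that $z_{\brho}$ permutes a spanning set of global basis vectors, so the image of a ``global-basis-compatible'' submodule is again global-basis-compatible; hence $X_x^-(\lambda) = \bigoplus_{b \in \CB(X_x^-(\lambda))}\BC(q_s)G(b)$ for some explicit subset.

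Next I would translate everything to the SiLS side via $\Psi_\lambda$. By Theorem~\ref{thm:isom}(b), $\Psi_\lambda(\CB_x^-(\lambda)) = \SLS_{\sige x}(\lambda)$, so it remains to compute $\Psi_\lambda(\CB(X_x^-(\lambda)))$ as a subset of $\SLS_{\sige x}(\lambda)$. The key is to run the same analysis as in the proof of Theorem~\ref{thm:gch}: fibering $\SLS_{\sige x}(\lambda)$ over $\QLS(\lambda)$ via $\cl$, the description \eqref{eq:grch1-1} says that $\cl^{-1}(\psi)\cap\SLS_{\sige x}(\lambda)$ consists of the $X(t(\zeta)\cdot\eta^C)$ with $C \in \Conn(\SLS(\lambda))$ and $[\zeta] \ge [\xi_{x,\kappa(\psi)}]$, where $X$ is a monomial with $\eta_\psi = X\eta_e$. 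On the other hand, $X_x^-(\lambda) = \sum_{\brho \neq (\emptyset)} z_{\brho}(V_x^-(\lambda))$ corresponds, under $\Psi_\lambda$ and the crystal-theoretic meaning of $z_{\brho}$ (it acts like $t(\xi)$-translation composed with a column-shift, cf.\ \eqref{eq:zcu} and Remark~\ref{rem:par}), to those $\eta$ whose ``partition part'' $(c_i + \rho^{(i)})_{i\in I} \in \ol{\Par(\lambda)}$ is \emph{not} the empty multi-partition. Writing $\eta = X(t(\zeta)\cdot\eta^C)$ as in \eqref{eq:grch1-1} and recalling from \eqref{eq:par+} that the relevant multi-partition is $(c_i)_{i\in I} + \Theta(C)$ where $[\zeta]-[\xi_{x,\kappa(\psi)}] = \sum_i c_i\alpha_i^\vee$, the element fails to lie in $X_x^-(\lambda)$ precisely when $(c_i)_{i\in I}+\Theta(C) = (\emptyset)_{i\in I}$, i.e.\ when $\Theta(C) = (\emptyset)_{i\in I}$ (so $C = \SLS_0(\lambda)$) \emph{and} $[\zeta] = [\xi_{x,\kappa(\psi)}]$. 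That forces $\eta = X(t(\xi_{x,\kappa(\psi)})\cdot\eta_e) = \eta_\psi \cdot t(\xi_{x,\kappa(\psi)})$ in the notation of Remark~\ref{rem:etatxi}. This exactly exhibits the complement $\SLS_{\sige x}(\lambda)\setminus\{\eta_\psi\cdot t(\xi_{x,\kappa(\psi)}) \mid \psi\in\QLS(\lambda)\}$ as $\Psi_\lambda(\CB(X_x^-(\lambda)))$.

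I would organize the write-up as: (i) a lemma giving \eqref{eq:GX} via Remark~\ref{rem:zGb} and \eqref{eq:CBlam}; (ii) the identification, for fixed $\psi$, of $\Psi_\lambda(\CB(X_x^-(\lambda))) \cap \cl^{-1}(\psi)$ using \eqref{eq:grch1-1}, \eqref{eq:par+}, and the translation interpretation of $z_{\brho}$; (iii) assembling over $\psi \in \QLS(\lambda)$ and invoking Remark~\ref{rem:etatxi} (especially \eqref{eq:etatxi2}, which guarantees $\cl(\eta_\psi\cdot t(\xi_{x,\kappa(\psi)})) = \psi$, so the exceptional elements are pairwise distinct and one per fiber) to conclude. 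A minor point to check along the way is that $\eta_\psi \cdot t(\xi_{x,\kappa(\psi)}) \in \SLS_{\sige x}(\lambda)$, which follows exactly as in the ``$\supset$'' direction of \eqref{eq:grch1-1}: $\kappa(\eta_\psi\cdot t(\xi_{x,\kappa(\psi)})) = \kappa(\psi)z_{\xi_{x,\kappa(\psi)}}t(\xi_{x,\kappa(\psi)}) \sige x$ by Lemma~\ref{lem:min1}.

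The main obstacle I anticipate is step (ii): making rigorous the claim that, under $\Psi_\lambda$, the submodule $\sum_{\brho \neq (\emptyset)} z_{\brho}(V_x^-(\lambda))$ corresponds to the SiLS paths with nonempty partition part. The subtlety is that $z_{\brho}$ for $\brho \in \ol{\Par(\lambda)}$ (not just $\Par(\lambda)$) involves both genuine partition data and ``column-of-full-length'' data that gets converted into a lattice translation $t(\xi)$ via \eqref{eq:zcu}; one must carefully track, via the bijection $\Theta$ of Proposition~\ref{prop:SLS}(2) and the $Q^\vee/Q_S^\vee$ bookkeeping in Remark~\ref{rem:trans}, that the combined effect on an arbitrary element of $\SLS_{\sige x}(\lambda)$ (written in the normal form \eqref{eq:grch1-1}) is precisely to add $(c_i)_{i\in I}$ to $\Theta(C)$ and shift $[\zeta]$ accordingly, and that ``$\brho \neq (\emptyset)$'' is equivalent to ``$(c_i)_{i\in I} + \Theta(C) \neq (\emptyset)$''. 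This is essentially a reorganization of the computations already done in \S\ref{subsec:prf-gch} together with Remark~\ref{rem:par}, but it requires care to state cleanly; everything else is routine bookkeeping with the results cited above.
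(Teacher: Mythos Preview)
Your proposal is correct and follows essentially the same approach as the paper: the paper also proves \eqref{eq:GX} first (via a preparatory lemma identifying $\CB_x^-(\lambda)$ with $\{z_{\brho}b \mid \brho \in \Par(\lambda),\, b \in \CB_x^-(\lambda)\cap\CB_0(\lambda)\}$, then using Remark~\ref{rem:zGb} together with the Schur-product decomposition of $z_{\brho}z_{\brho'}$ to handle arbitrary $b \in \CB_x^-(\lambda)$), and then works fiber-by-fiber over $\QLS(\lambda)$ exactly as you outline, using \eqref{eq:grch1-1} and the bookkeeping from \eqref{eq:par+} and Remark~\ref{rem:par}. Your anticipated obstacle is precisely where the paper spends its effort, and your check that $\eta_\psi \cdot t(\xi_{x,\kappa(\psi)}) \in \SLS_{\sige x}(\lambda)$ via Lemma~\ref{lem:min1} is the same one the paper makes.
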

From Theorem~\ref{thm:quotient}, we immediately obtain the following corollary;
cf. \cite[Theorem 6.1.1 combined with Proposition 6.2.4]{NS-D} for the case $x=e$.
%
%
\begin{cor} \label{cor:quotient}
For each $x \in W^S$, there holds the equality
\begin{equation}
\gch ( V_{x}^{-}(\lambda)/X_{x}^{-}(\lambda) ) = 
 \sum_{\psi \in \QLS(\lambda)} e^{\wt(\psi)}q^{\dg_{x\lambda}(\psi)}.
\end{equation}
%
\end{cor}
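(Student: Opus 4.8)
The plan is to derive Corollary~\ref{cor:quotient} as an immediate consequence of Theorem~\ref{thm:quotient}, using the compatibility of $X_x^-(\lambda)$ with the global basis together with the explicit description of $\CB(X_x^-(\lambda))$ under $\Psi_\lambda$. First I would note that since $X_x^-(\lambda) \subset V_x^-(\lambda)$ and both are spanned by subsets of the global basis (by Theorem~\ref{thm:quotient} and \eqref{eq:deme} respectively), the quotient $V_x^-(\lambda)/X_x^-(\lambda)$ has a basis indexed by $\CB_x^-(\lambda) \setminus \CB(X_x^-(\lambda))$; hence its graded character is $\sum_{b} e^{\wt(G(b))} q^{-k(b)}$ over this difference set, where we write the weight of $G(b)$ as $\lambda - \gamma - k(b)\delta$. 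Applying $\Psi_\lambda$, which preserves weights (as a crystal isomorphism with weights in $P_\aff$), this difference set is carried by Theorem~\ref{thm:isom}(b) and Theorem~\ref{thm:quotient} exactly to $\{\eta_\psi \cdot t(\xi_{x,\kappa(\psi)}) \mid \psi \in \QLS(\lambda)\}$, viewed inside $\SLS_{\sige x}(\lambda)$.

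Next I would compute the weight contribution of each surviving basis element. By Remark~\ref{rem:etatxi}, specifically \eqref{eq:etatxi1}, we have
\begin{equation*}
\wt(\eta_\psi \cdot t(\xi_{x,\kappa(\psi)})) = \wt(\psi) + \bigl(\dg_\lambda(\psi) - \pair{\lambda}{\xi_{x,\kappa(\psi)}}\bigr)\delta.
\end{equation*}
The key identification is that $\pair{\lambda}{\xi_{x,\kappa(\psi)}} = \wt_\lambda(x \Rightarrow \kappa(\psi))$; this follows because $\xi_{x,y} = \wt^S(\bp) + \phi_S(\wt^S(\bp))$ for a shortest path $\bp$ from $x$ to $y$ in $\QBG^S$ (see \eqref{eq:xi}), and $\pair{\lambda}{\phi_S(\cdot)} = 0$ since $\phi_S$ takes values in $Q_S^\vee$ and $\pair{\lambda}{Q_S^\vee} = \{0\}$, so that $\pair{\lambda}{\xi_{x,y}} = \pair{\lambda}{\wt^S(\bp)} = \wt_\lambda(x \Rightarrow y)$ by the convention adopted in \S 2.1. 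Therefore
\begin{equation*}
\wt(\eta_\psi \cdot t(\xi_{x,\kappa(\psi)})) = \wt(\psi) + \bigl(\dg_\lambda(\psi) - \wt_\lambda(x \Rightarrow \kappa(\psi))\bigr)\delta = \wt(\psi) + \dg_{x\lambda}(\psi)\,\delta,
\end{equation*}
using the identity $\dg_\lambda(\psi) - \wt_\lambda(x \Rightarrow \kappa(\psi)) = \dg_{x\lambda}(\psi)$ already recorded in the proof of Theorem~\ref{thm:gch}. Under the specialization $e^\delta = q$, this contributes precisely $e^{\wt(\psi)} q^{\dg_{x\lambda}(\psi)}$.

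Finally, summing over all $\psi \in \QLS(\lambda)$ — and noting that the assignment $\psi \mapsto \eta_\psi \cdot t(\xi_{x,\kappa(\psi)})$ is injective, since applying $\cl$ recovers $\psi$ by \eqref{eq:etatxi2} — yields
\begin{equation*}
\gch(V_x^-(\lambda)/X_x^-(\lambda)) = \sum_{\psi \in \QLS(\lambda)} e^{\wt(\psi)} q^{\dg_{x\lambda}(\psi)},
\end{equation*}
as claimed. I do not expect any serious obstacle here: the corollary is genuinely routine once Theorem~\ref{thm:quotient} is in hand, the only points requiring a line of justification being the passage from ``compatible with the global basis'' to ``graded character equals the weighted count over the index set'' and the identification $\pair{\lambda}{\xi_{x,\kappa(\psi)}} = \wt_\lambda(x \Rightarrow \kappa(\psi))$ together with the injectivity of the parametrization. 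The substantive content — establishing \eqref{eq:GX} and the description of $\CB(X_x^-(\lambda))$ — lives entirely in the proof of Theorem~\ref{thm:quotient}, which is assumed here.
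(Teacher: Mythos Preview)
Your proposal is correct and follows precisely the approach implicit in the paper, which states only that the corollary is obtained ``immediately'' from Theorem~\ref{thm:quotient}. You have faithfully unpacked that immediacy: the passage to the complement of $\CB(X_x^-(\lambda))$ inside $\CB_x^-(\lambda)$, the weight computation via \eqref{eq:etatxi1}, the identification $\pair{\lambda}{\xi_{x,\kappa(\psi)}} = \wt_\lambda(x \Rightarrow \kappa(\psi))$, and the injectivity via \eqref{eq:etatxi2} are exactly the details the paper leaves to the reader.
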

By combing the special case $x = \lfloor \lon \rfloor \in W^S$ of Corollary~\ref{cor:quotient}
with the special case $\mu = \lon \lambda$ of Theorem~\ref{main}, we obtain the equality:
\begin{equation*}
\gch ( V_{\lon}^{-}(\lambda)/X_{\lon}^{-}(\lambda) ) = E_{\lon \lambda}(q, \infty). 
\end{equation*}
%
%
\subsection{Proof of Theorem~\ref{thm:quotient}}
\label{subsec:prf-quotient}
%
%
\begin{lem}[cf. \eqref{eq:CBlam}] \label{lem:Be-}
Let $x \in W^{S}$. Then, we have
%
%
\begin{equation} \label{eq:Be-1}
\CB_{x}^{-}(\lambda) = \bigl\{
 z_{\brho}b \mid 
 \brho \in \Par(\lambda),\,
 b \in \CB_{x}^{-}(\lambda) \cap \CB_{0}(\lambda) \bigr\}.
\end{equation}
Moreover, for every 
$\brho \in \ol{\Par(\lambda)}$ and 
$b \in \CB_{x}^{-}(\lambda) \cap \CB_{0}(\lambda)$, 
the element $z_{\brho}b$ is contained in $\CB_{x}^{-}(\lambda)$. 
\end{lem}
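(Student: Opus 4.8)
The statement to prove is Lemma~\ref{lem:Be-}: for $x \in W^S$, the global-basis-compatible subset $\CB_x^-(\lambda)$ decomposes as $\{z_{\brho}b \mid \brho \in \Par(\lambda),\ b \in \CB_x^-(\lambda) \cap \CB_0(\lambda)\}$, and moreover $z_{\brho}b \in \CB_x^-(\lambda)$ for every $\brho \in \ol{\Par(\lambda)}$ and $b \in \CB_x^-(\lambda) \cap \CB_0(\lambda)$. The plan is to transport everything through the crystal isomorphism $\Psi_\lambda : \CB(\lambda) \xrightarrow{\sim} \SLS(\lambda)$ of Theorem~\ref{thm:isom}, under which $\CB_x^-(\lambda)$ corresponds to $\SLS_{\sige x}(\lambda)$ and $\CB_0(\lambda)$ corresponds to $\SLS_0(\lambda)$ (the latter because $\Psi_\lambda(u_\lambda) = \eta_e$ and $\Psi_\lambda$ is a crystal isomorphism, so it carries connected components to connected components). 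So the whole lemma becomes a statement purely about SiLS paths and the action of $z_{\brho}$, which via Remark~\ref{rem:zGb} and \eqref{eq:CBlam} is realized on $\SLS(\lambda)$ essentially by the translation action described in Remark~\ref{rem:par} and Remark~\ref{rem:extp}.

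\textbf{Key steps.} First I would recall \eqref{eq:CBlam}: every $b \in \CB(\lambda)$ is uniquely $z_{\brho}b_0$ with $\brho \in \Par(\lambda)$ and $b_0 \in \CB_0(\lambda)$; translating, every $\eta \in \SLS(\lambda)$ lies in a unique connected component $C$ and, by Proposition~\ref{prop:SLS}\,(3) together with Remark~\ref{rem:extp}, is of the form $X(t(\zeta)\cdot\eta^C)$ (or $\eta_\psi\cdot t(\zeta)$-type expression) with the translation part recording $\brho$. Second, I would pin down $\Psi_\lambda(z_{\brho}b)$ for $b \in \CB_0(\lambda)$: by Remark~\ref{rem:par}, $z_{\brho}u_\lambda = t(\xi)\cdot u^{\bvrho}$ where $\xi = \sum_i c_i\alpha_i^\vee$ encodes the full columns of $\brho$; combining with Remark~\ref{rem:extp}'s formula \eqref{eq:xeta} for the $W_\aff$-action on $\cl^{-1}(\psi_e)$, the operator $z_{\brho}$ acts on a path $\eta = X(t(\zeta)\cdot\eta^C)$ by augmenting the translation data, i.e. $\Psi_\lambda(z_{\brho}b)$ has final direction $\kappa(\Psi_\lambda(b))$ multiplied on the right by $\PJ(t(\xi_\brho))$ for the appropriate $\xi_\brho \in Q^{\vee,+}$ (using Lemma~\ref{lem:INS714} to compute final directions after applying a monomial of root operators). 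Third, I would use Lemma~\ref{lem:trans} (the monotonicity $x \sile y \Rightarrow \PJ(xt(\xi)) \sile \PJ(yt(\xi))$): if $\kappa(\Psi_\lambda(b)) \sige x$, then $\kappa(\Psi_\lambda(z_{\brho}b)) = \PJ(\kappa(\Psi_\lambda(b))\,t(\xi_\brho)) \sige \PJ(x\,t(\xi_\brho)) \sige x$, the last step because translations by elements of $Q^{\vee,+}$ only increase semi-infinite length and $\PJ(xt(\xi_\brho))$ is reached from $x$ by an $\SBG^S$-path (cf. Lemma~\ref{lem:min1}-style arguments, or directly since $\sell$ increases). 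This gives $z_{\brho}b \in \CB_x^-(\lambda)$ for all $\brho \in \ol{\Par(\lambda)}$, which is the ``moreover'' assertion and also gives the inclusion $\supset$ in \eqref{eq:Be-1}. For the inclusion $\subset$: given $b \in \CB_x^-(\lambda)$, write $b = z_{\brho}b_0$ with $\brho \in \Par(\lambda)$, $b_0 \in \CB_0(\lambda)$ by \eqref{eq:CBlam}; I must show $b_0 \in \CB_x^-(\lambda)$, i.e. $\kappa(\Psi_\lambda(b_0)) \sige x$. Here I would invoke Lemma~\ref{lem:min2}: since $\kappa(\Psi_\lambda(b)) = \PJ(\kappa(\Psi_\lambda(b_0))\,t(\xi_\brho)) \sige x$ and $\kappa(\Psi_\lambda(b_0)) \in W^S$ (because $b_0 \in \CB_0(\lambda)$, its path is in $\SLS_0(\lambda)$ whose elements obtained from $\eta_e$ by root operators have final direction in $W^S$ after the relevant normalization — more precisely one uses Lemma~\ref{lem:pi-eta}), the bound $[\xi_\brho] \ge [\xi_{x,\kappa(\psi)}]$ forces, since $[\xi_\brho]$ is built only from full columns, that actually $\xi_{x,\kappa(\psi)}$ contributes nothing obstructive, giving $\kappa(\Psi_\lambda(b_0)) \sige x$ directly via Lemma~\ref{lem:sig0}/Lemma~\ref{lem:min2}.

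\textbf{Main obstacle.} The delicate point is the $\subset$ direction: showing that if $z_{\brho}b_0 \in \CB_x^-(\lambda)$ then already $b_0 \in \CB_x^-(\lambda)$. The subtlety is that $z_{\brho}$ shifts the translation part, and one needs that this shift cannot ``rescue'' a path whose final direction fails $\sige x$ into one that succeeds — equivalently, that $\PJ(y\,t(\xi_\brho)) \sige x$ with $y \in W^S$ and $\xi_\brho \in Q^{\vee,+}$ implies $y \sige x$. This is where Lemma~\ref{lem:min2} is essential, but applying it requires identifying $\xi_\brho$ with (a shift of) some $\xi_{x,y'}$ or else arguing more carefully that the semi-infinite order on $(W^S)_\aff$ restricted to the ``fiber over $y$'' behaves as in Lemma~\ref{lem:sig1}. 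I expect to spend most of the effort making this identification precise and checking that the decomposition $\brho = \bvrho + (\text{full columns})$ from Remark~\ref{rem:par} interacts correctly with the $S$-adjusted representatives $z_{\xi}t(\xi)$, so that the translation data is genuinely additive under $z_{\brho}$ modulo $Q_S^\vee$, allowing the projection $[\,\cdot\,] = [\,\cdot\,]_{I\setminus S}$ to separate the ``full column'' contribution cleanly.
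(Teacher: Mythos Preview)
Your overall framework---transporting via $\Psi_\lambda$ and computing final directions with Lemma~\ref{lem:INS714}---is exactly the paper's approach. But you have misidentified where the difficulty lies, and this causes you to overcomplicate \eqref{eq:Be-1} and to make an incorrect auxiliary claim.

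The point you are missing is that for $\brho \in \Par(\lambda)$ (partitions of length \emph{strictly less} than $m_i$, hence with \emph{no} full columns), the element $\eta^{C}$ with $C=\Theta^{-1}(\brho)$ has $\kappa(\eta^{C})=e$ by its very form \eqref{eq:etaC}. Thus if $b=Xu_\lambda \in \CB_0(\lambda)$ and $\brho \in \Par(\lambda)$, then $z_{\brho}b = Xu^{\brho}$, and Lemma~\ref{lem:INS714} gives $\kappa(\Psi_\lambda(z_{\brho}b)) = \kappa(X\eta_e)\cdot\kappa(\eta^{C}) = \kappa(\Psi_\lambda(b))$: the final direction is \emph{literally unchanged}. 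Both inclusions in \eqref{eq:Be-1} are then immediate---no translation appears, no Lemma~\ref{lem:min2} is needed, and your ``main obstacle'' does not exist. The translation shift $\xi_{\brho}$ you introduce is zero here.

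The only place a genuine translation enters is the second (``moreover'') assertion, where $\brho \in \ol{\Par(\lambda)}$ may have full columns. There the paper does exactly what you sketch: decompose $\brho$ via Remark~\ref{rem:par} into $\bvrho \in \Par(\lambda)$ plus a column part $\xi \in Q^{\vee,+}$, obtain $\kappa(\Psi_\lambda(z_{\brho}b)) = \kappa(X\eta_e)\PJ(t(\xi))$ from Lemma~\ref{lem:INS714}, and conclude $\sige x$ by Lemma~\ref{lem:sig1} (not Lemma~\ref{lem:trans} or a ``Lemma~\ref{lem:min1}-style argument'').

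Finally, your claim that elements of $\SLS_0(\lambda)$ have final direction in $W^S$ is false in general; Lemma~\ref{lem:pi-eta} only produces one such element per $\psi$. Fortunately, once you see that $z_{\brho}$ for $\brho \in \Par(\lambda)$ preserves $\kappa$, you never need that claim.
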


\begin{proof}
We first prove the inclusion $\supset$. 
Let $b \in \CB_{x}^{-}(\lambda) \cap \CB_{0}(\lambda)$, and 
write it as $b = X u_{\lambda}$ for a monomial $X$ in Kashiwara operators. 
For $\brho \in \Par(\lambda)$, we have $z_{\brho}b=X z_{\brho}u_{\lambda}=Xu^{\brho}$ 
since $z_{\brho}$ commutes with Kashiwara operators (see \S\ref{subsec:ext}). 
Now we set $\eta : = \Psi_{\lambda}(b)$ and $\eta' : = \Psi_{\lambda}(z_{\brho}b)$, 
where $\Psi_{\lambda}:\CB(\lambda) \stackrel{\sim}{\rightarrow} \SLS(\lambda)$ 
is the isomorphism of crystals in Theorem~\ref{thm:isom}. 
Then, we have $\eta = X \eta_{e}$ and 
$\eta' = X \Psi_{\lambda}(u^{\brho})=X\eta^{C}$, 
with $C\eqdef \Theta^{-1}(\brho) \in \Conn(\SLS(\lambda))$. 
Therefore, noting that $\kappa(\eta^{C})=e$, 
we deduce from Lemma~\ref{lem:INS714} that 
$\kappa(\eta') = \kappa(\eta) \kappa(\eta^C) = \kappa(\eta)$. 
Also, since $b \in \CB_{x}^{-}(\lambda)$, it follows that 
$\kappa(\eta) \sige x$, and hence $\kappa(\eta') = \kappa(\eta) \sige x$. 
Hence we obtain $\eta' \in \SLS_{\sige x}(\lambda)$, which implies that 
$z_{\brho}b \in \CB_{x}^{-}(\lambda)$. 

Next we prove the opposite inclusion $\subset$. 
Let $b' \in \CB_{x}^{-}(\lambda)$, and write it as 
$b' = z_{\brho}b$ for some $\brho \in \Par(\lambda)$ and 
$b \in \CB_{0}(\lambda)$ (see \eqref{eq:CBlam}); 
we need to show that $b \in \CB_{x}^{-}(\lambda)$. 
We set $\eta : = \Psi_{\lambda}(b) \in \SLS(\lambda)$ and 
$\eta' : = \Psi_{\lambda}(b') \in \SLS(\lambda)$. 
Then, the same argument as above shows that 
$\kappa(\eta) = \kappa(\eta') \sige x$. 
Hence we obtain $\eta \in \SLS_{\sige x}(\lambda)$, 
which implies that $b \in \CB_{x}^{-}(\lambda)$. 

For the second assertion, 
let $\brho = (\rho^{(i)})_{i \in I} \in \ol{\Par(\lambda)}$ and $b \in 
\CB_{x}^{-}(\lambda) \cap \CB_{0}(\lambda)$; remark that 
\begin{equation*}
z_{\brho}b \in \CB_{x}^{-}(\lambda) \iff 
\Psi_{\lambda}(z_{\brho}b) \in \SLS_{\sige x}(\lambda) \iff 
\kappa(\Psi_{\lambda}(z_{\brho}b)) \sige x.
\end{equation*}
We write $b$ as $b=Xu_{\lambda}$ 
for a monomial $X$ in Kashiwara operators. 
Also, define $\bvrho\eqdef (\varrho^{(i)})_{i \in I} \in \Par(\lambda)$ 
and $\xi \eqdef  \sum_{i \in I} c_{i}\alpha_{i}^{\vee} \in Q^{\vee,+}$ 
as in Remark~\ref{rem:par}. Then it follows that 
$z_{\brho}b = z_{\brho}Xu_{\lambda} = X z_{\brho}u_{\lambda} 
\stackrel{\eqref{eq:zcu}}{=} X ( t(\xi) \cdot u^{\bvrho} )$.
If we set $C\eqdef \Theta^{-1}(\bvrho) \in \Conn(\SLS(\lambda))$, then we have
\begin{equation*}
\Psi_{\lambda}(z_{\brho}b) 
  = \Psi_{\lambda}\bigl( X ( t(\xi) \cdot u^{\bvrho} ) \bigr) 
  = X \bigl(t(\xi) \cdot \Psi_{\lambda}(u^{\bvrho})\bigr)
  = X\bigl( t(\xi) \cdot \eta^{C}\bigr);
\end{equation*}
note that $t(\xi) \cdot \eta^{C}$ is of the form \eqref{eq:pie} 
with $\kappa(t(\xi) \cdot \eta^{C})=\PJ(t(\xi))$ by 
Remark~\ref{rem:extp} and the fact that $\kappa(\eta^{C})=e$. 
Therefore, we see from Lemma~\ref{lem:INS714} that
\begin{equation} \label{eq:kappa1}
\kappa(\Psi_{\lambda}(z_{\brho}b))= 
\kappa(X(t(\xi) \cdot \eta^{C}))=\kappa(X\eta_{e})\PJ(t(\xi)).
\end{equation} 
Here we recall that $\kappa(X\eta_{e}) \sige x$ since 
$b \in \CB_{x}^{-}(\lambda) \cap \CB_{0}(\lambda)$. 
Also, recall that $\xi \in Q^{\vee,+}$. From these, we deduce that 
\begin{align*}
\kappa(\Psi_{\lambda}(z_{\brho}b))
 & = \kappa(X\eta_{e})\PJ(t(\xi)) 
 \sige \kappa(X\eta_{e}) \quad \text{by Lemma~\ref{lem:sig1}} \\
 & \sige x. 
\end{align*}
This proves the lemma. 
\end{proof}

\begin{proof}[Proof of Theorem~\ref{thm:quotient}.]
We will prove that if we set
%
%
\begin{equation} \label{eq:GX1}
\CB \eqdef  
 \bigl\{z_{\brho}b \mid 
 \brho \in \ol{\Par(\lambda)} \setminus (\emptyset)_{i \in I},\,
 b \in \CB_{x}^{-}(\lambda) \cap \CB_{0}(\lambda) \bigr\} \subset \CB(\lambda),
\end{equation}
then 
%
%
\begin{equation} \label{eq:GX2}
X_{x}^{-}(\lambda) =
\bigoplus_{b \in \CB} \BC(q_{s}) G(b).
\end{equation}
We first show the inclusion $\supset$ in \eqref{eq:GX2}. 
Let $\brho \in \ol{\Par(\lambda)} \setminus (\emptyset)_{i \in I}$ and 
$b \in \CB_{x}^{-}(\lambda) \cap \CB_{0}(\lambda)$. 
We see from Remark~\ref{rem:zGb} that 
$G(z_{\brho}b) = z_{\brho} G(b)$. 
Since $G(b) \in V_{x}^{-}(\lambda)$ and 
\begin{equation*}
X_{x}^{-}(\lambda)=
\sum_{
 \begin{subarray}{c}
  \brho \in \ol{\Par(\lambda)} \\[1.5mm]
  \brho \ne (\emptyset)_{i \in I}
 \end{subarray}
 } z_{\brho} \Bigl(V_{x}^{-}(\lambda)\Bigr)
\end{equation*}
by the definition, we have 
$G(z_{\brho}b) = z_{\brho} G(b) \in X_{x}^{-}(\lambda)$. 
Thus we have shown the inclusion $\supset$ in \eqref{eq:GX2}. 
Next we show the opposite inclusion $\subset$ in \eqref{eq:GX2}. 
Since $\bigl\{G(b) \mid b \in \CB_{x}^{-}(\lambda)\bigr\}$
is a $\BC(q_{s})$-basis of $V_{x}^{-}(\lambda)$, 
we deduce from \eqref{eq:X1} that
%
%
\begin{equation} \label{eq:Xspan}
X_{x}^{-}(\lambda) = \Span_{\BC(q_{s})}
\bigl\{
 z_{\brho}G(b) \mid 
 \brho \in \ol{\Par(\lambda)} \setminus (\emptyset)_{i \in I},\,
 b \in \CB_{x}^{-}(\lambda)
\bigr\}.
\end{equation}
Let $\brho \in \ol{\Par(\lambda)} \setminus (\emptyset)_{i \in I}$ 
and $b \in \CB_{x}^{-}(\lambda)$. By Lemma~\ref{lem:Be-}, 
we can write the $b$ as $b = z_{\brho'}b'$ for some 
$\brho' \in \Par(\lambda)$ and $b' \in \CB_{x}^{-}(\lambda) \cap 
\CB_{0}(\lambda)$. It follows that $z_{\brho}b = z_{\brho} z_{\brho'}b'$. 
Because $z_{\brho}$ and $z_{\brho'}$ are defined to be a certain product of 
Schur polynomials (see \eqref{eq:Schur}), 
the element $z_{\brho} z_{\brho'}$ can be expressed as:
\begin{equation*}
z_{\brho} z_{\brho'} = 
 \sum_{
   \begin{subarray}{c}
     \brho'' \in \ol{\Par(\lambda)} \\[1.5mm]
     |\brho''|=|\brho|+|\brho'|
   \end{subarray}
 } n_{\brho''} z_{\brho''}, \quad 
\text{with $n_{\brho''} \in \BZ$}; 
\end{equation*}
here we remark that $|\brho|+|\brho'| \ge 1$ since 
$\brho \ne (\emptyset)_{i \in I}$. Therefore, we deduce that
\begin{align*}
z_{\brho}G(b) & = z_{\brho}G(z_{\brho'}b') = z_{\brho}z_{\brho'}G(b') \\
& = 
 \sum_{
   \begin{subarray}{c}
     \brho'' \in \ol{\Par(\lambda)} \\[1.5mm]
     |\brho''|=|\brho|+|\brho'|
   \end{subarray}
 } n_{\brho''} G(z_{\brho''}b') \in 
\bigoplus_{b \in \CB} \BC(q_{s}) G(b).
\end{align*}
From this, together with \eqref{eq:Xspan}, we obtain
the inclusion $X_{x}^{-}(\lambda) \subset 
\bigoplus_{b \in \CB} \BC(q_{s}) G(b)$ in \eqref{eq:GX2}.
Thus, we obtain \eqref{eq:GX2}, 
as desired; in what follows, we write $\CB(X_{x}^{-}(\lambda))$ 
for the subset $\CB \subset \CB(\lambda)$ in \eqref{eq:GX1}. 

Furthermore, we will prove that 
\begin{equation*}
\Psi_{\lambda}\bigl(\CB(X_{x}^{-}(\lambda))\bigr) = 
\SLS_{\sige x}(\lambda) \setminus 
\bigl\{\eta_{\psi} \cdot t(\xi_{x,\kappa(\psi)}) 
\mid \psi \in \QLS(\lambda)\bigr\}. 
\end{equation*}
For this purpose, 
it suffices to show that for each $\psi \in \QLS(\lambda)$, 
%
%
\begin{equation} \label{eq:X3}
\cl^{-1}(\psi) \cap 
\Psi_{\lambda}\bigl(\CB(X_{x}^{-}(\lambda))\bigr) = 
\Bigl(\cl^{-1}(\psi) \cap \SLS_{\sige x}(\lambda)\Bigr) 
 \setminus \bigl\{\eta_{\psi} \cdot t(\xi_{x,\kappa(\psi)}) \bigr\}.
\end{equation}
Let $\psi \in \QLS(\lambda)$; recall that 
$X_{\psi}$ is a monomial in root operators such that 
$\eta_{\psi}=X_{\psi} \eta_{e}$. Then we know from 
\eqref{eq:grch1-1} that
\begin{align*}
& \cl^{-1}(\psi) \cap \SLS_{\sige x}(\lambda) \\
& \quad = 
\bigl\{X_{\psi} (t(\zeta) \cdot \eta^{C}) \mid 
 C \in \Conn(\SLS(\lambda)),\,
 \zeta \in Q^{\vee},\,[\zeta] \ge [\xi_{x,\kappa(\psi)}] \bigr\}.
\end{align*}
We first show the inclusion $\supset$ in \eqref{eq:X3}. 
Let $\eta$ be an element in the set on the right-hand side of \eqref{eq:X3}, 
and write it as
$\eta = X_{\psi} (t(\zeta) \cdot \eta^{C})$ 
for some $C \in \Conn(\SLS(\lambda))$ and $\zeta \in Q^{\vee}$ 
such that $[\zeta] \ge [\xi_{x,\kappa(\psi)}]$. 
We write the difference $[\zeta]-[\xi_{x,\kappa(\psi)}] \in Q^{\vee,+}$ as 
$[\zeta]-[\xi_{x,\kappa(\psi)}]=
\sum_{i \in I} c_{i}\alpha_{i}^{\vee}$ with $c_{i} \in \BZ_{\ge 0}$ for $i \in I$ 
(note that $c_{i}=0$ for all $i \in S$), and  
define 
$\brho\eqdef (c_{i})_{i \in I} + \Theta(C) \in \ol{\Par(\lambda)}$ 
as in \eqref{eq:par+}.
We claim that $\brho \ne (\emptyset)_{i \in I}$. 
Suppose, for a contradiction, that $\brho = (\emptyset)_{i \in I}$. 
Then we have $\Theta(C)=(\emptyset)_{i \in I}$ and 
$c_{i}=0$ for all $i \in I$, and hence
\begin{align*}
\eta & = X_\psi (t(\zeta) \cdot \eta^{C}) 
 = X_\psi (t(\zeta) \cdot \eta_{e}) 
 = X_\psi (\PJ(t(\zeta))\,;\,0,\,1) \\
& = X_\psi (\PJ(t(\xi_{x,\kappa(\psi)}))\,;\,0,\,1)
\quad \text{since $[\zeta]=[\xi_{x,\kappa(\psi)}]$} \\
& = X_\psi (t(\xi_{x,\kappa(\psi)}) \cdot \eta_{e}) 
  = \eta_{\psi} \cdot t(\xi_{x,\kappa(\psi)}), 
\end{align*}
which contradicts the assumption that $\eta$ is an element in the set 
on the right-hand side of \eqref{eq:X3}. Thus we obtain 
$\brho \ne (\emptyset)_{i \in I}$. Now, we set 
\begin{equation*}
b\eqdef \Psi_{\lambda}^{-1}(\eta_{\psi} \cdot t(\xi_{x,\kappa(\psi)}))
  =\Psi_{\lambda}^{-1}\bigl( X_\psi (t(\xi_{x,\kappa(\psi)}) \cdot \eta_{e}) \bigr)
  \in \CB_{x}^{-}(\lambda) \cap \CB_{0}(\lambda);
\end{equation*}
note that $\eta_{\psi} \cdot t(\xi_{x,\kappa(\psi)}) \in 
\SLS_{\sige x}(\lambda)$ by \eqref{eq:grch1-1}, and that 
$b=X_\psi (t(\xi_{x,\kappa(\psi)}) \cdot u_{\lambda})$. 
Then we see by \eqref{eq:GX1} that $z_{\brho}b \in \CB(X_{x}^{-}(\lambda))$. 
Also, we have
\begin{align*}
z_{\brho}b & = z_{\brho} \bigl( X_\psi (t(\xi_{x,\kappa(\psi)}) \cdot u_{\lambda}) \bigr) = 
X_\psi \bigl( t(\xi_{x,\kappa(\psi)}) \cdot (z_{\brho}u_{\lambda}) \bigr) \\
& = X_\psi \bigl( 
        t(\xi_{x,\kappa(\psi)}) \cdot 
        t([\zeta]-[\xi_{x,\kappa(\psi)}]) \cdot u^{\Theta(C)}\bigr)
  \quad \text{by Remark~\ref{rem:par}} \\
& = X_\psi ( t(\zeta+\gamma) \cdot u^{\Theta(C)}) 
  \quad \text{for some $\gamma \in Q_{S}^{\vee}$} \\
& = X_\psi ( t(\zeta) \cdot u^{\Theta(C)}).
\end{align*}
Therefore, $\Psi_{\lambda}(z_{\brho}b) = X_\psi (t(\zeta) \cdot \eta^{C}) = \eta$, 
which implies that $\eta$ is contained in $\Psi_{\lambda}(\CB(X_{x}^{-}(\lambda)))$. 
Thus we have shown the inclusion $\supset$ in \eqref{eq:X3}. 

Next we show the opposite inclusion $\subset$ in \eqref{eq:X3}. 
Since $\CB(X_{x}^{-}(\lambda)) \subset \CB_{x}^{-}(\lambda)$, 
it follows that 
\begin{equation*}
\cl^{-1}(\psi) \cap 
 \Psi_{\lambda}\bigl(\CB(X_{x}^{-}(\lambda))\bigr) \subset
\cl^{-1}(\psi) \cap \SLS_{\sige x}(\lambda). 
\end{equation*}
Hence it suffices to show that 
$\eta_{\psi} \cdot t(\xi_{x,\kappa(\psi)}) 
\not\in \Psi_{\lambda}\bigl(\CB(X_{x}^{-}(\lambda))\bigr)$. 
Suppose, for a contradiction, that 
there exists $b' \in \CB(X_{x}^{-}(\lambda))$ 
such that $\Psi_{\lambda}(b') = \eta_{\psi} \cdot t(\xi_{x,\kappa(\psi)})$. 
By \eqref{eq:GX1}, we can write it as $b'= z_{\brho}b$ 
for some $\brho \in \ol{\Par(\lambda)} \setminus (\emptyset)_{i \in I}$ and 
$b \in \CB_{x}^{-}(\lambda) \cap \CB_{0}(\lambda)$. 
We set $\eta\eqdef \Psi_{\lambda}^{-1}(b) \in 
\SLS_{\sige x}(\lambda) \cap \SLS_{0}(\lambda)$, 
and write $\kappa(\eta) \in (W^{S})_{\aff}$ as 
$\kappa(\eta)=yz_{\xi}t(\xi)$ for some $y \in W^{S}$ and $\xi \in \Qad$. 
Then, $\kappa(\eta)=yz_{\xi}t(\xi) \sige x$ 
since $\eta \in \SLS_{\sige x}(\lambda)$, and hence
\begin{equation} \label{eq:ge}
[\xi] \ge [\xi_{x,y}] \qquad \text{by Lemma~\ref{lem:min2}}. 
\end{equation}
Let us write $b$ as $b=Yu_{\lambda}$ 
for some monomial $Y$ in Kashiwara operators (note that $\eta =Y\eta_{e}$),
and define $\zeta = \sum_{i \in I} c_{i}\alpha_{i}^{\vee} \in Q^{\vee,+}$ and 
$\bvrho=(\varrho^{(i)})_{i \in I} \in \Par(\lambda)$ in such a way that 
$\brho = (c_{i})_{i \in I} + \bvrho$ (see Remark~\ref{rem:par} and \eqref{eq:par+}); 
note that $c_{i}=0$ for all $i \in S$. 
Then, by \eqref{eq:zcu}, we have
\begin{equation*}
b' = z_{\brho}b = z_{\brho}Yu_{\lambda} 
   = Yz_{\brho}u_{\lambda} = Y (t(\zeta) \cdot u^{\bvrho}).
\end{equation*}
Therefore, we see that 
\begin{equation} \label{eq:cep}
\begin{split}
& \eta_{\psi} \cdot t(\xi_{x,\kappa(\psi)}) = \Psi_{\lambda}(b') = 
  \Psi_{\lambda}\bigl( Y(t(\zeta) \cdot u^{\bvrho}) \bigr) = 
  Y ( t(\zeta) \cdot \eta^{C}), \\
& \hspace*{10mm} 
\text{with $C\eqdef \Theta^{-1}(\bvrho) \in \Conn(\SLS(\lambda))$}. 
\end{split}
\end{equation}
Since $\eta_{\psi} \cdot t(\xi_{x,\kappa(\psi)}) = 
X_{\psi}(t(\xi_{x,\kappa(\psi)}) \cdot \eta_{e}) \in \SLS_{0}(\lambda)$, 
it follows that $\eta^{C} = \eta_{e}$, and 
hence $\bvrho = (\emptyset)_{i \in I}$. 
Hence we obtain $\eta_{\psi} \cdot t(\xi_{x,\kappa(\psi)}) = 
Y ( t(\zeta) \cdot \eta_{e})$. 
Since $t(\zeta) \cdot \eta_{e} = ( \PJ(t(\zeta))\,;\,0,\,1)$, 
we see from Lemma~\ref{lem:INS714} that $\kappa(Y(t(\zeta) \cdot \eta_{e})) = 
\kappa(\eta)\kappa(t(\zeta) \cdot \eta_{e}) = y z_{\xi}t(\xi)\PJ(t(\zeta))$. 
Similarly, we see that $\kappa(\eta_{\psi} \cdot t(\xi_{x,\kappa(\psi)})) = 
\kappa(\psi)\PJ(t(\xi_{x,\kappa(\psi)}))$. Combining these equalities, 
we obtain
$\kappa(\psi)\PJ(t(\xi_{x,\kappa(\psi)})) = y z_{\xi}t(\xi)\PJ(t(\zeta))$, 
and hence ($y=\kappa(\psi)$ and) $[\zeta+\xi]=[\xi_{x,\kappa(\psi)}]$. 
Since $[\xi] \ge [\xi_{x,y}]$ by \eqref{eq:ge} and $\zeta \in Q^{\vee,+}$, 
it follows that ($[\xi] = [\xi_{x,y}]$ and) $[\zeta]=0$, 
which implies that $c_{i}=0$ for all $i \in I \setminus S$; 
recall that $c_{i}=0$ for all $i \in S$ by the definition. 
Therefore, we conclude that $\brho = (c_{i})_{i \in I} + \bvrho 
= (\emptyset)_{i \in I}$; 
this contradicts our assumption that 
$\brho \in \ol{\Par(\lambda)} \setminus (\emptyset)_{i \in I}$. 
Thus we have shown the inclusion $\subset$. 
This completes the proof of Theorem~\ref{thm:quotient}. 
\end{proof}

\end{document}